\newcommand{\A}{{\mathcal{A}}}
\newcommand{\B}{{\mathcal{B}}}
\newcommand{\C}{{\mathcal{C}}}
\newcommand{\D}{{\mathcal{D}}}
\newcommand{\G}{{\mathcal{G}}}
\newcommand{\fct}{\mathbf{Fct}}
\newcommand{\st}{\mathbf{St}}
\newcommand{\E}{{\mathcal{E}}}
\newcommand{\Pol}{{\mathcal{P}}ol}
\newcommand{\s}{{\mathcal{S}}}
\newcommand{\K}{{\mathcal{K}}}
\newcommand{\M}{{\mathcal{M}}}
\newcommand{\N}{{\mathcal{N}}}
\newcommand{\col}{{\rm colim}\,}
\newcommand{\T}{{\mathcal{T}}}
\newcommand{\mn}{\M on_{\rm nul}}
\newcommand{\mi}{\M on_{\rm ini}}
\title{Des propriétés de finitude des foncteurs polynomiaux}
\author{Aur\'elien DJAMENT\thanks{CNRS, laboratoire de mathématiques Jean Leray (UMR 6629), 2 rue de la Houssinière, BP 92208, 44322 NANTES CEDEX 3, FRANCE ; aurelien.djament@univ-nantes.fr.}}
\newtheorem{thi}{Th\'eor\`eme}
\newtheorem{pin}{Proposition}
\theoremstyle{definition}
\newtheorem{noti}{Notation}
\newtheorem{thm}{Th\'eor\`eme}[section]
\newtheorem{pr}[thm]{Proposition}
\newtheorem{cor}[thm]{Corollaire}
\newtheorem{lm}[thm]{Lemme}
\newtheorem{prdef}[thm]{Proposition et d\'efinition}
\theoremstyle{definition}
\newtheorem{defi}[thm]{D\'efinition}
\newtheorem{conv}[thm]{Convention}
\theoremstyle{remark}
\newtheorem{rem}[thm]{Remarque}
\newtheorem{ex}[thm]{Exemple}
\begin{document}

\maketitle

\begin{abstract}
On étudie différentes propriétés de finitude, notamment la propriété noethérienne, la dimension de Krull et une variante de la présentation finie, dans des catégories de foncteurs polynomiaux d'une petite catégorie monoïdale symétrique dont l'unité est objet initial vers une catégorie abélienne (notion introduite dans \cite{DV3}). On montre notamment que les foncteurs polynomiaux depuis la catégorie des groupes abéliens libres $\mathbb{Z}^n$ avec monomorphismes scindés vers les groupes abéliens forment <<~presque~>> une catégorie localement noethérienne. On donne également une application à des foncteurs liés aux automorphismes des groupes libres.
\end{abstract}

\begin{small}
\begin{center}
 \textbf{Abstract}
\end{center}

We study finiteness properties, especially the noetherian property, the Krull dimension and a variation of finite presentation, in categories of polynomial functors from a small symmetric monoidal category whose unit is an initial object to an abelian category (notion introduced in \cite{DV3}). We prove in particular that the category of polynomial functors from the category of free abelian groups $\mathbb{Z}^n$ with split monomorphisms to abelian groups is ``almost'' locally noetherian. We give also an application to functors related to automorphisms of free groups.
\end{small}

\medskip

\noindent
{\em Mots clefs} : foncteurs polynomiaux ; objets noethériens ; catégories abéliennes ; catégories quotients ; dimension de Krull.

\smallskip

\noindent
{\em Classification MSC 2010} : 18A25 ; 18D10 ; 18E15 ; 18E35.

\section*{Introduction}

Cet article est consacré à l'étude des propriétés de finitude des foncteurs polynomiaux depuis une petite catégorie monoïdale symétrique $\M$ dont l'unité est objet initial vers une catégorie abélienne raisonnable $\A$. Les foncteurs polynomiaux dans ce contexte sont définis et étudiés dans \cite{DV3} ; deux exemples s'avèrent particulièrement intéressants : le cas où la catégorie source $\M$ est la catégorie $\Theta$ des ensembles finis avec injections (notée FI dans \cite{CEF} ou \cite{CEFN}) et le cas, plus délicat, où la catégorie source est la catégorie des objets hermitiens (non dégénérés) sur une petite catégorie additive à dualité --- notamment le cas particulier de la catégorie $\mathbf{S}(\mathbb{Z})$ (définie dans le théorème~\ref{thfi} ci-dessous). Le cas de la catégorie $\Theta$ a déjà été étudié par Church, Ellenberg, Farb et Nagpal dans \cite{CEFN}, où les auteurs montrent que la catégorie des foncteurs depuis cette catégorie vers la catégorie des modules sur un anneau noethérien est localement noethérienne (il n'y a pas de condition polynomiale ici car cette catégorie est engendrée par des foncteurs projectifs de type fini polynomiaux, ce qui en rend l'étude particulièrement favorable). De plus, dans \cite{CEF}, les auteurs montrent l'ubiquité des foncteurs de ce type. D'un autre côté, les foncteurs depuis une catégorie d'objets hermitiens vers une catégorie abélienne apparaissent très naturellement quand on s'intéresse à l'homologie des groupes de congruences ou des sous-groupes $IA$ des automorphismes des groupes libres induisant l'identité sur l'abélianisation ou les sous-quotients des filtrations centrales usuelles sur les automorphismes des groupes libres. Comme le montre l'article \cite{DV3}, l'étude des foncteurs polynomiaux sur de telles catégories est plus difficile que dans le cas de $\Theta$, mais accessible --- on obtient notamment une classification des foncteurs polynomiaux de degré au plus $d$ modulo les foncteurs polynomiaux de degré au plus $d-1$ se ramenant à une catégorie quotient analogue dans le cas bien connu d'une catégorie source additive.

L'un des nos résultats principaux est le suivant.

\begin{thi}\label{thfi}
 Soient $A$ un anneau, $\mathbf{S}(A)$ la catégorie des $A$-modules libres de rang fini avec monomorphismes scindés et $\A$ une catégorie de Grothendieck localement noethérienne.
\begin{enumerate}
 \item Si $A$ est fini, alors la catégorie des foncteurs faiblement polynomiaux $\mathbf{S}(A)\to\A$ est localement noethérienne.
\item Pour $A=\mathbb{Z}$, la catégorie des foncteurs faiblement polynomiaux $\mathbf{S}(\mathbb{Z})\to\A$ est localement presque noethérienne.
\end{enumerate}
\end{thi}

({\em Localement presque noethérienne} signifie : engendrée par un ensemble de foncteurs presque noethériens. Un foncteur $F : \mathbf{S}(\mathbb{Z})\to\A$ est dit presque noethérien s'il existe un entier $n$ tel que la restriction de $F$ à la sous-catégorie pleine $\mathbf{S}(\mathbb{Z})_{\geq n}$ des groupes abéliens libres de rang $\geq n$ est noethérienne. Un foncteur presque noethérien qui prend des valeurs noethériennes est noethérien. La catégorie des foncteurs faiblement polynomiaux $\mathbf{S}(\mathbb{Z})\to\mathbf{Ab}$ n'est pas localement noethérienne car, pour $n\geq 2$, l'anneau de groupe $\mathbb{Z}[GL_n(\mathbb{Z})]$ n'est pas noethérien.)

La notion de foncteur faiblement polynomial est rappelée en début d'article. Elle s'oppose à celle de foncteur fortement polynomial, peut-être plus intuitive mais manquant de propriétés de stabilité essentielles (un sous-foncteur d'un foncteur fortement polynomial n'est pas nécessairement fortement polynomial). De fait, la notion d'objet polynomial la plus naturelle n'intervient pas dans la catégorie de foncteurs elle-même, mais dans une catégorie quotient appropriée ; les foncteurs faiblement polynomiaux sont ceux dont l'image dans cette catégorie quotient est polynomiale.

Une première étape importante pour établir le théorème~\ref{thfi} consiste à étudier d'abord les propriétés noethériennes des foncteurs fortement polynomiaux. Cela s'avère nettement plus facile (et nécessite beaucoup moins d'hypothèses sur la catégorie monoïdale source) ; le c\oe ur du raisonnement est déjà présent dans le travail \cite{CEFN} (pour les foncteurs {\em fortement} polynomiaux, on peut se ramener à la catégorie source $\Theta$). Pour aborder les foncteurs faiblement polynomiaux, nous utilisons un raisonnement plus détourné. Un ingrédient essentiel est donné par une propriété du foncteur section de la catégorie quotient susmentionnée vers la catégorie de foncteurs sur laquelle on travaille, ainsi que de ses dérivés. Cette propriété, qui présente un intérêt intrinsèque, se montre en utilisant un résultat important de \cite{DV3} et des récurrences enchevêtrées. Elle utilise également des propriétés voisines de la présentation finie, notions qui se révèlent aussi importantes pour en déduire le théorème~\ref{thfi}. Ces propriétés nécessitent un certain nombre d'hypothèses techniques sur les catégories sources des foncteurs polynomiaux qu'on rencontre : en effet, une difficulté apparaît du fait que les foncteurs de décalage (i.e. de précomposition par un endofoncteur $x+-$ de $\M$) ne préservent pas forcément les foncteurs de type fini (même pour $\M=\mathbf{S}(\mathbb{Z})$). On contourne le problème en raisonnant souvent avec la notion de {\em foncteur à support fini} (voir le §\,\ref{sect-sk}).

La notion de foncteur à support fini conduit naturellement à un affaiblissement de la notion de foncteur de présentation finie, que nous nommons la {\em présentation de support fini}, qui se traduit commodément en termes d'extensions de Kan (voir la proposition-définition~\ref{pres-suppFini}). Les résultats qui mènent au théorème~\ref{thfi} nous permettent également de montrer que, sous des hypothèses raisonnables, les foncteurs polynomiaux sont à présentation de support fini. Nous en donnons deux applications dans la section~\ref{sappl}. Voici l'énoncé informel de l'une d'entre elles (voir la proposition~\ref{th-and} et le paragraphe qui la précède pour les définitions et un énoncé précis) :

\begin{pin}\label{pai}
 Pour tout groupe libre $G$ et tout entier $n$, notons $\A_n(G)$ le $n$-ème étage de la filtration d'Andreadakis du groupe des automorphismes de $G$. Le groupe abélien $\A_n(G)/\A_{n+1}(G)$ ne dépend fonctoriellement que de l'abélianisation $G_{ab}$ de $G$, notons $F_n : G_{ab}\mapsto \A_n(G)/\A_{n+1}(G)$ le foncteur ainsi obtenu. Alors pour tout $n\in\mathbb{N}$, il existe $N\in\mathbb{N}$ tel que le morphisme canonique
 $$\underset{W=(R,S)\in\G r^s_{\leq N}(V)}{\col}F_n(R)\to F_n(V)$$ 
 soit un isomorphisme pour tout groupe abélien libre de rang fini $V$, où $\G r^s_{\leq N}(V)$ désigne l'ensemble ordonné des décompositions $V=R\oplus S$, où $R$ est de rang au plus $N$.
\end{pin}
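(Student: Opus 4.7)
Le plan est de réaliser $F_n$ comme un foncteur faiblement polynomial sur $\mathbf{S}(\Z)$ et d'appliquer ensuite le résultat de présentation de support fini établi dans l'article pour en déduire la formule de colimite attendue.

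Première étape~: je commencerais par prolonger $F_n$ en un foncteur covariant $\mathbf{S}(\Z)\to\mathbf{Ab}$. Étant donnée une décomposition $V=R\oplus S$, on réalise un groupe libre $G$ d'abélianisé $V$ comme produit libre $G_R * G_S$ avec $(G_R)_{ab}=R$ et $(G_S)_{ab}=S$~; l'extension par l'identité sur $G_S$ plonge $\mathrm{Aut}(G_R)$ dans $\mathrm{Aut}(G)$ en respectant la filtration d'Andreadakis et fournit le morphisme $F_n(R)\to F_n(V)$ cherché. La compatibilité entre décompositions successives se vérifie directement et montre que l'on définit bien de cette manière un foncteur sur $\mathbf{S}(\Z)$.

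Deuxième étape~: j'établirais la polynomialité faible de $F_n$. L'ingrédient décisif est l'homomorphisme de Johnson-Andreadakis, qui plonge $F_n(V)$ dans $\mathrm{Hom}(V,\mathcal{L}_{n+1}(V))\cong V^*\otimes\mathcal{L}_{n+1}(V)$, où $\mathcal{L}_{n+1}(V)$ désigne la composante homogène de degré $n+1$ de l'algèbre de Lie libre sur $V$. Ce foncteur ambiant, produit tensoriel de foncteurs manifestement polynomiaux, est polynomial de degré borné sur $\mathbf{S}(\Z)$~; la stabilité des foncteurs faiblement polynomiaux par passage aux sous-foncteurs (propriété fondamentale de cette notion rappelée en introduction, et qui la distingue de la polynomialité forte) entraîne alors que $F_n$ est faiblement polynomial.

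Troisième étape~: j'invoquerais les résultats principaux de l'article assurant que les foncteurs faiblement polynomiaux sur $\mathbf{S}(\Z)$ sont à présentation de support fini. Cela fournit un entier $N$ tel que $F_n$ soit canoniquement l'extension de Kan à gauche, le long de l'inclusion, de sa restriction à la sous-catégorie pleine des objets de rang au plus $N$. La proposition-définition~\ref{pres-suppFini} traduit cette extension de Kan en la colimite indexée par l'ensemble ordonné $\G r^s_{\leq N}(V)$ des décompositions $V=R\oplus S$ avec $\mathrm{rg}(R)\leq N$, ce qui est exactement l'isomorphisme annoncé.

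L'obstacle principal attendu est de vérifier que $F_n$ relève effectivement des hypothèses précises du théorème de présentation de support fini~: la polynomialité faible seule peut se révéler insuffisante et il pourra être nécessaire d'invoquer un raffinement exploitant la structure fine de $\mathbf{S}(\Z)$ (notamment une propriété de support fini ou de présentation finie adaptée). Une fois ce point réglé, la dernière étape --- traduire la formule abstraite de Kan en la colimite concrète sur $\G r^s_{\leq N}(V)$ --- est essentiellement formelle, puisque codifiée par la proposition-définition~\ref{pres-suppFini}.
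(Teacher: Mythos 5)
Votre stratégie générale est celle du texte : réaliser $\A_n/\A_{n+1}$ comme foncteur sur $\mathbf{S}(\Z)$, établir sa polynomialité faible en le plongeant, via l'homomorphisme de Johnson, dans un foncteur ambiant contrôlé, obtenir une propriété de présentation de support fini, puis la traduire en la formule de colimite au moyen de la proposition~\ref{prp-id} (reformulation de la proposition-définition~\ref{pres-suppFini}). Vos étapes 1 et 2 correspondent à ce que le texte délègue à la proposition~4.13 de \cite{DV3}, et la traduction finale est correcte.

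En revanche, votre étape 3 repose sur un énoncé qui ne figure pas dans l'article et qui est même faux : les foncteurs faiblement polynomiaux $\mathbf{S}(\Z)\to\mathbf{Ab}$ ne sont pas tous à présentation de support fini. L'exemple~\ref{expn-th} (transposable de $\Theta$ à $\mathbf{S}(\Z)$) exhibe un foncteur stablement nul, donc faiblement polynomial, ponctuellement noethérien, mais qui n'est même pas à support fini ; la remarque~\ref{rq-fortpsf} montre que la polynomialité forte elle-même ne suffit pas à garantir la propriété $PSF$. Vous signalez cet obstacle mais ne le levez pas. Le texte le résout ainsi : le foncteur ambiant dans lequel se plonge $\A_n/\A_{n+1}$ --- c'est précisément votre $V\mapsto V^*\otimes\mathcal{L}_{n+1}(V)$ --- est fortement polynomial et ponctuellement noethérien, donc noethérien par la proposition~\ref{polfor-noeth2} ; son sous-foncteur $\A_n/\A_{n+1}$ est donc noethérien, en particulier de type fini, donc à support fini (corollaire~\ref{pf-gal2}), et il est ponctuellement noethérien. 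Ce sont exactement les hypothèses de la proposition~\ref{pr-nf2} (les conditions $(P_0)$ à $(P_4)$ sur $\mathbf{S}(\Z)$ étant vérifiées dans la démonstration du théorème~\ref{thsab}), qui fournit alors la propriété $PSF$. Tous les ingrédients de ce complément figurent déjà dans votre étape 2 ; il manque seulement de les assembler pour vérifier les hypothèses précises de la proposition~\ref{pr-nf2}, au lieu d'invoquer un résultat général inexistant.
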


Dans la section~\ref{seckrul}, on applique la propriété fondamentale susmentionnée du foncteur section pour obtenir des résultats sur la dimension de Krull de catégories d'objets polynomiaux. Cela permet de voir que la catégorie des foncteurs de $\Theta$ vers une catégorie d'espaces vectoriels est de dimension de Krull $1$ (voir le corollaire~\ref{cor-k2}) ; on obtient également le résultat suivant (cf. corollaires~\ref{cor-k1} et~\ref{cor-k2}).

\begin{thi}
\begin{enumerate}
 \item Si $A$ est un anneau fini, tout foncteur faiblement polynomial de $\mathbf{S}(A)$ vers une catégorie d'espaces vectoriels est de dimension de Krull au plus $1$.
\item Si $\A$ est une catégorie de Grothendieck localement noethérienne, la catégorie des objets polynomiaux de $\st(\mathbf{S}(\mathbb{Z}),\A)$ a la même dimension de Krull que $\A$.
\end{enumerate}
\end{thi}
(La définition de la catégorie $\st(\mathbf{S}(\mathbb{Z}),\A)$, quotient de la catégorie des foncteurs $\mathbf{S}(\mathbb{Z})\to\A$, est rappelée dans la première section.)

Ce résultat est frappant dans la mesure où les problèmes de dimension de Krull dans les catégories de foncteurs sont souvent très difficiles (cf. \cite{Dja}).

Dans l'appendice, on étudie une classe de foncteurs polynomiaux dits {\em parfaits} et on en donne des caractérisations (théorème~\ref{th-nag}) dans le cas de la catégorie source $\Theta$. Celles-ci s'inspirent des travaux de Nagpal \cite{Nag} (ainsi que de \cite{NSS}, \cite{Nag-prive} et \cite{Ram}), motivés par l'étude de phénomènes de périodicité en cohomologie des groupes symétriques et des espaces de configurations, et les recoupent largement. Notre théorème~\ref{th-nag} s'inspire aussi beaucoup de l'article \cite{GP-cow} de Powell.

\smallskip

Après la rédaction de la première version de ce travail, Putman et Sam ont démontré \cite{PSam} un résultat beaucoup plus fort que la première assertion du théorème~\ref{thfi}, lorsque l'anneau fini $A$ est commutatif, à savoir que la catégorie de {\em tous} les foncteurs de $\mathbf{S}(A)$ vers une catégorie de Grothendieck localement noethérienne est localement noethérienne (lorsque la catégorie but est celle des espaces vectoriels sur un corps de caractéristique nulle, ce résultat a été également obtenu par Gan et Li \cite{GL}, d'une manière indépendante à la fois de \cite{PSam} et du présent travail). Leur méthode n'utilise pas d'algèbre homologique (et nulle catégorie quotient n'y apparaît) mais repose sur un astucieux changement de catégorie source et des arguments combinatoires inspirés des bases de Gröbner (voir aussi le travail \cite{SSn} de Sam et Snowden, relié à \cite{PSam}). Toutefois, il paraît très difficile d'adapter les techniques de \cite{PSam} pour établir la deuxième assertion du théorème~\ref{thfi}, et nos résultats sur les dérivés du foncteur section, la présentation de support fini ou la dimension de Krull en semblent également disjoints.

\paragraph*{Remerciements} Cet article s'inscrit dans le prolongement du travail \cite{DV3} avec Christine Vespa, il a bénéficié de ses encouragements et remarques pertinentes. L'auteur sait également gré à Steven Sam de la communication privée d'une version préliminaire de \cite{SSn}
et de nombreux échanges fructueux sur la propriété noethérienne dans les catégories de foncteurs, ainsi qu'à Rohit Nagpal de discussions autour de son article \cite{Nag}, qui sont à l'origine de l'appendice. Il témoigne aussi sa reconnaissance à Jacques Darné, dont la lecture vigilante de versions antérieures de ce travail en a permis plusieurs améliorations, notamment la correction d'une erreur dans la définition d'objet de type fini.  Il remercie enfin Thomas Church et Jordan Ellenberg pour des discussions autour des articles \cite{CEF} et \cite{CEFN}, Yves de Cornulier pour lui avoir indiqué l'ouvrage \cite{MCR}, et le rapporteur anonyme dont la lecture attentive a permis de notables améliorations des premières versions de ce texte.

\begin{noti}
 Dans tout cet article, si $A$ est un objet d'une catégorie additive $\A$ et $E$ un ensemble fini, on note $A[E]$ la somme directe de copies de $A$ indexées par $E$. On étend cette notation au cas où $E$ est infini si $\A$ possède des sommes infinies.
\end{noti}

\begin{noti}\label{nca}
 Soient $\A$ une catégorie et $G$ un monoïde. On note $\A_G$ la catégorie des objets de $\A$ munis d'une action de $G$, appelés aussi représentations de $G$ dans $\A$ (si $\A$ est abélienne), c'est-à-dire la catégorie dont les objets sont les couples $(A,f)$ où $A$ est un objet de $\A$ et $f : G\to {\rm End}_\A(A)$ un morphisme de monoïdes, les morphismes $(A,f)\to (B,g)$ étant les morphismes $\xi : A\to B$ de $\A$ tels que, pour tout $u\in G$, le diagramme
 $$\xymatrix{A\ar[r]^-{f(u)}\ar[d]_\xi & A\ar[d]^\xi \\
 B\ar[r]^-{g(u)} & B
 }$$
 de $\A$ commute.
\end{noti}

Si l'on voit $G$ comme une catégorie à un objet, cette catégorie s'identifie à la catégorie des foncteurs de $G$ dans $\A$.

\tableofcontents

\section{Notations et rappels sur les foncteurs polynomiaux (cf. \cite{DV3})}

(Nos références générales pour les notions catégoriques sont \cite{ML} et, pour les catégories abéliennes, catégories quotients comprises, \cite{Gab}.)

\smallskip

Si $\A$ et $\C$ sont des catégories, avec $\C$ petite, on note $\fct(\C,\A)$ la catégorie des foncteurs de $\C$ vers $\A$. 

On note $\mi$ la catégorie des petites catégories monoïdales symétriques $(\M,+,0)$ dont l'unité $0$ est objet initial, les morphismes étant les foncteurs monoïdaux au sens fort, et $\mn$ la sous-catégorie pleine des catégories telles que $0$ soit objet nul de $\M$. Les petites catégories additives seront vues comme des objets de $\mn$, $+$ étant alors la somme directe. L'exemple qui nous intéressera le plus est celui de la catégorie $\mathbf{P}(A)$, où $A$ est un anneau, des $A$-modules à droite libres de rang fini (ou plutôt un squelette de cette catégorie).

Si $\A$ est une petite catégorie additive, on note $\mathbf{S}(\A)$ la catégorie ayant les mêmes objets et dont les morphismes sont les monomorphismes scindés de $\A$, {\em le scindage étant donné dans la structure}. Munie du foncteur induit par la somme directe de $\A$, $\mathbf{S}(\A)$ est un objet de $\mi$ qui nous intéresse particulièrement. Cette catégorie est un cas particulier de catégorie d'objets hermitiens sur une petite catégorie additive à dualité (en l'occurence, $\A^{op}\times\A$ avec l'échange des deux facteurs) ; la plupart des considérations relatives à $\mathbf{S}(\A)$ pourraient s'étendre à cette situation plus générale (cf. \cite{DV3}). Si $A$ est un anneau, on notera $\mathbf{S}(A)$ pour $\mathbf{S}(\mathbf{P}(A))$. Un autre exemple fondamental d'objet de $\mi$ est la catégorie $\Theta$ des ensembles finis avec injections, ou plutôt son squelette constitué des objets $\mathbf{n}:=\{1,\dots,n\}$ pour $n\in\mathbb{N}$.

 Pour toute catégorie $\A$ et tout objet $x$ d'un objet $\M$ de $\mi$, on note $\tau_x$ l'endofoncteur de $\fct(\M,\A)$ de précomposition par l'endofoncteur $x+-$ de $\M$. On dispose ainsi d'isomorphismes naturels $\tau_x\tau_y\simeq\tau_y\tau_x\simeq\tau_{x+y}$. Si $\A$ est une catégorie abélienne, on note $\delta_x$ (resp. $\kappa_x$) l'endofoncteur de $\fct(\M,\A)$ conoyau (resp. noyau) de la transformation naturelle $i_x : {\rm Id}\simeq\tau_0\to\tau_x$ induite par l'unique morphisme $0\to x$. Ainsi, par le lemme du serpent, toute suite exacte courte $0\to F\to G\to H\to 0$ de $\fct(\M,\A)$ induit une suite exacte
$$0\to\kappa_x(F)\to\kappa_x(G)\to\kappa_x(H)\to\delta_x(F)\to\delta_x(G)\to\delta_x(H)\to 0,$$
ce qu'on utilisera abondamment dans la suite.

On dit qu'un foncteur $F\to\A$ est {\em fortement polynomial} de degré fort au plus $d$ si $\delta_{a_0}\dots\delta_{a_d}(F)=0$ pour tous objets $a_0,\dots,a_d$ de $\M$. On note $\Pol_d^{{\rm fort}}(\M,\A)$ la sous-catégorie pleine de $\fct(\M,\A)$ formée de ces foncteurs (par convention, cette catégorie est réduite à $\{0\}$ pour $d<0$). Cette sous-catégorie est stable par quotients et extensions, mais pas par sous-objets en général.

Supposons que $\A$ est une catégorie de Grothendieck\,\footnote{Pour plusieurs de nos considérations, des hypothèses plus faibles sur cette catégorie abélienne suffiraient, mais cela ne serait guère utile pour les applications.}. On note $\kappa$ le sous-foncteur de l'identité de $\fct(\M,\A)$ somme des $\kappa_x$ pour $x\in {\rm Ob}\,\M$ et l'on note $\s n(\M,\A)$ la sous-catégorie pleine des foncteurs $F$ tels que l'inclusion $\kappa(F)\subset F$ soit une égalité. C'est une sous-catégorie localisante de $\fct(\M,\A)$ ; ses objets sont appelés foncteurs {\em stablement nuls}. La catégorie quotient $\fct(\M,\A)/\s n(\M,\A)$ est notée $\st(\M,\A)$ ; le foncteur canonique $\fct(\M,\A)\to\st(\M,\A)$ est noté $\pi$. Son adjoint à droite, le foncteur section, est noté $s$.

Pour tout objet $x$ de $\M$, les endofoncteurs $\tau_x$ (exact) et $\delta_x$ (exact seulement à droite en général) de $\fct(\M,\A)$ induisent des endofoncteurs exacts encore notés $\tau_x$ et $\delta_x$ de $\st(\M,\A)$. Un objet $X$ de $\st(\M,\A)$ tel que $\delta_{a_0}\dots\delta_{a_d}(X)=0$ pour tous objets $a_0,\dots,a_d$ de $\M$ est dit {\em polynomial} de degré au plus $d$ ; on note $\Pol_d(\M,\A)$ la sous-catégorie pleine de ces objets. Elle est bilocalisante. Un foncteur $F$ de $\fct(\M,\A)$ est dit {\em faiblement polynomial} de degré faible au plus $d$ si son image $\pi(F)$ dans $\st(\M,\A)$ appartient à $\Pol_d(\M,\A)$ ; on note $\Pol^{{\rm faible}}_d(\M,\A)$ la sous-catégorie pleine de ces foncteurs. C'est une sous-catégorie localisante de $\fct(\M,\A)$, et $\Pol^{{\rm faible}}_d(\M,\A)=\s n(\M,\A)$ pour $d<0$. On montre que l'inclusion des foncteurs constants induit une équivalence de catégories entre $\A$ et $\Pol_0(\M,\A)$ (la restriction du foncteur section en est un quasi-inverse). Le foncteur $s$ commute à isomorphisme naturel près aux foncteurs $\tau_x$, tandis qu'on dispose de monomorphismes naturels $\delta_x s\hookrightarrow s\delta_x$.

Bien sûr, si $\M$ est dans $\mn$, alors $\s n(\M,\A)$ est réduite à $0$, de sorte que les notions de foncteur fortement et faiblement polynomial coïncident.

Les foncteurs $\tau_x$ commutent, à isomorphisme naturel près, aux foncteurs $\delta_t$ et $\kappa_t$. En particulier, les différentes catégories de foncteurs polynomiaux sont stables par les $\tau_x$.

Nous utiliserons plus tard la propriété élémentaire suivante du foncteur section (par l'intermédiaire de son corollaire), que le corollaire~\ref{cor-pft} généralisera.

\begin{pr}\label{caract-sect}
 Soient $\M$ un objet de $\mi$, $\A$ une catégorie de Grothendieck et $F : \M\to\A$ un foncteur. Alors l'unité $F\to s\pi(F)$ de l'adjonction entre $s$ et $\pi$ est un isomorphisme si et seulement si $\kappa_x(F)$ et $\kappa_x(\delta_x(F))$ sont nuls pour tout objet $x$ de $\M$.
\end{pr}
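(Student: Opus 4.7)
The plan is to analyze the kernel and cokernel of the unit $\eta_F : F \to s\pi(F)$, both of which lie in the localizing subcategory $\s n(\M,\A)$ by general Gabriel localization theory, and to exploit the fact that $s\pi(F)$ is $\s n$-closed (so $\mathrm{Hom}_\fct(S, s\pi F) = \mathrm{Hom}_\st(\pi S, \pi F) = 0$ for every $S \in \s n(\M,\A)$, and in particular $\kappa_x(s\pi F) = 0$). I would first record the preliminary observation that $\kappa_x(G) \in \s n(\M,\A)$ for every $G$ and every $x$: the composite $\kappa_x G \hookrightarrow G \xrightarrow{i_x} \tau_x G$ vanishes by definition, and since $\tau_x \kappa_x G \to \tau_x G$ is injective (exactness of $\tau_x$), the map $i_x : \kappa_x G \to \tau_x \kappa_x G$ is itself zero, so $\kappa_x(\kappa_x G) = \kappa_x G$. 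The same naturality argument yields a useful lemma: if $J \hookrightarrow H$ is injective and $i_x$ vanishes on $J$, then $J \subseteq \kappa_x(H)$.

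For the direction $(\Rightarrow)$, assume $\eta_F$ is an isomorphism. The composite $\kappa_x F \hookrightarrow F \xrightarrow{\eta_F} s\pi F$ is zero by the Hom-vanishing above, so $\kappa_x F = 0$. Since $s$ commutes with $\tau_x$ up to natural isomorphism, $\tau_x F \simeq s\pi(\tau_x F)$ is also $\s n$-closed. Applying the left exact functor $s\pi$ to the short exact sequence $0 \to F \to \tau_x F \to \delta_x F \to 0$ (which is short exact because $\kappa_x F = 0$) gives $0 \to F \to \tau_x F \to s\pi(\delta_x F)$ exact. The map $\tau_x F \to s\pi \delta_x F$ therefore has kernel exactly $F$ and induces an injection $\delta_x F \hookrightarrow s\pi(\delta_x F)$, which is nothing other than $\eta_{\delta_x F}$; its kernel $\kappa(\delta_x F)$ thus vanishes, and in particular $\kappa_x(\delta_x F) = 0$.

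For the direction $(\Leftarrow)$, assume $\kappa_x F = 0$ and $\kappa_x(\delta_x F) = 0$ for every $x$. The maximal stably null subobject of $F$ being $\sum_x \kappa_x F = 0$, the unit $\eta_F$ is mono; denote its cokernel by $C \in \s n(\M,\A)$. Applying the six-term exact sequence of the excerpt to $0 \to F \to s\pi F \to C \to 0$ and using $\kappa_x F = 0 = \kappa_x(s\pi F)$ yields an injection $\kappa_x C \hookrightarrow \delta_x F$. By the preliminary observation applied to $J = \kappa_x C$ (on which $i_x$ vanishes) inside $H = \delta_x F$, this injection factors through $\kappa_x(\delta_x F) = 0$, so $\kappa_x C = 0$ for every $x$. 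Since $C$ is stably null, $C = \kappa(C) = \sum_x \kappa_x C = 0$, so $\eta_F$ is an isomorphism. The only mildly delicate point in the argument is the use, in $(\Rightarrow)$, of the natural isomorphism $s\tau_x \simeq \tau_x s$ to transfer $\s n$-closedness from $F$ to $\tau_x F$ before applying the left exact functor $s\pi$ to the short exact sequence; everything else is routine Gabriel-localization bookkeeping combined with the snake-lemma sequence already highlighted in the excerpt.
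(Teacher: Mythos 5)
Votre démonstration est correcte, mais elle suit un chemin réellement différent de celui de l'article. L'article caractérise l'inversibilité de l'unité par l'annulation de ${\rm Ext}^i_{\fct(\M,\A)}(N,F)$ pour $i\leq 1$ et $N$ stablement nul, se ramène aux $N$ tels que $i_x(N)=0$, puis établit un isomorphisme de décalage ${\rm Hom}(N,\delta_x(F))\simeq{\rm Ext}^1(N,F)$ (sous $\kappa_x(F)=0$) au moyen de la suite exacte $0\to F\to\tau_x(F)\to\delta_x(F)\to 0$ et d'un diagramme montrant que ${\rm Ext}^1(N,i_x(F))=0$. Vous évitez entièrement les groupes d'extensions : vous travaillez directement avec le noyau et le conoyau de l'unité (qui appartiennent à $\s n(\M,\A)$ par la théorie de Gabriel), avec la fermeture de $s\pi(F)$, avec l'exactitude à gauche de $s\pi$ appliquée à la même suite exacte courte pour le sens direct, et avec la suite exacte à six termes en $\kappa_x$ et $\delta_x$ rappelée dans la première section pour le sens réciproque ; le point clef est alors votre observation que tout sous-objet $J$ de $H$ sur lequel $i_x$ s'annule est contenu dans $\kappa_x(H)$, appliquée à $\kappa_x(C)\hookrightarrow\delta_x(F)$. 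Les deux approches sont de difficulté comparable ; la vôtre est plus élémentaire (aucune algèbre homologique au-delà du lemme du serpent) et prouve au passage, dans le sens direct, l'annulation de $\kappa(\delta_x(F))$ tout entier ; celle de l'article dégage en chemin l'isomorphisme ${\rm Hom}(N,\delta_x(F))\simeq{\rm Ext}^1(N,F)$, de nature plus structurelle, qui préfigure les énoncés sur les dérivés du foncteur section (corollaire~\ref{cor-pft}).
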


\begin{proof}
 La condition $\kappa_x(F)=0$ pour tout objet $x$ de $\M$ équivaut à la nullité de ${\rm Hom}_{\fct(\M,\A)}(N,F)$ pour $N$ stablement nul, tandis que l'unité $F\to s\pi(F)$ est un isomorphisme si et seulement si ${\rm Ext}^i_{\fct(\M,\A)}(N,F)$ est nul pour $i\leq 1$ et $N$ stablement nul. Cette condition équivaut encore à la nullité ${\rm Ext}^i_{\fct(\M,\A)}(N,F)$ pour tout $i\leq 1$, tout $x\in{\rm Ob}\,\M$ et $N$ tel que $i_x(N) : N\to\tau_x(N)$ soit nul (pour $i=0$, cela équivaut à $\kappa_x(F)=0$).
 
 Si $\kappa_x(F)=0$ et $i_x(N)=0$, la suite exacte $0\to F\to\tau_x(F)\to\delta_x(F)\to 0$ induit un isomorphisme ${\rm Hom}(N,\delta_x(F))\xrightarrow{\simeq}{\rm Ext}^1(N,F)$, car ${\rm Hom}(N,\tau_x(F))=0$ (en effet, $\kappa_x(\tau_x(F))\simeq\tau_x(\kappa_x(F))=0$) et le morphisme ${\rm Ext}^1(N,F)\to{\rm Ext}^1(N,\tau_x(F))$ induit par $i_x(F)$ est nul en raison du diagramme commutatif suivant.
 $$\xymatrix{{\rm Ext}^1(N,F)\ar[r]\ar[rd]_-{{\rm Ext}^1(N,i_x(F))} & {\rm Ext}^1(\tau_x(N),\tau_x(F))\ar[d]^-{{\rm Ext}^1(i_x(N),F)=0} \\
 & {\rm Ext}^1(N,\tau_x(F))
 }$$
 Cet isomorphisme ${\rm Hom}(N,\delta_x(F))\xrightarrow{\simeq}{\rm Ext}^1(N,F)$, lorsque $\kappa_x(F)=0$ et $i_x(N)=0$, et le fait que l'unité $F\to s\pi(F)$ est un isomorphisme si et seulement si $\kappa_x(F)=0$ pour tout $x\in {\rm Ob}\,\M$ et ${\rm Ext}^1(N,F)=0$ pour tous objets $x$ de $\M$ et $N$ de $\fct(\M,\A)$ tels que $i_x(N)=0$ montrent que ces conditions équivalent à $\kappa_x(F)=0$ et $\kappa_x(\delta_x(F))=0$ pour tout $x\in {\rm Ob}\,\M$, comme souhaité.
\end{proof}

\begin{cor}\label{corsec}
 Soient $\M$ un objet de $\mi$ et $\A$ une catégorie de Grothendieck. Le foncteur section $s : \st(\M,\A)\to\fct(\M,\A)$ commute aux colimites filtrantes.
\end{cor}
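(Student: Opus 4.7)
Mon plan repose sur la proposition~\ref{caract-sect}. Soient $\I$ une petite catégorie filtrante et $X : \I\to\st(\M,\A)$ un diagramme ; posons $F_i := s(X_i)$ et $F := \underset{i\in\I}{\col}F_i$, la colimite étant calculée dans $\fct(\M,\A)$. Il s'agit de démontrer que le morphisme canonique $F\to s(\underset{i}{\col}X_i)$ est un isomorphisme. Il suffit pour cela de vérifier deux choses : d'une part que $\pi(F)\simeq\underset{i}{\col}X_i$, d'autre part que l'unité $F\to s\pi(F)$ de l'adjonction est un isomorphisme.

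Le premier point est formel : comme $\s n(\M,\A)$ est une sous-catégorie localisante, le foncteur $\pi$ est adjoint à gauche de $s$ et commute donc aux colimites ; par ailleurs, $s$ est pleinement fidèle, donc $\pi s\simeq{\rm Id}$ et $\pi(F)\simeq\underset{i}{\col}\pi s(X_i)\simeq\underset{i}{\col}X_i$. Pour le second point, j'applique la proposition~\ref{caract-sect} à $F$ : il suffit de vérifier que $\kappa_x(F)=0$ et $\kappa_x(\delta_x(F))=0$ pour tout objet $x$ de $\M$. Chaque $F_i$ appartenant à l'image essentielle du foncteur pleinement fidèle $s$, l'unité $F_i\to s\pi(F_i)$ est un isomorphisme, si bien que la proposition~\ref{caract-sect} appliquée à $F_i$ garantit $\kappa_x(F_i)=0$ et $\kappa_x(\delta_x(F_i))=0$ pour tout $x$.

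La conclusion s'obtient alors par passage à la colimite filtrante : comme $\A$ est une catégorie de Grothendieck (donc vérifie AB5) et que les colimites dans $\fct(\M,\A)$ se calculent objet par objet, les colimites filtrantes y sont exactes. Par conséquent $\kappa_x$ (un noyau) et $\delta_x$ (un conoyau) commutent aux colimites filtrantes, et l'on obtient $\kappa_x(F)\simeq\underset{i}{\col}\kappa_x(F_i)=0$ ainsi que $\kappa_x(\delta_x(F))\simeq\underset{i}{\col}\kappa_x(\delta_x(F_i))=0$. La principale subtilité est ce dernier passage à la colimite, qui repose sur AB5 ; tout le reste est une application directe de l'adjonction $\pi\dashv s$ et de la caractérisation de la proposition~\ref{caract-sect}.
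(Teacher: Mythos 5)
Votre démonstration est correcte et suit essentiellement la même voie que celle du texte : on applique la proposition~\ref{caract-sect} et on observe que les conditions $\kappa_x(F)=0$ et $\kappa_x(\delta_x(F))=0$ passent aux colimites filtrantes, parce que $\tau_x$ et $\delta_x$ commutent aux colimites et que les colimites filtrantes sont exactes dans $\fct(\M,\A)$. Vous explicitez simplement davantage la réduction (identification de $\pi(F)$ à $\col X_i$ et du morphisme de comparaison à l'unité de l'adjonction), ce qui est correct mais ne change pas l'argument.
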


\begin{proof}
 Cela découle de la proposition précédente et de ce que, pour tout $x\in {\rm Ob}\,\M$ :
 \begin{enumerate}
  \item la condition $\kappa_x(F)=0$ équivaut à $i_x(F) : F\to\tau_x(F)$ injectif, condition préservée par colimites filtrantes parce que $\tau_x$ commute aux colimites et que les colimites filtrantes sont exactes dans $\fct(\M,\A)$ ;
  \item le foncteur $\delta_x$ commute aux colimites. 
 \end{enumerate}\end{proof}

À tout objet $(\M,+,0)$ de $\mi$ on associe une catégorie $(\widetilde{\M},+,0)$ dans $\mn$ qui a les mêmes objets et dont les morphismes sont donnés par
$$\widetilde{\M}(a,b):=\underset{\M}{\col}\tau_b\M(a,-)\;;$$
on dispose d'un foncteur monoïdal (au sens fort) $\M\to\widetilde{\M}$ qui est l'identité sur les objets. Si $\A$ est une catégorie abélienne, on notera $\eta : \fct(\widetilde{\M},\A)\to\fct(\M,\A)$ la précomposition par ce foncteur (attention, dans \cite{DV3}, c'est le foncteur $\M\to\widetilde{\M}$ qui est noté $\eta$, de sorte que celui qu'on note maintenant $\eta$ y figure comme $\eta^*$). Le foncteur $\eta$ possède un adjoint à gauche $\alpha$ tel que
\begin{equation}\label{eqalph}
 \alpha(F)(t)=\underset{\M}{\col}\tau_t(F).
\end{equation}
L'unité $\eta(F)\to s\pi\eta(F)$ est un isomorphisme pour tout foncteur $F$ de $\fct(\widetilde{\M},\A)$, tandis que les foncteurs dérivés à droite $\mathbf{R}^i s$ du foncteur section sont nuls, pour $i>0$, sur un objet du type $\pi\eta(F)$. Le foncteur $\alpha$ et ses dérivés à gauche sont nuls sur $\s n(\M,\A)$, de sorte que $\alpha$ induit un foncteur $\st(\M,\A)\to\fct(\widetilde{\M},\A)$ qui est adjoint à gauche à $\pi\eta$.

Les foncteurs $\eta$ et $\alpha$ commutent (à isomorphisme naturel près) aux foncteurs $\tau_x$ et $\delta_x$ ; $\eta$ envoie $\Pol_d(\widetilde{\M},\A)$ dans $\Pol_d^{{\rm fort}}(\M,\A)$ et $\alpha$ envoie $\Pol_d^{{\rm faible}}(\M,\A)$ dans $\Pol_d(\widetilde{\M},\A)$. Si $X$ est un objet de $\Pol_d(\M,\A)$, alors le noyau et le conoyau de l'unité $X\to\eta\alpha(X)$ appartiennent à $\Pol_{d-1}(\M,\A)$ ; en conséquence, les foncteurs adjointes $\alpha$ et $\eta$ induisent des équivalences de catégories quasi-inverses l'une de l'autre entre $\Pol_d(\M,\A)/\Pol_{d-1}(\M,\A)$ et $\Pol_d(\widetilde{\M},\A)/\Pol_{d-1}(\widetilde{\M},\A)$.

\section{Propriétés de finitude dans les catégories de foncteurs vers une catégorie abélienne}

\subsection{Objets de type fini, de présentation finie, noethériens}

Commençons par des rappels généraux sur les propriétés de finitude dans une catégorie abélienne $\A$. Un objet $A$ de $\A$ est dit {\em noethérien} si toute suite croissante de sous-objets de $\A$ stationne, ce qui implique que toute famille filtrante croissante de sous-objets de $\A$ stationne. Il est dit {\em de type fini} si toute famille filtrante croissante de sous-objets de $A$ dont la réunion est $A$ stationne.

Lorsque $\A$ possède des colimites et que les colimites filtrantes y sont exactes (par exemple, si $\A$ est une catégorie de Grothendieck), un objet est noethérien si et seulement si tous ses sous-objets sont de type fini. La catégorie abélienne $\A$ est dite {\em localement noethérienne} (resp. {\em localement de type fini}) si elle possède un ensemble de générateurs noethériens (resp. de type fini).

La proposition suivante, classique, est laissée en exercice.
\begin{pr}
Supposons que $\A$ est une catégorie abélienne avec colimites filtrantes exactes. Soit $A$ un objet de $\A$. Les assertions suivantes sont équivalentes.
\begin{enumerate}
 \item L'objet $A$ de $\A$ est de type fini.
 \item Pour tout foncteur $F : \T\to\A$, où $\T$ est une petite catégorie filtrante, le morphisme canonique de groupes abéliens
 $$\underset{\T}{\col}\A(A,-)\circ F\to\A(A,\underset{\T}{\col}F)$$
 est injectif.
 \item Le foncteur $\A(A,-)$ commute aux colimites filtrantes de monomorphismes, c'est-à-dire que le morphisme précédent est un isomorphisme si $F$ envoie chaque flèche de $\T$ sur un monomorphisme ($\T$ étant toujours une petite catégorie filtrante).
\end{enumerate}
\end{pr}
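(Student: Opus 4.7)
Je montrerais les équivalences via le schéma (1)$\Rightarrow$(3)$\Rightarrow$(1) et (1)$\Rightarrow$(2)$\Rightarrow$(1). Pour (1)$\Rightarrow$(3), étant donné un foncteur filtrant $F:\T\to\A$ à flèches monomorphiques, je remarquerais d'abord que chaque morphisme structural $F(t)\to\col F$ est alors un monomorphisme (par exactitude des colimites filtrantes), ce qui rend l'injectivité du morphisme canonique immédiate. Pour la surjectivité, à $f:A\to\col F$ j'associerais les images inverses $A_t:=f^{-1}(F(t))\subset A$ ; l'exactitude des colimites filtrantes entraîne $\col_t A_t=A$ en tant que sous-objet, et (1) fournit alors un $t_0$ tel que $A_{t_0}=A$, c'est-à-dire que $f$ se factorise par $F(t_0)$. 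Pour (1)$\Rightarrow$(2), je représenterais, par filtration de $\T$, deux éléments de $\col\A(A,F(-))$ d'image commune dans $\A(A,\col F)$ par deux flèches $f,g:A\to F(t)$ ; pour chaque $u\geq t$, j'introduirais l'égalisateur $E_u\hookrightarrow A$ des deux composées $A\to F(u)$ induites. L'exactitude des colimites filtrantes identifie $\col_u E_u$ à l'égalisateur des deux morphismes $A\to\col F$ associés, lequel vaut $A$ par hypothèse ; (1) fournit alors un $u_0$ tel que $E_{u_0}=A$, soit l'égalité des classes au stade $u_0$.

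Pour les implications réciproques, je partirais dans les deux cas d'une famille filtrante croissante $(A_i)_{i\in\I}$ de sous-objets de $A$ de réunion $A$. L'implication (3)$\Rightarrow$(1) s'obtiendrait en appliquant (3) au diagramme évident de monomorphismes $i\mapsto A_i$ : l'isomorphisme $\col_i\A(A,A_i)\simeq\A(A,A)$ fournit un relèvement de $\text{id}_A$ en un morphisme $A\to A_{i_0}$, donnant une rétraction de l'inclusion $A_{i_0}\hookrightarrow A$, qui est alors un isomorphisme. Pour (2)$\Rightarrow$(1), je considérerais plutôt le diagramme $i\mapsto A/A_i$ formé des surjections canoniques : sa colimite vaut $A/A=0$ par exactitude, donc chaque $q_i:A\to A/A_i$ a une image nulle dans $\A(A,0)$ ; par (2), sa classe dans $\col_j\A(A,A/A_j)$ est déjà nulle, ce qui entraîne l'existence d'un $j\geq i$ tel que $q_j=0$, soit $A_j=A$.

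La principale subtilité réside dans l'utilisation systématique de l'exactitude des colimites filtrantes (axiome AB5) pour identifier colimites filtrantes d'images inverses, d'égalisateurs ou de noyaux aux constructions correspondantes effectuées dans la colimite ; c'est cette manipulation, routinière mais indispensable à chaque étape, qui sous-tend l'ensemble de l'argument.
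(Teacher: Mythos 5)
La proposition est laissée en exercice dans le texte (« classique, est laissée en exercice »), il n'y a donc pas de démonstration du papier à laquelle comparer la vôtre. Votre argument est correct et complet : c'est bien la démonstration standard, reposant sur l'exactitude des colimites filtrantes pour identifier $\underset{t}{\col}\,f^{-1}(F(t))$, $\underset{u}{\col}\,E_u$ et $\underset{i}{\col}\,A/A_i$ aux constructions correspondantes dans la colimite, puis sur la définition d'objet de type fini par stationnarité des familles filtrantes de sous-objets de réunion $A$.
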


Cette proposition implique que, dans une catégorie abélienne avec colimites filtrantes exactes, la classe des objets de type fini est stable par quotients et par extensions.

Si $\E$ est une classe de générateurs d'une catégorie abélienne avec colimites filtrantes exactes $\A$, alors tout foncteur de type fini est quotient d'une somme directe {\em finie} d'éléments de $\E$. Si les éléments de $\E$ sont de type fini, la réciproque est vraie, d'après ce qui précède.

On rappelle qu'un objet $A$ d'une catégorie abélienne $\A$ est dit {\em de présentation finie} si le foncteur $\A(A,-) : \A\to\mathbf{Ab}$ commute aux colimites filtrantes. Il est classique (et facile) que si, dans une suite exacte courte $0\to C\to B\to A\to 0$, $A$ est de présentation finie et $B$ de type fini, alors $C$ est de type fini. Dans une catégorie localement noethérienne, tout objet de type fini est noethérien et de présentation finie.

\begin{cor}\label{crtfe}
 Soit $\Phi : \A\to\B$ un foncteur entre catégories abéliennes avec colimites filtrantes exactes. On suppose que $\Phi$ possède un adjoint à droite qui commute aux colimites filtrantes. Alors $\Phi$ préserve les objets de type fini, ainsi que les objets de présentation finie.
\end{cor}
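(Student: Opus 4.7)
Notons $\Psi$ l'adjoint à droite de $\Phi$ donné dans l'hypothèse. L'adjonction fournit, pour tout objet $A$ de $\A$, un isomorphisme naturel de foncteurs
$$\B(\Phi(A),-)\simeq\A(A,\Psi(-)).$$
Toute la preuve consiste à exploiter cet isomorphisme, en combinant les hypothèses de finitude sur $A$ et de commutation aux colimites filtrantes sur $\Psi$.

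Pour la présentation finie, l'argument est immédiat. Si $A$ est de présentation finie, le foncteur $\A(A,-)$ commute aux colimites filtrantes, et par hypothèse $\Psi$ aussi. Par composition, $\B(\Phi(A),-)\simeq\A(A,\Psi(-))$ commute aux colimites filtrantes, ce qui signifie précisément que $\Phi(A)$ est de présentation finie.

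Pour le type fini, on utilise la caractérisation (troisième assertion de la proposition précédente) : un objet $A$ est de type fini si et seulement si $\A(A,-)$ commute aux colimites filtrantes de monomorphismes. Soit donc $F : \T\to\B$ un foncteur, avec $\T$ petite et filtrante, envoyant chaque flèche sur un monomorphisme. Puisque $\Psi$ est un adjoint à droite, il est exact à gauche, donc préserve les monomorphismes : le foncteur composé $\Psi\circ F : \T\to\A$ envoie lui aussi chaque flèche sur un monomorphisme. En combinant l'isomorphisme d'adjonction, l'hypothèse que $\Psi$ commute aux colimites filtrantes, et le fait que $A$, étant de type fini, fait commuter $\A(A,-)$ aux colimites filtrantes de monomorphismes, on obtient :
$$\B\bigl(\Phi(A),\col F\bigr)\simeq\A\bigl(A,\Psi(\col F)\bigr)\simeq\A\bigl(A,\col(\Psi\circ F)\bigr)\simeq\col\A\bigl(A,\Psi\circ F\bigr)\simeq\col\B\bigl(\Phi(A),F\bigr),$$
et l'on vérifie sans peine que cet isomorphisme est bien le morphisme canonique. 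Ceci montre que $\Phi(A)$ est de type fini.

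Il n'y a pas de difficulté essentielle : l'argument repose uniquement sur la formule d'adjonction, la commutation aux colimites filtrantes de $\Psi$ (par hypothèse) et sa préservation des monomorphismes (gratuite, puisque c'est un adjoint à droite). Le point le plus délicat est simplement de se souvenir qu'il faut la caractérisation via les colimites filtrantes de monomorphismes pour le type fini, afin d'avoir à disposition l'adjective <<~de monomorphismes~>> que $\Psi$ préserve grâce à son exactitude à gauche.
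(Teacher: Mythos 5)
Votre démonstration est correcte et correspond exactement à l'argument attendu : le papier ne donne d'ailleurs aucune preuve de ce corollaire, le laissant découler de la proposition qui précède via l'isomorphisme d'adjonction $\B(\Phi(A),-)\simeq\A(A,\Psi(-))$. Le point que vous soulignez — utiliser la caractérisation par les colimites filtrantes de monomorphismes pour le type fini, en exploitant que $\Psi$, adjoint à droite, préserve les monomorphismes — est bien l'ingrédient clef, et votre rédaction est complète.
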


Appliquant ce résultat au foncteur $\alpha$ de la section précédente, dont l'adjoint à droite $\eta$ commute aux colimites, nous obtenons la propriété suivante, qui nous sera utile plus tard.

\begin{pr}\label{lmatf}
Soient $\M$ un objet de $\mi$, $\A$ une catégorie de Grothendieck et $F : \M\to\A$ un foncteur de type fini (resp. de présentation finie). Alors $\alpha(F) : \widetilde{\M}\to\A$ est également de type fini (resp. de présentation finie).
\end{pr}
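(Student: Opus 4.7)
The strategy is to apply Corollary~\ref{crtfe} to the functor $\alpha : \fct(\M,\A)\to\fct(\widetilde{\M},\A)$. Both source and target are Grothendieck categories, since $\A$ is one and functor categories with values in a Grothendieck category inherit the property; in particular both have exact filtered colimits, which is the standing hypothesis of the corollary.

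The left adjoint $\alpha$ comes equipped with the right adjoint $\eta$ recalled in the previous section, so the only thing to check is that $\eta$ commutes with filtered colimits. But $\eta$ is by construction precomposition by the canonical monoidal functor $\M\to\widetilde{\M}$, and colimits in any functor category with values in a cocomplete category are computed objectwise. Hence $\eta$ commutes not merely with filtered colimits, but with all colimits whatsoever. Corollary~\ref{crtfe} then gives at once that $\alpha$ preserves finite-type objects and finite-presentation objects, which is the statement of the proposition.

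The main point worth underlining is that this adjunction argument sidesteps any direct manipulation of the colimit formula~\eqref{eqalph} defining $\alpha$: that formula involves a filtered colimit over $\M$ of the shifts $\tau_t(F)$, and it is not at all immediate from the formula alone that $\alpha(F)$ should inherit a finite generation property of $F$ (the values of $\alpha(F)$ are computed as colimits, which a priori destroy finiteness at the level of $\A$). So the only conceptual ingredient is the existence of a colimit-preserving right adjoint; no obstacle arises.
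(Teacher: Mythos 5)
Votre argument est exactement celui du texte : la proposition est présentée comme une application immédiate du corollaire~\ref{crtfe} au foncteur $\alpha$, dont l'adjoint à droite $\eta$, étant une précomposition, commute à toutes les colimites (calculées au but). La démonstration est donc correcte et identique à celle du papier.
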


En combinant les corollaires~\ref{crtfe} et~\ref{corsec}, on obtient également :
\begin{cor}\label{pi-tf}
Soient $\M$ un objet de $\mi$ et $\A$ une catégorie de Grothendieck. Le foncteur canonique $\pi : \fct(\M,\A)\to\st(\M,\A)$ préserve les objets de type fini et les objets de présentation finie.
\end{cor}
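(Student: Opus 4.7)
Le plan consiste essentiellement à enchaîner les deux résultats précédents, sans aucun calcul supplémentaire. D'abord, j'observerais que, par construction même de la catégorie quotient $\st(\M,\A)=\fct(\M,\A)/\s n(\M,\A)$ au sens de Gabriel, le foncteur canonique $\pi$ est adjoint à gauche au foncteur section $s$ --- propriété déjà rappelée dans la première section.

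Ensuite, j'invoquerais le corollaire~\ref{corsec}, qui affirme précisément que $s : \st(\M,\A)\to\fct(\M,\A)$ commute aux colimites filtrantes. Ceci fournit exactement l'hypothèse requise par le corollaire~\ref{crtfe} portant sur l'adjoint à droite du foncteur dont on étudie les propriétés de préservation.

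Enfin, j'appliquerais le corollaire~\ref{crtfe} au couple adjoint $(\pi,s)$ : comme $\fct(\M,\A)$ et $\st(\M,\A)$ sont toutes deux des catégories de Grothendieck (la seconde parce que $\s n(\M,\A)$ est localisante dans la première, de sorte que les colimites filtrantes y restent exactes) et que $s$ commute aux colimites filtrantes, on conclut que $\pi$ préserve à la fois les objets de type fini et les objets de présentation finie, ce qui est précisément l'énoncé du corollaire~\ref{pi-tf}.

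Je ne m'attends à aucune difficulté sérieuse : la preuve est essentiellement une concaténation d'arguments déjà disponibles, et le seul point à surveiller est la vérification (de nature purement formelle) que $\st(\M,\A)$ est bien une catégorie abélienne à colimites filtrantes exactes, indispensable pour que les énoncés des corollaires~\ref{crtfe} et~\ref{corsec} se combinent proprement. Cette vérification résulte immédiatement du caractère localisant de la sous-catégorie $\s n(\M,\A)$. L'étape la moins automatique est donc simplement de reconnaître que le foncteur $\pi$ satisfait les hypothèses du corollaire~\ref{crtfe}, point que l'énoncé même du résultat rend transparent.
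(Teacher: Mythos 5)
Votre démonstration est correcte et suit exactement la même voie que le texte : l'énoncé est obtenu en combinant les corollaires~\ref{crtfe} et~\ref{corsec} appliqués au couple adjoint $(\pi,s)$. La vérification que $\st(\M,\A)$ est une catégorie de Grothendieck (donc à colimites filtrantes exactes), conséquence du caractère localisant de $\s n(\M,\A)$, est bien le seul point formel à contrôler, et vous le faites.
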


\subsection{Propriétés de finitude ponctuelles des foncteurs}

Donnons maintenant une définition générale pour traiter de propriétés de finitude dans des catégories de foncteurs.

\begin{defi}\label{df-finitude}
 Soient $\C$ une petite catégorie, $\A$ une catégorie abélienne et $F : \C\to\A$ un foncteur.
\begin{enumerate}
 \item On dit que $F$ est {\em ponctuellement de type fini} (resp. {\em ponctuellement noethérien}) si $F(c)$ est un objet de type fini (resp. noethérien) dans la catégorie $\A$ pour tout objet $c$ de $\C$.
 \item On dit que $F$ est {\em faiblement ponctuellement de type fini} (resp. {\em faiblement ponctuellement noethérien}) si, pour tout objet $c$ de $\C$, $F(c)$ est un objet de type fini (resp. noethérien) dans la catégorie $\A_{{\rm End}_\C(c)}$ des objets de $\A$ munis d'une action du monoïde ${\rm End}_\C(c)$.
\end{enumerate}
\end{defi}

On utilisera aussi la propriété suivante pour les catégories sources.

\begin{defi}\label{dfm}
 On dit qu'une catégorie $\C$ vérifie la propriété $(FM)$ si pour tous objets $t$ et $x$ de $\M$, le ${\rm End}_\C(x)$-ensemble $\C(t,x)$ est de type fini.
\end{defi}

\begin{pr}\label{pf-gal} Soient $\C$ une petite catégorie, $\A$ une catégorie abélienne avec colimites filtrantes exactes et $F : \C\to\A$ un foncteur.
 \begin{enumerate}
 \item Si $F$ est un quotient d'une somme directe finie de foncteurs du type $A[\C(t,-)]$, où $t$ est un objet de $\C$ et $A$ un objet de type fini de $\A$, alors $F$ est de type fini.
\item Si $\A$ est localement de type fini et $F$ de type fini, alors $F$ est un quotient d'une somme directe finie de foncteurs du type $A[\C(t,-)]$, où $t$ est un objet de $\C$ et $A$ un objet de type fini de $\A$.
\item Si $\A$ est localement de type fini, alors il en est de même pour $\fct(\C,\A)$.
\item Supposons que $\C$ vérifie la propriété $(FM)$ et que $\A$ est localement de type fini. Alors le foncteur $F$ est faiblement ponctuellement de type fini s'il est de type fini.
 \end{enumerate}
\end{pr}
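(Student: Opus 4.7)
The plan is to handle the four assertions in sequence: (3) follows at once from (1) and (2), while (4) rests on (2).

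For part (1), the key ingredient is the Yoneda-type adjunction
\[
{\rm Hom}_{\fct(\C,\A)}\bigl(A[\C(t,-)],\,F\bigr) \;\cong\; {\rm Hom}_{\A}\bigl(A,\,F(t)\bigr),
\]
which identifies $A\mapsto A[\C(t,-)]$ as the left adjoint to evaluation at $t$. I would use this to exhibit the representable functor ${\rm Hom}(A[\C(t,-)],-)$ as the composite of the evaluation at $t$ (which preserves all (co)limits and monomorphisms, since (co)limits in $\fct(\C,\A)$ are pointwise) with ${\rm Hom}_\A(A,-)$ (which commutes with filtered colimits of monomorphisms by the previous proposition, $A$ being of finite type). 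Hence $A[\C(t,-)]$ is of finite type in $\fct(\C,\A)$, and the stability of finite-type objects under extensions and quotients — noted just after the previous proposition — implies that any quotient of a finite direct sum of such functors is of finite type.

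For part (2), I would first observe that for any generating family $\mathcal{E}$ of finite-type objects of $\A$, the family $\{A[\C(t,-)] : t\in{\rm Ob}\,\C,\ A\in\mathcal{E}\}$ generates $\fct(\C,\A)$, again by the Yoneda adjunction. Writing $F$ as the filtered union of its subfunctors of the form ${\rm Im}\bigl(\bigoplus_{i=1}^{n}A_i[\C(t_i,-)]\to F\bigr)$, this increasing family has $F$ as union by the generation property, so the finite-type hypothesis on $F$ forces one such image to equal $F$ already. Part (3) is an immediate consequence: $\fct(\C,\A)$ is then generated by the set of finite-type objects produced in (1).

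The substantial argument is (4). Via (2) I would reduce to the case $F = A[\C(t,-)]$ with $A$ of finite type in $\A$, since evaluation at $c$ commutes with direct sums and quotients, and these operations preserve finite type in $\A_G$ (where $G := {\rm End}_\C(c)$). It then suffices to show that $A[\C(t,c)]$ is of finite type in $\A_G$. Property $(FM)$ decomposes $\C(t,c)$ as a finite union of $G$-orbits $G\cdot s_1,\dots,G\cdot s_k$, each being a $G$-equivariant quotient of the regular $G$-set $G$, so that $A[\C(t,c)]$ is a quotient of $A[G]^{\oplus k}$ in $\A_G$. Finally, $A[G]$ is of finite type in $\A_G$ by Corollary~\ref{crtfe} applied to the adjunction whose left adjoint $A\mapsto A[G]$ is the free-representation functor and whose right adjoint is the forgetful functor $\A_G\to\A$, the latter commuting with all colimits since these are computed in $\A$ with the induced $G$-action.

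The principal obstacle, modest though it is, lies in part (4): one must correctly translate the set-theoretic finite generation of the $G$-set $\C(t,c)$ given by $(FM)$ into an orbit decomposition in $\A_G$ and invoke Corollary~\ref{crtfe} at the right moment. Parts (1)--(3) are essentially formal once the representability adjunction is in place.
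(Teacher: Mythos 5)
Your proposal is correct and follows essentially the same route as the paper: the Yoneda adjunction $\fct(\C,\A)(A[\C(t,-)],F)\simeq\A(A,F(t))$ combined with Corollary~\ref{crtfe} for parts (1)--(3), and for part (4) the reduction to $A[\C(t,-)]$, the presentation of the finitely generated ${\rm End}_\C(c)$-set $\C(t,c)$ as a quotient of finitely many copies of the regular one, and the adjunction $\A_G(A[G],M)\simeq\A(A,Ou(M))$. No gaps.
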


\begin{proof}
Il est classique que le lemme de Yoneda fournit un isomorphisme
$$\fct(\C,\A)(A[\C(t,-)],F)\simeq\A(A,F(t))$$
naturel en les objets $A$ de $\A$, $t$ de $\C$ et $F$ de $\fct(\C,\A)$. Par conséquent, si $A$ est de type fini dans $\A$, $A[\C(t,-)]$ est de type fini dans $\fct(\C,\A)$ (cf. le corollaire~\ref{crtfe}) ; de plus, cet isomorphisme montre que si $A$ parcourt un ensemble de générateurs de $\A$ et $t$ les objets de $\C$, alors les $A[\C(t,-)]$ engendrent $\fct(\C,\A)$. Cela implique les trois premiers points.

Comme tout quotient d'une somme directe finie de foncteurs faiblement ponctuellement de type fini est faiblement ponctuellement de type fini, ce qui précède montre également qu'il suffit de prouver, pour établir la dernière assertion, que $A[\C(t,-)]$ est faiblement ponctuellement de type fini lorsque $A$ est de type fini et que $\C$ vérifie $(FM)$. En effet, si $G$ est un monoïde et $E$ un $G$-ensemble de type fini, $A[E]$ est de type fini dans la catégorie $\A_G$, à cause de l'isomorphisme naturel
$$\A_G(A[G],M)\simeq\A(A,Ou(M))$$
où $Ou : \A_G\to\A$ est le foncteur d'oubli (appliquer encore le corollaire~\ref{crtfe}) et du fait que $A[E]$ est quotient d'une somme directe finie de copies de $A[G]$ si $E$ est un $G$-ensemble de type fini.
\end{proof}

\begin{rem}\label{rq-fingal}
 La propriété $(FM)$ sur $\C$, suffisante pour que les foncteurs de type fini $\C\to\A$ soient faiblement ponctuellement de type fini, est aussi nécessaire, au moins si $\A=\mathbf{Ab}$, comme on le voit en considérant les générateurs projectifs de type fini $\mathbb{Z}[\C(c,-)]$. Même en se restreignant à des foncteurs polynomiaux depuis une catégorie dans $\mn$, les foncteurs de type fini ne sont pas nécessairement faiblement ponctuellement de type fini. Un exemple est donné par le foncteur
$$\mathbf{P}(\mathbb{Z}[t])\to\mathbf{Ab}\qquad V\mapsto V^{\underset{\mathbb{Z}}{\otimes} 2},$$
qui est polynomial de degré $2$, de type fini (en raison de l'épimorphisme
$$\mathbb{Z}[\mathbf{P}(\mathbb{Z}[t])(\mathbb{Z}[t]^2,V)]\simeq\mathbb{Z}[V]^{\underset{\mathbb{Z}}{\otimes} 2}\twoheadrightarrow V^{\underset{\mathbb{Z}}{\otimes} 2}$$
naturel en le $\mathbb{Z}[t]$-module $V$ obtenu en prenant la deuxième puissance tensorielle de la linéarisation $\mathbb{Z}[V]\twoheadrightarrow V$), mais pas faiblement ponctuellement de type fini. Ce dernier point provient de ce que l'action du monoïde multiplicatif sous-jacent à l'anneau $\mathbb{Z}[t]$ sur $\mathbb{Z}[t]^{\underset{\mathbb{Z}}{\otimes} 2}$ donnée par $\xi.(a\otimes b)=(\xi a)\otimes (\xi b)$ (où $\xi$, $a$ et $b$ sont des éléments de $\mathbb{Z}[t]$) n'est pas de type fini. En effet, sinon, il existerait un entier $N$ tel que le groupe abélien $\mathbb{Z}[t]^{\underset{\mathbb{Z}}{\otimes} 2}$ soit engendré par les éléments $(\xi t^i)\otimes(\xi t^j)$ pour $\xi\in\mathbb{Z}[t]$ et $i,j\leq N$, ou, ce qui revient au même, par les $t^{i+r}\otimes t^{j+r}$, où $i,j\leq N$, ce qui n'est pas le cas, puisque les $t^a\otimes t^b$, où $(a,b)$ parcourt $\mathbb{N}^2$, forment une {\em base} de ce groupe abélien.
\end{rem}

\subsection{Support des foncteurs et extensions de Kan}\label{sect-sk}

\begin{defi}\label{df-supp}
 Soient $\C$ une petite catégorie, $\A$ une catégorie abélienne et $F : \C\to\A$ un foncteur.

On appelle {\em support} de $F$ tout ensemble $S$ d'objets de $\C$ tel que tout sous-foncteur $G$ de $F$ tel que l'inclusion $G(t)\subset F(t)$ est une égalité pour $t\in S$ est égal à $F$.

On dit que $F$ est {\em à support fini} s'il admet un support fini.
\end{defi}

Afin d'étudier cette notion, on rappelle quelques faits classiques sur les extensions de Kan (cf. par exemple \cite{ML}, chap.~X, §\,3).

Soient $\varphi : \C\to\D$ un foncteur entre petites catégories et $\A$ une catégorie avec colimites. Le foncteur de précomposition $\varphi^* : \fct(\D,\A)\to\fct(\C,\A)$ possède un adjoint à gauche $\varphi_!$, appelé {\em extension de Kan à gauche} de $\varphi$, qui est donné explicitement sur les objets par
$$\varphi_!(F)(d)=\underset{\C[\varphi,d]}{\col}\iota[\varphi,d]^*F$$
(où $F : \C\to\A$ est un foncteur et $d$ un objet de $\D$), $\C[\varphi,d]$ désignant la catégorie des objets $c$ de $\C$ munis d'un morphisme $\varphi(c)\to d$ (les morphismes étant les morphismes de $\C$ vérifiant la condition de compatibilité évidente) et $\iota[\varphi,d] : \C[\varphi,d]\to\C$ le foncteur d'oubli. Le foncteur $\varphi_!$ peut également être caractérisé par le fait qu'il commute aux colimites et qu'on dispose d'un isomorphisme $\varphi_!(A[\C(c,-)])\simeq A[\D(\varphi(c),-)]$ naturel en les objets $c$ de $\C$ et $A$ de $\A$ respectivement.

Si le foncteur $\varphi$ est pleinement fidèle, l'unité ${\rm Id}\to\varphi^*\varphi_!$ de l'adjonction est un isomorphisme.

\begin{pr}\label{pr-suppFini}
 Soient $\C$ une petite catégorie, $S$ un ensemble d'objets de $\C$ vu comme une sous-catégorie pleine de $\C$, $\varphi : S\to\C$ le foncteur d'inclusion, $\A$ une catégorie abélienne avec colimites et $F : \C\to\A$ un foncteur. Les propriétés suivantes sont équivalentes :
 \begin{enumerate}
  \item\label{psf1} $S$ est un support de $F$ ;
  \item\label{psf2} la coünité $\varphi_!\varphi^*F\to F$ de l'adjonction est un épimorphisme ;
  \item\label{psf3} $F$ est un quotient d'une somme directe de foncteurs du type $A[\C(s,-)]$, où $A$ est un objet de $\A$ et $s$ un élément de $S$ ;
  \item\label{psf4} le morphisme canonique
 $$\underset{s\in S}{\bigoplus}\mathbb{Z}[\C(s,-)]\underset{{\rm End}_\C(s)}{\otimes}F(s)\to F$$
  est un épimorphisme.
 \end{enumerate}
\end{pr}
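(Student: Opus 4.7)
The plan is to prove the four conditions equivalent by identifying a single subfunctor $I\subset F$ which each condition asserts to equal $F$. Explicitly, $I$ will be the image of the canonical morphism appearing in (2), which will coincide with the image of the morphism in (4), and which will characterize ``being a support of $F$''. The structural observation underlying everything is that the counit of (2), the morphism of (4), and any surjection of the form (3) all end up with the same image in $F$.

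Using the explicit colimit formula for the left Kan extension recalled just above the proposition, and the fact that $\varphi\colon S\hookrightarrow\C$ is fully faithful as a full-subcategory inclusion, the counit $\varphi_!\varphi^*F\to F$ evaluated at $t\in\C$ sends the class of a triple $(s\in S,\,f\colon s\to t,\,x\in F(s))$ to $F(f)(x)\in F(t)$. Hence its image subfunctor $I\subset F$ is given by
\[
I(t)=\sum_{s\in S}\sum_{f\in\C(s,t)}F(f)\bigl(F(s)\bigr)\subset F(t).
\]
The morphism of (4), evaluated at $t$, sends a class $[f]\otimes x$ to $F(f)(x)$, the relation $(f\phi)\otimes x\sim f\otimes F(\phi)(x)$ for $\phi\in\mathrm{End}_\C(s)$ being respected by $F$. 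Thus the morphism of (4) also has image $I$, and $I=F$ is equivalent to each of (2) and (4).

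For (1)$\Leftrightarrow$(2), note that $I(s)=F(s)$ for every $s\in S$ by taking $f=\mathrm{id}_s$. If $S$ is a support of $F$, this forces $I=F$, which is (2). Conversely, if (2) holds and $G\subset F$ is a subfunctor with $G(s)=F(s)$ for all $s\in S$, then for any $t$, any $f\colon s\to t$ with $s\in S$ and any $x\in F(s)=G(s)$ the element $F(f)(x)$ lies in $G(t)$, so $G\supset I=F$ and $G=F$. Finally, (3)$\Leftrightarrow$(4) follows by Yoneda: from (4), the source is a quotient of $\bigoplus_{s\in S}F(s)[\C(s,-)]$, which is of the form appearing in (3); conversely, any epimorphism $\bigoplus_iA_i[\C(s_i,-)]\twoheadrightarrow F$ corresponds via Yoneda to a family of morphisms $A_i\to F(s_i)$, hence factors through the canonical map $\bigoplus_iF(s_i)[\C(s_i,-)]\to F$, forcing this canonical map (and therefore, after passing to the quotient by the endomorphism actions, the morphism of (4)) to be an epimorphism. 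There is no serious obstacle; the only step requiring genuine attention is the unwinding of the Kan-extension colimit to identify its image with the naive ``subfunctor generated by the values of $F$ on $S$''.
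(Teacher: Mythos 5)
Your proof is correct, but it is organized differently from the paper's. The paper proves the cycle \ref{psf1}$\Rightarrow$\ref{psf4}$\Rightarrow$\ref{psf3}$\Rightarrow$\ref{psf2}$\Rightarrow$\ref{psf1}, and for the two nontrivial steps it argues formally: \ref{psf3}$\Rightarrow$\ref{psf2} uses that $\varphi_!\varphi^*$ commutes with colimits together with the fact that the counit is an isomorphism on the representable-type functors $A[\C(s,-)]$ (full faithfulness of $\varphi$), and \ref{psf2}$\Rightarrow$\ref{psf1} uses a commutative square relating the counits of $F$ and $F/G$. You instead exhibit a single subobject $I\subset F$ (the common image of the counit of \ref{psf2} and of the canonical morphism of \ref{psf4}, computed by unwinding the colimit formula for $\varphi_!$) and show that each of \ref{psf1}, \ref{psf2}, \ref{psf4} is equivalent to $I=F$, handling \ref{psf3}$\Leftrightarrow$\ref{psf4} separately by Yoneda. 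Your route is more computational and arguably more transparent about \emph{why} the conditions agree; the paper's is more formal and avoids any element-level description. The only point to tidy up is that your sums $\sum_{s,f}F(f)(F(s))$ and manipulations of elements $x\in F(s)$ are written as if $\A$ were a module category, whereas $\A$ is only assumed abelian with colimits; this is harmless here because everything you use (the image of a map out of a colimit equals the image of the induced map out of the coproduct, i.e.\ the join of the images of the components, and a subfunctor $G$ with $G(s)=F(s)$ contains the image of each $F(f)$ with source in $S$) makes sense and holds categorically, but the argument should be phrased in those terms to be rigorous at the stated level of generality.
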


 \begin{proof} L'implication \ref{psf1}$\Rightarrow$\ref{psf4} provient de ce que, si $G$ désigne l'image du morphisme canonique en question, alors $G(s)=F(s)$ pour $s\in S$, car, évalué sur $s$, le morphisme canonique
 $$\mathbb{Z}[\C(s,-)]\underset{{\rm End}_\C(s)}{\otimes}F(s)\to F$$
 se réduit à l'isomorphisme canonique $\mathbb{Z}[\C(s,s)]\underset{{\rm End}_\C(s)}{\otimes}F(s)\xrightarrow{\simeq} F(s)$.
 
 L'implication \ref{psf4}$\Rightarrow$\ref{psf3} s'obtient en écrivant $F(s)$ comme quotient d'un objet libre de $\A_{{\rm End}_\C(s)}$.
 
 Comme $\varphi$ est pleinement fidèle, la coünité $\varphi_!\varphi^*(A[\C(s,-)])\to A[\C(s,-)]$ est un isomorphisme pour tous $s\in S$ et $A\in {\rm Ob}\,\A$. Le foncteur $\varphi_!\varphi^*$ commutant aux colimites, on en déduit l'implication \ref{psf3}$\Rightarrow$\ref{psf2}.
 
 Sous l'hypothèse \ref{psf2}, considérons un sous-foncteur $G$ de $F$ tel que l'inclusion $G(s)\subset F(s)$ soit une égalité pour $s\in S$. Cela signifie que le foncteur $\varphi^*(F/G)$ est nul. Le diagramme commutatif
 $$\xymatrix{\varphi_!\varphi^*(F)\ar@{->>}[r]\ar[d] & F\ar[d] \\
 \varphi_!\varphi^*(F/G)\ar[r] & F/G
 }$$
 dont les flèches horizontales sont les coünités et les flèches verticales les projections permet d'en déduire la nullité de $F/G$, d'où l'implication \ref{psf2}$\Rightarrow$\ref{psf1}.
 \end{proof}

\begin{cor}\label{pf-gal2} Soient $\C$ une petite catégorie, $\A$ une catégorie abélienne avec colimites filtrantes exactes et $F : \C\to\A$ un foncteur.
 \begin{enumerate}
\item Si $F$ est de type fini, alors $F$ est à support fini.
\item Si $F$ est à support fini et faiblement ponctuellement de type fini, alors $F$ est de type fini.
\item La classe des foncteurs $\C\to\A$ à support fini est stable par quotients et extensions.
 \end{enumerate}
\end{cor}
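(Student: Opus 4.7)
\medskip

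\noindent\textbf{Plan de preuve.} Le cadre sera fourni par les caractérisations équivalentes du support données dans la proposition~\ref{pr-suppFini} ; je me servirai particulièrement de la caractérisation \ref{psf4} (surjectivité du morphisme canonique) ainsi que de la caractérisation originelle \ref{psf1}. L'ingrédient transversal crucial est que dans $\fct(\C,\A)$ les limites, colimites et images se calculent argument par argument, et que l'évaluation en un objet est donc un foncteur exact. Rien n'apparaît comme obstacle sérieux : l'argumentation est essentiellement formelle une fois mise en place la proposition~\ref{pr-suppFini}.

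\smallskip

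Pour le point~1, je considère la famille, indexée par les parties finies $S$ de $\mathrm{Ob}\,\C$ ordonnées par inclusion, des sous-foncteurs $F_S\subset F$ définis comme image du morphisme canonique de la proposition~\ref{pr-suppFini}(\ref{psf4}). Cette famille est filtrante croissante ; de plus, pour tout objet $c$ de $\C$, la composante en $c$ du terme indexé par $\{c\}$ contient déjà l'image du facteur $s=c$, qui vaut $F(c)$ tout entier grâce à l'isomorphisme $\mathbb{Z}[\C(c,c)]\otimes_{\mathrm{End}(c)}F(c)\xrightarrow{\simeq}F(c)$. Les colimites étant calculées points par points, la réunion des $F_S$ est donc $F$. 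Comme $F$ est de type fini, cette famille stationne : il existe $S_0$ finie avec $F_{S_0}=F$, c'est-à-dire que le morphisme canonique indexé par $S_0$ est épi, et $S_0$ est un support de $F$ d'après la proposition~\ref{pr-suppFini}.

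\smallskip

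Pour le point~2, je prends un support fini $S$ de $F$ et une famille filtrante croissante $(G_i)$ de sous-foncteurs de $F$ dont la réunion est $F$. Pour chaque $s\in S$, l'évaluation fournit une famille filtrante croissante $(G_i(s))$ de sous-objets de $F(s)$ dans $\A_{\mathrm{End}_\C(s)}$ (chaque $G_i$ étant un sous-foncteur, l'évaluation hérite de l'action), de réunion $F(s)$ par permutation des colimites. L'hypothèse de type fini faible ponctuel donne un $i_s$ tel que $G_{i_s}(s)=F(s)$. Comme $S$ est fini et la famille d'indices filtrante, il existe $i$ majorant simultanément tous les $i_s$ pour $s\in S$, et alors $G_i$ coïncide avec $F$ sur $S$. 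Le caractère de support de $S$ impose $G_i=F$, d'où la finitude du type de $F$.

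\smallskip

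Pour le point~3, la stabilité par quotients résulte du passage à l'image réciproque : si $p:F\twoheadrightarrow F'$ et $S$ est un support fini de $F$, alors pour tout sous-foncteur $G'\subset F'$ coïncidant avec $F'$ sur $S$, la préimage $G:=p^{-1}(G')$ coïncide avec $F$ sur $S$ par exactitude de l'évaluation, donc vaut $F$, et $G'=p(G)=F'$. Pour la stabilité par extensions, étant donnée une suite exacte $0\to F'\to F\to F''\to 0$ avec supports finis $S'$ et $S''$, je pose $S:=S'\cup S''$ et considère $G\subset F$ coïncidant avec $F$ sur $S$. Alors $G\cap F'$ coïncide avec $F'$ sur $S'$ (exactitude de l'évaluation), donc vaut $F'$, ce qui fournit $F'\subset G$ ; et l'image de $G$ dans $F''$ coïncide avec $F''$ sur $S''$, donc vaut $F''$. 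Une petite chasse aux diagrammes utilisant $F'\subset G$ et $G\twoheadrightarrow F''$ force alors $G=F$, et $S$ est bien un support fini de $F$.
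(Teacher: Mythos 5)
Votre démonstration est correcte ; le texte ne donne d'ailleurs aucune preuve de ce corollaire, qu'il laisse découler de la proposition~\ref{pr-suppFini} et des définitions, et vos trois arguments (famille filtrante des sous-foncteurs engendrés par les valeurs sur les parties finies, évaluation ponctuelle sur un support fini, puis image réciproque et réunion des supports pour quotients et extensions) sont exactement la déduction standard attendue. Seule remarque mineure : pour le point~1, la somme $\bigoplus_{s\in S}\mathbb{Z}[\C(s,-)]\otimes_{{\rm End}_\C(s)}F(s)$ suppose implicitement des coproduits quelconques dans $\A$ ; on peut l'éviter en définissant directement $F_S$ comme le plus petit sous-foncteur coïncidant avec $F$ sur $S$, dont l'existence ne requiert que les colimites filtrantes, mais c'est un détail que l'énoncé de la proposition~\ref{pr-suppFini} (qui suppose $\A$ avec colimites) passe lui-même sous silence.
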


%\begin{proof}
% Le premier point découle de ce que les générateurs $A[\C(c,-)]$ (où $A$ est un objet de $\A$ et $c$ un objet de $\C$ --- cf. la démonstration de la proposition~\ref{pf-gal}) de la catégorie $\fct(\C,\A)$ sont à support fini ($\{c\}$ est un support de ce foncteur).
%
%Si $S$ est un support de $F$, ce foncteur est isomorphe à un quotient de la somme directe sur $s\in S$ de
%$$\mathbb{Z}[\C(s,-)]\underset{{\rm End}_\C(s)}{\otimes}F(s)\;;$$
%ce foncteur est de type fini si $F(s)$ est une représentation de type fini du monoïde ${\rm End}_\C(s)$, car on dispose, pour toute représentation $M$ de ce monoïde et tout foncteur $G : \C\to\A$, d'un isomorphisme naturel
%$$\fct(\C,\A)(\mathbb{Z}[\C(s,-)]\underset{{\rm End}_\C(s)}{\otimes}M,G)\simeq\A_{{\rm End}_\C(s)}(M,G(s))$$
%qui permet d'appliquer le corollaire~\ref{crtfe}. Cela montre le deuxième point.
%
%Pour la dernière assertion, on note que si $G$ est un quotient de $F$, alors tout support de $F$ est aussi un quotient de $G$. Si
%$$0\to F\to G\to H\to 0$$
%est une suite exacte courte de $\fct(\C,\A)$, $S$ un support de $F$ et $S'$ un support de $H$, alors $S\cup S'$ est un support de $G$. Cela achève la démonstration.
%\end{proof}

\begin{rem}\label{sup0}
 Soient $\M$ un objet de $\mi$ et $\A$ une catégorie abélienne. Du fait que $0$ est objet initial de $\M$, un foncteur $F : \M\to\A$ appartient à $\Pol_0^{{\rm fort}}(\M,\A)$ si et seulement si $\{0\}$ est un support de $F$.
\end{rem}

Au-delà du degré $0$, le lien entre propriété polynomiale forte et support ne fonctionne généralement bien que dans un sens (cf. remarque~\ref{rqsp} ci-après), cela fera l'objet du paragraphe~\ref{prp}.

\smallskip

La variante suivante de la proposition~\ref{lmatf} utilisant la notion de support fini nous sera utile dans la section~\ref{spffp}.

\begin{pr}\label{lmalph}
 Soient $\M$ un objet de $\mi$, $\A$ une catégorie de Grothendieck et $F : \M\to\A$ un foncteur ponctuellement noethérien. On suppose que, pour tout $t\in {\rm Ob}\,\M$, le foncteur $\tau_t(F)$ est à support fini. Alors $\alpha(F) : \widetilde{\M}\to\A$ est ponctuellement noethérien.
\end{pr}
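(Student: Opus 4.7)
The plan is to transfer the finite support of each $\tau_t(F)$ through the colimit formula defining $\alpha$, thereby exhibiting $\alpha(F)(t)$ as a quotient of finitely many noetherian objects of $\A$.

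First, I would fix an object $t$ of $\M$ (the object sets of $\M$ and $\widetilde{\M}$ coincide) and apply the implication \ref{psf1}$\Rightarrow$\ref{psf4} of Proposition~\ref{pr-suppFini} to $\tau_t(F)$, using a finite support $S\subset{\rm Ob}\,\M$ given by the hypothesis. This yields an epimorphism in $\fct(\M,\A)$
\[ \bigoplus_{s\in S}\Z[\M(s,-)]\underset{{\rm End}_\M(s)}{\otimes} F(t+s) \twoheadrightarrow \tau_t(F). \]

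Next, I would apply the colimit functor $\underset{\M}{\col}\colon\fct(\M,\A)\to\A$ to this epimorphism. This functor is right exact, being a left adjoint to the constant-diagram functor; and by the formula~(\ref{eqalph}) it sends $\tau_t(F)$ to $\alpha(F)(t)$. On the source, the colimit commutes with finite direct sums, with the free $\Z$-module functor, and with the tensor product over the fixed ring $\Z[{\rm End}_\M(s)]$ against the fixed object $F(t+s)$; thus the entire computation reduces to that of the right ${\rm End}_\M(s)$-set $\underset{x\in\M}{\col}\M(s,x)=\widetilde{\M}(s,0)$. The pivotal observation is that this set is a singleton: indeed $\widetilde{\M}$ lies in $\mn$, so $0$ is null in $\widetilde{\M}$. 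The epimorphism therefore becomes
\[ \bigoplus_{s\in S} F(t+s)_{{\rm End}_\M(s)} \twoheadrightarrow \alpha(F)(t), \]
the subscript denoting the coinvariants under the action of ${\rm End}_\M(s)$.

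Each $F(t+s)$ is noetherian in $\A$ by the pointwise noetherian hypothesis on $F$, so its quotient $F(t+s)_{{\rm End}_\M(s)}$ is noetherian; the finite direct sum on the left is then a noetherian object of $\A$, and noetherianness descends to the quotient $\alpha(F)(t)$, as required.

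The delicate point of the argument is the identification $\widetilde{\M}(s,0)=\{*\}$: it converts the a priori infinite right ${\rm End}_\M(s)$-set $\M(s,-)$ into a single point after the colimit, which is what lets the noetherianness of the values of $F$ transfer to $\alpha(F)$. Without this vanishing (or some other finiteness hypothesis on the action), pointwise noetherianness of $F$ alone would not suffice, and the finite support assumption really is needed to keep the direct sum finite.
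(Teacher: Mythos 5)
Your proof is correct and follows essentially the same route as the paper's: both reduce $\alpha(F)(t)=\underset{\M}{\col}\,\tau_t(F)$ to a quotient of finitely many noetherian objects via Proposition~\ref{pr-suppFini} (you use criterion~\ref{psf4}, the paper criterion~\ref{psf3} with $A=F(x)$), the key point in each case being that $\underset{\M}{\col}\,\M(s,-)=\widetilde{\M}(s,0)$ is a singleton so that the colimit of $A[\M(s,-)]$ is $A$ (resp.\ the coinvariants of $F(t+s)$). No gap.
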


\begin{proof}
Si $F$ est à support fini et ponctuellement noethérien, alors $\underset{\M}{\col}F$ est un objet noethérien de $\A$. En effet, $F$ est alors quotient d'une somme directe finie d'objets du type $A[\M(x,-)]$ avec $x\in  {\rm Ob}\,\M$ et  $A\in {\rm Ob}\,\A$ noethérien (on peut prendre $A=F(x)$), dont la colimite est $A$.

Sous les hypothèses de l'énoncé, pour tout $t\in {\rm Ob}\,\M$, $\tau_t(F)$ est à support fini et ponctuellement noethérien, donc $\alpha(F)(t)=\underset{\M}{\col}\tau_t(F)$ est noethérien.
\end{proof}

La notion suivante nous servira également dans la section~\ref{spffp}.

\begin{prdef}\label{pres-suppFini}
 Soient $\C$ une petite catégorie, $S$ un ensemble d'objets de $\C$ vu comme une sous-catégorie pleine de $\C$, $\varphi : S\to\C$ le foncteur d'inclusion, $\A$ une catégorie abélienne avec colimites et $F : \C\to\A$ un foncteur. Les propriétés suivantes sont équivalentes :
 \begin{enumerate}
  \item\label{pspf1} $S$ est un support de $F$ et, pour toute suite exacte courte $0\to N\to G\to F\to 0$, si $S$ est un support de $G$, alors $S$ est un support de $N$ ;
  \item\label{pspf2} la coünité $\varphi_!\varphi^*F\to F$ de l'adjonction est un isomorphisme ;
  \item\label{pspf3} il existe une famille $(A_s)_{s\in S}$ d'objets de $\A$ et une suite exacte courte 
$$0\to N\to\underset{s\in S}{\bigoplus}A_s[\C(s,-)]\to F\to 0$$
telle que $S$ est un support de $N$ ;
  \item\label{pspf4} il existe des familles $(A_s)_{s\in S}$ et $(B_s)_{s\in S}$ d'objets de $\A$ et une suite exacte
$$\underset{s\in S}{\bigoplus}B_s[\C(s,-)]\to\underset{s\in S}{\bigoplus}A_s[\C(s,-)]\to F\to 0.$$
 \end{enumerate}
 Si ces propriétés sont satisfaites, nous dirons que $S$ est un {\em support de présentation} de $F$.
 
 On dira que $F$ est {\em à présentation de support fini} (en abrégé $PSF$) s'il possède un support de présentation fini.
\end{prdef}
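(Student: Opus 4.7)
The proof splits naturally into $(1)\Leftrightarrow(2)$ and a cycle $(2)\Rightarrow(3)\Rightarrow(4)\Rightarrow(2)$. Before starting, I would record three elementary ingredients on which everything will rest. First, $\varphi^*$ is exact (it is a restriction to a full subcategory) and $\varphi_!$, being a left adjoint, is right exact, so $\varphi_!\varphi^*$ is right exact. Second, since $\varphi$ is pleinement fidèle, the canonical isomorphism $\varphi_!(A[S(s,-)])\simeq A[\C(s,-)]$ for $s\in S$ combined with the triangle identities makes the coünité $\varphi_!\varphi^*\to\mathrm{Id}$ an isomorphism on every functor of the form $A[\C(s,-)]$ with $s\in S$, and hence on any direct sum of such. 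Third, for any $G:S\to\A$ the explicit colimit formula for $\varphi_!(G)$ exhibits it as a quotient of $\bigoplus_{s\in S}G(s)[\C(s,-)]$; applied to $G=\varphi^*F$, this shows that $\varphi_!\varphi^*F$ always carries $S$ as a support via proposition~\ref{pr-suppFini}.(3).

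For $(1)\Rightarrow(2)$, the support condition on $F$ and proposition~\ref{pr-suppFini} make the coünité $\varphi_!\varphi^*F\twoheadrightarrow F$ an epimorphism; denoting its kernel by $N$, the stability clause of (1) together with the third preliminary forces $N$ to have $S$ as support. Restricting along $\varphi$, the triangle identity yields $\varphi^*N=0$, and combining this with $S$ being a support of $N$ gives $N=0$. Conversely, for $(2)\Rightarrow(1)$, the coünité being an isomorphism is in particular surjective, so $S$ supports $F$; given then an exact sequence $0\to N\to G\to F\to 0$ with $S$ a support of $G$, I would apply the right exact functor $\varphi_!\varphi^*$ and compare with the original sequence via the coünité, obtaining a commutative diagram in which the $G$-component is surjective (proposition~\ref{pr-suppFini}) and the $F$-component is an isomorphism. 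A short diagram chase then shows that $\varphi_!\varphi^*N\to N$ is itself surjective, so proposition~\ref{pr-suppFini} concludes that $S$ is a support of $N$.

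For the second cycle, $(2)\Rightarrow(3)$ will follow by taking $A_s=F(s)$ and defining $N$ as the kernel of the canonical epimorphism $\bigoplus_{s\in S}F(s)[\C(s,-)]\twoheadrightarrow\varphi_!\varphi^*F\simeq F$, since the middle term has $S$ as support and the stability clause of (1), now available from (2), forces the same for $N$. The implication $(3)\Rightarrow(4)$ is immediate upon writing $N$ itself as a quotient of some $\bigoplus_{s\in S}B_s[\C(s,-)]$ (proposition~\ref{pr-suppFini}.(3)) and splicing. For $(4)\Rightarrow(2)$, the second preliminary shows that the coünité is an isomorphism on both $\bigoplus_{s\in S}A_s[\C(s,-)]$ and $\bigoplus_{s\in S}B_s[\C(s,-)]$, so applying the right exact functor $\varphi_!\varphi^*$ to the given presentation yields, via the universal property of cokernels, that $\varphi_!\varphi^*F\to F$ is an isomorphism. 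The whole argument is essentially formal once the preliminaries are in place: I see no genuine obstacle, only the mild bookkeeping of the diagram chase in $(2)\Rightarrow(1)$, and the main effort is to present the implications in an order that avoids circular use of the first criterion.
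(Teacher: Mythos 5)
Votre démonstration est correcte et repose exactement sur les mêmes ingrédients que celle du texte (l'équivalence des assertions de la proposition~\ref{pr-suppFini}, le fait que la coünité est un isomorphisme sur les $A[\C(s,-)]$ pour $s\in S$ car $\varphi$ est pleinement fidèle, l'exactitude à droite de $\varphi_!\varphi^*$ et des comparaisons de diagrammes à lignes exactes) ; seul l'ordre des implications diffère ((1)$\Leftrightarrow$(2) puis (2)$\Rightarrow$(3)$\Rightarrow$(4)$\Rightarrow$(2), contre (3)$\Leftrightarrow$(4), (1)$\Rightarrow$(3)$\Rightarrow$(2)$\Rightarrow$(1) dans l'article). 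Votre chasse au diagramme pour (2)$\Rightarrow$(1) explicite utilement ce que le texte expédie par \guillemotleft~s'établit de façon analogue~\guillemotright, et votre observation préliminaire que $\varphi_!\varphi^*F$ admet toujours $S$ comme support est bien celle qui est implicitement utilisée.
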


\begin{proof}
 L'équivalence entre \ref{pspf3} et \ref{pspf4} et l'implication \ref{pspf1}$\Rightarrow$\ref{pspf3} résultent de l'équivalence entre les assertions \ref{psf1} et \ref{psf3} de la proposition~\ref{pr-suppFini}.
 
 Pour l'implication \ref{pspf3}$\Rightarrow$\ref{pspf2}, si l'on pose $P= \underset{s\in S}{\bigoplus}A_s[\C(s,-)]$, on a déjà vu dans la démonstration de la proposition~\ref{pr-suppFini} que la coünité $\varphi_!\varphi^*(P)\to P$ est un isomorphisme. De plus, $\varphi_!\varphi^*(N)\to N$ est un épimorphisme, puisque $S$ est un support de $N$ (utiliser encore la proposition~\ref{pr-suppFini}), de sorte que le diagramme commutatif
 $$\xymatrix{ & \varphi_!\varphi^*(N)\ar[r]\ar@{->>}[d] & \varphi_!\varphi^*(P)\ar[r]\ar[d]^\simeq & \varphi_!\varphi^*(F)\ar[r]\ar[d] & 0\\
 0\ar[r] & N\ar[r] & P\ar[r] & F\ar[r] & 0
 }$$
 dont les flèches verticales sont les coünités et les lignes sont exactes montre la propriété~\ref{pspf2}.
 
 L'implication \ref{pspf2}$\Rightarrow$\ref{pspf1} s'établit de façon analogue.
\end{proof}

\begin{rem}\label{rq-spf}
 \begin{enumerate}
  \item On vérifie facilement qu'un foncteur $F : \C\to\A$ est $PSF$ si et seulement si le foncteur ${\rm Hom}(F,-) : \fct(\C,\A)\to\mathbf{Ab}$ commute aux colimites filtrantes {\em ponctuellement stationnaires}, c'est-à-dire aux colimites de foncteurs $\phi : \T\to\fct(\C,\A)$ où $\T$ est une petite catégorie filtrante et où, pour tout objet $c$ de $\C$, il existe un objet $t$ de $\T$ tel que l'évaluation en $c$ de l'image par $\phi$ d'une flèche $a\to b$ de $\T$ soit un isomorphisme si $\T(t,a)\neq\emptyset$. Pour le faire, on vérifie d'abord qu'un foncteur du type $A[\C(c,-)]$ commute aux colimites filtrantes ponctuellement stationnaires.
  
  Ce nouveau critère (dont nous n'aurons pas usage) permet de voir qu'un foncteur $\C\to\A$ de présentation finie est toujours $PSF$. Si $\A$ est localement de type fini, cette propriété découle des propositions~\ref{pf-gal} et~\ref{pres-suppFini}.
  \item On peut renforcer la notion introduite à la proposition~\ref{pres-suppFini} en considérant, pour tout entier $n>0$, la classe des foncteurs $F$ tels qu'il existe une suite exacte du type
  $$\underset{s\in S}{\bigoplus}A^n_s[\C(s,-)]\to\underset{s\in S}{\bigoplus}A^{n-1}_s[\C(s,-)]\to\dots\to\underset{s\in S}{\bigoplus}A^0_s[\C(s,-)]\to F\to 0.$$
  On peut facilement en donner des caractérisations analogues à celles de la proposition~\ref{pres-suppFini} ; par exemple, en termes de l'extension de Kan $\varphi_!$, cette propriété est équivalente à la conjonction de l'assertion~\ref{pspf2} de la proposition et de l'annulation de $\mathbf{L}_i(\varphi_!)(F)$ pour $0<i<n$, où les $\mathbf{L}_i(\varphi_!)$ désignent les dérivés à gauche de $\varphi_!$.
  
  Cette notion, très naturelle dans certains cadres, ne se comporte généralement pas bien, contrairement à la propriété $PSF$, relativement aux foncteurs de translation dans le contexte du présent article (sauf dans le cas de la catégorie source $\Theta$), c'est pourquoi nous ne l'utiliserons pas.
 \end{enumerate}
\end{rem}

En utilisant le critère \ref{pspf2} dans la proposition-définition~\ref{pres-suppFini} et le lemme des cinq, on obtient le résultat suivant.

\begin{cor}\label{cr-psf}
 Soient $\C$ une petite catégorie, $S$ un ensemble d'objets de $\C$, $\A$ une catégorie abélienne avec colimites et $0\to G\to F\to H\to 0$ une suite exacte de $\fct(\C,\A)$.
 \begin{enumerate}
  \item Si $S$ est un support de présentation de $G$ et $H$, alors c'est aussi un support de présentation de $F$.
  \item Supposons que $S$ est un support de $G$ et un support de présentation de $F$. Alors $S$ est un support de présentation de $H$.
 \end{enumerate}
\end{cor}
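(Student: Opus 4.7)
Le plan consiste à utiliser la caractérisation~\ref{pspf2} de la proposition-définition~\ref{pres-suppFini}, selon laquelle $S$ est un support de présentation d'un foncteur $X$ si et seulement si la coünité $\varphi_!\varphi^*X\to X$ est un isomorphisme, où $\varphi : S\hookrightarrow\C$ désigne l'inclusion comme sous-catégorie pleine. Le foncteur $\varphi^*$ étant exact (simple restriction) et $\varphi_!$ exact à droite comme adjoint à gauche, on obtient, en appliquant $\varphi_!\varphi^*$ à la suite donnée et en joignant les coünités, le diagramme commutatif
$$\xymatrix{
& \varphi_!\varphi^*G\ar[r]\ar[d]^-{\alpha} & \varphi_!\varphi^*F\ar[r]\ar[d]^-{\beta} & \varphi_!\varphi^*H\ar[r]\ar[d]^-{\gamma} & 0 \\
0\ar[r] & G\ar[r] & F\ar[r] & H\ar[r] & 0
}$$
dont la ligne inférieure est exacte et la ligne supérieure exacte en ses termes central et droit.

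Pour l'assertion~(1), les hypothèses fournissent que $\alpha$ et $\gamma$ sont des isomorphismes ; une chasse au diagramme à la manière du lemme des cinq établit alors que $\beta$ l'est aussi. L'injectivité s'obtient, pour $x\in\ker\beta$, en remarquant que l'image de $x$ dans $\varphi_!\varphi^*H$ est annulée par $\gamma$ donc nulle, ce qui permet de relever $x$ en $y\in\varphi_!\varphi^*G$ par exactitude au centre de la ligne supérieure ; alors $\alpha(y)\in G$ s'annule dans $F$, donc est nul par injectivité de $G\hookrightarrow F$, puis $y=0$ par injectivité de $\alpha$. La surjectivité est analogue.

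Pour l'assertion~(2), la proposition~\ref{pr-suppFini} appliquée à $G$ fournit que $\alpha$ est un épimorphisme, tandis que $\beta$ est un isomorphisme par hypothèse $PSF$ sur $F$ ; une chasse analogue conclut que $\gamma$ est un isomorphisme. Pour l'injectivité, on relève $z\in\ker\gamma$ en $f'\in\varphi_!\varphi^*F$ par exactitude à droite de la ligne supérieure ; alors $\beta(f')\in G$ s'écrit $\alpha(g')$ par surjectivité de $\alpha$, et l'image $f''$ de $g'$ dans $\varphi_!\varphi^*F$ vérifie $\beta(f'')=\beta(f')$, donc $f'=f''$ par injectivité de $\beta$, d'où $z=0$ par exactitude au centre de la ligne supérieure. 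Le principal point à surveiller est que la ligne supérieure n'est pas a priori exacte à gauche en $\varphi_!\varphi^*G$, ce qui interdit l'application brute du lemme des cinq classique ; l'inspection des chasses ci-dessus montre cependant qu'elles ne sollicitent jamais cette injectivité manquante, les hypothèses particulières sur $\alpha,\gamma$ (resp.\ $\alpha,\beta$) et l'injectivité de $G\hookrightarrow F$ suffisant à conclure.
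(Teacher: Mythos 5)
Votre démonstration est correcte et suit exactement la voie indiquée par le papier, qui se contente d'invoquer le critère~\ref{pspf2} de la proposition-définition~\ref{pres-suppFini} et le lemme des cinq. Votre vérification explicite que les chasses au diagramme n'utilisent jamais l'exactitude à gauche de la ligne supérieure (que l'exactitude à droite seule de $\varphi_!$ ne garantit pas) est un soin bienvenu, mais ne change pas la nature de l'argument.
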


\begin{pr}\label{prelim-spf}
 Soient $\C$ une petite catégorie, $\A$ une catégorie abélienne avec colimites, $S$ et $T$ deux ensembles d'objets de $\C$ et $F : \C\to\A$ un foncteur. On suppose que :
 \begin{enumerate}
  \item $F$ est nul hors des objets isomorphes à un élément de $S$ ;
  \item $F$ est nul sur $T$ ;
  \item tout morphisme $s\to x$ de $\C$, où $s\in S$ et $F(x)=0$, se factorise par une flèche $s\to t$ où $t\in T$.
 \end{enumerate}

 Alors $S$ est un support de $F$ et $S\cup T$ est un support de présentation de $F$.
\end{pr}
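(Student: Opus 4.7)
My plan is to treat the two conclusions in order, relying on the characterizations in Propositions~\ref{pr-suppFini} and~\ref{pres-suppFini}.

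For the first conclusion (that $S$ is a support of $F$), the argument is essentially immediate from hypothesis~1 and functoriality: given a subfunctor $G \subset F$ with $G(s) = F(s)$ for every $s \in S$, on any $x$ isomorphic in $\C$ to some $s \in S$ via $u$ we get $G(x) = F(x)$ because the restriction of $F(u)$ to $G(s) = F(s)$ is an isomorphism onto $F(x)$ and factors through $G(x)$; and on any other $x$, both $F(x)$ and $G(x)$ vanish by hypothesis~1. So $G = F$.

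For the second conclusion, I would use criterion~\ref{pspf3} of Proposition-Définition~\ref{pres-suppFini}: it suffices to exhibit an exact sequence
$$0 \to N \to \underset{s \in S \cup T}{\bigoplus} A_s[\C(s, -)] \to F \to 0$$
with $S \cup T$ a support of $N$. The natural candidate is the canonical presentation $\pi : P := \underset{s \in S}{\bigoplus} F(s)[\C(s, -)] \twoheadrightarrow F$, reindexed over $S \cup T$ by taking zero summands at $T$ (using hypothesis~2), and $N := \ker \pi$. The surjectivity of $\pi$ follows from the first assertion via characterization~\ref{psf4} of Proposition~\ref{pr-suppFini}. The substantial work is then to show that $S \cup T$ supports $N$, which I would again check through characterization~\ref{psf4}: at every object $x$ of $\C$, verify that $N(x)$ lies in the image of the canonical map $\underset{y \in S \cup T}{\bigoplus} \mathbb{Z}[\C(y, -)] \otimes_{{\rm End}_\C(y)} N(y) \to N$ evaluated at $x$.

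The main step is a case analysis on $x$. If $x$ is isomorphic to some $s^* \in S$ via $u : s^* \to x$, then $N(u)$ is an isomorphism, so $N(x) = N(u)(N(s^*))$ lies in the image from $s^* \in S$. Otherwise hypothesis~1 forces $F(x) = 0$, so $N(x) = P(x) = \bigoplus_{s \in S,\, \gamma \in \C(s, x)} F(s)$; for each indexing summand $(s, \gamma)$, hypothesis~3 factors $\gamma = \delta \beta$ through some $t \in T$ with $\beta : s \to t$ and $\delta : t \to x$, and the copy of $F(s)$ sitting in the $(s, \beta)$-summand of $P(t)$ already lies in $N(t) = P(t)$ (because $F(t) = 0$ by hypothesis~2) and is carried identically onto the $(s, \gamma)$-summand of $P(x)$ by $P(\delta) = N(\delta)$, placing it in the image. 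Summing over the summands gives $N(x)$ in full.

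The conceptual content is that hypotheses~2 and~3 cooperate: hypothesis~3 trades arbitrary morphisms $s \to x$ out of the support of $F$ for detours through $T$, while hypothesis~2 makes those detours vanish under $\pi$, so they lift to honest elements of $N$. The main obstacle I anticipate is purely bookkeeping, namely parsing the direct-sum notation $F(s)[\C(s,-)]$ correctly inside an abstract abelian category $\A$ so that ``each summand is in the image'' makes sense and implies the statement for the whole $N(x)$; this is routine once one writes $F(s)[\C(s,x)]$ as a coproduct of copies of $F(s)$ indexed by $\gamma \in \C(s,x)$ and uses that the image of a morphism is a subobject.
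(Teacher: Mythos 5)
Your proof is correct. For the first assertion you argue exactly as one would expect (the paper simply calls it evident). For the second assertion you take a genuinely different, though closely related, route: you verify criterion~\ref{pspf3} of the proposition-définition~\ref{pres-suppFini} by exhibiting the canonical presentation $P=\bigoplus_{s\in S}F(s)[\C(s,-)]\twoheadrightarrow F$ and checking, via criterion~\ref{psf4} de la proposition~\ref{pr-suppFini}, that its kernel $N$ is supported on $S\cup T$; the paper instead verifies criterion~\ref{pspf2} directly, computing $\varphi_!\varphi^*(F)(x)=\col_{(S\cup T)[\varphi,x]}F(s)$ and showing this colimit vanishes whenever $F(x)=0$, because each structure map out of an $S$-indexed component factors through a $T$-indexed component where $F$ is zero. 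The key combinatorial content is identical in both arguments --- hypothesis~3 reroutes every $\gamma : s\to x$ with $F(x)=0$ through some $t\in T$, and hypothesis~2 kills the detour --- but the paper's use of the Kan-extension colimit formula makes the verification a one-line computation, whereas your version requires the (correct but slightly longer) bookkeeping with summands of $P(x)$ and the observation that $N(t)=P(t)$ when $F(t)=0$. Your approach has the minor virtue of staying entirely at the level of explicit presentations and never invoking the colimit formula for $\varphi_!$; the paper's has the virtue of brevity. Both are complete.
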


\begin{proof}
 La première assertion est évidente. Pour la deuxième, notons $\varphi$ l'inclusion dans $\C$ de $S\cup T$, vu comme sous-catégorie pleine de $\C$. La coünité $\varphi_!\varphi^*(F)\to F$ est un isomorphisme sur les objets de $\C$ isomorphes à un élément de $S$, puisque $\varphi$ est pleinement fidèle, il suffit donc de vérifier que $\varphi_!\varphi^*(F)$ est nul sur les objets $x$ de $\C$ tels que $F(x)=0$. Cela provient de ce que, pour tout objet $(s,s=\varphi(s)\xrightarrow{\xi} x)$ de $(S\cup T)[\varphi,x]$ (cf. les notations du début de ce paragraphe), où $s\in S$ (si $s\in T$, $F(s)=0$), il existe $t\in T$ tel que $\xi$ se factorise par une flèche $s\to t$, et de la nullité de $F(t)$, observations qui impliquent que l'image de $F(s)$ dans $\underset{(S\cup T)[\varphi,x]}{\col}\iota[\varphi,x]^*\varphi^* F=\varphi_!\varphi^*(F)(x)$ est nulle.
\end{proof}

\subsection{Trois notions de finitude auxiliaires sur les foncteurs}

Dans la suite, nous nous concentrerons le plus souvent sur des foncteurs ponctuellement noethériens. La raison en est donnée par la définition et la proposition suivantes.

\begin{defi}
 Soient $\M$ un objet de $\mi$, $\A$ une catégorie abélienne et $F : \M\to\A$ un foncteur.  On dira que $F$ est {\em fortement noethérien} si, pour tout objet $x$ de $\M$, le foncteur $\tau_x(F)$ est noethérien.
\end{defi}

Nous ne traiterons essentiellement que de foncteurs fortement noethériens, dans la suite, pour la raison suivante. Nos démonstrations de propriétés de finitude de foncteurs polynomiaux reposent presque toutes sur des récurrences sur leur degré. En général, il s'agit donc de montrer que $F$ est noethérien en s'appuyant sur le fait que les $\delta_x(F)$, qui sont de degré strictement inférieur (si $F$ est non nul), le sont. Mais les suites exactes $F\to\tau_x(F)\to\delta_x(F)\to 0$ montrent que si $F$ et les $\delta_x(F)$ sont noethériens, alors $F$ est fortement noethérien. On pourra donc difficilement montrer qu'un foncteur est noethérien lorsqu'il n'est pas fortement noethérien. Il existe néanmoins des foncteurs polynomiaux noethériens non fortement noethériens, même dans les situations les plus simples.

\begin{ex}
 Soient $A$ un anneau noethérien à gauche. Le foncteur d'inclusion de $\mathbf{P}(A)$ dans $\mathbf{Ab}$ est polynomial de degré $1$ et noethérien, mais son image par le foncteur $\delta_A$ est le foncteur constant en $A$, qui n'est pas nécessairement un groupe abélien de type fini, donc pas forcément un foncteur de type fini.
\end{ex}

\begin{pr}\label{pr-pni}
 Si $F$ est fortement noethérien, alors $F$ est ponctuellement noethérien.
\end{pr}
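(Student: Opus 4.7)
The plan is to reduce to the following core statement: for any functor $G : \M\to\A$ that is noetherian as an object of $\fct(\M,\A)$, the object $G(0)\in\A$ is noetherian. Granting this, the proposition follows by applying the statement to $G=\tau_c(F)$ for each object $c$ of $\M$: by hypothesis $\tau_c(F)$ is noetherian in $\fct(\M,\A)$, and $\tau_c(F)(0)=F(c+0)=F(c)$ since $0$ is the unit of $+$.

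The core statement exploits the fact that $0$ is initial in $\M$, so that $\M(0,-)$ is the constant functor at a singleton. Consequently, for any object $A$ of $\A$, the constant functor $\underline{A}:=A[\M(0,-)]$ satisfies the Yoneda-type isomorphism $\fct(\M,\A)(\underline{A},H)\simeq\A(A,H(0))$; equivalently, the constant embedding $\A\to\fct(\M,\A)$ is left adjoint to evaluation at $0$.

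Let $A_0\subset A_1\subset\dots$ be an ascending chain of subobjects of $G(0)$ in $\A$. For each $n$, the inclusion $A_n\hookrightarrow G(0)$ corresponds under the adjunction to a natural transformation $\underline{A_n}\to G$, whose image $H_n$ is a subfunctor of $G$ given pointwise, for every $y\in\mathrm{Ob}\,\M$, by the image in $\A$ of the composite $A_n\hookrightarrow G(0)\xrightarrow{G(0\to y)} G(y)$. At $y=0$ this composite is the inclusion $A_n\hookrightarrow G(0)$, so $H_n(0)=A_n$. The inclusions $A_n\hookrightarrow A_{n+1}$ yield, by naturality, inclusions $H_n\subset H_{n+1}$. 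Noetherianity of $G$ forces the chain $(H_n)$ to stabilize beyond some index $N$; evaluating at $0$ gives $A_n=A_N$ for $n\geq N$, proving that $G(0)$ is noetherian.

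There is essentially no serious obstacle; the only subtle point is that the canonical ``extension by constancy'' $A\mapsto \underline{A}$ is well defined and left adjoint to $\mathrm{ev}_0$ precisely because $0$ is initial, which is built into the hypothesis $\M\in\mi$. Everything else is formal: images in $\fct(\M,\A)$ are computed pointwise (since $\A$ is abelian with exact filtered colimits, in particular finite limits and colimits are pointwise), and the resulting chain of subfunctors transfers noetherianity from $G$ down to $G(0)$.
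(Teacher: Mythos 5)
Your proof is correct and follows essentially the same route as the paper: the key point in both is that, because $0$ is initial in $\M$, the subfunctor of a noetherian $G$ generated by a subobject $V\subset G(0)$ (i.e.\ the image of $\underline{V}\to G$ under the adjunction you describe) evaluates to $V$ at $0$, so $G(0)$ inherits the noetherian property, and one concludes via $\tau_x(F)(0)\simeq F(x)$. Your write-up merely spells out the adjunction and the chain argument that the paper leaves implicit.
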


\begin{proof}
 Le fait que $0$ est objet initial de $\M$ implique que, pour tout foncteur noethérien $G : \M\to\A$, $G(0)$ est un objet noethérien de $\A$ (si $V$ est un sous-objet de $G(0)$, l'évaluation en $0$ du sous-foncteur de $G$ engendré par $V$ est égale à $V$). Comme $\tau_x(F)(0)=F(x)$, on en déduit la proposition.
\end{proof}

Introduisons maintenant une définition pour une situation particulièrement simple pour traiter de finitude sur des foncteurs.

\begin{defi}\label{df-psqn}
 Soient $\C$ une petite catégorie, $\A$ une catégorie abélienne et $F : \C\to\A$ un foncteur. On dira que $F$ est {\em presque nul} si l'ensemble des classes d'isomorphisme d'objets $c$ de $\C$ tels que $F(c)\neq 0$ est fini.
\end{defi}

On notera que les foncteurs presque nuls forment une sous-catégorie épaisse de $\fct(\C,\A)$.

Les deux propositions qui suivent sont immédiates.

\begin{pr}\label{ex-psqn}
Tout foncteur presque nul est à support fini.
\end{pr}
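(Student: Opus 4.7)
The plan is to exhibit an explicit finite support. Since $F$ is presque nul, the set $I$ of isomorphism classes of objects $c$ of $\C$ with $F(c)\neq 0$ is finite; using that $\C$ is small, we pick a (finite) system $S$ of representatives, one object for each class in $I$. I claim $S$ is a support of $F$.

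To verify this, I would let $G$ be a subfunctor of $F$ such that $G(s)=F(s)$ for every $s\in S$, and show $G(c)=F(c)$ for every object $c$ of $\C$. Two cases arise. First, if $c$ is not isomorphic to any element of $S$, then by choice of $S$ we have $F(c)=0$, hence $G(c)\subset F(c)=0$ equals $F(c)$. Second, if $c$ is isomorphic to some $s\in S$ via an isomorphism $f:c\to s$ of $\C$, then $F(f):F(c)\to F(s)$ is an isomorphism in $\A$; since $G$ is a subfunctor, $G(f)$ is the restriction of $F(f)$ to $G(c)$, and likewise $G(f^{-1})$ is the restriction of $F(f)^{-1}$. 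The assumption $G(s)=F(s)$ combined with functoriality gives $F(f)^{-1}(F(s))\subset G(c)$, i.e.\ $F(c)\subset G(c)$, so $G(c)=F(c)$. Therefore $G=F$, and $S$ is a support of $F$; being finite, $F$ is à support fini.

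There is no real obstacle here: the statement is essentially formal, the only content being the passage from isomorphism classes to actual representatives (which is legitimate since $\C$ is small) and the elementary observation that subfunctors respect isomorphisms. Alternatively, one could shortcut the argument by invoking criterion \ref{psf3} of Proposition~\ref{pr-suppFini}: the canonical morphism $\bigoplus_{s\in S}F(s)[\C(s,-)]\to F$, adjoint to the identities $F(s)\to F(s)$, is pointwise surjective (trivially on $c$ not isomorphic to any $s\in S$, and on $c\simeq s\in S$ because an isomorphism $c\to s$ provides a section), so $F$ is a quotient of a finite direct sum of functors of the form $A[\C(s,-)]$ with $s\in S$, hence à support fini.
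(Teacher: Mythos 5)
Your argument is correct and is exactly the one the paper has in mind: the paper states this proposition without proof, declaring it immediate, and your verification (take a finite set of representatives of the classes where $F$ is nonzero, then check the support condition directly using that subfunctors are preserved under isomorphisms, or equivalently invoke criterion~\ref{psf3} of la proposition~\ref{pr-suppFini}) is the intended elementary justification. Nothing is missing.
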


\begin{pr}\label{pr-idiote}
 Tout foncteur presque nul et faiblement ponctuellement noethérien est noethérien.
\end{pr}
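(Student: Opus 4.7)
L'approche consiste à se ramener, grâce à la quasi-nullité, à un nombre fini d'évaluations et à y appliquer directement la noethérianité ponctuelle faible. Soit $(F_k)_{k\in\mathbb{N}}$ une suite croissante de sous-foncteurs de $F$ dont je cherche à établir la stationnarité. Par hypothèse de quasi-nullité, il existe un ensemble fini de représentants $c_1,\ldots,c_n$ des classes d'isomorphisme d'objets de $\C$ sur lesquels $F$ prend une valeur non nulle.

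Pour tout objet $c$ de $\C$, la famille $(F_k(c))_{k\in\mathbb{N}}$ forme une chaîne croissante dans la catégorie $\A_{{\rm End}_\C(c)}$, puisqu'un sous-foncteur de $F$ est en particulier stable sous l'action des endomorphismes de $c$. Pour chaque $i\in\{1,\ldots,n\}$, cette chaîne stationne donc par hypothèse de noethérianité ponctuelle faible de $F(c_i)$ dans $\A_{{\rm End}_\C(c_i)}$. Comme il n'y a qu'un nombre fini de telles chaînes à contrôler, il existe un entier $k_0$ tel que $F_k(c_i)=F_{k_0}(c_i)$ pour tout $k\geq k_0$ et tout $i\in\{1,\ldots,n\}$.

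Il reste à propager cette égalité à tous les objets de $\C$. Lorsque $F(c)=0$, on a trivialement $F_k(c)=0$ pour tout $k$. Sinon $c$ est isomorphe à un $c_i$ via un morphisme $\phi : c\xrightarrow{\simeq}c_i$, et la naturalité de l'inclusion $F_k\subset F$ entraîne l'égalité $F_k(c)=F(\phi)^{-1}(F_k(c_i))$ en tant que sous-objets de $F(c)$, de sorte que la stationnarité en $c_i$ se transporte en $c$. On en conclut $F_k=F_{k_0}$ pour $k\geq k_0$, ce qui établit la noethérianité de $F$. L'unique point méritant attention, bien qu'élémentaire, réside dans l'emploi de la noethérianité dans la catégorie $\A_{{\rm End}_\C(c)}$ plutôt que dans $\A$ tout court : c'est précisément cette nuance, intégrée à la définition~\ref{df-finitude}, qui rend l'argument correct, car un sous-foncteur ne définit pas seulement un sous-objet de $\A$ en chaque point mais un sous-objet stable sous l'action du monoïde d'endomorphismes.
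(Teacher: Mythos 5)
Votre démonstration est correcte et correspond à l'argument que l'article considère comme immédiat (aucune preuve n'y est rédigée) : réduction aux évaluations sur les finitely many classes d'isomorphisme où $F$ est non nul, stabilisation de chaque chaîne $(F_k(c_i))$ dans $\A_{{\rm End}_\C(c_i)}$ par noethérianité ponctuelle faible, puis transport par les isomorphismes. Le point que vous soulignez — qu'un sous-foncteur fournit en chaque objet un sous-objet stable sous le monoïde d'endomorphismes, ce qui permet d'invoquer la noethérianité dans $\A_{{\rm End}_\C(c)}$ et non dans $\A$ — est exactement la nuance qui rend l'énoncé vrai sous cette hypothèse affaiblie.
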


On introduit enfin une définition dont on ne fera usage qu'à la fin de cet article.

\begin{defi}\label{presqnoet}
 Soient $\C$ une petite catégorie, $\A$ une catégorie abélienne et $F : \C\to\A$ un foncteur. On dit que $F$ est {\em presque noethérien} s'il existe un sous-foncteur $G$ presque nul de $F$ tel que $F/G$ soit noethérien. 
\end{defi}

\begin{cor}\label{cor-idiot}
 Tout foncteur presque noethérien et faiblement ponctuellement noethérien est noethérien.
\end{cor}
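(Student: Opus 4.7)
The plan is to reduce to Proposition~\ref{pr-idiote} via a two-out-of-three argument for the noetherian property in the short exact sequence coming from the definition of \emph{presque noethérien}.

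Given a functor $F$ which is both presque noethérien and faiblement ponctuellement noethérien, Definition~\ref{presqnoet} furnishes a short exact sequence
$$0\to G\to F\to F/G\to 0$$
in $\fct(\C,\A)$ where $G$ is presque nul and $F/G$ is noethérien. Since extensions of noethérien objects are noethérien in any abelian category, it suffices to show that $G$ is noethérien.

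First I would check that $G$ inherits the property of being faiblement ponctuellement noethérien from $F$. For each object $c$ of $\C$, the inclusion $G(c)\hookrightarrow F(c)$ is a monomorphism in the category $\A_{{\rm End}_\C(c)}$ (the forgetful functor to $\A$ being faithful and exact, plus the fact that the inclusion is ${\rm End}_\C(c)$-equivariant because it comes from a morphism of functors). Since sub-objects of noethérien objects are noethérien in any abelian category --- ascending chains of sub-objects of $G(c)$ are, a fortiori, ascending chains of sub-objects of $F(c)$ --- the object $G(c)$ is noethérien in $\A_{{\rm End}_\C(c)}$.

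Once this is established, $G$ is presque nul (by hypothesis on the chosen $G$) and faiblement ponctuellement noethérien (by the previous step), so Proposition~\ref{pr-idiote} applies directly to yield that $G$ is noethérien. Combining the noethérienne-ness of $G$ and of $F/G$ through the short exact sequence above finishes the proof. There is no real obstacle here; the only point requiring slight care is the inheritance of the weakly pointwise noetherian property by subfunctors, which is automatic once one remarks that the restriction of an ${\rm End}_\C(c)$-equivariant inclusion is itself an equivariant inclusion.
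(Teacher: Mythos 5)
Votre démonstration est correcte et suit exactement l'argument implicite du texte : la suite exacte $0\to G\to F\to F/G\to 0$ issue de la définition~\ref{presqnoet}, l'héritage par le sous-foncteur $G$ de la propriété faiblement ponctuellement noethérienne, l'application de la proposition~\ref{pr-idiote} à $G$, puis la stabilité de la propriété noethérienne par extensions. Le papier laisse ce corollaire sans démonstration précisément parce que c'est cette réduction immédiate qui est attendue ; rien à redire.
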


\section{Hypothèses de finitude utiles sur les objets de $\mi$}\label{sec-sn}

On commence par donner deux définitions qu'on utilisera, dans la section~\ref{spffp}, pour des objets de $\mi$ mais qui ne font pas intervenir de structure monoïdale.

\begin{defi}\label{df-fei}
 On dit qu'une catégorie $\C$ vérifie :
\begin{enumerate}
 \item la propriété $(FE)$ si pour tout objet $x$ de $\M$, le monoïde ${\rm End}_\C(x)$ est de type fini ;
\item la propriété $(EI)$ si tout endomorphisme de $\C$ est un isomorphisme.
\end{enumerate}
\end{defi}

Nous donnons maintenant la liste des propriétés de finitude sur les catégories monoïdales symétriques qui nous seront utiles dans la suite de cet article, à des degrés divers : la propriété $(P_0)$ sera presque toujours nécessaire, tandis que la propriété $(P_4)$ n'interviendra que dans la section~\ref{spffp}.

\subsection{Définitions}

\begin{defi}\label{df-fin2}
 Soit $(\M,+,0)$ une catégorie monoïdale symétrique.
\begin{enumerate}
\item On dira qu'un ensemble $T$ d'objets de $\M$ est {\em générateur monoïdal} (resp. {\em générateur monoïdal faible}) de $(\M,+,0)$ (ou simplement, par abus, de $\M$) si tout objet de $\M$ est isomorphe à une somme finie (resp. à un facteur direct d'une somme finie), au sens de $+$, d'éléments de $T$.
\item On notera $(P_0)$ la propriété suivante de $(\M,+,0)$ : il existe un ensemble générateur monoïdal faible fini.
\item On dira que $(\M,+,0)$ est une {\em pseudo-théorie} si elle possède un ensemble générateur monoïdal réduit à un élément, i.e. s'il existe $a\in {\rm Ob}\,\M$ tel que tout objet de $\M$ soit isomorphe à $a^{+n}$ pour un $n\in\mathbb{N}$. Si l'on peut faire en sorte que cet entier soit toujours unique, nous dirons que $\M$ est une pseudo-théorie {\em régulière}.
 \item Nous dirons que $\M$ vérifie la propriété $(P_1)$ si pour tout objet $x$ de $\M$, l'ensemble des classes d'isomorphisme d'objets de $\M$ n'appartenant pas à l'image essentielle de l'endofoncteur $x+ -$ est fini.
\item Nous dirons que $\M$ vérifie la propriété $(P_2)$ si pour tout ensemble fini $E$ d'objets de $\M$, il existe un objet $x$ de $\M$ tel qu'aucun des éléments de $E$ n'appartienne à l'image essentielle de l'endofoncteur $x+ -$ de $\M$.
\item Nous dirons que $\M$ vérifie la propriété $(P_3)$ si pour tous objets $x$ et $t$ de $\M$, l'ensemble des classes d'isomorphisme d'objets $y$ de $\M$ tel qu'existe un morphisme $x\to y$ ne se factorisant pas par le morphisme canonique $x\to x+t$ est fini.
\item On dira que $\M$ vérifie la propriété $(P_4)$ si pour tous objets $t$ et $a$ de $\M$, il existe un ensemble fini d'objets $U(a,t)$ tel que, pour tout objet $x$ et tout morphisme $f : a\to t+x$ de $\M$, il existe un élément $b$ de $U(a,t)$ et des morphismes $u\in\M(a,t+b)$ et $g\in\M(b,x)$ tels que $f$ coïncide avec la composée                                                                                                                                                                                                                                                                                                                                                             $$a\xrightarrow{u}t+b\xrightarrow{t+g}t+x.$$
\end{enumerate}

\end{defi}

On voit que si $(P_0)$ est vérifiée et que $\M$ appartient à $\mn$, alors il existe un ensemble générateur monoïdal faible réduit à un élément (prendre la somme des éléments d'un ensemble générateur fini).

\begin{rem}\label{rq-diff}
 Si $T$ est un ensemble générateur monoïdal faible d'un objet $\M$ de $\mi$, alors un foncteur $F$ de $\M$ dans une catégorie abélienne est fortement polynomial de degré au plus $d\in\mathbb{N}$ si $\delta_{a_0}\dots\delta_{a_d}(F)=0$ pour tout $(a_0,\dots,a_d)\in T^{d+1}$ ; cela découle des propriétés élémentaires des foncteurs $\delta_x$ (cf. \cite{DV3}). Un critère analogue vaut pour les foncteurs faiblement polynomiaux.
\end{rem}

Le cadre catégorique formel introduit dans \cite{DV} (et légèrement revisité dans \cite{Dja-JKT} et \cite{Dja-cong}), ainsi que celui de \cite{W-stab} (qui s'inspire du précédent) pourra aider le lecteur à se convaincre que, malgré l'apparence technique de certaines des définitions ci-dessus, celles-ci ne sont pas si étranges.

\smallskip

La proposition suivante est immédiate.

\begin{pr}\label{pr-ev}
 \begin{enumerate}
  \item Toute pseudo-théorie vérifie les propriétés $(P_0)$ et $(P_1)$.
\item Une pseudo-théorie régulière vérifie également la propriété $(P_2)$.
 \end{enumerate}
\end{pr}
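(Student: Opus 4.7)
Mon plan est de traiter les deux assertions séparément, en exploitant directement la structure très contrainte d'une pseudo-théorie : l'existence d'un générateur monoïdal réduit à un unique objet $a$ permet d'écrire tout objet de $\M$ sous la forme $a^{+n}$ pour un certain entier $n \geq 0$.

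Pour la première assertion, la propriété $(P_0)$ est immédiate car $\{a\}$ est déjà un ensemble générateur monoïdal fini, donc a fortiori un ensemble générateur monoïdal faible fini. Pour vérifier $(P_1)$, je fixerais un objet $x \simeq a^{+k}$ et observerais que, pour tout $n \geq k$, l'objet $a^{+n}$ appartient à l'image essentielle de l'endofoncteur $x + -$ via la décomposition $a^{+n} \simeq x + a^{+(n-k)}$ (qui repose uniquement sur l'associativité et la commutativité de $+$). Par conséquent, l'ensemble des classes d'isomorphisme d'objets de $\M$ n'appartenant pas à cette image essentielle est contenu dans l'image de l'ensemble fini $\{0, 1, \ldots, k-1\}$ par l'application surjective $n \mapsto [a^{+n}]$ de $\mathbb{N}$ vers l'ensemble des classes d'isomorphisme de $\M$, d'où la finitude cherchée.

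Pour la seconde assertion, l'hypothèse de régularité rend l'application $n \mapsto [a^{+n}]$ bijective, ce qui permet d'associer à chaque objet une \emph{longueur} bien définie. Étant donné un ensemble fini $E$ d'objets, je poserais $N := 1 + \max_{e \in E} n_e$, où $n_e$ désigne l'unique entier tel que $e \simeq a^{+n_e}$, puis $x := a^{+N}$. Tout objet dans l'image essentielle de $x + -$ est alors isomorphe à un $a^{+(N+m)}$ avec $m \geq 0$ et ne peut donc, par régularité, être isomorphe à aucun $a^{+n_e}$, ces exposants étant strictement inférieurs à $N \leq N + m$.

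Je n'anticipe aucun obstacle sérieux dans cette démonstration : les deux assertions découlent directement de la rigidité des pseudo-théories. Le seul point méritant attention est que, dans le cas non régulier, l'application $n \mapsto [a^{+n}]$ peut ne pas être injective, ce qui interdit la stratégie « par longueur » utilisée pour établir $(P_2)$ ; c'est précisément cette obstruction qui justifie le renforcement d'hypothèse dans la seconde assertion.
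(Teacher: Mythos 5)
Votre démonstration est correcte et correspond exactement à la vérification directe que l'article considère comme immédiate (la proposition y est énoncée sans preuve). Les trois points — $\{a\}$ comme générateur monoïdal faible fini pour $(P_0)$, la majoration par les classes $[a^{+n}]$ avec $n<k$ pour $(P_1)$, et l'argument de longueur sous l'hypothèse de régularité pour $(P_2)$ — sont bien les arguments attendus.
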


\subsection{Exemples fondamentaux}

\begin{ex}
 La catégorie $\Theta$ est une pseudo-théorie régulière vérifiant toutes les propriétés introduites précédemment. Toutefois, pour étudier les propriétés de finitude des foncteurs depuis la catégorie $\Theta$, nous n'aurons même pas besoin d'utiliser explicitement toutes ces propriétés.
\end{ex}

Dans des situations issues de catégories additives, les propriétés de finitude précédentes ne sont pas toujours satisfaites, et leur vérification ou réfutation peut s'avérer délicate. Nous nous contenterons de donner quelques cas favorables où l'on peut les établir sans trop de peine.

\begin{pr}\label{pfpa}
 Soit $A$ un anneau non nul.
\begin{enumerate}
 \item L'objet $\mathbf{S}(A)$ de $\mi$ est une pseudo-théorie ; elle est régulière si $A$ est commutatif ou artinien.
\item La catégorie $\mathbf{S}(A)$ vérifie $(P_3)$ si $A$ a un rang stable (au sens de Bass \cite{B}) fini, par exemple si $A$ est une algèbre commutative de type fini sur un corps, un anneau principal, un anneau local ou un anneau fini.
\item La catégorie $\mathbf{S}(A)$ vérifie $(P_4)$ si $A$ est un corps, un anneau principal ou un anneau fini.
\end{enumerate}
\end{pr}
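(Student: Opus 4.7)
The plan is to treat the three assertions separately, starting from the most elementary.

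For the first assertion, since every finitely generated free right $A$-module is isomorphic to some $A^{n}=A^{+n}$, the singleton $\{A\}$ is a monoidal generating set for $\mathbf{S}(A)$, so $\mathbf{S}(A)$ is a pseudo-théorie. Regularity is equivalent to the invariant basis number property of $A$: for commutative $A$, reducing modulo a maximal ideal gives a nonzero vector space whose dimension is an invariant; for (left) artinian $A$, the additivity of the composition length on direct sums yields the same conclusion.

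For the second assertion, I first reinterpret morphisms: a morphism $x\to y$ in $\mathbf{S}(A)$ amounts to a decomposition $y=x\oplus C$, and such a morphism factors through the canonical $x\to x+t$ if and only if $C$ admits $t$ as a direct summand. Writing $x=A^p$, $y=A^q$, $t=A^r$, the module $C$ is stably free with $C\oplus A^p\simeq A^q$, hence of rank $q-p$. Bass's theorem asserts that if $A$ has stable rank $\leq s$ and $q-p\geq s$, then every stably free module of rank $q-p$ is actually free. Consequently, as soon as $q-p\geq\max(s,r)$, the complement $C\simeq A^{q-p}$ contains $A^r$ as a direct summand, so \emph{every} morphism $x\to y$ factors through $x\to x+t$. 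The remaining $y$'s satisfy $q<p+\max(s,r)$, so form finitely many isomorphism classes. The stated examples (commutative finitely generated algebras over a field, PIDs, local rings, finite rings) all have finite stable rank by classical results of Bass.

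The third assertion is the most delicate. A morphism $f:A^p\to A^{r+q}=t+x$ decomposes as $(f_1,f_2)$ with splitting $(s_1,s_2)$, and the factorization $f=(t+g)\circ u$ through $t+b$, $b\simeq A^n$, is equivalent (by composing matrices and forcing the splittings to match) to the existence of a decomposition $x=b\oplus C_g$ such that $P:=f_2(A^p)\subseteq b$ and $C_g\subseteq\ker s_2$; here the splitting conditions force $s_{u,2}=s_2 g$ and, crucially, the identity $s_2(I-gs_g)f_2=0$, which is automatic exactly when $P\subseteq b$. Hence the task reduces to bounding, independently of $q$, the rank of a direct summand $b$ of $A^q$ with $P\subseteq b$ and a complement $C_g$ contained in $\ker s_2$. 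Over a field, decomposing $A^q=\ker s_2\oplus\tilde b$ with $\dim\tilde b\leq p$ and setting $b':=P+\tilde b$ gives $b'$ of dimension $\leq 2p$ with the required property (the complement being obtained as a complement of $b'\cap\ker s_2$ inside $\ker s_2$). Over a PID, the submodule $\ker s_2$ is pure hence a direct summand of $A^q$ of corank $\leq p$; replacing $b'$ by its saturation in $A^q$ makes it a direct summand of rank $\leq 2p$, and the complement is furnished by taking a pure complement inside $\ker s_2$. For finite rings, one exploits finiteness directly: $A$ being artinian, $\mathbf{P}(A)$ enjoys the Krull--Schmidt property and any f.\,g.\,projective module contains (or avoids) a given indecomposable in a uniformly controlled way, whence an analogous bound. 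In each case $U(a,t)$ can be taken to be $\{A^0,A^1,\dots,A^N\}$ for some $N$ depending only on $p$ and $r$.

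The main obstacle is, unsurprisingly, the splitting compatibility in $(P_4)$: factorization in $\mathbf{P}(A)$ is easy, but in $\mathbf{S}(A)$ one must additionally arrange the complement $C_g$ to lie in $\ker s_2$, which is what forces the hypotheses on $A$ (availability of pure/direct complements, or uniform control through Krull--Schmidt in the finite case). The verifications in parts 1 and 2 are straightforward reductions to textbook facts (IBN and stable-rank results of Bass), whereas the case-by-case discussion in part 3 is where the proof really has to work.
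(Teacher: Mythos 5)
Your first two assertions are handled correctly and essentially as in the paper: the paper likewise reduces regularity to invariance of the rank (determinant for $A$ commutative, Krull--Schmidt for $A$ artinien) and reformulates $(P_3)$ exactly as you do --- a morphism $x\to y$ is a decomposition $y=x\oplus C$ with $C$ stably free, factoring through $x\to x+t$ precisely when $C$ has a free summand of rank $\operatorname{rg}(t)$ --- before invoking Bass's cancellation theorem.

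For $(P_4)$ your route is genuinely different from the paper's, and on the field and principal-ideal-domain cases it is the more careful one. The paper reduces $(P_4)$ to the factorization of the single linear map $y=f_2$ through a splittable monomorphism $A^i\to A^s$ with $i$ bounded (taking $r=1$ for a field), whereas you correctly identify that the binding constraint is on the \emph{splitting}: one needs $x=b\oplus C_g$ with $\operatorname{Im}(f_2)\subseteq b$ \emph{and} $C_g\subseteq\ker s_2$, and this second condition is invisible to $f_2$ alone. Your equivalence is correct, and the constraint is real: for $A=\mathbb{Q}$, $a=t=A$, $x=A^2$, $f_2(1)=(1,0)$ and $s_2=(0,1)$, no factorization through $t+A^1$ exists, so the rank of $b$ must be allowed to reach $2p$, exactly the bound your construction (complement of $\ker s_2$ of rank $\leq p$, saturation of $P+\tilde b$, pure complement inside $\ker s_2$) delivers over a field or a principal ideal domain. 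Those two cases are, to my mind, complete.

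The finite-ring case, however, is a genuine gap. The appeal to Krull--Schmidt and to indecomposables being contained ``in a uniformly controlled way'' does not engage with the actual problem, which is to produce a \emph{free} direct summand $b$ of $A^q$, of rank bounded independently of $q$, containing the $p$ generators of $\operatorname{Im}(f_2)$ \emph{and} admitting a complement inside $\ker s_2$; neither requirement follows from Krull--Schmidt (note also that over a finite ring $\ker s_2$ need not be a direct summand, so the field/PID scheme does not transpose). What does work is coordinate counting: the map sending $k\in\{1,\dots,q\}$ to the $k$-th coordinates of the generators of $\operatorname{Im}(f_2)$ and of the components of $s_2$ takes at most $|A|^{2p}$ values; on each class $F$ of basis vectors the data are multiples of $\mathbf{1}_F$, so $A^F$ splits as a rank-one free summand $Ae_{k_F}$ plus $\{v:\sum_k v_k=0\}\subseteq\ker s_2$; summing these gives $A^q=b_0\oplus C_0$ with $C_0\subseteq\ker s_2$ free with explicit basis and $b_0$ free of rank $\leq|A|^{2p}$, and one then absorbs the $C_0$-components of the generators into a further free summand of $C_0$ of rank $\leq|A|^{p}$ (same counting trick, any complement now being automatically in $\ker s_2$). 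Until an argument of this kind is supplied, your part~3 is established for fields and principal ideal domains but not for finite rings.
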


\begin{proof}
 Il est évident que $\mathbf{S}(A)$ est une pseudo-théorie ; elle est régulière si et seulement si tout isomorphisme $A^{ n}\simeq A^{m}$ de $A$-modules à droite, où $n, m\in\mathbb{N}$, implique $n=m$. Lorsque $A$ est commutatif, cette propriété s'obtient de façon rapide et classique par l'utilisation du déterminant, pour $A$ artinien c'est une conséquence de la propriété de Krull-Schmidt.

Dans la catégorie $\mathbf{S}(A)$, la condition $(P_3)$ peut se reformuler comme suit : {\em pour tous $n,m\in\mathbb{N}$, il existe $r$ tel que, pour $t\geq r$, la condition $A^t\simeq A^n\oplus M$ pour un $A$-module à droite $M$ implique l'existence d'un isomorphisme $M\simeq A^m\oplus N$ pour un certain $A$-module à droite $N$}. Il suffit bien sûr de le montrer pour $n=m=1$, en raisonnant par récurrence. Dire que $A$ est de rang stable fini implique classiquement et facilement cette propriété.

Dans la catégorie $\mathbf{S}(A)$, la condition $(P_4)$ est équivalente à la suivante : {\em il existe un entier $r$ tel que pour tout entier $s$, toute application $A$-linéaire (à droite) $f : A\to A^s$ se factorise en
$$A\to A^i\xrightarrow{g} A^s$$
pour un $i\leq r$ et un monomorphisme scindable $g$.}

On montre seulement ici que cette propriété implique l'hypothèse $(P_4)$ ; la réciproque, que le lecteur pourra traiter facilement en exercice, ne nous servira pas.
Si la condition précédente est vérifiée, considérons un morphisme $\Phi : A\to A^n\oplus A^s$ de $\mathbf{S}(A)$, c'est-à-dire la donnée d'applications linéaires $A\to A^n\oplus A^s$ de coordonnées $(x,y)$ et $A^n\oplus A^s\to A$ de coordonnées $(l,L)$ avec $l(x)+L(y)=1$. On factorise $y$ comme précédemment : on trouve $z : A\to A^i$ avec $i\leq r$, $g : A^i\to A^n$ et $h : A^n\to A^i$ tels que $gz=y$ et $hg=1$ ($g$ et $h$ forment donc un élément $\Psi$ de $\mathbf{S}(A)(A^i,A^n)$). On obtient alors une factorisation dans $\mathbf{S}(A)$
$$\xymatrix{A\ar[rr]^\Phi\ar[rd]_-\Xi & & A^n\oplus A^s \\
& A^n\oplus A^i\ar[ru]_{A^n\oplus\Psi} &
}$$
où $\Xi$ est le morphisme donné par $(x,z) : A\to A^n\oplus A^i$ et $A^n\oplus A^i\to A$ de coordonnées $l$ et $Lg$. Cela montre que $U(A,A^n)=\{A^i\,|\,i\leq r\}$ est un choix convenable (avec les notations de la définition~\ref{df-fin2} pour $(P_4)$), d'où l'on déduit facilement que $(P_4)$ est vérifiée (raisonner par récurrence sur $m$ pour trouver un $U(A^m,A^n)$ convenable).

Lorsque $A$ est un corps ou un anneau principal, il est clair que la condition précédente est satisfaite pour $r=1$. Si $A$ est un anneau fini, on peut choisir évidemment le cardinal de l'anneau pour $r$. Cela termine la démonstration.
\end{proof}

\begin{rem}
\begin{enumerate}
\item Il n'est pas difficile de voir que la condition $(P_4)$ pour la catégorie $\mathbf{S}(A)$ entraîne que $A$ a un rang stable de Bass fini (mais la réciproque semble erronée). D'un point de vue culturel, il serait intéressant de clarifier les liens entre les conditions de Bass et nos hypothèses, qui pourraient permettre de revisiter à l'aune de méthodes fonctorielles des résultats connus sur la stabilité en $K$-théorie algébrique.
\item On pourrait généraliser la proposition précédente à des catégories d'espaces hermitiens hyperboliques sur un anneau à involution aux bonnes propriétés.
\end{enumerate}
\end{rem}

\subsection{Utilisation pour les propriétés de finitude des foncteurs stablement nuls}

Pour espérer disposer de bonnes propriétés de finitude des foncteurs faiblement polynomiaux, il est indispensable de commencer par se pencher sur les foncteurs stablement nuls : certaines des hypothèses introduites précédemment permettront d'en garantir un comportement agréable. Pour cela, on relie la notion de foncteur stablement nul à celle de foncteur presque nul.

Les deux propositions suivantes sont immédiates. 

\begin{pr}\label{lm-pt3}
 Soient $\M$ un objet de $\mi$, $\A$ une catégorie abélienne et $F : \M\to\A$ un foncteur.
\begin{enumerate}
 \item Si $F$ est presque nul et que $\M$ vérifie la propriété $(P_2)$, alors il existe un objet $t$ de $\M$ tel que $\tau_t(F)=0$.
\item S'il existe un objet $t$ de $\M$ tel que $\tau_t(F)=0$ et que $\M$ vérifie la propriété $(P_1)$, alors $F$ est presque nul.
\end{enumerate}
\end{pr}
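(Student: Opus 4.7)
Le plan est de dérouler directement les définitions : la nullité de $\tau_t(F)$ signifie exactement que $F$ s'annule sur tout objet appartenant à l'image essentielle de l'endofoncteur $t+-$ de $\M$, de sorte qu'il s'agira d'ajuster, au moyen des propriétés $(P_1)$ et $(P_2)$, cette image essentielle à l'ensemble (à isomorphisme près) des objets où $F$ est non nul. La seule précaution à garder en tête tout au long est que, $F$ étant un foncteur, la condition $F(c)\neq 0$ est invariante par isomorphisme, de sorte que tous les raisonnements pourront se mener sur les classes d'isomorphisme d'objets de $\M$, ce qui s'accorde avec les formulations des propriétés $(P_1)$ et $(P_2)$.

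Pour la première assertion, je choisirais un ensemble fini $E$ de représentants des classes d'isomorphisme d'objets $c$ de $\M$ tels que $F(c)\neq 0$, ensemble fini par l'hypothèse de presque nullité. L'application de $(P_2)$ à $E$ fournira alors un objet $t$ de $\M$ tel qu'aucun élément de $E$ n'appartienne à l'image essentielle de $t+-$ ; ainsi, pour tout $y\in {\rm Ob}\,\M$, l'objet $t+y$ ne sera isomorphe à aucun élément de $E$, d'où $F(t+y)=0$, soit $\tau_t(F)=0$.

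Pour la seconde assertion, l'hypothèse $\tau_t(F)=0$ équivaut à la nullité de $F$ sur l'image essentielle de l'endofoncteur $t+-$, et il suffira d'appliquer $(P_1)$ à $x=t$ pour conclure que le complémentaire de cette image essentielle ne contient qu'un nombre fini de classes d'isomorphisme d'objets de $\M$ ; l'ensemble des classes d'isomorphisme de $c$ vérifiant $F(c)\neq 0$ étant contenu dans ce complémentaire, il sera fini, ce qui est exactement la presque nullité recherchée. Aucun obstacle sérieux n'est à anticiper : la démonstration sera essentiellement automatique une fois les définitions correctement dépliées, en accord avec la remarque de l'auteur selon laquelle l'énoncé est immédiat.
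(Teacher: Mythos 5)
Votre démonstration est correcte et suit exactement la démarche attendue : le texte qualifie ces deux assertions d'immédiates et n'en donne aucune preuve, et votre dépliage des définitions (application de $(P_2)$ à un ensemble fini de représentants des classes d'isomorphisme où $F$ est non nul, puis de $(P_1)$ en $x=t$ pour majorer le complémentaire de l'image essentielle de $t+-$) est précisément l'argument sous-entendu. Rien à redire.
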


\begin{pr}\label{prp2}
 Soient $\M$ un objet de $\mi$ vérifiant $(P_1)$, $\A$ une catégorie abélienne et $F : \M\to\A$ un foncteur. Supposons que $F$ est faiblement ponctuellement noethérien et qu'il existe un objet $t$ de $\M$ tel que $\tau_t(F)$ soit noethérien. Alors $F$ est noethérien.
\end{pr}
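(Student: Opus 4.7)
The plan is to combine the exactness of $\tau_t$, the noetherianity of $\tau_t(F)$, property $(P_1)$ (via Proposition~\ref{lm-pt3}) and the weak pointwise noetherianity of $F$ to stabilise any ascending chain of subfunctors of~$F$. The whole point is that neither hypothesis taken separately produces a \emph{uniform} stabilisation: pointwise noetherianity yields stabilisation at each object $c$, but there are infinitely many iso classes; conversely $\tau_t(F)$ being noetherian ignores the iso classes outside the essential image of $t+-$. One must therefore combine the two.

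Given an ascending chain $(G_n)_{n \in \mathbb{N}}$ of subfunctors of $F$, I would first apply the exact endofunctor $\tau_t$. The resulting chain $(\tau_t(G_n))$ lies inside the noetherian functor $\tau_t(F)$, hence stabilises from some rank $N$. Setting $H_n := G_n/G_N$ for $n \geq N$, exactness of $\tau_t$ gives $\tau_t(H_n) = 0$, so by Proposition~\ref{lm-pt3}(2) (which is precisely where $(P_1)$ is used) the functor $H_n$ is presque nul. More precisely, $H_n$ vanishes on every object of the form $t+y$, so its support is contained in the \emph{fixed} finite set $S$ of isomorphism classes of objects of $\M$ lying outside the essential image of $t+-$ (finiteness being another incarnation of $(P_1)$).

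It then suffices to show that the chain $(H_n)_{n \geq N}$ of subfunctors of $F/G_N$ stabilises. For each representative $s$ of an iso class in $S$, $F(s)$ is noetherian in $\A_{\mathrm{End}_\M(s)}$ by weak pointwise noetherianity of $F$, hence so is the quotient $(F/G_N)(s)$; the chain of subobjects $(H_n(s))_{n \geq N}$ therefore stabilises in $\A_{\mathrm{End}_\M(s)}$. Since $S$ is finite one may choose a single integer $N' \geq N$ past which all the (finitely many) pointwise chains $(H_n(s))_{n}$, $s \in S$, have stabilised simultaneously. On any object $c$ whose iso class does not lie in $S$, $H_n(c) = 0$ for every $n$ by construction. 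Consequently $H_n = H_{N'}$ as subfunctors of $F/G_N$ for all $n \geq N'$, equivalently $G_n = G_{N'}$, which establishes the noetherianity of $F$.

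I do not foresee a serious obstacle; the only mildly delicate point is the final identification of two presque nul subfunctors from the equality of their values on representatives of the (common, finite) supporting set of iso classes, which is routine since isomorphisms in $\M$ transport these values canonically and everything else is zero.
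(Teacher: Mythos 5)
Your argument is correct and is precisely the ``immediate'' argument the paper has in mind (the paper states Proposition~\ref{prp2} without proof): stabilise the chain after applying the exact functor $\tau_t$ using the noetherianity of $\tau_t(F)$, observe via $(P_1)$ that the successive quotients are concentrated on a fixed finite set of isomorphism classes, and finish with weak pointwise noetherianity there. The final identification of subfunctors from their values on representatives is handled correctly.
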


Nous utiliserons la propriété $(P_3)$ par l'intermédiaire de la proposition sui\-vante.

\begin{pr}\label{psn}
 Soient $\M$ un objet de $\mi$ vérifiant la propriété $(P_3)$, $\A$ une catégorie abélienne et $F : \M\to\A$ un foncteur stablement nul et de type fini. Alors $F$ est presque nul.
\end{pr}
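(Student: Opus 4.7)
The plan is to combine the finite support of $F$ (which comes from its finite generation via Corollaire~\ref{pf-gal2}) with a stabilization argument for the filtered system $(\kappa_x F)_{x\in\mathrm{Ob}\,\M}$, and then exploit $(P_3)$ to conclude that $F$ vanishes outside finitely many isomorphism classes.

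First, from $F$ de type fini and Corollaire~\ref{pf-gal2}, item~1, I extract a support fini $S=\{s_1,\dots,s_n\}\subset \mathrm{Ob}\,\M$ of $F$. By Proposition~\ref{pr-suppFini}, this means that every value $F(c)$ is generated by images $F(f)(\eta)$ with $f\in\M(s_i,c)$, $\eta\in F(s_i)$, $i=1,\dots,n$. Next, I observe that the family $(\kappa_x F)_{x\in\mathrm{Ob}\,\M}$ of subfunctors of $F$ is filtered: the factorisation of $i_{x+y}(F)$ as $\tau_x(i_y(F))\circ i_x(F)$ gives $\kappa_x F\subset\kappa_{x+y}F$, so $\kappa_x F+\kappa_y F\subset\kappa_{x+y}F$. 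Hence the subfunctor $\kappa F=\sum_x \kappa_x F$ coincides with the filtered union $\bigcup_x \kappa_x F$. Since $F$ est stablement nul, this union is the whole of $F$; since $F$ is de type fini, such a filtered increasing family of subobjects whose réunion is $F$ is stationnaire, so there exists $x_0\in\mathrm{Ob}\,\M$ with $\kappa_{x_0}F=F$. In other words, the natural transformation $i_{x_0}(F):F\to\tau_{x_0}F$ is nulle, so $F(f):F(c)\to F(x_0+c)$ vanishes for every object $c$.

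Now I invoke $(P_3)$ for each pair $(s_i,x_0)$: the set $E_i$ of isomorphism classes of objects $y$ admitting some morphism $s_i\to y$ ne se factorisant pas par $s_i\to s_i+x_0$ is fini. Let $E=\bigcup_{i=1}^n E_i$, which is again fini. For any $c\notin E$ (up to isomorphisme), every morphism $f:s_i\to c$ factorises as $f=g\circ(s_i\to s_i+x_0)$ for some $g:s_i+x_0\to c$; hence for any generator $\eta\in F(s_i)$,
\[
F(f)(\eta)=F(g)\bigl(F(s_i\to s_i+x_0)(\eta)\bigr)=F(g)(0)=0,
\]
since $i_{x_0}(F)$ evaluated in $s_i$ is nulle. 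Combined with the finite support property, this forces $F(c)=0$.

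Therefore $F(c)\neq 0$ only for $c$ dont la classe d'isomorphisme appartient à l'ensemble fini $E$, c'est-à-dire $F$ est presque nul. The only delicate step is the stabilisation of the filtered union $\bigcup_x\kappa_xF=F$, which rests on the définition even des objets de type fini; everything else is a direct application of Proposition~\ref{pr-suppFini}, of the property $(P_3)$, and of the composition formula for the transformations $i_x$.
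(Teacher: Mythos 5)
Votre démonstration est correcte et suit essentiellement le même chemin que celle de l'article : on extrait un support fini de $F$ via le corollaire~\ref{pf-gal2}, on utilise le fait que $F$, stablement nul et de type fini, est la réunion filtrante stationnaire des $\kappa_x(F)$ pour trouver $x_0$ avec $i_{x_0}(F)=0$, puis on applique $(P_3)$ aux couples $(s_i,x_0)$ pour conclure. Les seuls ajouts sont des explicitations bienvenues (la filtration des $\kappa_x F$ via la factorisation de $i_{x+y}$, et le calcul $F(f)(\eta)=F(g)(0)=0$) que l'article laisse implicites.
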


\begin{proof}
 Le foncteur $F$, stablement nul, est réunion filtrante des sous-foncteurs $\kappa_x(F)$ (pour $x\in {\rm Ob}\,\M$) et de type fini, donc il existe $x$ tel que $\kappa_x(F)=F$, c'est-à-dire tel que $i_x(F) : F\to\tau_x(F)$ soit nul. Si $S$ est un support de $F$, cela entraîne que $F(y)=0$ pour tout objet $y$ tel que toute flèche de source $t$ dans $S$ et de but $y$ se factorise par $t\to x+t$. Comme $F$ possède un support fini (corollaire~\ref{pf-gal2}) et que $\M$ vérifie $(P_3)$, on en déduit la finitude souhaitée de l'ensemble des classes d'isomorphisme d'objets $y$ de $\M$ tels que $F(y)$ est non nul.
\end{proof}

En utilisant la proposition~\ref{pr-idiote} et le corollaire~\ref{pf-gal2}, on en déduit :

\begin{cor}\label{cor-snn}
 Soient $\M$ un objet de $\mi$ vérifiant $(P_3)$, $\A$ une catégorie abélienne et $F : \M\to\A$ un foncteur stablement nul, à support fini et faiblement ponctuellement noethérien. Alors $F$ est noethérien.
\end{cor}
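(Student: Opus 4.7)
Le plan est d'enchaîner trois résultats déjà établis : la proposition~\ref{psn} (qui, sous $(P_3)$, affirme qu'un foncteur stablement nul et de type fini est presque nul), le corollaire~\ref{pf-gal2} (qui permet de passer de la conjonction support fini + faible type fini ponctuel à la propriété de type fini), et la proposition~\ref{pr-idiote} (qui, sur un foncteur presque nul, convertit la noethérianité ponctuelle faible en noethérianité globale). L'idée générale est la suivante : l'hypothèse de support fini fournit, grâce à la noethérianité ponctuelle faible, un contrôle suffisant pour en déduire la propriété de type fini ; puis le caractère stablement nul, combiné à $(P_3)$, force le foncteur à être presque nul ; enfin ce caractère presque nul, couplé à la noethérianité ponctuelle faible, assure la noethérianité.

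Concrètement, je procéderais ainsi. D'abord, j'observe que $F$ étant faiblement ponctuellement noethérien, il est en particulier faiblement ponctuellement de type fini ; comme il est par ailleurs à support fini, le corollaire~\ref{pf-gal2} garantit que $F$ est de type fini. Ensuite, $F$ étant de type fini et stablement nul, et $\M$ vérifiant $(P_3)$, la proposition~\ref{psn} s'applique et livre le fait que $F$ est presque nul. Enfin, $F$ étant presque nul et faiblement ponctuellement noethérien, la proposition~\ref{pr-idiote} entraîne immédiatement que $F$ est noethérien.

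Aucune de ces trois étapes ne présente de difficulté intrinsèque : le cœur du travail a été fait dans les résultats préparatoires. Si l'on devait identifier une étape moins immédiate, c'est le recours à la proposition~\ref{psn} qui est le plus substantiel, puisque c'est là qu'intervient de façon essentielle l'hypothèse $(P_3)$ sur $\M$ pour contrôler la finitude du support effectif du foncteur stablement nul à partir d'un support au sens de la définition~\ref{df-supp} et du caractère de type fini. Le reste n'est qu'un assemblage formel.
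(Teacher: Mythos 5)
Votre démonstration est correcte et suit exactement le chemin que le texte indique : le corollaire~\ref{pf-gal2} (support fini + faiblement ponctuellement de type fini $\Rightarrow$ type fini), puis la proposition~\ref{psn} (stablement nul + type fini + $(P_3)$ $\Rightarrow$ presque nul), puis la proposition~\ref{pr-idiote}. C'est précisément l'assemblage annoncé par la phrase qui précède l'énoncé dans l'article.
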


Sans la propriété $(P_3)$ (ou de légers affaiblissements de cette condition), il y a généralement beaucoup trop de foncteurs stablement nuls pour qu'on puisse aborder leurs propriétés de finitude par des méthodes spécifiques.

\begin{ex}
 Voyons le monoïde commutatif ordonné usuel $\mathbb{N}$ comme un objet de $\mi$ et notons $\mathbb{Z}_0 : \mathbb{N}\to\mathbf{Ab}$ le foncteur égal à $\mathbb{Z}$ en $0$ et nul ailleurs. Si $\M$ est un objet de $\mi$, le foncteur $\fct(\M,\A)\to\fct(\M\times\mathbb{N},\A)$ (où $\A$ est une catégorie abélienne) produit tensoriel extérieur par $\mathbb{Z}_0$ est exact et pleinement fidèle ; il est à valeurs dans les foncteurs stablement nuls. Ainsi, tout foncteur $\M\to\A$ peut se voir comme un foncteur stablement nul, en changeant la catégorie de départ. On notera de surcroît que, si $\M$ vérifie $(P_0)$ ou $(P_2)$, alors il en est de même pour $\M\times\mathbb{N}$ (qui ne vérifie toutefois presque jamais $(P_1)$).
\end{ex}

La proposition suivante et son corollaire constituent un échauffement pour les considérations du §\,\ref{sect-psf}.

\begin{pr}\label{pr-psfs}
 Soient $\M$ un objet de $\mi$ vérifiant $(P_0)$ et $(P_3)$ et $\A$ une catégorie abélienne avec colimites. Tout foncteur presque nul $F : \M\to\A$ est à présentation de support fini.
\end{pr}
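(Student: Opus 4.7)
Plan. The aim is to verify the hypotheses of Proposition~\ref{prelim-spf}, which will immediately provide a finite support of presentation. Let $S$ be a finite set of representatives of the isomorphism classes of objects $c\in\mathrm{Ob}(\M)$ with $F(c)\neq 0$; $S$ is finite by the hypothesis that $F$ is almost null (Définition~\ref{df-psqn}), and $F$ vanishes on every object not isomorphic to an element of $S$, which gives the first hypothesis of Proposition~\ref{prelim-spf}.

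The main task is to produce a finite set $T$ of objects on which $F$ vanishes and such that every morphism $s\to x$ with $s\in S$ and $F(x)=0$ factors through some arrow $s\to t$ with $t\in T$. The construction proceeds as follows. First, invoke $(P_0)$ to fix a finite weak monoidal generator $\{g_1,\dots,g_k\}$ of $\M$. Using this generator together with the finiteness of $S$, produce an object $\tau$ of $\M$ such that $F(s+\tau)=0$ for every $s\in S$. Second, for each $s\in S$, apply $(P_3)$ to the pair $(s,\tau)$: the set $Y_s$ of isomorphism classes of objects $y$ such that some morphism $s\to y$ does not factor through the canonical arrow $s\to s+\tau$ is finite. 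Third, set
$$T:=\{s+\tau\mid s\in S\}\cup\bigcup_{s\in S}(Y_s\setminus S).$$
By construction $T$ is finite and $F$ vanishes on all of its elements (on $s+\tau$ by the choice of $\tau$, on $Y_s\setminus S$ since these iso classes lie outside $S$). For any morphism $\alpha:s\to x$ with $s\in S$ and $F(x)=0$, either $x$ is isomorphic to an element of $Y_s$, in which case $x\in T$ and $\alpha$ factors through $s\xrightarrow{\alpha}x\in T$; or $x$ is not isomorphic to any element of $Y_s$, in which case by $(P_3)$ $\alpha$ factors through $s\to s+\tau\in T$. Applying Proposition~\ref{prelim-spf} then gives that $S\cup T$ is a finite support of presentation of $F$, so $F$ is $PSF$.

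The main obstacle is the first step: exhibiting an object $\tau$ with $F(s+\tau)=0$ simultaneously for all $s\in S$, using only $(P_0)$ and $(P_3)$. The natural candidate is a sufficiently large sum of the generators $g_i$, chosen so as to push each $s+\tau$ outside the finite set $S$ of iso classes. Justifying this rigorously requires a combinatorial argument: one inspects the orbits $\{s+n\tau\}_{n\ge 0}$ for each $s\in S$, which by finiteness of $S$ must be eventually periodic if they never escape $S$, and then one invokes $(P_3)$ applied to the pair $(s+a\tau,(b-a)\tau)$ (where $s+a\tau\cong s+b\tau$) to rule out such periodic absorption or to reduce it to a case where the relevant canonical arrow is already an isomorphism. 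In concrete good situations (for instance $\M=\mathbf{S}(A)$ with $A$ regular of finite stable rank), one may simply take $\tau$ to be a large enough standard object and invoke well-definedness of rank, but in the abstract setting this interplay between $(P_0)$ and $(P_3)$ is the delicate heart of the argument.
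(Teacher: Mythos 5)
Your overall strategy --- reduce to Proposition~\ref{prelim-spf}, taking for $S$ a finite set of representatives of the isomorphism classes on which $F$ is nonzero and building $T$ out of the finite sets furnished by $(P_3)$ --- is exactly that of the paper, and your verification of the three hypotheses of Proposition~\ref{prelim-spf} is correct once the escape objects are available. The gap is precisely the step you flag yourself: the existence of those objects. Note first that you do not need a single $\tau$ working uniformly for all $s\in S$: the factorisation condition in Proposition~\ref{prelim-spf} concerns morphisms out of each $s$ separately, so an object $f(s)$ depending on $s$, with $s+f(s)$ isomorphic to no element of $S$, suffices (one then applies $(P_3)$ to each pair $(s,f(s))$ and takes $T$ to be the union of the $\{s+f(s)\}$ and of the resulting finite sets). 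This is what the paper does, and it sidesteps the uniformisation problem altogether; your sketch via eventually periodic orbits $\{s+n\tau\}_{n}$ and an application of $(P_3)$ to the pair $(s+a\tau,(b-a)\tau)$ does not constitute a proof that a uniform $\tau$ exists.

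More seriously, even the per-element object $f(s)$ need not exist: nothing in $(P_0)$ and $(P_3)$ forbids the existence of some $s_0\in S$ such that $s_0+x$ is isomorphic to an element of $S$ for \emph{every} object $x$ of $\M$, and your argument silently assumes this cannot happen when you speak of choosing $\tau$ \emph{so as to push each $s+\tau$ outside the finite set $S$}. The paper isolates this as a genuine degenerate case and disposes of it separately: if no function $f$ as above exists, then every object $y$ of $\M$ is a direct factor of $s_0+y$, hence of an element of the finite set $S$, and one deduces from $(P_0)$ a finite set $E$ of objects which is a presentation support of \emph{every} functor $\M\to\A$, so that the conclusion holds trivially. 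Your proof is therefore incomplete on two counts: the uniform $\tau$ is both stronger than needed and unjustified, and the degenerate case in which no escape object exists at all is not addressed.
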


\begin{proof}
 Soit $S$ un ensemble fini d'objets de $\M$ tel que $F$ soit nul hors des objets isomorphes à un élément de $S$. On peut supposer qu'il existe une fonction $f : S\to {\rm Ob}\,\M$ telle que, pour tout $s\in S$, $s+f(s)$ ne soit isomorphe à aucun élément de $S$. En effet, si ce n'est pas le cas, on déduit facilement de la propriété $(P_0)$ qu'il existe un ensemble fini $E$ d'objets de $\M$ tel que tout objet de $\M$ soit facteur direct d'un élément de $E$, auquel cas $E$ est support de présentation de {\em tout} foncteur $\M\to\A$. Cette fonction $f$ étant fixée, par $(P_3)$, on dispose, pour tout $s\in S$, d'un ensemble fini $R(s)$ d'objets de $\M$ tel que, si $u$ est un objet de $\M$ tel qu'existe une flèche $s\to u$ ne se factorisant pas par le morphisme canonique $s\to s+f(s)$, alors $u$ est isomorphe à un élément de $R(s)$. La proposition~\ref{prelim-spf} montre alors que l'ensemble fini
 $$S\cup\{\,s+f(s)\,|\,s\in S\}\cup\underset{s\in S}{\bigcup} R(s)$$
 est un support de présentation de $F$.
\end{proof}

En combinant ce résultat à la proposition~\ref{psn}, on obtient l'énoncé suivant.

\begin{cor}\label{cor-psf}
 Soient $\M$ un objet de $\mi$ vérifiant $(P_0)$ et $(P_3)$ et $\A$ une catégorie abélienne avec colimites. Tout foncteur stablement nul et de type fini $\M\to\A$ est à présentation de support fini.
\end{cor}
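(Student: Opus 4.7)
The plan is straightforward: this corollary is a direct composition of the two results immediately preceding it, namely Proposition~\ref{psn} and Proposition~\ref{pr-psfs}. The text explicitly announces this (``En combinant ce résultat à la proposition~\ref{psn}''), so the proof should simply chain the two implications.

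First I would invoke Proposition~\ref{psn}: since $\M$ satisfies $(P_3)$ and $F : \M \to \A$ is stably null and of finite type, $F$ is almost null (``presque nul'') in the sense of Definition~\ref{df-psqn}. This is the content-bearing step, but we are allowed to cite it; note that it relies internally on Corollary~\ref{pf-gal2} to guarantee that a finite type functor admits a finite support, which is then combined with the hypothesis $(P_3)$ to bound the isomorphism classes on which $F$ is nonzero.

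Next I would apply Proposition~\ref{pr-psfs} to the almost null functor $F$: since $\M$ satisfies both $(P_0)$ and $(P_3)$, every almost null functor $\M \to \A$ admits a finite support of presentation. This yields immediately that $F$ is $PSF$, which is the required conclusion.

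Since both premises of the corollary ($(P_0)$ and $(P_3)$) are exactly what is needed to chain these two propositions, there is no real obstacle and no additional argument to supply: the proof reduces to two lines citing the previous results. The only thing worth verifying is that the hypotheses match up cleanly, which they do --- Proposition~\ref{psn} needs $(P_3)$ and Proposition~\ref{pr-psfs} needs $(P_0)$ and $(P_3)$, and the corollary assumes both.
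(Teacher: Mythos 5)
Votre démonstration est correcte et suit exactement la voie du texte : on applique la proposition~\ref{psn} pour conclure que $F$ est presque nul, puis la proposition~\ref{pr-psfs} pour obtenir la propriété $PSF$, les hypothèses $(P_0)$ et $(P_3)$ couvrant précisément les besoins des deux énoncés. C'est mot pour mot l'argument que le papier indique par la phrase qui précède le corollaire.
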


\section{Support des foncteurs fortement polynomiaux}

Les propriétés de finitude des foncteurs polynomiaux depuis une catégorie dans $\mn$ sont beaucoup plus faciles à étudier que pour une catégorie source générale de $\mi$. Cela est dû à une propriété générale de support (proposition~\ref{pr-P11} ci-après) des foncteurs fortement polynomiaux (depuis un objet de $\mi$ n'appartenant pas forcément à $\mn$), dont les conséquences sont aisées à tirer quand la sous-catégorie des foncteurs fortement polynomiaux est stable par sous-objets, ce qui n'est plus le cas en général quand on sort du cadre de $\mn$.

\subsection{Résultats principaux}\label{prp}

\begin{pr}\label{pr-P11}
 Soient $\M$ un objet de $\mi$ vérifiant la propriété $(P_0)$ et $\A$ une catégorie abélienne. Tout foncteur fortement polynomial $\M\to\A$ est à support fini. Plus précisément, soit $T$ un ensemble générateur monoïdal faible fini de $\M$ et $d\in\mathbb{N}$. L'ensemble $S_d:=\{a_1+\dots+a_i\,|\,i\leq d,\,a_i\in T\}$ (qui est égal à $\{0\}$ pour $d=0$) est un support pour chaque foncteur de $\Pol_d^{{\rm fort}}(\M,\A)$.
\end{pr}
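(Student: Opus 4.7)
The plan is to reduce to showing that any $F\in\Pol_d^{\rm fort}(\M,\A)$ with $F(s)=0$ for all $s\in S_d$ is the zero functor. Granted this, let $F\in\Pol_d^{\rm fort}$ and $G\subset F$ be a subfunctor with $G(s)=F(s)$ for every $s\in S_d$; since $\Pol_d^{\rm fort}$ is stable by quotients (recalled in the first section), $F/G$ lies in $\Pol_d^{\rm fort}$ and vanishes on $S_d$, so $F/G=0$ and $G=F$. Hence $S_d$ is a support of $F$.

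I would prove the reduced claim by induction on $d$. The base case $d=0$ follows immediately from Remark~\ref{sup0}: for $F\in\Pol_0^{\rm fort}$, $\{0\}=S_0$ is a support of $F$, and taking the subfunctor $G=0$, which satisfies $G(0)=0=F(0)$, forces $F=0$.

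For the inductive step, assume the claim for $d-1$, fix $a\in T$, and let $F\in\Pol_d^{\rm fort}$ with $F(s)=0$ for all $s\in S_d$. The functor $\delta_a F$ lies in $\Pol_{d-1}^{\rm fort}$ directly from the definition, since $\delta_{a_0}\cdots\delta_{a_{d-1}}(\delta_a F)=0$ for any choice of $(a_0,\dots,a_{d-1})$. One checks by inspection of the explicit formula for $S_d$ that $S_{d-1}\subset S_d$ and $a+S_{d-1}\subset S_d$, so for every $s\in S_{d-1}$ both $F(s)$ and $F(a+s)$ vanish; the right exact sequence $F(s)\to F(a+s)\to(\delta_a F)(s)\to 0$ defining $\delta_a$ then yields $(\delta_a F)(s)=0$. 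The inductive hypothesis applied to $\delta_a F$ gives $\delta_a F=0$, i.e. $i_a(F):F\to\tau_a F$ is an epimorphism, so $F(x)\twoheadrightarrow F(a+x)$ for every $a\in T$ and every $x\in{\rm Ob}\,\M$.

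Starting from $F(0)=0$ (since $0\in S_d$) and iterating these surjections, a straightforward induction on $n$ shows $F(a_1+\dots+a_n)=0$ for every $n\in\mathbb{N}$ and every $(a_1,\dots,a_n)\in T^n$. Since $T$ is a weak monoidal generating set, an arbitrary object $x$ of $\M$ is a direct summand of some such sum $y$, so applying $F$ to the splitting maps exhibits $F(x)$ as a retract of $F(y)=0$, whence $F(x)=0$. The only mild obstacle in this argument is organising the two nested inductions (first on $d$, then on the length $n$ of a monoidal word on $T$) together with the bookkeeping of the inclusions between the sets $S_d$; these inclusions are entirely tautological from the definition of $S_d$.
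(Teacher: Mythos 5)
Your reduction to the statement ``any $F\in\Pol_d^{\rm fort}(\M,\A)$ vanishing on $S_d$ is zero'' and your induction on $d$ follow the paper's own argument quite closely (the paper works directly with the quotient $F/G$ and shows $\delta_x(F/G)=0$ for $x\in T$ exactly as you do), but your last step contains a genuine gap. Having shown that $F$ vanishes on every finite sum $a_1+\dots+a_n$ of elements of $T$, you pass to an arbitrary object $x$ by writing it as a direct factor, in the sense of $+$, of such a sum $y\simeq x+z$, and you claim that ``the splitting maps'' exhibit $F(x)$ as a retract of $F(y)$. This is false in general: in an object of $\mi$ there is a canonical morphism $x\to x+z$ induced by $0\to z$, but no morphism $x+z\to x$ splitting it --- in $\Theta$ there is no injection $\mathbf{2}\to\mathbf{1}$, and in $\mathbf{S}(A)$ there is no split monomorphism $A^2\to A$. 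Your argument therefore only proves the result when $T$ is a genuine monoidal generator (every object isomorphic to a sum of elements of $T$), whereas the hypothesis $(P_0)$ only provides a \emph{weak} one.

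The paper closes exactly this gap by invoking Proposition~1.17 de \cite{DV3} (cf. la remarque~\ref{rq-diff}) : l'annulation de $\delta_a(F)$ pour tout $a$ dans un ensemble générateur monoïdal \emph{faible} $T$ entraîne déjà $F\in\Pol_0^{\rm fort}(\M,\A)$, c'est-à-dire $\delta_y(F)=0$ pour \emph{tout} objet $y$ ; on conclut alors par $F(0)\twoheadrightarrow F(y)$ et $F(0)=0$ (remarque~\ref{sup0}). Ce passage de $T$ aux objets quelconques n'est pas tautologique pour les facteurs directs : la suite exacte naturelle $\delta_x F\to\delta_{x+z}F\to\tau_x\delta_z F\to 0$ ne donne, à partir de $\delta_{x+z}F=0$, que $\tau_z\delta_x F=0$, et non $\delta_x F=0$. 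Il vous faut donc citer (ou redémontrer) ce résultat de \cite{DV3} pour achever la preuve ; le reste de votre argument est correct.
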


\begin{proof}
On établit la propriété par récurrence pour $d$. Pour $d=0$, elle découle de ce que $0$ est objet initial de $\M$ (cf. remarque~\ref{sup0}).

Supposons-la vraie pour les foncteurs de degré strictement inférieur à $d>0$ et considérons un foncteur $F$ fortement polynomial de degré fort $d$. Soit $G$ un sous-foncteur de $F$ tel que $G(t)=F(t)$ pour $t\in S_d$. Pour $x\in T$, notons $H_x$ l'image du morphisme $\delta_x(G)\to\delta_x(F)$ induit par l'inclusion. L'inclusion $H_x(t)\subset\delta_x(F)(t)$ est une égalité pour $t\in S_{d-1}$, de sorte que $H_x=\delta_x(F)$ par l'hypothèse de récurrence. Autrement dit, $\delta_x(F/G)=0$ pour tout $x\in T$, de sorte que $F/G$ est polynomial de degré fort au plus $0$, puisque l'ensemble $T$ est générateur monoïdal faible de $\M$ (utiliser la proposition~1.17 de \cite{DV3}). Mais comme $0$ appartient à $S_d$, $F/G$ est nul en $0$, donc $F/G$ est nul, ce qui établit la proposition.
\end{proof}

\begin{rem}\label{rq-moninf}
Sans hypothèse de finitude sur $\M$, la conclusion de la proposition~\ref{pr-P11} peut être en défaut, en voici un exemple.

 Soient $E$ un ensemble quelconque et $M$ le monoïde commutatif obtenu en adjoignant à $E$ deux éléments notés $0$ et $\infty$, la loi de composition étant caractérisée par le fait que $0$ est son unité et que $x+y=\infty$ dès que ni $x$ ni $y$ ne sont égaux à $0$. On munit $M$ d'un ordre partiel compatible à sa structure de monoïde en imposant que $0$ (resp. $\infty$) en soit le plus petit (resp. le plus grand) élément et que les éléments de $E$ soient deux à deux incomparables. Cette relation d'ordre permet de voir $M$ comme une petite catégorie monoïdale symétrique, qui appartient à $\mi$. Elle vérifie même la propriété de finitude suivante (mais pas $(P_0)$) :  il existe un objet, en l'occurrence $\infty$, tel que pour tout objet $x$ de $M$ existe un objet $y$ tel que $x+y=\infty$ (cette propriété impliquerait $(P_0)$ pour une catégorie de $\mn$).

Soient $\A$ une catégorie abélienne et $A$ un objet non nul de $A$. Alors le foncteur de $\fct(M,\A)$ constant en $A$ est fortement polynomial de degré fort $0$, il est à support fini ($\{0\}$ en étant un support), et ponctuellement noethérien (donc en particulier de type fini par le corollaire~\ref{pf-gal2}) si $A$ est noethérien, mais il n'est pas noethérien si $E$ est un ensemble infini. En effet, le sous-foncteur $A_{>0}$ de $A$ nul en $0$ et égal à $A$ ailleurs n'est pas de type fini (tout support de ce foncteur contient $E$). Pourtant, ce foncteur $A_{>0}$ est fortement polynomial de degré fort~$1$.
\end{rem}

\begin{rem}\label{rq-fortpsf}
 Même lorsque $\M$ vérifie toutes les propriétés de finitude qu'on a introduites et que $\A$ vérifie également toutes les propriétés raisonnables de finitude qu'on peut attendre d'une catégorie abélienne, un foncteur fortement polynomial $F : \M\to\A$ n'est pas forcément à {\em présentation} de support fini, ce dès le degré fort $0$. En voici un exemple dans $\fct(\Theta,\mathbf{Ab})$. Pour $n\in\mathbb{N}$, notons $\mathbb{Z}_{<n}$ (resp. $\mathbb{Z}_{\geq n}$) le foncteur associant $\mathbb{Z}$ à $\mathbf{i}$ pour $i<n$ (resp. $i\geq n$) et égal à l'identité sur les morphismes $\mathbf{i}\to\mathbf{j}$ pour $j<n$ (resp. $i\geq n$) et à $0$ sur les autres morphismes. On dispose ainsi d'une suite exacte $0\to\mathbb{Z}_{\geq n}\to\mathbb{Z}\to\mathbb{Z}_{<n}\to 0$. Prenons la somme directe sur $n\in\mathbb{N}$ de ces suites exactes : on obtient une suite exacte
 $$0\to\underset{n\in\mathbb{N}}{\bigoplus}\mathbb{Z}_{\geq n}\to\mathbb{Z}^{\oplus\mathbb{N}}\to\underset{n\in\mathbb{N}}{\bigoplus}\mathbb{Z}_{<n}\to 0$$
 où $\mathbb{Z}^{\oplus\mathbb{N}}$ et $\underset{n\in\mathbb{N}}{\bigoplus}\mathbb{Z}_{<n}$ sont fortement polynomiaux de degré nul, donc en particulier à support fini, alors que $\underset{n\in\mathbb{N}}{\bigoplus}\mathbb{Z}_{\geq n}$ n'est pas à support fini (car tout support de $\mathbb{Z}_{\geq n}$ contient $\mathbf{n}$). Par conséquent, $\underset{n\in\mathbb{N}}{\bigoplus}\mathbb{Z}_{<n}$ est un foncteur fortement polynomial ne possédant pas la propriété $PSF$.
 
 On peut néanmoins établir la propriété $PSF$ pour des classes assez vastes de foncteurs polynomiaux, comme on le verra au paragraphe~\ref{sect-psf}.
\end{rem}

Dans le cas de la catégorie source $\Theta$, il est remarquable qu'on dispose d'une réciproque au fait qu'un foncteur fortement polynomial est à support fini :

\begin{pr}\label{theta-supp}
 Soient $\A$ une catégorie abélienne et $F : \Theta\to\A$ un foncteur. Alors $F$ est fortement polynomial si et seulement s'il est à support fini. Plus précisément, $F$ est fortement polynomial de degré fort au plus $d$ si et seulement si $\{\mathbf{0},\dots,\mathbf{d}\}$ est un support de $F$.
\end{pr}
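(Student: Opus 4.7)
L'implication directe suit immédiatement de la proposition~\ref{pr-P11} appliquée à $\M=\Theta$ muni de l'ensemble générateur monoïdal $T=\{\mathbf{1}\}$ (tout objet $\mathbf{n}$ étant isomorphe à $\mathbf{1}^{+n}$), pour lequel $S_d=\{\mathbf{0},\mathbf{1},\ldots,\mathbf{d}\}$ ; ainsi, tout foncteur fortement polynomial de degré fort au plus $d$ admet $\{\mathbf{0},\ldots,\mathbf{d}\}$ pour support.

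Pour la réciproque, supposons que $\{\mathbf{0},\ldots,\mathbf{d}\}$ est un support de $F$. La proposition~\ref{pr-suppFini} fournit un épimorphisme sur $F$ d'une somme directe de foncteurs du type $A[\Theta(\mathbf{i},-)]$, avec $i\leq d$ et $A\in {\rm Ob}\,\A$. Comme la classe des foncteurs fortement polynomiaux de degré fort au plus $d$ est stable par quotients (cf. section~1) et par sommes directes quelconques (chaque $\delta_x$ commutant aux colimites), il suffit d'établir que chaque projectif standard $A[\Theta(\mathbf{i},-)]$ est fortement polynomial de degré fort au plus $i$. Par la remarque~\ref{rq-diff}, l'ensemble $\{\mathbf{1}\}$ étant un générateur monoïdal de $\Theta$, cela équivaut à la nullité de $\delta_{\mathbf{1}}^{i+1}(A[\Theta(\mathbf{i},-)])$, que j'établirais par récurrence sur $i$.

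Le cas $i=0$ est trivial, $A[\Theta(\mathbf{0},-)]$ étant le foncteur constant en $A$. Pour $i\geq 1$, l'argument repose sur l'isomorphisme naturel
$$\delta_{\mathbf{1}}(A[\Theta(\mathbf{i},-)])\simeq A[\Theta(\mathbf{i-1},-)]^{\oplus i},$$
qui résulte de la décomposition ensembliste, naturelle en $\mathbf{n}$, de l'ensemble des injections $\mathbf{i}\hookrightarrow\mathbf{1}+\mathbf{n}$ rencontrant l'élément distingué ajouté par $+\mathbf{1}$ sous la forme $\mathbf{i}\times\Theta(\mathbf{i-1},\mathbf{n})$ (chaque telle injection étant caractérisée par la préimage $k\in\mathbf{i}$ du nouvel élément et par la restriction à $\mathbf{i}\setminus\{k\}\simeq\mathbf{i-1}$). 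La compatibilité de cette décomposition à la postcomposition par un morphisme $\mathbf{n}\to\mathbf{m}$ de $\Theta$ est immédiate, l'élément distingué étant préservé. L'hypothèse de récurrence appliquée à $A[\Theta(\mathbf{i-1},-)]$ permet alors de conclure.

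Aucun obstacle véritable n'est à prévoir : le seul point un peu technique est le calcul combinatoire explicite du conoyau $\delta_{\mathbf{1}}(A[\Theta(\mathbf{i},-)])$, qui reflète le comportement familier (cf. par exemple \cite{CEFN}) des projectifs standards de $\fct(\Theta,\A)$ vis-à-vis du foncteur de translation ; tout le contenu non trivial de l'énoncé est déjà concentré dans la proposition~\ref{pr-P11} et dans la propriété très particulière que dans $\Theta$, un monoïde comme $\mathbf{1}$ suffit à engendrer la structure monoïdale.
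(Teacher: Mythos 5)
Votre démonstration est correcte et suit essentiellement la même voie que celle du papier : l'implication directe par la proposition~\ref{pr-P11}, et la réciproque via l'isomorphisme $\delta_{\mathbf{1}}(A[\Theta(\mathbf{i},-)])\simeq A[\Theta(\mathbf{i-1},-)]^{\oplus i}$ et une récurrence sur $i$. Vous explicitez simplement un peu plus la décomposition combinatoire des injections $\mathbf{i}\hookrightarrow\mathbf{1}+\mathbf{n}$ que le papier qualifie de << calcul immédiat >>.
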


\begin{proof}
 Un calcul immédiat fournit un isomorphisme naturel
$$\delta_{\mathbf{1}}(A[\Theta(\mathbf{d},-)])\simeq A[\Theta(\mathbf{d-1},-)]^{\oplus d}$$
(où $A$ est un objet de $\A$ et $d$ un entier naturel), d'où l'on tire par récurrence que $A[\Theta(\mathbf{d},-)]$ est fortement polynomial de degré au plus $d$. Il s'en suit que tout foncteur dont $\{\mathbf{0},\dots,\mathbf{d}\}$ est un support est fortement polynomial de degré au plus $d$. La réciproque provient de la proposition~\ref{pr-P11}.
\end{proof}

\begin{rem}\label{rqsp}
 En général, si $\M$ est un objet de $\mi$ (ou même de $\mn$), il est tout à fait exceptionnel que les foncteurs représentables $U[\M(x,-)]$ (où $U$ est un objet non nul de $\A$ et $x$ un objet de $\M$) soient polynomiaux (même faiblement). Par exemple, si $A$ est un anneau non nul et $n>0$ un entier, ce n'est jamais le cas pour $\M=\mathbf{S}(A)$ ou $\M=\mathbf{P}(A)$ si $x=A^{\oplus n}$. 
\end{rem}

\subsection{Application : propriétés de finitude des foncteurs polynomiaux depuis un objet de $\mn$}

\begin{pr}\label{pnoeth1}
 Soient $\M$ une catégorie de $\mn$ possédant la propriété $(P_0)$ et $\A$ une catégorie abélienne.

Tout foncteur polynomial $\M\to\A$ faiblement ponctuellement de type fini (resp. faiblement ponctuellement noethérien) est de type fini (resp. noethérien).
\end{pr}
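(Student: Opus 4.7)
Le plan consiste à ramener les deux assertions à la proposition~\ref{pr-P11} et à une propriété de stabilité propre au cadre de $\mn$. Comme $\M$ appartient à $\mn$, la sous-catégorie $\s n(\M,\A)$ est réduite à $0$, de sorte que les notions de foncteur faiblement et fortement polynomial coïncident ; notons $d$ un degré polynomial commun à tous les foncteurs considérés.

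L'observation cruciale dans le cadre $\mn$ est que l'endofoncteur $\delta_x$ de $\fct(\M,\A)$ est \emph{exact}. En effet, $0$ étant objet nul de $\M$, l'unique morphisme $x\to 0$ fournit, par fonctorialité de $+$, une rétraction naturelle de $i_x : F\to\tau_x(F)$ : on obtient un scindement naturel $\tau_x\simeq {\rm Id}\oplus\delta_x$. En particulier, $\Pol^{{\rm fort}}_d(\M,\A)$ est stable par sous-objets : si $G\subset F$ avec $F$ de degré fort au plus $d$, alors $\delta_{a_0}\cdots\delta_{a_d}(G)\subset\delta_{a_0}\cdots\delta_{a_d}(F)=0$, ce qui assure que $G$ est encore de degré fort au plus $d$.

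Pour la première assertion, fixons un ensemble générateur monoïdal faible fini $T$ de $\M$ (fourni par $(P_0)$) et notons $S_d$ l'ensemble fini associé par la proposition~\ref{pr-P11}. Si $F$ est polynomial de degré au plus $d$ et faiblement ponctuellement de type fini, alors $S_d$ est un support de $F$ d'après la proposition~\ref{pr-P11}, et le corollaire~\ref{pf-gal2} permet de conclure que $F$ est de type fini.

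Pour la seconde, soient $F$ polynomial de degré au plus $d$, faiblement ponctuellement noethérien, et $(G_n)_{n\geq 0}$ une suite croissante de sous-foncteurs de $F$. Par la stabilité par sous-objets établie plus haut, chaque $G_n$ est encore polynomial de degré au plus $d$, et admet donc $S_d$ pour support. Pour tout $s\in S_d$, la suite $(G_n(s))_{n\geq 0}$ de sous-objets $\mathrm{End}_\M(s)$-stables de $F(s)$ stationne dans $\A_{\mathrm{End}_\M(s)}$ par hypothèse. La finitude de $S_d$ fournit alors un entier $N$ tel que $G_n(s)=G_N(s)$ pour tout $n\geq N$ et tout $s\in S_d$ ; la propriété de support appliquée à l'inclusion $G_N\subset G_n$ donne $G_N=G_n$ pour $n\geq N$. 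Ainsi, $F$ est noethérien. Le seul véritable obstacle conceptuel réside dans la reconnaissance de l'exactitude de $\delta_x$ dans le cadre $\mn$, qui rend automatique la stabilité par sous-foncteurs et permet l'application directe de la proposition~\ref{pr-P11} et du corollaire~\ref{pf-gal2}.
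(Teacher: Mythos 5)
Votre démonstration est correcte et suit pour l'essentiel la même voie que celle de l'article : elle repose sur les mêmes ingrédients, à savoir le support fini $S_d$ fourni par la proposition~\ref{pr-P11}, le corollaire~\ref{pf-gal2}, et la stabilité par sous-objets de $\Pol_d^{\rm fort}(\M,\A)$ lorsque $\M$ est dans $\mn$ (fait que vous justifiez utilement via le scindement $\tau_x\simeq{\rm Id}\oplus\delta_x$, donc l'exactitude de $\delta_x$). La seule différence notable est que, pour l'assertion noethérienne, vous faites stationner directement les chaînes de sous-foncteurs en les évaluant sur l'ensemble fini $S_d$, au lieu d'invoquer la caractérisation des objets noethériens par la finitude de type de tous leurs sous-objets ; cette variante a le mérite de ne pas requérir l'exactitude des colimites filtrantes dans $\A$ pour cette partie de l'énoncé.
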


\begin{proof}
 Cela résulte de la proposition~\ref{pr-P11}, du corollaire~\ref{pf-gal2} et de ce que tout sous-foncteur d'un foncteur polynomial de source dans $\mn$ est encore polynomial. 
\end{proof}

\begin{rem}\label{rqptf}
 Si l'on n'impose aucune condition de finitude à la catégorie monoïdale $\M$, les conclusions de la proposition précédente peuvent tomber en défaut (nous ne prétendons pas que la condition $(P_0)$ soit optimale, mais elle correspond très bien aux applications usuelles). L'exemple~\ref{ex-notad} ci-après en donne une illustration.
%En voici un exemple.
%Soient $(\B_i)_{i\in E}$ une famille infinie de petites catégories additives non nulles (on note $X_i$ un objet non nul de $\B_i$) et $\C:=\underset{i\in E}{\bigoplus}\B_i$ ; on munit cette petite catégorie additive de la structure monoïdale symétrique (dont l'unité est objet nul) donnée par la somme directe. Le foncteur
%$$\underset{i\in E}{\bigoplus}\C(X_i,-) : \C\to\mathbf{Ab}$$
%est additif (donc polynomial de degré $1$) --- il est aussi ponctuellement noethérien si les $\B_i$ et $X_i$ sont bien choisis --- mais n'est pas à support fini.
\end{rem}

%\subsection{Une propriété de finitude des foncteurs polynomiaux depuis une catégorie additive}

À titre d'application de la proposition~\ref{pnoeth1}, nous donnons une condition suf\-fisante simple pour que les foncteurs polynomiaux d'une catégorie {\em additive} vers une catégorie abélienne localement noethérienne forment une catégorie localement noethérienne.

\begin{pr}\label{pr-adln}
 Soient $\C$ une petite catégorie additive vérifiant $(P_0)$ et telle que, pour tous objets $a$ et $b$ de $\C$, le groupe abélien $\C(a,b)$ soit de type fini, et $\A$ une catégorie abélienne localement noethérienne. Alors la catégorie des foncteurs polynomiaux de $\C$ dans $\A$ est localement noethérienne. De plus, les objets noethériens de cette catégorie sont des foncteurs ponctuellement noethériens. 
\end{pr}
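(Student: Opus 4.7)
Le plan consiste à exhiber une famille génératrice d'objets noethériens de $\Pol(\C,\A)$. Commençons par observer que, pour tout $c\in {\rm Ob}\,\C$, tout sous-objet d'un objet de $\A_{{\rm End}_\C(c)}$ est en particulier un sous-objet dans $\A$, de sorte que la noethérianité dans $\A$ entraîne celle dans $\A_{{\rm End}_\C(c)}$ ; combinée à la proposition~\ref{pnoeth1} (qui s'applique car $\C\in\mn$ vérifie $(P_0)$), cette observation montre que tout foncteur polynomial ponctuellement noethérien dans $\A$ est noethérien. Comme $\C$ appartient à $\mn$, chaque $\Pol_d(\C,\A)$ est bilocalisante dans $\fct(\C,\A)$ (cf. \cite{DV3}) ; soit $L_d$ son adjoint à gauche. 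Pour $c\in{\rm Ob}\,\C$ et $V$ un objet noethérien de $\A$, posons $P^d_{c,V}:=L_d(V[\C(c,-)])$ ; l'adjonction fournit $\Pol_d(\C,\A)(P^d_{c,V},G)\simeq\A(V,G(c))$, et puisque les $V[\C(c,-)]$, pour $V$ parcourant un ensemble générateur d'objets noethériens de $\A$ et $c\in {\rm Ob}\,\C$, engendrent $\fct(\C,\A)$, la famille $(P^d_{c,V})_{d,c,V}$ engendre $\Pol(\C,\A)$.

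Le cœur du raisonnement, et son principal obstacle, consiste à établir que chaque $P^d_{c,V}$ est ponctuellement noethérien. On calcule explicitement $P^d_{c,V}(t)$ comme le quotient de $V[\C(c,t)]$ par le sous-objet engendré par les relations de type effet croisé de degré $d+1$, à savoir les éléments $\sum_{I\subset\{1,\dots,d+1\}}(-1)^{d+1-|I|}\,v\cdot[\sum_{i\in I}\phi_i]$ pour $v\in V$ et $\phi_1,\dots,\phi_{d+1}\in\C(c,t)$ (qu'il convient bien sûr de vérifier comme étant précisément la bonne relation pour que le quotient tombe dans $\Pol_d$, ce qui se fait à l'aide des propriétés élémentaires des $\delta_x$ dans le cadre additif). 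Ce quotient s'identifie naturellement à $V\otimes_{\mathbb{Z}}\Pi^d(\C(c,t))$, où $\Pi^d(G):=\mathbb{Z}[G]/J^d$ désigne le quotient du groupe abélien libre sur l'ensemble $G$ par le sous-groupe $J^d$ engendré par les mêmes relations (avec $v=1$), le produit tensoriel étant défini dans $\A$ via une présentation finie. L'hypothèse cruciale de l'énoncé, à savoir que $\C(c,t)$ est un groupe abélien de type fini, permet alors, par un argument combinatoire élémentaire sur un système fini de générateurs (exprimant le crochet d'un élément général comme polynôme de degré $\leq d$ des crochets d'éléments indexés par des sous-ensembles de taille $\leq d$ du système), de conclure que $\Pi^d(\C(c,t))$ est également un groupe abélien de type fini ; il en résulte que $V\otimes_{\mathbb{Z}}\Pi^d(\C(c,t))$ est noethérien dans $\A$, et donc que $P^d_{c,V}$ est ponctuellement noethérien.

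Par l'observation initiale, chaque $P^d_{c,V}$ est alors noethérien, ce qui établit le caractère localement noethérien de $\Pol(\C,\A)$. Pour la seconde partie de l'énoncé, soit $F$ un objet noethérien de $\Pol(\C,\A)$ : il est en particulier de type fini, donc quotient d'une somme directe finie de générateurs $P^{d_i}_{c_i,V_i}$ ; chacun d'entre eux étant ponctuellement noethérien, toute somme directe finie de tels objets l'est aussi, et par passage au quotient, $F$ est ponctuellement noethérien.
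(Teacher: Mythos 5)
Votre démonstration suit essentiellement la même voie que celle de l'article : les générateurs $P^d_{c,V}=L_d(V[\C(c,-)])$ coïncident avec les foncteurs $A\otimes Q_d\circ\C(c,-)$ du texte (votre $\Pi^d$ est le $Q_d$ de \cite{HPV}), le point clef étant dans les deux cas que $Q_d(M)$ est un groupe abélien de type fini lorsque $M$ l'est, puis l'application de la proposition~\ref{pnoeth1}. La seule nuance est que l'article renvoie à \cite{HPV} pour la description explicite de $Q_d$ et engendre $Q_d(M)$ par les crochets $\big[\sum_{t\in E}\varepsilon(t)t\big]$ à coefficients bornés par $d$ (formulation plus précise que votre <<~sous-ensembles de taille $\leq d$~>>, qui omet les multiples), mais l'argument est le même.
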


\begin{proof}
 Notons $\mathbf{Ab}^{{\rm tf}}$ la sous-catégorie pleine de $\mathbf{Ab}$ des groupes abéliens de type fini, et $Q_n : \mathbf{Ab}^{{\rm tf}}\to\mathbf{Ab}$, pour $n\in\mathbb{N}$, le plus grand quotient du foncteur $\mathbb{Z}[-]$ appartenant à $\Pol_n(\mathbf{Ab}^{{\rm tf}},\mathbf{Ab})$. Une description explicite (comme conoyau d'une certaine transformation naturelle) de ce foncteur peut se trouver dans le §\,2.3 de \cite{HPV}. La catégorie $\Pol_n(\C,\A)$ est engendrée par les foncteurs $A\otimes Q_n\circ\C(t,-)$, où $A$ est un objet noethérien de $\A$ et $t$ un objet de $\C$. La conclusion résulte donc de la proposition~\ref{pnoeth1} et du lemme ci-après.
\end{proof}

\begin{lm}
 Pour tout $n\in\mathbb{N}$, le foncteur $Q_n$ est à valeurs dans $\mathbf{Ab}^{{\rm tf}}$.
\end{lm}

\begin{proof}
Par la proposition~2.17 de \cite{HPV}, pour tout groupe abélien de type fini $M$, $Q_n(M)$ est le quotient de $\mathbb{Z}[M]$ par le sous-groupe engendré par les éléments du type
$$\sum_{I\subset\{0,\dots,n\}}(-1)^{{\rm Card}(I)}[x_I]$$
où $x_0,\dots,x_n$ sont des éléments de $M$ et l'on note $x_I:=\sum_{i\in I} x_i$.

Soit $E$ une partie génératrice finie de $M$. On vérifie que l'image dans $Q_n(M)$ des éléments de $\mathbb{Z}[M]$ du type
$$\Big[\sum_{t\in E}\varepsilon(t) t\Big]$$
où $\varepsilon$ parcourt l'ensemble des fonctions de $E$ vers les entiers de valeur absolue au plus $n$ est une partie génératrice de $Q_n(M)$.
\end{proof}

Sans la propriété $(P_0)$, la conclusion de la proposition~\ref{pr-adln} peut tomber en défaut, comme l'illustre l'exemple suivant.

\begin{ex}\label{ex-notad}
 La catégorie $\mathbf{Ab}^{tf}$ est additive, essentiellement petite et tous ses groupes abéliens de morphismes sont de type fini. Pourtant, la catégorie des foncteurs additifs  $\mathbf{Ab}^{tf}\to\mathbf{Ab}$ n'est pas localement noethérienne. En effet, le foncteur d'inclusion est de type fini (et par ailleurs ponctuellement noethérien), puisque représentable par $\mathbb{Z}$, mais contient le foncteur associant à un groupe abélien de type fini son sous-groupe de torsion, qui est la somme directe infinie non triviale des foncteurs de torsion $p$-primaire, $p$ décrivant l'ensemble des nombres premiers, donc n'est pas de type fini.
\end{ex}

\begin{rem}
 Si les foncteurs polynomiaux d'une petite catégorie additive $\C$ vers les groupes abéliens forment une catégorie localement noethérienne, alors ils possèdent une autre propriété de finitude : ceux d'entre eux qui sont de type fini admettent, dans la catégorie de {\em tous} les foncteurs $\C\to\mathbf{Ab}$, une résolution projective de type fini (lorsque $\C=\mathbf{P}(A)$ pour un certain anneau $A$, cette propriété équivaut, pour les foncteurs additifs, à la commutation de l'homologie de Mac Lane de $A$ aux colimites filtrantes). Ce résultat classique est lié à la construction cubique de Mac Lane ; une esquisse de démonstration en est donnée dans les notes \cite{Dja-notes} (proposition~7, page~8).
\end{rem}

\section{Propriétés de finitude des foncteurs fortement polynomiaux}\label{sec-eta}

\subsection{Propriétés de finitude des foncteurs $\eta(F)$}\label{sec-etav}

\begin{pr}\label{noeth-eta}
 Soient $\M$ un objet de $\mi$ vérifiant les propriétés $(P_0)$ et $(P_1)$, $\A$ une catégorie de Grothendieck et $F : \widetilde{\M}\to\A$ un foncteur polynomial ponctuellement noethérien. Alors $\eta(F)$ est un objet noethérien de $\fct(\M,\A)$.
\end{pr}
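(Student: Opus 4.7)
The plan is to proceed by induction on the polynomial degree $d$ of $F$. The base case $d \leq 0$ reduces to showing $\eta F = A_{\rm const}$ is a noetherian object of $\fct(\M,\A)$ whenever $A \in \A$ is noetherian. A subfunctor $G \subset A_{\rm const}$ amounts to a monotone assignment $x \mapsto G(x) \subset A$, and because $G(x) \cup G(y) \subset G(x+y)$ the family $\{G(x)\}_{x \in {\rm Ob}\,\M}$ is directed inside the noetherian object $A$; hence it stabilizes at some $G(x_0)$. For $y$ in the essential image of $x_0 + -$ we then force $G(y) = G(x_0)$, and property $(P_1)$ bounds the complement by a finite set of isomorphism classes. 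This exhibits a finite support of $G$, so corollary~\ref{pf-gal2} (combined with $G$ being faiblement ponctuellement de type fini as a subobject of the pointwise noetherian $A_{\rm const}$) yields $G$ of finite type.

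For the inductive step two structural properties of $\eta F$ drive the argument. First, since $\widetilde{\M}$ belongs to $\mn$, the map $x \to 0$ provides a retraction of $0 \to x$, so $i_x : F \to \tau_x F$ is a split monomorphism in $\fct(\widetilde{\M},\A)$; therefore $\kappa_x F = 0$ and, applying the exact functor $\eta$, $\kappa_x(\eta F) = 0$. Every subfunctor $G \subset \eta F$ inherits $\kappa_x G = 0$. Second, since $\delta_x F$ is polynomial of degree $< d$ and pointwise noetherian (as a quotient of shifts of $F$), the induction hypothesis yields that $\eta \delta_x F = \delta_x \eta F$ is noetherian. Applying the snake lemma to $0 \to G \to \eta F \to \eta F/G \to 0$ and using $\kappa_x G = \kappa_x \eta F = 0$ produces an injection $\delta_x G / \kappa_x(\eta F/G) \hookrightarrow \eta \delta_x F$, so the image $I_x$ of $\delta_x G$ in $\eta \delta_x F$ is a subobject of a noetherian functor, hence of finite type with a finite support $S_x$.

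Using a finite weak monoidal generating set $T$ (from $(P_0)$) and the finite support $S_d$ of $\eta F$ (proposition~\ref{pr-P11}), the plan is to show that a finite set of the form $S = \{0\} \cup S_d \cup \bigcup_{x \in T}\bigl(S_x \cup (x + S_x)\bigr) \cup E$, where $E$ is a finite exceptional set provided by $(P_1)$, is a support of $G$: the subfunctor $G_1 \subset G$ generated by the values $G(s)$ for $s \in S$ should verify $\delta_x G_1 \to \delta_x G$ epi for $x \in T$; combined with $\kappa_x(G/G_1) = 0$, this forces $G/G_1$ to be strongly polynomial of degree $0$ (by remark~\ref{rq-diff}) with vanishing value at $0$, hence zero. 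Corollary~\ref{pf-gal2} then concludes that $G$ is of finite type, so $\eta F$ is noetherian. The main obstacle is the technical control of the kernel $\kappa_x(\eta F/G)$: the map $\delta_x G \to \eta \delta_x F$ need not be injective, and the discrepancy is a subfunctor of the quotient $\eta F/G$ which is not a priori noetherian. This is precisely where $(P_1)$ must intervene, via an argument in the spirit of proposition~\ref{lm-pt3} ensuring that such kernels are presque nuls in a way that can be absorbed into the finite exceptional set $E$.
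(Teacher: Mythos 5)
There is a genuine gap, located exactly where you place your own caveat, and the proposed remedy cannot work as stated. Your strategy reduces everything to showing that each $\delta_x(G)$ (for $G\subset\eta(F)$ and $x$ in a finite generating set) has finite support, via the extension $0\to K_x\to\delta_x(G)\to I_x\to 0$ with $K_x:=\kappa_x(\eta F/G)$. The image $I_x$ is indeed controlled by the induction hypothesis, but nothing controls $K_x$: it is a subfunctor of the quotient $\eta F/G$, i.e.\ precisely the kind of object whose finiteness is what the whole proposition is trying to establish. The fix you sketch via proposition~\ref{lm-pt3} fails for two reasons. First, what one knows about $K_x$ is only that $i_x(K_x)=0$ (the map $K_x\to\tau_x(K_x)$ vanishes), which is much weaker than $\tau_t(K_x)=0$ for some $t$, the hypothesis of proposition~\ref{lm-pt3}; a functor killed by $i_x$ can be supported on infinitely many isomorphism classes. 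Second, the result that converts ``stably null'' into ``presque nul'' is proposition~\ref{psn}, which requires both the hypothesis $(P_3)$ --- not assumed in proposition~\ref{noeth-eta}, which only uses $(P_0)$ and $(P_1)$ --- and the finite generation of the functor in question, which is unavailable for $K_x$. (Your base case, and the observations that $\kappa_x(\eta F)=0$ and that $\eta\delta_x(F)=\delta_x\eta(F)$ is noetherian by induction, are correct.)

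The paper circumvents this obstacle by never passing through the quotient $\eta F/G$. Because $0$ is a \emph{final} object of $\widetilde{\M}$, there is a canonical splitting $\eta\tau_x(F)\twoheadrightarrow\eta(F)$, and one sets $A[x]:=$ image of $\tau_x(A)\hookrightarrow\eta\tau_x(F)\twoheadrightarrow\eta(F)$ for a subfunctor $A\subset\eta(F)$. This yields the exact sequence $0\to\tau_x(A)\cap\eta\delta_x(F)\to\tau_x(A)\to A[x]\to 0$, in which \emph{both} outer terms live inside noetherian objects: the kernel inside $\eta\delta_x(F)$ (induction hypothesis), and $A[x]$ inside the subfunctor $\tilde{A}=\eta(\hat{A})$ of $\eta(F)$, where $\hat{A}$ is the image of $\alpha(A)\to F$; the stabilization of the $\hat{A}_n$ is governed by the noetherianity of $F$ itself on $\widetilde{\M}$ (proposition~\ref{pnoeth1}), and $(P_1)$ together with pointwise noetherianity is only used at the very end to pass from the stabilization of $(\tau_x(A_n))_n$ to that of $(A_n)_n$. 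If you want to repair your argument you would need an analogue of this splitting device; controlling $\kappa_x(\eta F/G)$ directly does not seem feasible under the stated hypotheses.
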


(La proposition~\ref{polfor-noeth2} ci-après montrera que l'hypothèse $(P_1)$ est en fait superflue.)

\begin{proof}
 On raisonne par récurrence sur le degré $d$ de $F$ : on suppose le résultat établi pour les foncteurs polynomiaux de degré strictement inférieur à $d$. Si $x$ est un objet de $\M$ et $A$ un sous-foncteur de $\eta(F)$, notons $A[x]$ l'image du morphisme
$$\tau_x(A)\hookrightarrow\tau_x\eta(F)\simeq\eta\tau_x(F)\twoheadrightarrow\eta(F)$$
composé du monomorphisme induit par l'inclusion et de l'épimorphisme canonique déduit de ce que $0$ est objet final dans $\widetilde{\M}$. Tout morphisme $x\to y$ de $\M$ induit un diagramme commutatif
$$\xymatrix{\tau_x(A)\ar@{^{(}->}[r]\ar[d] & \tau_x\eta(F)\ar[r]^\simeq\ar[d] & \eta\tau_x(F)\ar@{->>}[r]\ar[d] & \eta(F)\ar@{=}[d] \\
\tau_y(A)\ar@{^{(}->}[r] & \tau_y\eta(F)\ar[r]^\simeq & \eta\tau_y(F)\ar@{->>}[r] & \eta(F)
}$$
de $\fct(\M,\A)$ qui montre l'inclusion $A[x]\subset A[y]$ pour $\M(x,y)\neq\emptyset$. En particulier, $A=A[0]\subset A[x]$, et la famille de sous-objets $(A[x])_{x\in {\rm Ob}\,\M}$ de $\eta(F)$ est filtrante à droite. Notons $\tilde{A}$ sa réunion. La suite de la démonstration repose sur le lemme suivant.
\end{proof}

\begin{lm}\label{lm-noeta}
 \begin{enumerate}
  \item Le sous-objet $\tilde{A}$ de $\eta(F)$ s'identifie à l'image par le foncteur $\eta$ de l'image $\hat{A}$ du morphisme $\alpha(A)\to F$ de $\fct(\widetilde{\M},\A)$ adjoint à l'inclusion $A\hookrightarrow\eta(F)$.
\item Il existe un objet $x$ de $\M$ (dépendant de $A$) tel que $\tilde{A}=A[x]$.
\item Pour tout objet $x$ de $\M$, on dispose d'une suite exacte
$$0\to\tau_x(A)\cap\eta\delta_x(F)\to\tau_x(A)\to A[x]\to 0$$
(on plonge $\delta_x(F)$ dans $\tau_x(F)$ comme le noyau de la projection $\tau_x(F)\twoheadrightarrow F$) naturelle en le sous-foncteur $A$ de $\eta(F)$.
 \end{enumerate}

\end{lm}

\begin{proof}
Le premier point provient de la formule~(\ref{eqalph}) décrivant l'adjoint à gauche $\alpha$ de $\eta$. On en déduit que $\tilde{A}$ est fortement polynomial (de degré au plus $d$), ce qui entraîne que ce foncteur est à support fini par la proposition~\ref{pr-P11}. Comme ce foncteur est également ponctuellement noethérien, comme $F$, il est de type fini (corollaire~\ref{pf-gal2}). Du fait qu'il est réunion filtrante des sous-foncteurs $A[x]$, cela implique la deuxième propriété.

 La dernière assertion est évidente.
\end{proof}

\begin{proof}[Fin de la démonstration de la proposition~\ref{noeth-eta}]
 Soit $(A_n)_{n\in\mathbb{N}}$ une suite croissante de sous-objets de $\eta(F)$. La suite $(\hat{A}_n)$ de sous-foncteurs de $F$ stationne --- disons $\hat{A}_n=\hat{A}_N$ pour $n\geq N$. En effet, la proposition~\ref{pnoeth1} montre que $F$ est noethérien ($\widetilde{\M}$ vérifie $(P_0)$ comme $\M$). Le lemme~\ref{lm-noeta} procure un objet $x$ de $\M$ tel que $\eta(\hat{A}_N)=\tilde{A}_N=A_N[x]$. Pour $n\geq N$, l'inclusion $A_N[x]\subset A_n[x]$ est une égalité, puisque sa composée avec l'inclusion $A_n[x]\subset \tilde{A}_n=\eta(\hat{A}_n)$ coïncide avec la composée des inclusions $A_N[x]\subset\tilde{A}_N=\eta(\hat{A}_N)\subset\eta(\hat{A}_n)$ qui sont des égalités par choix de $N$ et $x$.

Par ailleurs, la suite $(\tau_x(A_n)\cap\eta\delta_x(F))_n$ de sous-foncteurs de $\eta\delta_x(F)$ stationne, par l'hypothèse de récurrence. Le dernier point du lemme précédent montre donc que la suite $(\tau_x(A_n))_n$ de sous-foncteurs de $\tau_x\eta(F)$ stationne. Grâce à la propriété $(P_1)$, qui garantit que tous les objets de $\M$, sauf un nombre fini à isomorphisme près, appartiennent à l'image essentielle de l'endofoncteur $x+ -$ et au fait que $F$ est ponctuellement noethérien, on en déduit que la suite $(A_n)_n$ elle-même stationne. Cela termine la démonstration.
\end{proof}

\begin{rem}\label{amelior}
 En reprenant la même démonstration en tenant compte de l'action naturelle du monoïde ${\rm End}_\M(x)$ sur $\tau_x(A)$ et $\delta_x(A)$ pour tous $x\in {\rm Ob}\,\M$ et $A : \M\to\A$, on voit qu'on peut affaiblir l'hypothèse que $F$ est ponctuellement noethérien en l'hypothèse suivante : pour tous objets $a_1,\dots,a_r$ de $\M$, $F(a_1+\dots+a_r)$ est un objet noethérien dans la catégorie des représentations (dans $\A$) du monoïde ${\rm End}_\M(a_1)\times\dots\times {\rm End}_\M(a_r)$ (on peut se contenter de cette propriété pour $r$ égal au degré polynomial de $F$). Nous n'insistons toutefois pas sur cette légère amélioration (qui se transmet à de nombreux résultats ultérieurs de cet article) car nous n'en connaissons pas d'application.
\end{rem}

\subsection{Application : cas de la catégorie source $\Theta$ (cf. \cite{CEFN})}

La proposition suivante (dans le cas où $\A$ est une catégorie de modules) constitue l'un des résultats principaux ({\em Theorem~1.1}) de l'article \cite{CEFN}.

\begin{pr}[Church-Ellenberg-Farb-Nagpal]\label{FInoeth}
 Soit $\A$ une catégorie de Grothendieck localement noethérienne. La catégorie $\fct(\Theta,\A)$ est localement noethérienne.
\end{pr}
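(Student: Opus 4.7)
Le plan est de montrer que les générateurs de la forme $A[\Theta(\mathbf{d},-)]$ (avec $A$ noethérien dans $\A$ et $d\in\mathbb{N}$) de $\fct(\Theta,\A)$, dont l'existence est fournie par la proposition~\ref{pf-gal}, sont des objets noethériens. L'idée est de les réaliser comme facteurs directs de foncteurs de la forme $\eta(F)$ auxquels on appliquera la proposition~\ref{noeth-eta}.

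On commence par identifier la catégorie $\widetilde{\Theta}$ à celle des ensembles finis avec injections \emph{partielles} : un morphisme $\mathbf{a}\to\mathbf{b}$ y correspond à une classe d'équivalence d'injections $\mathbf{a}\hookrightarrow\mathbf{b}+\mathbf{c}$ (modulo extension de $\mathbf{c}$), qu'on interprète comme l'injection partielle $\mathbf{a}\to\mathbf{b}$ obtenue en ne retenant que la partie tombant dans $\mathbf{b}$. Comme une injection partielle $\mathbf{d}\to\mathbf{n}$ se donne par un sous-ensemble $S\subseteq\mathbf{d}$ et une injection totale $S\hookrightarrow\mathbf{n}$, on obtiendra un isomorphisme naturel de foncteurs $\Theta\to\A$ :
$$\eta\bigl(A[\widetilde{\Theta}(\mathbf{d},-)]\bigr)\simeq\bigoplus_{k=0}^{d}A[\Theta(\mathbf{k},-)]^{\oplus\binom{d}{k}},$$
l'action d'une injection $\mathbf{n}\hookrightarrow\mathbf{m}$ préservant visiblement le sous-ensemble $S$. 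Le foncteur $A[\Theta(\mathbf{d},-)]$ correspond à l'unique terme où $S=\mathbf{d}$, donc en est un facteur direct.

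Il restera à vérifier les hypothèses de la proposition~\ref{noeth-eta} pour $F:=A[\widetilde{\Theta}(\mathbf{d},-)]$ et $\M=\Theta$. La catégorie $\Theta$ vérifie $(P_0)$ et $(P_1)$ en tant que pseudo-théorie régulière (proposition~\ref{pr-ev}), et $F$ est ponctuellement noethérien puisque chaque $F(\mathbf{n})$ est une somme directe finie de copies de $A$. Pour le caractère polynomial de $F$, on séparera les injections partielles $\mathbf{d}\to\mathbf{1}+\mathbf{n}$ selon qu'elles utilisent ou non l'élément de $\mathbf{1}$ ; on en tirera l'isomorphisme $\delta_{\mathbf{1}} F\simeq A[\widetilde{\Theta}(\mathbf{d-1},-)]^{\oplus d}$, puis, par récurrence sur $d$ et grâce à la remarque~\ref{rq-diff} appliquée au générateur monoïdal $\{\mathbf{1}\}$ de $\widetilde{\Theta}$, que $F$ est polynomial de degré au plus $d$. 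L'étape la plus subtile est l'identification explicite de $\widetilde{\Theta}$ comme catégorie d'injections partielles et la compatibilité de la décomposition ci-dessus aux structures fonctorielles ; les autres points se ramènent à des vérifications combinatoires élémentaires.
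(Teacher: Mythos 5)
Votre démonstration est correcte et suit pour l'essentiel la même voie que celle du texte : dans les deux cas on applique la proposition~\ref{noeth-eta} au foncteur $A[\widetilde{\Theta}(\mathbf{d},-)]$, polynomial (via le même calcul de $\delta_{\mathbf{1}}$) et ponctuellement noethérien, puis on en déduit la noethérianité de $A[\Theta(\mathbf{d},-)]$ comme sous-objet de $\eta\bigl(A[\widetilde{\Theta}(\mathbf{d},-)]\bigr)$. Votre identification explicite de $\widetilde{\Theta}$ aux injections partielles et la décomposition en somme directe ne font que préciser ce que le texte obtient en observant directement que $A[\Theta(\mathbf{d},-)]$ est un sous-foncteur.
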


\begin{proof}
 Pour tout objet $A$ de $\A$ et tout entier $n\in\mathbb{N}$, le foncteur $A[\widetilde{\Theta}(\mathbf{n},-)]$ de $\fct(\widetilde{\Theta},\A)$ est polynomial de degré $n$ --- on vérifie en effet que $\delta_\mathbf{1}\big(A[\widetilde{\Theta}(\mathbf{n},-)]\big)\simeq A[\widetilde{\Theta}(\mathbf{n-1},-)]^{\oplus n}$ (cf. la démonstration de la proposition~\ref{theta-supp}). Comme la catégorie $\Theta$ vérifie les propriétés $(P_0)$ et $(P_1)$, la proposition~\ref{noeth-eta} montre que $\eta\big(A[\widetilde{\Theta}(\mathbf{n},-)]\big)$, donc a fortiori $A[\Theta(\mathbf{n},-)]$ qui en est un sous-foncteur, est un objet noethérien de $\fct(\Theta,\A)$ si $A$ est un objet noethérien de $\A$. La catégorie $\fct(\Theta,\A)$ est engendrée par les $A[\widetilde{\Theta}(\mathbf{n},-)]$, où $A$ appartient à un ensemble de générateur noethériens de $\A$ et $n$ à $\mathbb{N}$, d'où le résultat.
\end{proof}

\begin{rem}
 La méthode que nous avons suivie pour établir la proposition~\ref{noeth-eta} généralise celle de l'article \cite{CEFN} ; elle suit également un principe analogue à celui utilisé pour démontrer un résultat de finitude dans des catégories de foncteurs (nettement plus difficile) appelé {\em théorème de simplicité généralisé} dans  \cite{Dja} (théorème~16.2.7).
\end{rem}

La situation de la catégorie $\Theta$ est particulièrement favorable : le fait que des foncteurs du type $A[\C(c,-)]$, où $\C$ est un objet de $\mi$ (ou de $\mn$) et $c$ un objet de $\C$, soient polynomiaux est tout à fait exceptionnel (cf. remarque~\ref{rqsp}). En particulier, les propriétés de finitude des foncteurs polynomiaux ne suffisent pas à montrer des propriétés de finitude pour tous les foncteurs de $\fct(\C,\A)$ (qui sont notoirement difficiles en général --- cf. \cite{Dja}, \cite{SSn} ou \cite{PSam}) ; de plus, même pour les foncteurs polynomiaux, la proposition~\ref{noeth-eta} ne suffit pas à elle seule à obtenir facilement tous les résultats de finitude qu'on peut espérer. Elle constitue toutefois le point de départ de tous nos résultats de noethérianité ultérieurs.

\subsection{Résultat principal}

En utilisant la fonctorialité en $\M$ des constructions, on voit que la catégorie $\Theta$ joue un rôle fondamental pour l'étude des foncteurs fortement polynomiaux.

\begin{lm}\label{lmevf}
 Soit $\M$ un objet de $\mi$ vérifiant $(P_0)$. Il existe un entier $n\in\mathbb{N}$ et un morphisme $\Phi : \Theta^n\to\M$ de $\mi$ qui est faiblement essentiellement surjectif au sens où tout objet de $\M$ est facteur direct d'un objet de l'image de $\Phi$.
\end{lm}

\begin{proof}
 Soit $E=\{a_1,\dots,a_n\}$ un ensemble générateur monoïdal faible fini de $\M$. La propriété universelle de $\Theta$ (cf. \cite{DV3}, exemple~1.13.1) procure, pour chaque $i$, un foncteur monoïdal $\phi_i : \Theta\to\M$ envoyant $\mathbf{1}$ sur $a_i$. Le foncteur $\Phi$ composé du produit des $\phi_i$ et du foncteur de somme itérée $\M^n\to\M$ convient.
\end{proof}

\begin{pr}\label{polfor-noeth2}
Soient $\M$ un objet de $\mi$ vérifiant $(P_0)$, $\A$ une catégorie de Grothendieck et $F : \M\to\A$ un foncteur fortement polynomial et ponctuellement noethérien. Alors $F$ est noethérien. 
\end{pr}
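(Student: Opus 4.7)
The plan is to argue by induction on the strong polynomial degree $d$ of $F$, using Lemma~\ref{lmevf} at the inductive step to reduce to a source $\Theta^n$, where the noetherian property is already understood. The base case $d<0$ (so $F=0$) is trivial. For the inductive step, assume the result for all strongly polynomial functors of degree strictly less than $d$ and pick a foncteur faiblement essentiellement surjectif $\Phi : \Theta^n\to\M$ from Lemma~\ref{lmevf}. The pullback $\Phi^*F : \Theta^n\to\A$ is strongly polynomial of degree $\leq d$ (since $\Phi$ is monoidal, precomposition commutes with the $\delta_x$) and pointwise noetherian; combining Proposition~\ref{pr-P11} with Corollary~\ref{pf-gal2} it is of finite type, and since $\Theta^n$ still satisfies $(P_0)$ and $(P_1)$ the argument underlying Proposition~\ref{FInoeth} extends to show that $\fct(\Theta^n,\A)$ is locally noetherian, whence $\Phi^*F$ is noetherian.

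Given any ascending chain $(A_k)_{k\in\mathbb{N}}$ of subfunctors of $F$, two independent stabilizations become available. First, the chain $(\Phi^*A_k)_k$ stabilizes in $\Phi^*F$ by the previous step: there is $N_1$ with $A_k(\Phi(z))=A_{N_1}(\Phi(z))$ for all $z\in {\rm Ob}\,\Theta^n$ and all $k\geq N_1$. Second, for each $x\in {\rm Ob}\,\M$, $\delta_xF$ is strongly polynomial of degree $<d$ and pointwise noetherian, hence noetherian by the induction hypothesis; the six-term exact sequence associated by the snake lemma to $0\to A_k\to F\to F/A_k\to 0$ then yields stabilizing chains in $\delta_xF$. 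The idea is to combine these controls: for an arbitrary $m\in {\rm Ob}\,\M$, use the weak essential surjectivity of $\Phi$ to write $\Phi(z)=m+m'$ in $\M$, then compare $A_k(m)$ with the stabilized data at $A_k(\Phi(z))$ via the canonical morphism $m\to m+m'$, making up the difference by the stabilized data in $\delta_{m'}F$.

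The main obstacle is this last combination step. Because $0$ is only initial and not terminal in $\M$, the map $F(m)\to F(m+m')$ need not be a monomorphism, so $A_k(m)$ cannot be read off $A_k(\Phi(z))$ alone; the required kernel information lies in $\kappa_{m'}A_k\subset\kappa_{m'}F$, and the required cokernel information in $\delta_{m'}A_k\subset\delta_{m'}F$, both of which the inductive hypothesis must supply by being applied to suitable lower-degree strongly polynomial functors. A careful diagram chase using the exact sequence $\kappa_{m'}F\to F\to\tau_{m'}F\to\delta_{m'}F\to 0$ evaluated at $m$, combined with the finiteness of the list of possible $m'$ modulo direct summands afforded by the weak monoidal generating set in $(P_0)$, should close the argument.
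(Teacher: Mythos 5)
Your first two steps are sound and agree with the paper: precomposition along the $\Phi$ of Lemma~\ref{lmevf} preserves strong polynomiality and pointwise noetherianity, $\Phi^*F$ has finite support by Proposition~\ref{pr-P11}, hence is of finite type by Corollaire~\ref{pf-gal2}, and it is noetherian because $\fct(\Theta^n,\A)$ is locally noetherian (iterate Proposition~\ref{FInoeth} through $\fct(\Theta^n,\A)\simeq\fct(\Theta,\fct(\Theta^{n-1},\A))$, exactly as the paper does). But the proof is not finished: the \emph{combination step}, which is where all the content lies, is only announced, and the route you sketch for it cannot work as stated. First, you propose to control $\kappa_{m'}A_k\subset\kappa_{m'}F$ by the inductive hypothesis applied to \emph{lower-degree strongly polynomial functors}; but $\kappa_{m'}F$ is a subfunctor of $F$, and the subcategory $\Pol_d^{{\rm fort}}(\M,\A)$ is not stable under subobjects --- already in $\fct(\Theta,\mathbf{Ab})$, for $F=\mathbb{Z}_{\leq 1}$ (strong degree $0$) one has $\kappa_{\mathbf 1}(F)=\mathbb{Z}_1$, of strong degree $1$ --- so the induction on the degree gives you no hold on the kernel term (compare remarques~\ref{rq-moninf} and~\ref{rq-fortpsf}). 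Second, $\delta_{m'}A_k\to\delta_{m'}F$ is not a monomorphism ($\delta_{m'}$ is only right exact), so the cokernel data is not a chain of subobjects of $\delta_{m'}F$. Third, and most seriously, recovering $A_k(m)$ from its image in $F(m+m')$ together with $A_k(m)\cap\kappa_{m'}F(m)$ only stabilizes the chain $(A_k(m))_k$ \emph{for each fixed $m$}, which is automatic from pointwise noetherianity of $F$; the entire difficulty is to produce a single index valid for all $m$ simultaneously, and nothing in the sketch delivers that uniformity. (The one place in the paper where such a combination is genuinely carried out is Proposition~\ref{noeth-eta}, and it crucially uses that the functors there are of the form $\eta(F)$, so that all the $\kappa_x$ vanish and $\tau_x\eta(F)\to\eta(F)$ splits.)

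The idea you are missing is that no combination step is needed at all. Because $\Phi$ is weakly essentially surjective, the paper asserts that $\Phi^*$ is exact and \emph{faithful}, and an exact faithful functor between abelian categories reflects the noetherian property (first assertion of lemme~\ref{lm-classique}: a strictly increasing chain $A\subsetneq B$ of subobjects of $F$ stays strictly increasing after applying $\Phi^*$, since $\Phi^*(B/A)=\Phi^*(B)/\Phi^*(A)\neq 0$). Hence the noetherianity of $\Phi^*F$, which you did establish, immediately yields that of $F$, with no induction on the polynomial degree and no appeal to the $\delta_xF$ or $\kappa_xF$.
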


\begin{proof}
 La précomposition de $F$ par un foncteur $\Phi$ comme dans le lemme~\ref{lmevf} est encore fortement polynomiale et ponctuellement noethérienne. Du fait que $\Phi$ est faiblement essentiellement surjectif, le foncteur de précomposition $\Phi^*$ est exact et fidèle, donc la noethérianité de $\Phi^* F$ entraîne celle de $F$. Autrement dit, il suffit de vérifier la proposition pour $\M=\Theta^n$. La propriété s'obtient alors par récurrence sur $n$ à partir de la proposition~\ref{FInoeth}  en notant qu'un foncteur fortement polynomial et ponctuellement noethérien est de type fini (par la proposition~\ref{pr-P11} et le corollaire~\ref{pf-gal2}) et que, via les isomorphismes de catégories 
$$\fct(\M\times\N,\A)\simeq\fct(\M,\fct(\N,\A))$$
où $\M$ et $\N$ sont deux objets de $\mi$, un foncteur fortement polynomial de la catégorie de gauche est envoyé sur un foncteur fortement polynomial et à valeurs fortement polynomiales dans la catégorie de droite.
\end{proof}

\begin{rem}
 \begin{enumerate}
\item L'auteur ignore si le résultat de la proposition~\ref{polfor-noeth2} (ou même seulement de la proposition~\ref{noeth-eta}) persiste en remplaçant l'hypothèse d'un foncteur ponctuellement noethérien par celle d'un foncteur faiblement ponctuellement noethérien. C'est une difficulté générale due au traitement de foncteurs non fortement noethériens (cf. proposition~\ref{pr-pni}). 
\item Contrairement à ce qui arrive dans la proposition~\ref{polfor-noeth2}, pour les résultats de la suite de cet article, il semble illusoire de penser tout ramener au cas de la catégorie source $\Theta$, ne serait-ce que parce que le foncteur section $s$ n'a nulle raison de se comporter agréablement relativement aux foncteurs de précomposition par un morphisme de $\mi$.
 \end{enumerate}
\end{rem}

\section{Propriétés de finitude des foncteurs faiblement polynomiaux}\label{spffp}

Nous nous intéressons maintenant aux propriétés de finitude des foncteurs de $\Pol_d^{{\rm faible}}(\M,\A)$. L'idée générale de la démonstration est toujours de raisonner par  récurrence sur le degré en utilisant les propriétés des différents foncteurs disponibles ($\tau$, $\delta$, $\kappa$...), mais une difficulté apparaît du fait que la proposition~\ref{pr-P11} n'a pas d'analogue pour les foncteurs faiblement polynomiaux, même avec une <<~très bonne~>> catégorie source $\M$, comme l'illustre l'exemple suivant.

\begin{ex}\label{expn-th}
Dans la catégorie $\fct(\Theta,\mathbf{Ab})$ (qui est localement noethérienne --- cf. proposition~\ref{FInoeth}), pour $n\in\mathbb{N}$, notons $\mathbb{Z}_n$ le foncteur égal à $\mathbb{Z}$ sur les ensembles de cardinal $n$ et nul ailleurs. Le foncteur $\underset{n\in\mathbb{N}}{\bigoplus}\mathbb{Z}_n$ de $\fct(\Theta,\mathbf{Ab})$ est stablement nul, donc faiblement polynomial de degré $-\infty$, et ponctuellement noethérien, mais il n'est pas de type fini.

Pourtant, la catégorie $\Theta$ vérifie toutes nos hypothèses de finitude.
\end{ex}

Nous nous limiterons donc, dans un premier temps (corollaire~\ref{cornp}), aux foncteurs faiblement polynomiaux et $\s n(\M,\A)$-fermés de $\fct(\M,\A)$ (i.e. ceux sur lesquels l'unité ${\rm Id}\to s\pi$ de l'adjonction est un isomorphisme).

Pour cela, nous aurons besoin d'un résultat crucial sur les foncteurs dérivés à droite $\mathbf{R}^i(s)$ du foncteur section $s : \st(\M,\A)\to\fct(\M,\A)$. Comme la composée de $s$ avec le foncteur exact $\pi$ est isomorphe à l'identité, donc exacte, pour $i>0$, $\mathbf{R}^i(s)$ prend ses valeurs dans $\s n(\M,\A)$. De plus, on rappelle que les résultats de \cite{DV3} sur le foncteur $\eta$ impliquent que  $\mathbf{R}^i(s)(\pi\eta A)=0$ pour tout $i>0$ et tout foncteur $A$ de $\fct(\widetilde{\M},\A)$ (tandis que $s\pi\eta(A)\simeq\eta(A)$).

\smallskip

Nous commençons par deux lemmes qui donnent des propriétés qui ressemblent à des propriétés de présentation finie pour l'image dans $\st(\M,\A)$ des foncteurs constants.

\begin{lm}\label{lm-tck}
 Soient $\M$ un objet de $\mi$, $\A$ une catégorie de Grothendieck, $F : \M\to\A$ un foncteur, $C$ un objet de $\A$ (vu aussi comme foncteur constant $\M\to\A$) et $f : F\to C$ un morphisme stablement surjectif (i.e. dont l'image dans $\st(\M,\A)$ est un épimorphisme). Si $S$ est un support de $F$ et $x$ un objet de $\M$ tel que $\M(s,x)\neq\emptyset$ pour tout $s\in S$, alors $f(x) : F(x)\to C$ est surjectif.
\end{lm}

\begin{proof}
 Il suffit de vérifier que $\varphi : \underset{s\in S}{\bigoplus}F(s)\to C$ (de composantes les $f(s)$) est surjectif. Pour cela, on note que, pour tout objet $t$ de $\M$, le diagramme commutatif
$$\xymatrix{\bigoplus_{s\in S}F(s)[\M(s,t)]\ar@{>>}[r]\ar[d] & F(t)\ar[d]^-{f(t)} \\
\underset{s\in S}{\bigoplus}F(s)\ar[r]^\varphi & C
}$$
dont la flèche verticale de gauche est la somme directe des augmentations $F(s)[\M(s,t)]\to F(s)$ et la flèche horizontale supérieure le morphisme canonique, dont le fait que $S$ est un support de $F$ garantit la surjectivité, montre que l'image de $\varphi$ contient l'image de $f(t)$. Comme $C$ est la réunion de ces images, puisque $F$ est stablement surjectif, on en déduit ${\rm Im}\,\varphi=C$ comme souhaité.
\end{proof}

\begin{lm}\label{lm-coker}
 Soient $\M$ un objet de $\mi$ vérifiant $(P_0)$ et $\A$ une catégorie de Grothendieck. Il existe un objet $a$ de $\M$ tel que pour tous $d\in\mathbb{N}$, $F\in {\rm Ob}\,\Pol_d^{{\rm fort}}(\M,\A)$, $C\in {\rm Ob}\,\A$ et $f : F\to C$ stablement surjectif, $\tau_{a^{+d}}(f)$ soit surjectif.
\end{lm}

\begin{proof}
Le foncteur fortement polynomial $F$ possède un support fini donné par la proposition~\ref{pr-P11}. On déduit alors le résultat du lemme~\ref{lm-tck} en prenant pour $a$ la somme des éléments d'un ensemble générateur monoïdal faible fini de $\M$.
\end{proof}

\subsection{Propriété fondamentale du foncteur section}\label{par-sect}

La proposition suivante constitue l'un des principaux résultats de ce travail. Elle est de nature très générale, ne requérant que peu d'hypothèses de finitude sur la catégorie source et aucune hypothèse de noethérianité ; sa conclusion n'est d'ailleurs pas vraiment une propriété de finitude. Elle jouera un rôle complémentaire de la proposition~\ref{noeth-eta} pour établir des propriétés de finitude sur les foncteurs faiblement polynomiaux.

\begin{pr}\label{prn-princ}
  Soient $\M$ un objet de $\mi$ vérifiant $(P_0)$ et $\A$ une catégorie de Grothendieck. Pour tout $d\in\mathbb{N}$, il existe des objets $r(d)$ et $q(d)$ de $\M$ tels que, pour tout objet $X$ de $\Pol_d(\M,\A)$, on ait  :
\begin{enumerate}
 \item $\tau_{r(d)}(\mathbf{R}^i s)(X)=0$ pour $1\leq i\leq d$ et $(\mathbf{R}^i s)(X)=0$ pour $i>d$ ;
\item $\tau_{q(d)}s(X)\in {\rm Ob}\,\Pol^{{\rm fort}}_d(\M,\A)$.
\end{enumerate}
\end{pr}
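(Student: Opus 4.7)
On raisonne par récurrence sur $d$, en établissant les deux assertions conjointement. Le cas de base $d=0$ est immédiat : par l'équivalence $\A\simeq\Pol_0(\M,\A)$, un objet $X$ de $\Pol_0$ correspond à un foncteur constant $C$, qui est de la forme $\eta(C_{\widetilde{\M}})$ ; les résultats rappelés sur $\eta$ donnent $s(X)=C$ et $\mathbf{R}^i s(X)=0$ pour $i\geq 1$, et $C\in\Pol_0^{\rm fort}(\M,\A)$ trivialement, donc $r(0)=q(0)=0$ conviennent. Pour l'hérédité, soit $X\in\Pol_d(\M,\A)$ : l'outil décisif est l'unité $u:X\to\pi\eta\alpha(X)=:Z$, dont le noyau $K$ et le conoyau $C$ appartiennent à $\Pol_{d-1}$ par les résultats de \cite{DV3} rappelés dans les préliminaires. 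Factorisant $u$ en $X\twoheadrightarrow Y\hookrightarrow Z$ et utilisant que $s(Z)=\eta\alpha(X)$ et $\mathbf{R}^i s(Z)=0$ pour $i\geq 1$, l'application du foncteur exact à gauche $s$ produit les suites exactes longues
\[0\to s(Y)\to\eta\alpha(X)\xrightarrow{\phi}s(C)\to\mathbf{R}^1 s(Y)\to 0,\quad\mathbf{R}^i s(Y)\simeq\mathbf{R}^{i-1}s(C)\text{ pour }i\geq 2,\]
\[\cdots\to\mathbf{R}^{i-1}s(Y)\to\mathbf{R}^i s(K)\to\mathbf{R}^i s(X)\to\mathbf{R}^i s(Y)\to\mathbf{R}^{i+1}s(K)\to\cdots.\]
L'annulation $\mathbf{R}^i s(X)=0$ pour $i>d$ et la nullité $\tau_{r(d-1)}\mathbf{R}^i s(X)=0$ pour $2\leq i\leq d$ résultent directement de l'hypothèse de récurrence sur $K$ et $C$ ; il reste à contrôler le seul terme délicat $\mathbf{R}^1 s(Y)=\mathrm{coker}(\phi)$, où $\eta\alpha(X)\in\Pol_d^{\rm fort}$ est à support fini $S_d$ par la proposition~\ref{pr-P11}, $\tau_{q(d-1)}s(C)\in\Pol_{d-1}^{\rm fort}$ par hypothèse de récurrence, et $\phi$ est stablement surjectif.

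Pour produire une translation uniforme annulant $\mathbf{R}^1 s(Y)$, on applique d'abord $\tau_{q(d-1)}$, après quoi $\tau_{q(d-1)}\mathbf{R}^1 s(Y)$ est un quotient stablement nul de $\tau_{q(d-1)}s(C)\in\Pol_{d-1}^{\rm fort}$, donc lui-même fortement polynomial de degré $\leq d-1$ à support $S_{d-1}$. On poursuit par une récurrence enchevêtrée, itérant la même construction d'unité sur $C$ (puis sur le conoyau associé à $C$, etc.) et descendant dans $\Pol_{d-1}\supset\cdots\supset\Pol_0$, jusqu'à ce que la cible du problème de conoyau devienne constante et que le lemme~\ref{lm-coker} s'applique directement ; les translations successives accumulées fournissent le $r(d)$ uniforme voulu. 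Une fois la première assertion acquise, la seconde s'en déduit en choisissant $m:=r(d)+q(d-1)$ : les annulations $\tau_m\mathbf{R}^1 s(K)=0$ et $\tau_m\mathbf{R}^1 s(Y)=0$ donnent la suite exacte courte $0\to\tau_m s(K)\to\tau_m s(X)\to\tau_m s(Y)\to 0$, tandis que la suite $0\to\tau_m s(Y)\to\tau_m\eta\alpha(X)\to\tau_m s(C)\to 0$, dont les deux derniers termes sont respectivement dans $\Pol_d^{\rm fort}$ et $\Pol_{d-1}^{\rm fort}$, encadre $\tau_m s(Y)$ ; les propriétés de $\kappa$-liberté du foncteur section (proposition~\ref{caract-sect}) combinées au dévissage par $\delta$ permettent alors de conclure que $\tau_m s(Y)\in\Pol_d^{\rm fort}$ (quitte à agrandir $m$ en $q(d)$), d'où $\tau_{q(d)}s(X)\in\Pol_d^{\rm fort}$ par stabilité par extensions.

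La difficulté centrale est la production du $r(d)$ uniforme pour $\mathbf{R}^1 s(Y)$ : le lemme~\ref{lm-coker} exige une cible \emph{constante}, alors que $s(C)$ n'est que polynomiale de degré $\leq d-1$. C'est précisément cette récurrence enchevêtrée---réduisant en $d$ étapes successives le degré de la cible du problème de conoyau, en jonglant avec les supports finis fournis par la proposition~\ref{pr-P11} et l'hypothèse de récurrence---qui constitue le c\oe ur technique de l'argument, et requiert une gestion soigneuse de l'\emph{uniformité} en $X$ des translations construites.
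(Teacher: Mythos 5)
Votre proposition reproduit correctement le squelette de la démonstration de l'article : même point de départ (la suite exacte à quatre termes $0\to K\to X\to\pi\eta\alpha(X)\to C\to 0$ avec $K$ et $C$ dans $\Pol_{d-1}(\M,\A)$), mêmes suites exactes longues, même initialisation par le lemme~\ref{lm-coker}, et vous identifiez bien la difficulté centrale (annuler uniformément le conoyau de $s(f)$ pour $f$ stablement surjectif de cible polynomiale non constante). Mais c'est précisément là que subsiste une lacune réelle. Le mécanisme que vous proposez --- <<~itérer la construction d'unité sur $C$ jusqu'à ce que la cible du problème de conoyau devienne constante~>> --- n'est pas explicité et ne se ramène pas de façon évidente au lemme~\ref{lm-coker} : le morphisme $C\to\pi\eta\alpha(C)$ n'est pas un épimorphisme (son conoyau n'est que dans $\Pol_{d-2}$), de sorte qu'on ne voit pas comment le problème de conoyau de cible $s(C)$ se réduit à un problème de cible de degré strictement inférieur. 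De plus, observer que $\tau_{q(d-1)}\mathbf{R}^1 s(Y)$ est stablement nul, fortement polynomial et à support fini ne permet pas de conclure qu'une translation l'annule : sans hypothèse du type $(P_3)$ (indisponible ici, seul $(P_0)$ est supposé), un foncteur stablement nul à support fini ne meurt pas nécessairement après translation. L'article résout ce point par l'hypothèse $(HR_3(d,d'))$, établie par récurrence sur le couple $(d,d')$ : on applique $\delta_x$ à l'épimorphisme $f$, on utilise le monomorphisme naturel $\delta_x s\hookrightarrow s\delta_x$ pour en déduire que ${\rm Coker}\,s(f')$ appartient à $\Pol_0^{{\rm fort}}(\M,\A)$, puis on construit un produit fibré le long de $s(X')(0)\hookrightarrow s(X')$ pour se ramener à une cible \emph{constante} et appliquer le lemme~\ref{lm-coker} ; rien d'équivalent n'apparaît dans votre texte.

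Un second point faible concerne la déduction de la seconde assertion : vous <<~encadrez~>> $\tau_m s(Y)$ entre $\tau_m\eta\alpha(X)\in\Pol_d^{{\rm fort}}(\M,\A)$ et $\tau_m s(C)\in\Pol_{d-1}^{{\rm fort}}(\M,\A)$, mais un sous-foncteur d'un foncteur fortement polynomial n'est pas fortement polynomial en général (c'est toute la raison d'être de la notion faible), donc cet encadrement ne conclut pas. L'article contourne l'obstacle en montrant d'abord, via $HR_1(d)$ et un diagramme aux lignes exactes, que le monomorphisme naturel $\delta_x s(\tau_{r(d)}Y)\to s(\delta_x\tau_{r(d)}Y)$ est un isomorphisme, ce qui permet d'appliquer l'assertion~2 en degré $d-1$ à $\delta_x\tau_{r(d)}Y$ et d'en tirer la polynomialité forte de $\tau_{q(d-1)+r(d)}s(Y)$. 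Votre allusion au <<~dévissage par $\delta$~>> va dans cette direction mais reste trop imprécise pour constituer une démonstration.
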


\begin{proof}
 On procède par une récurrence imbriquée sur trois hypothèses, où $d\geq d'\geq 0$ sont des entiers et $r(d)$, $q(d)$ et $t(d,d')$ sont des objets de $\M$ :
\begin{itemize}
 \item $(HR_1(d))$ pour tout $X\in {\rm Ob}\,\Pol_d(\M,\A)$, $\tau_{r(d)}(\mathbf{R}^i s)(X)=0$ pour $1\leq i\leq d$ et $(\mathbf{R}^i s)(X)=0$ pour $i>d$ ;
\item $(HR_2(d))$ pour tout $X\in {\rm Ob}\,\Pol_d(\M,\A)$, $\tau_{q(d)}s(X)\in {\rm Ob}\,\Pol^{{\rm fort}}_d(\M,\A)$ ;
\item $(HR_3(d,d'))$ pour tous $A\in {\rm Ob}\,\Pol_d(\widetilde{\M},\A)$, $X\in {\rm Ob}\,\Pol_{d'}(\M,\A)$ et tout épimorphisme $f : \pi\eta(A)\twoheadrightarrow X$ de $\st(\M,\A)$, $\tau_{t(d,d')}({\rm Coker}\,s(f))=0$.
\end{itemize}

Pour l'initialisation, on note que $HR_1(0)$ et $HR_2(0)$ sont vérifiées avec $r(0)=q(0)=0$ (puisque $\Pol_0(\M,\A)$ est équivalente à $\A$ via l'inclusion des foncteurs constants) ; le lemme~\ref{lm-coker} montre que $HR_3(d,0)$ est vérifiée avec $t(d,0)=a^{+d}$ pour un objet convenable $a$ de $\M$.

Le résultat découle alors de la conjonction des trois lemmes qui suivent, où l'on suppose $d\geq d'>0$.
\end{proof}

\begin{lm}\label{lma1}
Si $HR_1(d-1)$ et $HR_3(d,d-1)$ sont vérifiées, alors $HR_1(d)$ l'est également, et tout $r(d)$ tel qu'existent des objets $a'$ et $a''$ de $\M$ tels que  $r(d)\simeq r(d-1)+a'$ et $r(d)\simeq t(d,d-1)+a''$ (par exemple, $r(d)=r(d-1)+t(d,d-1)$) convient.
\end{lm}

\begin{proof}
On sait qu'il existe dans $\st(\M,\A)$ une suite exacte
$$0\to Y\to X\xrightarrow{u}\pi\eta(A)\xrightarrow{v} Z\to 0$$
avec $Y$ et $Z$ dans $\Pol_{d-1}(\M,\A)$ et $A$ dans $\Pol_d(\widetilde{\M},\A)$ (on rappelle que cela provient de ce que le foncteur $\eta :\fct(\widetilde{\M},\A)\to\fct(\M,\A)$ induit, d'après \cite{DV3}, un foncteur essentiellement surjectif --- et même une équivalence --- $$\Pol_d(\widetilde{\M},\A)/\Pol_{d-1}(\widetilde{\M},\A)\to\Pol_d(\M,\A)/\Pol_d(\M,\A)).$$
Notons $L$ l'image de $u$ : la suite exacte courte $0\to L\to\pi\eta(A)\to Z\xrightarrow{v} 0$ et les relations $(\mathbf{R}^i s)(\pi\eta(A))=0$ pour $i>0$ montrent que $(\mathbf{R}^i s)(L)\simeq (\mathbf{R}^{i-1}s)(Z)$ pour $i\geq 2$, d'où $\tau_{r(d-1)}(\mathbf{R}^i s)(X)=0$ pour $2\leq i\leq d$ et $(\mathbf{R}^i s)(X)=0$ pour $i>d$ par $HR_1(d-1)$, tandis que $(\mathbf{R}^1 s)(L)$ est le conoyau de $s(v)$. Ainsi $HR_3(d,d-1)$ fournit $\tau_{t(d,d-1)}\mathbf{R}^1(s)(L)=0$.

Maintenant, la suite exacte courte $0\to Y\to X\to L\to 0$ fournit des suites exactes $(\mathbf{R}^i s)(Y)\to (\mathbf{R}^i s)(X)\to (\mathbf{R}^i s)(L)$ pour tout $i\in\mathbb{N}^*$ : ce qu'on vient de montrer sur les $(\mathbf{R}^i s)(L)$ et ce que $HR_1(d-1)$ implique pour $Y$ entraîne le résultat souhaité pour $X$.
\end{proof}

\begin{lm}\label{lma2}
Si $HR_2(d-1)$ et $HR_1(d)$ sont vérifiées, alors $HR_2(d)$ l'est également avec $q(d)=q(d-1)+r(d)$.
\end{lm}

\begin{proof}
 On procède selon une méthode analogue à celle du lemme~\ref{lma1}, dont on conserve les notations de la démonstration. La suite exacte
$$0\to s(L)\to\eta(A)\to s(Z)\to(\mathbf{R}^1 s)(L)\to 0$$
de $\fct(\M,\A)$ induit, grâce à $HR_1(d)$, une suite exacte courte
$$0\to\tau_{r(d)} s(L)\to\tau_{r(d)}\eta(A)\to\tau_{r(d)}s(Z)\to 0\;;$$
autrement dit, en utilisant la commutation du foncteur section $s$ et de $\tau_{r(d)}$, la suite exacte courte $0\to L\to\pi\eta(A)\to Z\to 0$ induit une suite exacte courte
$$0\to s(\tau_{r(d)} L)\to\eta(\tau_{r(d)}A)\to s(\tau_{r(d)}Z)\to 0.$$
Si $x$ est un objet de $\M$, la suite exacte courte $0\to\delta_x(L)\to\pi\eta\delta_x(A)\to\delta_x(Z)\to 0$ induit une suite exacte courte
$$0\to s(\delta_x(\tau_{r(d)} L))\to\eta(\delta_x(\tau_{r(d)}A))\to s(\delta_x(\tau_{r(d)}Z))\to 0$$
pour la même raison que précédemment : $\delta_x(L)$ appartient à $\Pol_d(\M,\A)$, de sorte que $\tau_{r(d)}(\mathbf{R}^1 s)(\delta_x(L))=0$ par l'hypothèse $HR_1(d)$ (on utilise également sans cesse la commutation de $\tau_{r(d)}$ à tous les foncteurs utilisés).

Considérons le diagramme commutatif aux lignes exactes
$$\xymatrix{0\ar[r] & \delta_x s(\tau_{r(d)} L)\ar[r]\ar[d] & \eta(\delta_x\tau_{r(d)} A)\ar[r]\ar[d]^\simeq &\delta_x s(\tau_{r(d)} Z)\ar[r]\ar[d] & 0\\
0\ar[r] & s(\delta_x(\tau_{r(d)} L))\ar[r] & \eta(\delta_x(\tau_{r(d)}A))\ar[r] & s(\delta_x(\tau_{r(d)}Z))\ar[r] & 0
}$$
de $\fct(\M,\A)$ (la suite supérieure est exacte à gauche car $\kappa_x(s(\tau_{r(d)} Z))=0$) dont les flèches verticales sont les {\em monomorphismes} naturels : il montre que le morphisme $\delta_x s(\tau_{r(d)} L)\to s(\delta_x(\tau_{r(d)} L))$ est un isomorphisme. Comme $\tau_{q(d-1)}s(\delta_x(\tau_{r(d)} L))$ appartient à $\Pol^{{\rm fort}}_{d-1}(\M,\A)$ par $HR_2(d-1)$, on en tire que $\tau_{q(d-1)}s(\tau_{r(d)} L)\simeq\tau_{q(d-1)+r(d)}s(L)$ appartient à $\Pol^{{\rm fort}}_{d}(\M,\A)$.

Revenons à $X$ : la suite exacte courte $0\to Y\to X\to L\to 0$ et $HR_1(d)$ fournissent une suite exacte courte
$$0\to\tau_{r(d)+q(d-1)}s(Y)\to\tau_{r(d)+q(d-1)}s(X)\to\tau_{r(d)+q(d-1)}s(L)\to 0\;;$$
comme $\tau_{r(d)+q(d-1)}s(L)$ appartient à $\Pol^{{\rm fort}}_{d}(\M,\A)$ comme on vient de le voir, et $\tau_{r(d)+q(d-1)}s(Y)$ aussi (par $HR_2(d-1)$), on voit qu'il en est de même pour  $\tau_{r(d)+q(d-1)}s(X)$, d'où le lemme.
\end{proof}

\begin{lm}\label{lma3}
Si $HR_3(d-1,d'-1)$ et $HR_2(d-1)$ sont vérifiées, alors $HR_3(d,d')$ l'est également avec $t(d,d')=t(d-1,d'-1)+q(d-1)+a^{+d}$.
\end{lm}

\begin{proof}
 Soit $f : \pi\eta(A)\twoheadrightarrow X$ un épimorphisme de $\st(\M,\A)$, avec $A\in {\rm Ob}\,\Pol_d(\widetilde{\M},\A)$ et $X\in {\rm Ob}\,\Pol_{d'}(\M,\A)$. Pour tout objet $x$ de $\M$, $\tau_{t(d-1,d'-1)}s(\delta_x (f))$ est un épimorphisme (par l'hypothèse $HR_3(d-1,d'-1)$). En utilisant le monomorphisme naturel $\delta_x s\hookrightarrow s\delta_x$ (comme dans la démonstration du lemme~\ref{lma2}), on voit que le diagramme commutatif
$$\xymatrix{\eta\delta_x(A)\ar@{=}[d]\ar[r]^-\simeq & \delta_x s\big(\pi\eta(A)\big)\ar[rr]^-{\delta_x (s(f))} & & \delta_x(s X)\ar[d]\\
\eta\delta_x(A)\ar[r]^-\simeq & s\delta_x\big(\pi\eta(A)\big)\ar[rr]^-{s\delta_x(f)} & & s\delta_x(X)
}$$
où la flèche verticale de droite est un monomorphisme procure un monomorphisme ${\rm Coker}\,\delta_x(sf)\hookrightarrow{\rm Coker}\,s\delta_x(f)$. Par conséquent, $\tau_{t(d-1,d'-1)}\delta_x(sf)\simeq\delta_x s(f')$ est un épimorphisme, où l'on note $f':=\tau_{t(d-1,d'-1)}(f)$ --- nous désignerons de manière générale, dans la suite, par un prime l'application du foncteur $\tau_{t(d-1,d'-1)}$, pour alléger.

Le conoyau $N$ de $s(f') : \eta(A')(=\eta\tau_{t(d-1,d'-1)}(A))\to s(X')(=s\tau_{t(d-1,d'-1)}(X))$ appartient donc à $\Pol_0^{{\rm fort}}(\M,\A)$ ; autrement dit, le morphisme canonique $N(0)\to N$ est un épimorphisme. Notons $F$ le produit fibré du monomorphisme canonique $sX'(0)\hookrightarrow s(X')$ et de $s(f')$, de sorte qu'on dispose d'une suite exacte courte
$$0\to F\to\eta(A')\oplus s(X')(0)\to s(X')\to 0.$$
En appliquant $s\pi$, on voit que l'unité $F\to s\pi(F)$ est un isomorphisme.

Raisonnant encore comme précédemment, on obtient pour tout $x\in {\rm Ob}\,\M$ un diagramme commutatif aux lignes exactes
$$\xymatrix{0\ar[r] & \delta_x(F)\ar[r]\ar[d] & \eta\delta_x(A')\ar@{=}[d]\ar[r] & \delta_x s(X')\ar[r]\ar[d] & 0\\
0\ar[r] & s\pi\delta_x(F)\ar[r] & \eta\delta_x(A')\ar[r] & s\delta_x(X') & 
}$$
dont les flèches verticales sont des monomorphismes, d'où l'on déduit que l'unité $\delta_x(F)\to s\pi\delta_x(F)$ est un isomorphisme. Comme $\pi\delta_x(F)$, qui est un sous-objet de $\pi\eta\delta_x(A')$, appartient à $\Pol_{d-1}(\M,\A)$, on en déduit, grâce à $HR_2(d-1)$, que $\tau_{q(d-1)}\delta_x(F)$ appartient à $\Pol_{d-1}^{{\rm fort}}(\M,\A)$, donc que $\tau_{q(d-1)}(F)$ appartient à $\Pol_{d}^{{\rm fort}}(\M,\A)$.

On utilise maintenant la suite exacte
$$F\xrightarrow{u} s(X')(0)\to N\to 0$$
déduite des définitions de $F$ et de $N$ et de ce que la composée $s(X')(0)\hookrightarrow s(X')\twoheadrightarrow N$ est un épimorphisme : en lui appliquant $\tau_{q(d-1)}$ et en appliquant le lemme~\ref{lm-coker} à $u$ (on rappelle que $N$ est stablement nul car $f$, donc $f'$, est un épimorphisme de $\st(\M,\A)$), on obtient $\tau_{q(d-1)+a^{+d}}(N)\simeq\tau_{a^{+d}}(\tau_{q(d-1)} N)=0$. Comme $N={\rm Coker}\,s\tau_{t(d-1,d'-1)}(f)$ par définition, on en déduit (utilisant toujours la commutation entre $s$ et les $\tau_b$)
$$\tau_{t(d-1,d'-1)+q(d-1)+ a^{+d}}{\rm Coker}\,s(f)=0,$$
ce qui achève la démonstration.
\end{proof}

\begin{cor}\label{cornp}
 Soient $\M$ un objet de $\mi$ vérifiant $(P_0)$ et $(P_1)$, $\A$ une catégorie de Grothendieck, $d\in\mathbb{N}$, $X\in {\rm Ob}\,\Pol_d(\M,\A)$ et $F:=s(X)$. Si le foncteur $F : \M\to\A$ est ponctuellement noethérien, alors il est noethérien.
\end{cor}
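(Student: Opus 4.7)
Le plan est de combiner directement trois résultats préalablement établis : la propriété fondamentale du foncteur section (proposition~\ref{prn-princ}), le résultat de noethérianité pour les foncteurs fortement polynomiaux (proposition~\ref{polfor-noeth2}), et le critère de noethérianité par translation (proposition~\ref{prp2}). L'hypothèse $(P_0)$ permet d'appliquer les deux premiers et l'hypothèse $(P_1)$ le troisième.

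D'abord, puisque $X\in\Pol_d(\M,\A)$, la deuxième assertion de la proposition~\ref{prn-princ} fournit un objet $q(d)$ de $\M$ tel que $\tau_{q(d)}(F)=\tau_{q(d)}s(X)$ appartienne à $\Pol_d^{{\rm fort}}(\M,\A)$. D'autre part, l'hypothèse que $F$ est ponctuellement noethérien assure que $\tau_{q(d)}(F)$ l'est encore, puisque $(\tau_{q(d)}F)(x)=F(q(d)+x)$ pour tout $x\in{\rm Ob}\,\M$. La proposition~\ref{polfor-noeth2}, qui ne requiert sur $\M$ que la propriété $(P_0)$, permet alors de conclure que $\tau_{q(d)}(F)$ est un objet noethérien de $\fct(\M,\A)$.

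Il reste à remonter de $\tau_{q(d)}(F)$ à $F$ lui-même. Pour cela, on observera que $F$, étant ponctuellement noethérien, est a fortiori faiblement ponctuellement noethérien : le foncteur d'oubli $\A_G\to\A$ étant exact et fidèle pour tout monoïde $G$, toute suite croissante de sous-objets de $F(c)$ dans $\A_{{\rm End}_\M(c)}$ est en particulier une suite croissante de sous-objets dans $\A$, qui stationne donc. La proposition~\ref{prp2}, appliquée avec $t=q(d)$ sous l'hypothèse $(P_1)$, entraîne alors la noethérianité de $F$.

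L'ensemble du travail étant déjà absorbé dans la proposition~\ref{prn-princ} (à travers les récurrences imbriquées des lemmes~\ref{lma1}, \ref{lma2} et \ref{lma3}), il n'y a ici aucun obstacle sérieux : la seule subtilité conceptuelle est que le foncteur section ne préserve pas la polynomialité forte en général, mais seulement \emph{à translation près}, ce qui impose précisément de passer par la proposition~\ref{prp2} plutôt que par la proposition~\ref{polfor-noeth2} directement.
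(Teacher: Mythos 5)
Votre démonstration est correcte et suit exactement la même stratégie que celle du texte : obtenir par la proposition~\ref{prn-princ} un décalage $\tau_{q(d)}(F)$ fortement polynomial, lui appliquer la proposition~\ref{polfor-noeth2}, puis redescendre à $F$ via la proposition~\ref{prp2}. Les précisions supplémentaires (passage de ponctuellement noethérien à faiblement ponctuellement noethérien par fidélité du foncteur d'oubli) sont exactes et implicites dans le texte original.
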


\begin{proof}
 Par la proposition~\ref{prn-princ}, il existe un objet $x$ de $\M$ tel que $\tau_x(F)$ soit fortement polynomial. Comme ce foncteur est ponctuellement noethérien, la proposition~\ref{polfor-noeth2} montre qu'il est noethérien. En utilisant la proposition~\ref{prp2}, on en déduit le résultat.
\end{proof}

Notre but consiste maintenant à montrer que, sous certaines hypothèses sur $\M$, la catégorie des foncteurs faiblement polynomiaux de $\M$ vers une catégorie de Grothendieck localement noethérienne est localement noethérienne (ou presque). Cela exigera, outre la proposition~\ref{prn-princ}, l'utilisation de l'hypothèse de finitude $(P_4)$ et la démonstration de quelques résultats intermédiaires de présentation à support fini, qui possèdent également un intérêt intrinsèque.

\subsection{Propriétés autour de la présentation finie}\label{sect-psf}

On commence par un résultat général sur les foncteurs à support fini qui explique l'utilité de la propriété $(P_4)$.

\begin{lm}\label{lmtf}
 Soient $\M$ un objet de $\mi$, $\A$ une catégorie abélienne et $F$ un foncteur de $\fct(\M,\A)$. Considérons les propriétés suivantes :
\begin{enumerate}
 \item\label{tf1} $F$ est à support fini ;
\item\label{tf2} pour tout objet $x$ de $\M$, $\tau_x(F)$ est à support fini ;
\item\label{tf3} pour tout objet $x$ de $\M$, $\delta_x(F)$ est à support fini.
\end{enumerate}
Alors \ref{tf1}.$\Rightarrow$\ref{tf2}. si $\M$ vérifie $(P_4)$, \ref{tf2}.$\Rightarrow$\ref{tf3}., et \ref{tf3}.$\Rightarrow$\ref{tf1}. si $\M$ vérifie $(P_0)$.

De plus, il suffit que $\tau_x(F)$ soit à support fini pour {\em un} objet $x$ de $\M$ pour que $F$ soit à support fini, si $\M$ vérifie $(P_1)$.
\end{lm}

\begin{proof}
 {\em \ref{tf1}.$\Rightarrow$\ref{tf2}.} : cela provient de ce que pour tous objets $a$, $t$ de $\M$ et $A$ de $\A$, $U(a,t)$ est un support fini du foncteur $\tau_t(A[\M(a,-)])$.

{\em \ref{tf2}.$\Rightarrow$\ref{tf3}.} découle de ce que $\delta_x(F)$ est un quotient de $\tau_x(F)$.

 {\em \ref{tf3}.$\Rightarrow$\ref{tf1}.} : soient $T$ un ensemble générateur monoïdal faible fini de $\M$, $S_x$ un support fini de $\delta_x(F)$ pour $x\in T$ et
$$S':=\{0\}\cup\underset{x\in T}{\bigcup}(x+S_x).$$

Montrons que l'ensemble fini $S'$ est un support de $F$. Pour cela, on montre que, pour toute famille d'entiers naturels $\mathbf{n}=(n_x)_{x\in T}$, si l'on pose $a_\mathbf{n}:=\underset{x\in S}{\sum}x^{+n_x}$, $F(a_\mathbf{n})$ est somme d'éléments venant de $F(s)$ pour $s\in S'$, par récurrence sur $m:=\underset{x\in T}{\sum}{n_x}$ (cela suffit puisque tout objet de $\M$ est facteur direct d'un tel $a_\mathbf{n}$).

Pour $m=0$, cela résulte de ce que $0\in S'$. Si $m>0$, on choisit un $x\in T$ tel que $n_x>0$, de sorte que $a_\mathbf{n}$ s'écrit $x+b$, où $F(b)$ est somme d'éléments venant de $F(s)$ pour $s\in S'$, par l'hypothèse de récurrence. Comme $S_x$ est un support de $\delta_x(F)$ et que $x+S_x\subset S'$, $F(a_\mathbf{n})=F(x+b)$ est somme d'éléments venant de $F(s)$ pour $s\in S'$ et d'un élément de $F(b)$, qui lui-même est de ce type.

Pour la dernière assertion : si $S$ est un support fini de $\tau_x(F)$, la réunion de $x+S$ et d'un ensemble (fini par $(P_1)$) de représentants des classes d'isomorphisme d'objets de $\M$ n'appartenant pas à l'image essentielle de $x+-$ est un support fini de $F$.
\end{proof}

\begin{rem}
 L'implication \ref{tf3}.$\Rightarrow$\ref{tf1}. (sous $(P_0)$) généralise la proposition~\ref{pr-P11}.
\end{rem}

Une première conséquence, peu surprenante mais ne semblant pas facile à établir sans employer les résultats du paragraphe~\ref{par-sect}, est la suivante.

\begin{pr}\label{pr-sff}
 Soient $\M$ un objet de $\mi$ vérifiant $(P_0)$ et $(P_1)$, $\A$ une catégorie de Grothendieck et $X$ un objet polynomial de $\st(\M,\A)$. Alors $s(X)$ est à support fini.
\end{pr}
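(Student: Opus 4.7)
The plan is to combine Proposition~\ref{prn-princ} (which gives a translate of $s(X)$ that is strongly polynomial), Proposition~\ref{pr-P11} (which ensures strongly polynomial functors have finite support under $(P_0)$), and the last assertion of Lemma~\ref{lmtf} (which descends finite support from $\tau_x(F)$ to $F$ under $(P_1)$).

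More precisely, suppose $X$ belongs to $\Pol_d(\M,\A)$ and set $F = s(X)$. First I would invoke Proposition~\ref{prn-princ} to obtain an object $q(d)$ of $\M$ such that $\tau_{q(d)}(F) = \tau_{q(d)} s(X)$ lies in $\Pol_d^{{\rm fort}}(\M,\A)$. This is where the hypothesis $(P_0)$ on $\M$ is used the first time, through the conclusion of Proposition~\ref{prn-princ}.

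Next, Proposition~\ref{pr-P11} (again using $(P_0)$) implies that the strongly polynomial functor $\tau_{q(d)}(F)$ admits a finite support. Thus $F$ itself has the property that $\tau_{q(d)}(F)$ is with finite support. Finally, applying the last assertion of Lemma~\ref{lmtf} — which precisely states that if $\tau_x(F)$ is with finite support for \emph{one} object $x$ of $\M$, then $F$ is with finite support, provided $\M$ satisfies $(P_1)$ — we conclude that $F = s(X)$ is with finite support, as desired.

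The proof is a direct assembly and there is no real obstacle beyond having at hand the deep Proposition~\ref{prn-princ} (which is the main technical input, proved by the interlocked induction of the preceding paragraph); the only subtlety to verify is that the hypotheses $(P_0)$ and $(P_1)$ on $\M$ are exactly those needed to apply the three ingredients in succession, which is indeed the case.
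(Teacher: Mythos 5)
Your proof is correct and is exactly the argument the paper intends: the paper's own proof is a one-line citation of Propositions~\ref{prn-princ} and~\ref{pr-P11} and Lemma~\ref{lmtf}, and your write-up simply makes explicit how these three ingredients chain together (translate by $q(d)$ to land in $\Pol_d^{{\rm fort}}$, get finite support there via $(P_0)$, descend via the last assertion of Lemma~\ref{lmtf} using $(P_1)$).
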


\begin{rem}
 On peut trouver un tel support fini qui ne dépende que du degré de $X$.
\end{rem}

\begin{proof}
 C'est une conséquence directe des propositions~\ref{prn-princ} et~\ref{pr-P11} et du lemme~\ref{lmtf}. 
\end{proof}

On s'intéresse maintenant à la propriété $PSF$.

\begin{lm}\label{lm-psfd}
 Soient $\M$ un objet de $\mi$ vérifiant $(P_0)$ et $(P_4)$, $\A$ une catégorie de Grothendieck et $F : \M\to\A$ un foncteur. On suppose que, pour objet $x$ de $\M$, $\delta_x(F)$ est à présentation de support fini et que $\kappa_x(F)$ est à support fini. Alors $F$ est à présentation de support fini.
\end{lm}

\begin{proof}
 Le lemme~\ref{lmtf} montre déjà que $F$ est à support fini. Supposons que $0\to H\to G\to F\to 0$ est une suite exacte de $\fct(\M,\A)$ et que $G$ est à support fini. Pour tout objet $x$ de $\M$, la suite exacte
 $$\kappa_x(F)\to\delta_x(H)\to\delta_x(G)\to\delta_x(F)\to 0$$
 et les hypothèses sur $\kappa_x(F)$ et $\delta_x(F)$ impliquent, comme $\delta_x(G)$ est à support fini d'après le lemme~\ref{lmtf}, que $\delta_x(H)$ est à support fini (utiliser le corollaire~\ref{pf-gal2}). La conclusion découle donc du lemme~\ref{lmtf}.
\end{proof}

On démontre de façon simple et entièrement analogue, en utilisant le foncteur exact $\tau_x$, la propriété suivante, à partir du lemme~\ref{lmtf}.

\begin{lm}\label{lm-psfd2}
 Soient $\M$ un objet de $\mi$ vérifiant $(P_1)$ et $(P_4)$, $\A$ une catégorie de Grothendieck, $F : \M\to\A$ un foncteur et $x$ un objet de $\M$. Si $\tau_x(F)$ est $PSF$, alors $F$ l'est également.
\end{lm}

La propriété suivante constitue le résultat principal de ce paragraphe.

\begin{pr}\label{pr-pff}
 Soient $\M$ un objet de $\mi$ vérifiant $(P_0)$, $(P_1)$ et $(P_4)$, $\A$ une catégorie de Grothendieck, $d\in\mathbb{N}$ et $X\in {\rm Ob}\,\Pol_d(\M,\A)$. Alors $s(X)$ est à présentation de support fini.
\end{pr}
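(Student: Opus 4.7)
The plan is to proceed by induction on the polynomial degree $d$. For $d=0$, the equivalence $\A\simeq\Pol_0(\M,\A)$ (whose quasi-inverse is the restriction of $s$) identifies $s(X)$ with a constant functor $C:\M\to\A$; since $0$ is initial in $\M$ one has $C\simeq C[\M(0,-)]$, so the counit $\varphi_!\varphi^*(C)\to C$ of la proposition-définition~\ref{pres-suppFini} for $\varphi:\{0\}\hookrightarrow\M$ reduces to an identity, and $\{0\}$ is a support de présentation of $s(X)$.

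For the inductive step, assume the result for degree $d-1$ and take $X\in\Pol_d(\M,\A)$. The plan is to apply le lemme~\ref{lm-psfd} to $F=s(X)$. Because $s(X)$ is $\s n$-fermé, la proposition~\ref{caract-sect} gives $\kappa_x s(X)=0$ for every $x$, which is trivially à support fini, so it suffices to show that $\delta_x s(X)$ is $PSF$ for every object $x$ of $\M$.

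To establish this, I will start from the short exact sequence $0\to X\to\tau_x X\to\delta_x X\to 0$ of $\st(\M,\A)$ (exact because $\pi(\kappa_x(-))=0$), apply the left-exact functor $s$ and use the isomorphism $s\tau_x\simeq\tau_x s$ to produce an exact sequence
$$0\to\delta_x s(X)\to s(\delta_x X)\to K_x\to 0$$
in which $K_x$ injects into $\mathbf{R}^1 s(X)$. La proposition~\ref{prn-princ} furnishes $\tau_{r(d)}\mathbf{R}^1 s(X)=0$, whence $\tau_{r(d)}K_x=0$; applying the exact functor $\tau_{r(d)}$ to the above sequence and using that $\tau$, $\delta$ and $s$ commute pairwise will produce a natural isomorphism
$$\tau_{r(d)}\delta_x s(X)\;\simeq\;s\bigl(\delta_x\tau_{r(d)}X\bigr).$$
Since $\tau_{r(d)}X$ still lies in $\Pol_d(\M,\A)$, one has $\delta_x\tau_{r(d)}X\in\Pol_{d-1}(\M,\A)$, and the induction hypothesis ensures that $s(\delta_x\tau_{r(d)}X)$ is $PSF$. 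Hence $\tau_{r(d)}\delta_x s(X)$ is $PSF$, and le lemme~\ref{lm-psfd2} (which uses $(P_1)$ and $(P_4)$) descends this property to $\delta_x s(X)$ itself, which closes the induction.

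The main obstacle is the failure of the canonical monomorphism $\delta_x s(X)\hookrightarrow s(\delta_x X)$ to be an isomorphism, which prevents a direct transfer of the $PSF$ property from the target to the source. The crucial point will be that la proposition~\ref{prn-princ} controls $\mathbf{R}^1 s(X)$ uniformly enough to kill the cokernel $K_x$ after a single translation by $r(d)$, at which point le lemme~\ref{lm-psfd2} is perfectly tailored to descend $PSF$ back across that translation. The hypotheses $(P_0)$, $(P_1)$ and $(P_4)$ enter, respectively, through la proposition~\ref{prn-princ}, the passage from $\tau_{r(d)}K_x=0$ to the fact that $K_x$ is presque nul (via la proposition~\ref{lm-pt3}, used implicitly in identifying $K_x$ as inoffensive), and the descent lemmas~\ref{lm-psfd} and~\ref{lm-psfd2}.
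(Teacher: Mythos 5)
Votre démonstration est correcte et suit essentiellement la même démarche que celle de l'article : récurrence sur $d$, nullité de $\kappa_x(sX)$, contrôle du conoyau du monomorphisme $\delta_x(sX)\hookrightarrow s(\delta_x X)$ par $\mathbf{R}^1 s(X)$ via la proposition~\ref{prn-princ}, puis descente de la propriété $PSF$ à travers $\tau_{r(d)}$ par le lemme~\ref{lm-psfd2} et conclusion par le lemme~\ref{lm-psfd}. Seule remarque mineure : l'hypothèse $(P_1)$ n'intervient que dans le lemme~\ref{lm-psfd2} (aucun passage par \og presque nul\fg{} n'est nécessaire, l'annulation $\tau_{r(d)}K_x=0$ suffisant directement), et le cas $d=0$ peut se traiter dans la récurrence elle-même puisque $\delta_x\tau_{r(d)}X$ y est nul.
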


\begin{proof}
On procède par récurrence sur $d$. Soit $x$ un objet de $\M$. Le foncteur $\kappa_x(sX)$ est nul, et le conoyau du monomorphisme naturel $\delta_x(s X)\hookrightarrow s (\delta_x X)$ s'injecte dans $(\mathbf{R}^1 s)(X)$, comme le montre le diagramme commutatif aux lignes exactes suivant.
$$\xymatrix{0\ar[r] & sX\ar[r]\ar@{=}[d] & \tau_x (s X)\ar[r]\ar[d]^\simeq & \delta_x (s X)\ar[r]\ar[d] & 0\\
0\ar[r] & sX\ar[r] &  s(\tau_x X)\ar[r] & s(\delta_x X)\ar[r] & (\mathbf{R}^1 s)(X)
}$$
La proposition~\ref{prn-princ} fournit un objet $r$ de $\M$ tel que $\tau_r (\mathbf{R}^1 s)(X)=0$, de sorte que le morphisme canonique $\tau_r(\delta_x(s X))\to\tau_r(s (\delta_x X))\simeq s(\tau_r\delta_x X)$ est un isomorphisme. L'hypothèse de récurrence montre que $s(\tau_r\delta_x X)$ est $PSF$, puisque $\tau_r\delta_x X$ est polynomial de degré strictement inférieur à $d$ (nul si $d=0$). Le lemme~\ref{lm-psfd2} implique que $\delta_x(sX)$ est $PSF$. Le lemme~\ref{lm-psfd} donne alors la conclusion.
\end{proof}

\begin{cor}\label{crnpsf}
 Soient $\M$ un objet de $\mi$ vérifiant $(P_0)$, $(P_1)$, $(P_2)$, $(P_3)$ et $(P_4)$, $\A$ une catégorie de Grothendieck et $F : \M\to\A$ un foncteur faiblement polynomial. On suppose que $F$ est noethérien et $s\pi(F)$ de type fini. Alors $F$ est $PSF$.
\end{cor}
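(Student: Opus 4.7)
La stratégie que je propose consiste à ramener le problème au cas déjà traité par la proposition~\ref{pr-pff} via une translation appropriée. Plus précisément : comme $F$ est faiblement polynomial, disons de degré $d$, l'objet $\pi(F)$ appartient à $\Pol_d(\M,\A)$, et la proposition~\ref{pr-pff} fournit immédiatement que $s\pi(F)$ est $PSF$. Considérons alors l'unité $u : F\to s\pi(F)$, de noyau $K$ et conoyau $C$, qui sont tous deux stablement nuls.

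Le fait que $F$ soit noethérien entraîne que $K$, sous-foncteur de $F$, l'est aussi, donc est de type fini ; par ailleurs, $s\pi(F)$ étant de type fini, $C$ l'est aussi comme quotient. La proposition~\ref{psn}, qui s'appuie sur $(P_3)$, permet d'en déduire que $K$ et $C$ sont presque nuls. On invoque alors la proposition~\ref{lm-pt3}(1) (qui utilise $(P_2)$) pour disposer d'objets $t_K, t_C$ de $\M$ vérifiant $\tau_{t_K}(K)=0$ et $\tau_{t_C}(C)=0$ ; en posant $t:=t_K+t_C$ et en exploitant la commutativité (à isomorphisme naturel près) des foncteurs $\tau_x$, on obtient simultanément $\tau_t(K)=\tau_t(C)=0$.

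Il suffit ensuite d'appliquer le foncteur exact $\tau_t$ à la suite exacte $0\to K\to F\to s\pi(F)\to C\to 0$ : elle fournit un isomorphisme $\tau_t(F)\xrightarrow{\simeq}\tau_t(s\pi(F))$. Comme $\tau_t$ commute à isomorphisme naturel près à $s$ et à $\pi$, et que $\Pol_d(\M,\A)$ est stable par $\tau_t$, on a $\tau_t s\pi(F)\simeq s\pi(\tau_t F)$ avec $\pi(\tau_t F)=\tau_t\pi(F)\in\Pol_d(\M,\A)$ ; la proposition~\ref{pr-pff} assure donc que $\tau_t(F)$ est $PSF$. Une application finale du lemme~\ref{lm-psfd2} (qui requiert $(P_1)$ et $(P_4)$) permet alors de conclure que $F$ lui-même est $PSF$.

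Aucune étape ne paraît présenter d'obstacle majeur ; la seule subtilité réside dans l'articulation des hypothèses $(P_0)$--$(P_4)$. Les trois premières d'entre elles ($(P_0), (P_1), (P_4)$) interviennent via la proposition~\ref{pr-pff} et le lemme~\ref{lm-psfd2}, $(P_3)$ permet de passer de <<~stablement nul et de type fini~>> à <<~presque nul~>> (proposition~\ref{psn}), et $(P_2)$ fournit la translation annulant simultanément $K$ et $C$. L'idée conceptuelle-clé est donc que, modulo une translation suffisamment grande, l'unité $F\to s\pi(F)$ devient un isomorphisme, ce qui ramène le cas général de la corollaire à la proposition~\ref{pr-pff}.
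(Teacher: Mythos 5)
Votre démonstration est correcte et suit essentiellement la même voie que celle du papier : identification du noyau et du conoyau de l'unité $F\to s\pi(F)$ comme stablement nuls et de type fini, annulation par un $\tau_t$ convenable via les propositions~\ref{psn} et~\ref{lm-pt3}, application de la proposition~\ref{pr-pff} à $\tau_t(F)\simeq s\pi(\tau_t F)$, puis conclusion par le lemme~\ref{lm-psfd2}. Les justifications supplémentaires que vous donnez (noethérianité de $K$, type fini de $C$ comme quotient) sont exactement celles que le papier laisse implicites.
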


\begin{proof}
 Notons $N$ et $C$ les noyau et conoyau, respectivement, de l'unité $F\to s\pi(F)$. Alors $N$ et $C$ sont de type fini et stablement nuls. Par les propositions~\ref{psn} et~\ref{lm-pt3}, on en déduit qu'il existe $x\in {\rm Ob}\,\M$ tel que $\tau_x(N)$ et $\tau_x(C)$ soient nuls. Ainsi, $\tau_x(F)\simeq\tau_x(s\pi(F))\simeq s\pi\tau_x(F)$ est $PSF$ par la proposition~\ref{pr-pff}. La conclusion découle donc du lemme~\ref{lm-psfd2}.
\end{proof}

%Nous utiliserons également la variation suivante de la proposition~\ref{pr-pff}.
%
%\begin{cor}\label{cor-pff}
%Soient $\M$ un objet de $\mi$ vérifiant $(P_0)$, $(P_1)$ et $(P_4)$, $(FM)$ et $(FE)$, $\A$ une catégorie de Grothendieck localement noethérienne, $d\in\mathbb{N}$ et $X\in {\rm Ob}\,\Pol_d(\M,\A)$. Si
%$$0\to F\to G\to s(X)\to 0$$
%est une suite exacte de $\fct(\M,\A)$ avec $G$ de type fini, alors $F$ est de type fini.
%\end{cor}
%
%\begin{proof}
% La proposition~\ref{pf-gal2} montre que $G$ est à support fini et faiblement ponctuellement noethérien. Le sous-foncteur $F$ de $G$ est donc également faiblement ponctuellement noethérien. Mais il est à support fini par la proposition~\ref{pr-pff}, il est donc de type fini, toujours d'après la proposition~\ref{pf-gal2}.
%\end{proof}

Le lemme facile qui suit n'a pas de rapport direct avec les catégories de foncteurs mais interviendra un peu plus loin.

\begin{lm}\label{lm-tec}
 Soient $\A$ une catégorie abélienne, $G$ un groupe de type fini et $Ou : \A_G\to\A$ le foncteur d'oubli (cf. notation~\ref{nca}, page~\pageref{nca}, pour la définition de $\A_G$). Si $M$ est un objet de $\A_G$ tel que $Ou(M)$ est de présentation finie dans $\A$, alors $M$ est de présentation finie dans la catégorie $\A_G$.
\end{lm}

\begin{proof}
 On commence par noter que $\mathbb{Z}$ (représentation triviale de $G$, dans les groupes abéliens) est une représentation de présentation finie de $G$ : si $E$ est un ensemble fini de générateurs de $G$, on dispose d'une suite exacte
$$\mathbb{Z}[G\times E]\simeq\mathbb{Z}[G]^{\oplus E}\to\mathbb{Z}[G]\to\mathbb{Z}\to 0$$
où la première flèche est donnée par $[g,e]\mapsto [ge]-[g]$ et la deuxième est l'augmentation. Le résultat s'en déduit en tensorisant cette suite exacte par $M$ et en utilisant le fait que $M\otimes\mathbb{Z}[G]\simeq M_{triv}\otimes\mathbb{Z}[G]$ (où $M_{triv}$ est la représentation de $G$ dans $\A$ dont l'objet de $\A$ sous-jacent est $M$, mais avec une action triviale de $G$), puisque $G$ est un groupe, et l'isomorphisme naturel
$$\A_G(M_{triv}\otimes\mathbb{Z}[G],N)\simeq\A(Ou(M),Ou(N))$$
qui montre que $M_{triv}\otimes\mathbb{Z}[G]$ est de présentation finie dans la catégorie $\A_G$ si $M$ est de présentation finie dans~$\A$.
\end{proof}

\subsection{Résultats fondamentaux sur les foncteurs faiblement polynomiaux}

\begin{lm}\label{lm-ninter}
 Soient $\M$ un objet de $\mi$ vérifiant $(P_0)$, $(P_1)$, $(P_2)$, $(P_3)$ et $(P_4)$, $\A$ une catégorie de Grothendieck et $F : \M\to\A$ un foncteur vérifiant les propriétés suivantes :
\begin{enumerate}
 \item $F$ est faiblement polynomial ;
\item $\kappa(F)=0$ ;
\item $F$ est ponctuellement noethérien ;
\item $F$ est à support fini. 
\end{enumerate}

Alors :
\begin{enumerate}
 \item $\alpha(F) : \widetilde{\M}\to\A$ est ponctuellement noethérien ;
\item $s\pi(F) : \M\to\A$ est ponctuellement noethérien ;
\item $F$ est noethérien.
\end{enumerate}
\end{lm}

\begin{proof}
 On procède par récurrence sur le degré faible $d$ de $F$.

On commence par remarquer que, comme tous les $\tau_t(F)$ sont à support fini d'après le lemme~\ref{lmtf}, le fait que $F$ soit ponctuellement noethérien entraîne que $\alpha(F)$ l'est également (par la proposition~\ref{lmalph}). Notons $G$ et $H$ les noyau et image respectivement de l'unité $F\to\eta\alpha(F)$. Comme l'unité de l'adjonction ${\rm Id}\to s\pi$ est un isomorphisme sur $\eta\alpha(F)$, l'inclusion $H\hookrightarrow\eta\alpha(F)$ se factorise en
$$H\hookrightarrow s\pi(H)\hookrightarrow\eta\alpha(F).$$
Ainsi, $s\pi(H)$ est noethérien et ponctuellement noethérien, comme $\eta\alpha(F)$ (utiliser la proposition~\ref{noeth-eta}), d'où l'on déduit que le conoyau de $H\hookrightarrow s\pi(H)$ est noethérien, ponctuellement noethérien et stablement nul. Par conséquent, en utilisant les propositions~\ref{psn} et~\ref{lm-pt3}, on voit qu'existe un objet $x$ de $\M$ tel que $\tau_x(H)\hookrightarrow\tau_x(s\pi(H))\simeq s\tau_x\pi(H)$ soit un isomorphisme. On dispose donc d'une suite exacte
$$0\to\tau_x(G)\to\tau_x(F)\to s\tau_x\pi(H)\to 0.$$

Comme $\tau_x\pi(H)$ appartient à $\Pol_d(\M,\A)$ et que $\tau_x(F)$ est comme $F$ à support fini par le lemme~\ref{lmtf}, on voit que $\tau_x(G)$, puis $G$ (par le même lemme), est également à support fini, en utilisant la proposition~\ref{pr-pff}. Mais $G$, sous-foncteur de $F$, est également ponctuellement noethérien et tel que $\kappa(G)=0$, et $G$ appartient à $\Pol_{d-1}^{{\rm faible}}(\M,\A)$. L'hypothèse de récurrence montre que $G$ est noethérien et $s\pi(G)$ ponctuellement noethérien. Les suites exactes
$$0\to G\to F\to H\to 0$$
et
$$0\to s\pi(G)\to s\pi(F)\to s\pi(H)$$
montrent donc que $F$ est noethérien et $s\pi(F)$ ponctuellement noethérien (on a vu plus haut que $H$ est noethérien et $s\pi(H)$ ponctuellement noethérien), d'où le lemme.
\end{proof}

Nous utiliserons aussi la variante suivante du lemme~\ref{lm-ninter}.

\begin{lm}\label{lm-ninter2}
 Soient $\M$ un objet de $\mi$ vérifiant $(P_0)$, $(P_1)$, $(P_2)$, $(P_3)$, $(P_4)$, $(FM)$, $(FE)$ et $(EI)$ et $\A$ une catégorie de Grothendieck localement noethérienne. On suppose également que les foncteurs polynomiaux et de type fini $\widetilde{\M}\to\A$ sont ponctuellement noethériens. Soit $F : \M\to\A$ un foncteur vérifiant les propriétés suivantes :
\begin{enumerate}
 \item $F$ est faiblement polynomial ;
\item $\kappa(F)=0$ ;
\item $F$ est de type fini. 
\end{enumerate}

Alors :
\begin{enumerate}
 \item $\alpha(F) : \widetilde{\M}\to\A$ est ponctuellement noethérien ;
\item  $F$ et $s\pi(F) : \M\to\A$ sont ponctuellement noethériens ;
\item $F$ est noethérien.
\end{enumerate}
\end{lm}

\begin{proof}
 Le schéma de la démonstration est exactement le même que dans le lemme~\ref{lm-ninter}, nous nous bornerons donc à indiquer les différences avec la démonstration de ce lemme, dont on conserve les notations.
\begin{enumerate}
 \item Le foncteur $\alpha(F) : \widetilde{\M}\to\A$ est de type fini comme $F$ grâce à la proposition~\ref{lmatf}. Il est également polynomial comme $F$, de sorte que l'hypothèse implique qu'il est ponctuellement noethérien.
\item Le foncteur $G\subset F$ est à support fini (pour la même raison que dans la démonstration du lemme~\ref{lm-ninter}), mais il est aussi faiblement ponctuellement de type fini. Cela provient de ce que $F$ est faiblement ponctuellement de type fini, puisque ce foncteur est de type fini, que $\M$
 vérifie l'hypothèse $(FM)$ et que $\A$ est localement noethérienne (utiliser la proposition~\ref{pf-gal}), de ce que $H$ prend des valeurs qui sont de présentation finie dans la catégorie des représentations des monoïdes d'endomorphismes des objets (utiliser les hypothèses $(FE)$, $(EI)$, le fait que $H$ est ponctuellement noethérien comme $\alpha(F)$, le caractère localement noethérien de $\A$ et le lemme~\ref{lm-tec}) et de la suite exacte $0\to G\to F\to H\to 0$. Le corollaire~\ref{pf-gal2} implique donc que $G$ est de type fini, ce qui permet de lui appliquer l'hypothèse de récurrence.
\end{enumerate}

\end{proof}

\begin{pr}\label{pr-nf2}
 Soient 
$\M$ un objet de $\mi$ vérifiant $(P_0)$, $(P_1)$, $(P_2)$, $(P_3)$ et $(P_4)$, $\A$ une catégorie de Grothendieck et $F : \M\to\A$ un foncteur vérifiant les propriétés suivantes :
\begin{enumerate}
 \item $F$ est faiblement polynomial ;
\item $F$ est ponctuellement noethérien ;
\item $F$ est à support fini. 
\end{enumerate}

Alors :
\begin{enumerate}
 \item $\alpha(F) : \widetilde{\M}\to\A$ est ponctuellement noethérien ;
\item $s\pi(F) : \M\to\A$ est ponctuellement noethérien ;
\item $F$ est noethérien ;
\item $F$ est $PSF$.
\end{enumerate}
\end{pr}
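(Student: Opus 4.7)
Le plan est de se ramener à la situation déjà traitée dans le lemme~\ref{lm-ninter} en quotientant par le plus grand sous-foncteur stablement nul. Je poserais donc $N:=\kappa(F)$ et $F':=F/N$, de sorte que $\kappa(F')=0$. Le foncteur $F'$ satisfera les quatre hypothèses du lemme~\ref{lm-ninter} : il est faiblement polynomial (puisque $\Pol^{{\rm faible}}_d(\M,\A)$ est localisante), ponctuellement noethérien (quotient d'un tel), à support fini (corollaire~\ref{pf-gal2}), et vérifie $\kappa(F')=0$ par construction. Le lemme~\ref{lm-ninter} donnera alors que $\alpha(F')$ et $s\pi(F')$ sont ponctuellement noethériens et que $F'$ est noethérien.

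Les deux premières assertions de l'énoncé en découleront immédiatement : comme $\alpha$ et $\pi$ s'annulent sur les foncteurs stablement nuls, on aura $\alpha(F)\simeq\alpha(F')$ et $s\pi(F)\simeq s\pi(F')$, qui seront donc ponctuellement noethériens.

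Le point délicat sera de montrer que $N=\kappa(F)$ est à support fini, propriété qui n'est pas préservée par passage aux sous-foncteurs en général. Pour cela, on notera que $\pi(F)=\pi(F')$ est polynomial, donc que $s\pi(F)$ est à support fini par la proposition~\ref{pr-sff} ; joint à la noethérianité ponctuelle de $s\pi(F)$, le corollaire~\ref{pf-gal2} donnera que $s\pi(F)$ est de type fini. Appliqué à $F'$ (faiblement polynomial, noethérien, avec $s\pi(F')$ de type fini), le corollaire~\ref{crnpsf} impliquera alors que $F'$ est $PSF$. Dans la suite exacte $0\to N\to F\to F'\to 0$, en prenant pour $S$ la réunion d'un support fini de $F$ et d'un support de présentation fini de $F'$, $S$ restera un support de $F$ et un support de présentation de $F'$ (les deux propriétés étant héréditaires par passage à un sur-ensemble) ; la première caractérisation de la proposition-définition~\ref{pres-suppFini} appliquée à $F'$ forcera alors $S$ à être un support de $N$, d'où le support fini souhaité.

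Une fois ce support fini acquis, la conclusion viendra vite. Le foncteur $N$ sera stablement nul, à support fini et ponctuellement noethérien (comme sous-foncteur de $F$), donc noethérien par le corollaire~\ref{cor-snn} ; $F$ étant alors extension de foncteurs noethériens, on obtiendra la troisième assertion. Une ultime application du corollaire~\ref{crnpsf} à $F$ lui-même (désormais noethérien, faiblement polynomial, avec $s\pi(F)$ de type fini) fournira enfin la propriété $PSF$, soit la dernière conclusion.
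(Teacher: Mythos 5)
Votre démonstration est correcte et suit pour l'essentiel le même chemin que celle du texte : quotient $F':=F/\kappa(F)$, application du lemme~\ref{lm-ninter}, identification $\alpha(F)\simeq\alpha(F')$ et $s\pi(F)\simeq s\pi(F')$, puis support fini de $\kappa(F)$, corollaire~\ref{cor-snn} et enfin le corollaire~\ref{crnpsf}. La seule variante réelle est l'étape du support fini de $\kappa(F)$ : le texte applique $\tau_x$ à la suite exacte pour se ramener à $s\tau_x\pi(F')$, qui est $PSF$ par la proposition~\ref{pr-pff}, puis redescend via le lemme~\ref{lmtf}, tandis que vous établissez d'abord que $F'$ lui-même est $PSF$ (via le corollaire~\ref{crnpsf}, licite puisque le lemme~\ref{lm-ninter} a déjà fourni la noethérianité de $F'$ et que $s\pi(F')$ est de type fini par les propositions~\ref{pr-sff} et le corollaire~\ref{pf-gal2}) et lisez le support du noyau directement sur la caractérisation~\ref{pspf1} de la proposition-définition~\ref{pres-suppFini} ; les deux arguments sont valides et d'un coût comparable.
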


\begin{proof}
 Posons $G:=F/\kappa(F)$. Alors $G$ vérifie les hypothèses du lemme~\ref{lm-ninter}, de sorte que $G$ est noethérien et que $\alpha(G)$ et $s\pi(G)$ sont ponctuellement noethériens. Comme la projection $F\twoheadrightarrow G$ induit des isomorphismes quand on applique les foncteurs $\alpha$ ou $s\pi$, cela démontre déjà les deux premières assertions.

On note par ailleurs que l'inclusion $G\hookrightarrow s\pi(G)$ induit un isomorphisme $\tau_x(G)\hookrightarrow\tau_x(s\pi(G))\simeq s\tau_x\pi(G)$ pour un certain objet $x$ de $\M$ (même raisonnement que dans la démonstration du lemme~\ref{lm-ninter} pour $H$), de sorte qu'on a une suite exacte courte
$$0\to\tau_x\kappa(F)\to\tau_x(F)\to s\tau_x\pi(G)\to 0$$
dont on déduit (comme pour le lemme~\ref{lm-ninter}) que $\kappa(F)$ est à support fini. Ce foncteur est donc de type fini (il est ponctuellement noethérien comme $F$), de sorte que le corollaire~\ref{cor-snn} montre que $\kappa(F)$ est noethérien. Cela implique que $F$ est noethérien, comme souhaité.

La dernière assertion se déduit des précédentes et des corollaires~\ref{crnpsf} et~\ref{cornp}.
\end{proof}

On a également la variante suivante :

\begin{pr}\label{pr-nf3}
 Soient 
$\M$ un objet de $\mi$ vérifiant $(P_0)$, $(P_1)$, $(P_2)$, $(P_3)$, $(P_4)$, $(FM)$, $(FE)$ et $(EI)$ et $\A$ une catégorie de Grothendieck localement noethérienne. On suppose également que les foncteurs polynomiaux et de type fini $\widetilde{\M}\to\A$ sont ponctuellement noethériens. Soit $F : \M\to\A$ un foncteur faiblement polynomial et de type fini.

Alors :
\begin{enumerate}
 \item $\alpha(F) : \widetilde{\M}\to\A$ est ponctuellement noethérien ;
\item $s\pi(F) : \M\to\A$ est ponctuellement noethérien ;
\item $F$ est presque noethérien (en particulier, si $F$ est faiblement ponctuellement noethérien, alors $F$ est noethérien) ;
\item $F$ est $PSF$.
\end{enumerate}
\end{pr}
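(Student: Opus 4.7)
The plan is to mimic the proof of Proposition \ref{pr-nf2}, substituting Lemma \ref{lm-ninter2} for Lemma \ref{lm-ninter} in order to exploit the additional hypotheses $(FM)$, $(FE)$, $(EI)$, the local noetherianity of $\A$, and the pointwise noetherianity of polynomial finite-type functors on $\widetilde{\M}$. The conclusion for $F$ itself is correspondingly weakened from noethérien to presque noethérien.

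First I would set $G := F/\kappa(F)$. Since $\kappa(F)$ is the largest subfunctor of $F$ in the localizing subcategory $\s n(\M,\A)$, one has $\kappa(G)=0$; as a quotient of $F$, the functor $G$ is faiblement polynomial and de type fini. Lemma \ref{lm-ninter2} then applies to $G$ and yields that $\alpha(G)$, $G$, $s\pi(G)$ are ponctuellement noethériens and that $G$ is noethérien. Because $\kappa(F)$ lies in $\s n(\M,\A)$, both $\alpha$ and $\pi$ vanish on it, so the short exact sequence $0\to\kappa(F)\to F\to G\to 0$ induces canonical isomorphisms $\alpha(F)\xrightarrow{\simeq}\alpha(G)$ and $s\pi(F)\xrightarrow{\simeq}s\pi(G)$, giving items (1) and (2).

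For (3), I would show that $\kappa(F)$ is presque nul, whence $F$ presque noethérien as an extension of the noethérien $G$ by the presque nul $\kappa(F)$. Exactly as in the proof of Proposition \ref{pr-nf2} (reusing the reasoning of Lemma \ref{lm-ninter2} for its internal image), one produces an object $x\in\M$ such that $\tau_x(G)\hookrightarrow\tau_x(s\pi(G))\simeq s\tau_x\pi(G)$ is an isomorphism, yielding a short exact sequence
$$0\to\tau_x\kappa(F)\to\tau_x(F)\to s\tau_x\pi(G)\to 0.$$
Here $\tau_x(F)$ has finite support (by Corollary \ref{pf-gal2} applied to $F$ of type fini, combined with Lemma \ref{lmtf}) and $s\tau_x\pi(G)$ is $PSF$ by Proposition \ref{pr-pff}. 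Criterion \ref{pspf1} of Proposition-Definition \ref{pres-suppFini} then forces $\tau_x\kappa(F)$ to have finite support; Lemma \ref{lmtf} (last assertion, using $(P_1)$) transfers this to $\kappa(F)$. Since $\kappa(F)$ is faiblement ponctuellement de type fini as a subfunctor of $F$ (using $(FM)$ and Proposition \ref{pf-gal}), Corollary \ref{pf-gal2} upgrades its finite support to de type fini, and Proposition \ref{psn} (under $(P_3)$) concludes that $\kappa(F)$, being stablement nul and de type fini, is presque nul. The parenthetical clause then follows from Corollary \ref{cor-idiot}.

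For (4), I would apply Corollary \ref{crnpsf} to $G$: the functor $G$ is noethérien, and $s\pi(G)$ is de type fini as an extension of the finite-type functor $G$ by the cokernel of $G\hookrightarrow s\pi(G)$ (itself presque nul by the same argument and faiblement ponctuellement de type fini since its nonzero values are noetherian in $\A$, hence de présentation finie, which Lemma \ref{lm-tec} promotes to de présentation finie in $\A_{\End(c)}$ using $(FE)$, $(EI)$ and the local noetherianity of $\A$, then de type fini by Corollary \ref{pf-gal2}). Hence $G$ is $PSF$. Proposition \ref{pr-psfs}, applicable under $(P_0)$ and $(P_3)$, shows $\kappa(F)$ (presque nul) is $PSF$, and Corollary \ref{cr-psf}(1) applied to $0\to\kappa(F)\to F\to G\to 0$ finally yields that $F$ is $PSF$. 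The main obstacle will be the production of the object $x$ killing the cokernel of $G\hookrightarrow s\pi(G)$: this requires rerunning the internal argument of Lemma \ref{lm-ninter2} on $G$ and carefully tracking the various finiteness hypotheses through the intermediate $s\pi$-closures; once this is established, the rest of the proof strings together already-proved criteria.
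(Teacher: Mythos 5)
Your overall architecture is exactly that of the paper: set $G:=F/\kappa(F)$, apply Lemma~\ref{lm-ninter2} to $G$ in place of Lemma~\ref{lm-ninter}, transfer assertions (1) and (2) through the isomorphisms $\alpha(F)\simeq\alpha(G)$ and $s\pi(F)\simeq s\pi(G)$, and then handle $\kappa(F)$ separately. There is, however, one genuine gap, and it sits precisely at the point the paper singles out in its own (very terse) proof. To pass from the finite support of $\kappa(F)$ to its being de type fini via Corollary~\ref{pf-gal2}, you need $\kappa(F)$ to be \emph{faiblement ponctuellement de type fini}, and you justify this by saying it is a subfunctor of $F$, which is weakly pointwise of finite type by $(FM)$ and Proposition~\ref{pf-gal}. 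But the finite-type property is not inherited by subobjects: in $\A_{{\rm End}_\M(c)}$ a subobject of an object of finite type need not be of finite type unless that category is locally noetherian, which fails in the motivating example --- the paper itself notes that $\mathbb{Z}[GL_n(\mathbb{Z})]$ is not noetherian for $n>1$. As written, this step does not go through.

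The repair is the one the paper prescribes, and which you in fact carry out correctly for the cokernel of $G\hookrightarrow s\pi(G)$ in your part (4): use the short exact sequence $0\to\kappa(F)\to F\to G\to 0$ together with Lemma~\ref{lm-tec}. Since $G$ is ponctuellement noethérien (by Lemma~\ref{lm-ninter2}) and $\A$ is locally noetherian, each $G(c)$ is of finite presentation in $\A$; by $(FE)$ and $(EI)$ the monoid ${\rm End}_\M(c)$ is a finitely generated group, so Lemma~\ref{lm-tec} upgrades $G(c)$ to an object of finite presentation in $\A_{{\rm End}_\M(c)}$; the standard fact that in a short exact sequence $0\to C\to B\to A\to 0$ with $A$ of finite presentation and $B$ of finite type the object $C$ is of finite type then gives that $\kappa(F)(c)$ is of finite type in $\A_{{\rm End}_\M(c)}$. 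With this correction, the remainder of your argument --- almost-nullity of $\kappa(F)$ via Proposition~\ref{psn}, part (3) via Corollary~\ref{cor-idiot}, and part (4) via Corollary~\ref{crnpsf} applied to $G$, Proposition~\ref{pr-psfs} for $\kappa(F)$ and Corollary~\ref{cr-psf} --- goes through and coincides in substance with the proof intended by the paper.
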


\begin{proof}
 La démonstration de cette proposition à partir du lemme~\ref{lm-ninter2} est la même que celle de la proposition~\ref{pr-nf2} à partir du lemme~\ref{lm-ninter} (à la fin, utiliser que $F$ est faiblement ponctuellement de type fini, donc $\kappa(F)$ aussi, en utilisant le lemme~\ref{lm-tec}
comme dans la démonstration du lemme~\ref{lm-ninter2}).
\end{proof}

\begin{cor}\label{cor-ln2}
 Supposons que $\M$ est un objet de $\mi$ vérifiant $(P_0)$, $(P_1)$, $(P_2)$, $(P_3)$ et $(P_4)$ et dont les ensembles de morphismes sont tous finis. Pour toute catégorie de Grothendieck localement noethérienne $\A$, la sous-catégorie abélienne des foncteurs faiblement polynomiaux de $\fct(\M,\A)$ est localement noethérienne.
\end{cor}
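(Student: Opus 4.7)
Le plan est de déduire ce corollaire de la proposition~\ref{pr-nf2} appliquée aux sous-foncteurs de type fini d'un foncteur faiblement polynomial arbitraire, ces sous-foncteurs constituant, à isomorphisme près, un ensemble de générateurs noethériens de la sous-catégorie visée.

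Premièrement, je vérifierai que tout foncteur de type fini $F : \M\to\A$ est ponctuellement noethérien sous nos hypothèses. Par la proposition~\ref{pf-gal}, un tel $F$ est quotient d'une somme directe finie $\bigoplus_{j=1}^{n} A_j[\M(t_j,-)]$ où chaque $A_j$ appartient à un ensemble fixé $\E$ de générateurs noethériens de $\A$ ; évalué en $x\in\mathrm{Ob}\,\M$, on obtient un quotient de $\bigoplus_j A_j[\M(t_j,x)]$, somme directe finie d'objets noethériens de $\A$ puisque chaque $\M(t_j,x)$ est fini par hypothèse, donc un objet noethérien. Comme le corollaire~\ref{pf-gal2} garantit en outre le caractère à support fini pour tout foncteur de type fini, la proposition~\ref{pr-nf2} s'appliquera à tout foncteur faiblement polynomial de type fini, qui sera alors noethérien.

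Deuxièmement, comme chaque $\Pol_d^{{\rm faible}}(\M,\A)$ est localisante, donc stable par sous-objets, tout sous-foncteur d'un foncteur faiblement polynomial reste faiblement polynomial. Ainsi, pour tout foncteur faiblement polynomial $F$, sa décomposition canonique comme réunion filtrante de ses sous-foncteurs de type fini en fait une réunion filtrante de sous-foncteurs faiblement polynomiaux de type fini, chacun noethérien d'après l'étape précédente.

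Troisièmement, la classe des foncteurs faiblement polynomiaux de type fini est essentiellement petite : chaque tel foncteur est quotient d'un objet de l'ensemble $\{\bigoplus_{j=1}^{n} A_j[\M(t_j,-)]\,|\,n\in\mathbb{N},\,A_j\in\E,\,t_j\in\mathrm{Ob}\,\M\}$, et les quotients d'un objet fixé forment un ensemble. Un choix de représentants des classes d'isomorphisme fournira l'ensemble cherché de générateurs noethériens de la sous-catégorie abélienne des foncteurs faiblement polynomiaux, d'où sa locale noethérianité. L'obstacle principal, bien que modeste, réside dans la première étape : sans la finitude des ensembles de morphismes de $\M$, un foncteur de type fini n'est pas nécessairement ponctuellement noethérien (cf. remarque~\ref{rq-fingal}), et la proposition~\ref{pr-nf2} ne pourrait être invoquée.
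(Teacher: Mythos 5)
Votre démonstration est correcte et suit essentiellement la même voie que celle du texte : on y engendre la sous-catégorie des foncteurs faiblement polynomiaux par des foncteurs à support fini et ponctuellement noethériens (la finitude des ensembles de morphismes de $\M$ et la locale noethérianité de $\A$ assurant que les foncteurs de type fini ont ces deux propriétés), puis on applique la proposition~\ref{pr-nf2} pour conclure qu'ils sont noethériens. Vous explicitez simplement quelques points que le texte laisse implicites (stabilité par sous-objets, petitesse essentielle de la famille de générateurs), sans changer l'argument.
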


\begin{proof}
 Du fait que $\A$ est localement noethérienne et que les ensembles de morphismes sont finis dans $\M$, les foncteurs à support fini et ponctuellement noethériens engendrent $\fct(\M,\A)$, et les foncteurs faiblement polynomiaux à support fini et ponctuellement noethériens engendrent la catégorie des foncteurs faiblement polynomiaux. La proposition~\ref{pr-nf2} montrant que de tels foncteurs sont noethériens, cela établit le corollaire.
\end{proof}

À titre d'application, on obtient l'un des résultats annoncés en introduction (et surpassé, au moins si l'anneau $A$ est commutatif, par \cite{PSam}) :

\begin{thm}\label{th-cf}
 Soient $A$ un anneau fini et $\A$ une catégorie de Grothendieck localement noethérienne. La catégorie des foncteurs faiblement polynomiaux de $\fct(\mathbf{S}(A),\A)$ est localement noethérienne.
\end{thm}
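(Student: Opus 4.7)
The plan is to deduce this theorem directly as a special case of Corollary~\ref{cor-ln2} applied to $\M=\mathbf{S}(A)$. Thus the entire proof reduces to checking two things when $A$ is a finite ring: (i) the category $\mathbf{S}(A)$ satisfies the five hypotheses $(P_0)$, $(P_1)$, $(P_2)$, $(P_3)$, $(P_4)$ of Section~\ref{sec-sn}; (ii) every hom-set of $\mathbf{S}(A)$ is finite.

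For (ii), observe that if $A$ is finite then, for any $n,m\in\mathbb{N}$, the $A$-module $\mathrm{Hom}_A(A^n,A^m)\simeq M_{m,n}(A)$ is finite. A morphism of $\mathbf{S}(A)$ from $A^n$ to $A^m$ is by definition a pair formed of an $A$-linear map $A^n\to A^m$ together with a chosen retraction $A^m\to A^n$, so it is an element of a product of two finite sets, hence finite. For (i), everything is already assembled in Propositions~\ref{pr-ev} and~\ref{pfpa}. Namely, Proposition~\ref{pfpa} states that $\mathbf{S}(A)$ is always a pseudo-theory, and that it is regular whenever $A$ is artinian (in particular for $A$ finite); Proposition~\ref{pr-ev} then yields $(P_0)$ and $(P_1)$ (pseudo-theory) as well as $(P_2)$ (regular pseudo-theory). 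Finally, Proposition~\ref{pfpa} states explicitly that $\mathbf{S}(A)$ satisfies $(P_3)$ and $(P_4)$ when $A$ is a finite ring (the argument for $(P_4)$ uses that the cardinal of $A$ itself bounds the integer $r$ in the relevant factorization property).

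With (i) and (ii) verified, Corollary~\ref{cor-ln2} applies and delivers exactly the stated conclusion: the full subcategory of weakly polynomial functors in $\fct(\mathbf{S}(A),\A)$ is locally noetherian. There is no genuine obstacle here: the heavy lifting has already been done in Proposition~\ref{prn-princ}, Proposition~\ref{pr-nf2} and their culmination in Corollary~\ref{cor-ln2}. The only thing to be slightly careful with is to recall that the hypothesis ``all hom-sets finite'' is used in the proof of Corollary~\ref{cor-ln2} to ensure, jointly with the local noetherianity of $\A$, that $\fct(\mathbf{S}(A),\A)$ is generated by functors which are both pointwise noetherian and of finite support, a property which reduces the noetherianity statement to the finite-support noetherianity provided by Proposition~\ref{pr-nf2}.
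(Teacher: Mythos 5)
Your proof is correct and follows exactly the paper's own route: verify that $\mathbf{S}(A)$ satisfies $(P_0)$–$(P_4)$ via Propositions~\ref{pfpa} and~\ref{pr-ev} (using that a finite ring is artinian for regularity, hence $(P_2)$), check finiteness of the hom-sets, and invoke Corollary~\ref{cor-ln2}. If anything, your verification of the finiteness of \emph{all} morphism sets (not just endomorphism sets) is slightly more explicit than the paper's one-line proof.
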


\begin{proof}
 La catégorie $\mathbf{S}(A)$ vérifie les hypothèses $(P_i)$ pour $i\leq 4$ d'après les propositions~\ref{pfpa} et~\ref{pr-ev}, et ses ensembles d'enfomorphismes sont finis puisque $A$ est fini. On peut donc appliquer le corollaire~\ref{cor-ln2}.
\end{proof}

En s'appuyant sur la proposition~\ref{pr-nf3}, on obtient la variante suivante du corollaire~\ref{cor-ln2}.

\begin{cor}\label{cor-ln3}
 Supposons que $\M$ est un objet de $\mi$ vérifiant $(P_0)$, $(P_1)$, $(P_2)$, $(P_3)$, $(P_4)$, $(FM)$, $(FE)$ et $(EI)$ et $\A$ une catégorie de Grothendieck localement noethérienne. On suppose également que les foncteurs polynomiaux et de type fini $\widetilde{\M}\to\A$ sont ponctuellement noethériens. Alors la sous-catégorie abélienne des foncteurs faiblement polynomiaux de $\fct(\M,\A)$ est localement presque noethérienne (i.e. est engendrée par des foncteurs presque noethériens). La sous-catégorie de $\st(\M,\A)$ des objets polynomiaux est localement noethérienne.
\end{cor}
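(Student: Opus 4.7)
Le plan est de déduire les deux assertions de la proposition~\ref{pr-nf3}, qui constitue le cœur du raisonnement ; ce qui reste à faire est essentiellement d'ordre structurel. Pour la première assertion, je montrerais que les foncteurs faiblement polynomiaux de type fini forment une classe essentiellement petite de générateurs presque noethériens pour la sous-catégorie des foncteurs faiblement polynomiaux de $\fct(\M,\A)$. En effet, tout foncteur faiblement polynomial $F$ est de degré faible fini $d$, appartient donc à la sous-catégorie localisante $\Pol_d^{{\rm faible}}(\M,\A)$ de $\fct(\M,\A)$, et s'y écrit comme réunion filtrante de ses sous-foncteurs de type fini ; ceux-ci, restant stables par sous-objets dans $\Pol_d^{{\rm faible}}(\M,\A)$, sont encore faiblement polynomiaux de degré $\leq d$. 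La structure de catégorie de Grothendieck de $\fct(\M,\A)$ assure que la classe de ces sous-foncteurs est essentiellement petite, et la proposition~\ref{pr-nf3} garantit qu'ils sont presque noethériens.

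Pour la seconde assertion, je commencerais par identifier les générateurs de la sous-catégorie des objets polynomiaux de $\st(\M,\A)$ comme les objets de la forme $X=\pi(F)$ avec $F:\M\to\A$ faiblement polynomial et de type fini, en utilisant le corollaire~\ref{pi-tf} et l'isomorphisme $\pi s\simeq\mathrm{Id}$. Pour établir la noethérianité d'un tel $X$ dans $\st(\M,\A)$, je poserais $F':=s\pi(F)=s(X)$ : la proposition~\ref{pr-nf3} assure que $F'$ est ponctuellement noethérien, et comme $\pi(F')\simeq X$ est polynomial, le corollaire~\ref{cornp} améliore cela en noethérianité pleine de $F'$ dans $\fct(\M,\A)$. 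Enfin, toute suite croissante $(X_n)$ de sous-objets de $X$ dans $\st(\M,\A)$ se relève via le foncteur $s$ (qui est exact à gauche) en une suite croissante $(s(X_n))$ de sous-objets de $F'$, laquelle stationne nécessairement ; l'application de $\pi$ et l'isomorphisme $\pi s\simeq\mathrm{Id}$ permettent alors de conclure à la stationnarité de $(X_n)$.

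La difficulté essentielle est donc entièrement concentrée dans la proposition~\ref{pr-nf3}, et la présente corollaire se réduit à une affaire d'emballage : les deux subtilités à vérifier sont, d'une part, le caractère essentiellement petit des classes de générateurs considérées (qui résulte de ce que $\fct(\M,\A)$ est une catégorie de Grothendieck, donc à générateur, donc dont les objets de type fini forment une classe essentiellement petite), et d'autre part le transfert de la noethérianité dans la seconde partie via le couple adjoint $(s,\pi)$, routinier une fois la conclusion du corollaire~\ref{cornp} en main. À noter que le passage du \emph{presque noethérien} (première assertion) au \emph{noethérien} (seconde assertion) est rendu possible par le fait que, dans $\st(\M,\A)$, les <<~parasites~>> stablement nuls (responsables de l'écart entre noethérianité et presque noethérianité dans la proposition~\ref{pr-nf3}) sont tués, de sorte que $s\pi(F)$ n'en conserve pas la trace.
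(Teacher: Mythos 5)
Votre argument est correct et suit essentiellement la voie que l'article laisse implicite : le corollaire y est présenté sans démonstration comme la variante du corollaire~\ref{cor-ln2} obtenue en substituant la proposition~\ref{pr-nf3} à la proposition~\ref{pr-nf2}, c'est-à-dire en prenant pour générateurs les foncteurs faiblement polynomiaux de type fini (presque noethériens par~\ref{pr-nf3}) plutôt que ceux à support fini et ponctuellement noethériens. Votre traitement de la seconde assertion --- noethérianité de $s\pi(F)$ via~\ref{pr-nf3} et le corollaire~\ref{cornp}, puis transfert aux sous-objets de $\pi(F)$ par la pleine fidélité de $s$ et l'isomorphisme $\pi s\simeq\mathrm{Id}$ --- est exactement le complément attendu.
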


Ce corollaire entraîne aussitôt l'un des résultats principaux de cet article.

\begin{thm}\label{thsab}
 Si $\A$ est une catégorie de Grothendieck localement noethérienne, alors la catégorie des foncteurs faiblement polynomiaux de $\mathbf{S}(\mathbb{Z})$ vers $\A$ est localement presque noethérienne, et la sous-catégorie de $\st(\mathbf{S}(\mathbb{Z}),\A)$ des objets polynomiaux est localement noethérienne.
\end{thm}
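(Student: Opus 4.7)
The plan is to deduce Theorem \ref{thsab} directly from Corollary \ref{cor-ln3} applied to $\M=\mathbf{S}(\mathbb{Z})$, so the proof amounts to verifying the full list of hypotheses of that corollary. Two things must be checked: (a) that $\mathbf{S}(\mathbb{Z})$ satisfies the structural properties $(P_0),(P_1),(P_2),(P_3),(P_4),(FM),(FE),(EI)$; and (b) that every polynomial, finitely generated functor $\widetilde{\mathbf{S}(\mathbb{Z})}\to\A$ is pointwise noethérien.

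For (a), the list of properties $(P_0)$ through $(P_4)$ is assembled from Proposition~\ref{pfpa} combined with Proposition~\ref{pr-ev}: since $\mathbb{Z}$ is commutative, $\mathbf{S}(\mathbb{Z})$ is a \emph{regular} pseudo-theory, which gives $(P_0)$, $(P_1)$, $(P_2)$ at once; because $\mathbb{Z}$ is a principal ring it has finite Bass stable rank, yielding $(P_3)$, and Proposition~\ref{pfpa} also delivers $(P_4)$ under the principal ring hypothesis. Property $(EI)$ is immediate because any split monomorphism $\mathbb{Z}^n\hookrightarrow\mathbb{Z}^n$ has full rank image hence is an isomorphism. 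Property $(FE)$ reduces to the classical fact that $GL_n(\mathbb{Z})$ is a finitely generated group, hence a finitely generated monoid. For $(FM)$, the $GL_n(\mathbb{Z})$-set $\mathbf{S}(\mathbb{Z})(\mathbb{Z}^t,\mathbb{Z}^n)$ is transitive (any two decompositions $\mathbb{Z}^n=R\oplus S$ with $R\simeq\mathbb{Z}^t$ are related by an element of $GL_n(\mathbb{Z})$) when $t\leq n$, and empty otherwise; in particular it is generated by one element.

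For (b), I would first identify $\widetilde{\mathbf{S}(\mathbb{Z})}$ with (a skeleton of) the additive category $\mathbf{P}(\mathbb{Z})$ of free abelian groups of finite rank with all linear maps, by unpacking the colimit formula $\widetilde{\M}(a,b)=\underset{\M}{\col}\,\tau_b\M(a,-)$: a morphism in the colimit corresponds to a split mono $\mathbb{Z}^s\to\mathbb{Z}^{t+r}$ with splitting, up to stabilisation on the added $\mathbb{Z}^r$-coordinate, and the projection onto the $\mathbb{Z}^t$-factor gives a well-defined element of $\mathrm{Hom}_{\mathbb{Z}}(\mathbb{Z}^s,\mathbb{Z}^t)$, yielding an equivalence (this is the content of the comparison \cite{DV3} between $\M$ and $\widetilde{\M}$ in the additive case). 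Once this identification is in place, $\mathbf{P}(\mathbb{Z})$ is additive, satisfies $(P_0)$ (generated by $\mathbb{Z}$), and has Hom-groups free abelian of finite rank, so Proposition~\ref{pr-adln} applies: polynomial functors $\mathbf{P}(\mathbb{Z})\to\A$ form a locally noethérienne subcategory of $\fct(\mathbf{P}(\mathbb{Z}),\A)$ whose noethérien objects are pointwise noethériens. A polynomial, finitely generated functor is therefore noethérien (a quotient of a finite sum of noethérien generators) and thus pointwise noethérien, as required.

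The argument presents no serious obstacle: every ingredient has been prepared in the earlier sections. The only potentially delicate point is the verification of $(P_4)$ for $\mathbf{S}(\mathbb{Z})$, which already appears as part of Proposition~\ref{pfpa}; next in subtlety is the identification $\widetilde{\mathbf{S}(\mathbb{Z})}\simeq\mathbf{P}(\mathbb{Z})$, which is essentially formal but must be invoked to connect Proposition~\ref{pr-adln} (formulated for additive source categories) with the hypothesis of Corollary~\ref{cor-ln3} concerning $\widetilde{\M}$. Once these two are recorded, Corollary~\ref{cor-ln3} applies verbatim and both assertions of the theorem follow.
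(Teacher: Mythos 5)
Your overall strategy is exactly the paper's: verify the hypotheses of Corollary~\ref{cor-ln3} for $\M=\mathbf{S}(\mathbb{Z})$, the structural properties $(P_0)$--$(P_4)$, $(FM)$, $(FE)$, $(EI)$ being checked precisely as you do (regular pseudo-theory, $\mathbb{Z}$ principal, transitivity of the $GL_n(\mathbb{Z})$-sets, finite generation of $GL_n(\mathbb{Z})$), and the pointwise noetherian hypothesis on polynomial finitely generated functors $\widetilde{\mathbf{S}(\mathbb{Z})}\to\A$ being obtained from Proposition~\ref{pr-adln}.

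There is, however, one concrete error in step (b): the identification $\widetilde{\mathbf{S}(\mathbb{Z})}\simeq\mathbf{P}(\mathbb{Z})$ is false. A morphism of $\mathbf{S}(\mathbb{Z})$ is a split monomorphism \emph{together with a chosen splitting}, i.e.\ a pair $(u:\mathbb{Z}^s\to\mathbb{Z}^t,\,v:\mathbb{Z}^t\to\mathbb{Z}^s)$ with $vu=1$; when you stabilise via $\widetilde{\M}(a,b)=\underset{\M}{\col}\,\tau_b\M(a,-)$, the datum of $v$ does not disappear (only the relation $vu=1$ is lost in the colimit), so the splitting contributes a \emph{contravariant} factor. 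The correct statement, which is the one the paper invokes from \cite{DV3}, is that the evident functor $\mathbf{S}(\mathbb{Z})\to\mathbf{P}(\mathbb{Z})^{op}\times\mathbf{P}(\mathbb{Z})$ induces an equivalence between polynomial functors on $\mathbf{P}(\mathbb{Z})^{op}\times\mathbf{P}(\mathbb{Z})$ and polynomial functors on $\widetilde{\mathbf{S}(\mathbb{Z})}$ (this reflects the description of $\mathbf{S}(\mathbb{Z})$ as a category of hermitian objects over $\mathbf{P}(\mathbb{Z})^{op}\times\mathbf{P}(\mathbb{Z})$ recalled in Section~1). Your projection ``onto the $\mathbb{Z}^t$-factor'' simply forgets half the data. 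The repair is immediate and costs nothing: $\mathbf{P}(\mathbb{Z})^{op}\times\mathbf{P}(\mathbb{Z})$ is again a small additive category satisfying $(P_0)$ with finitely generated Hom-groups, so Proposition~\ref{pr-adln} applies to it verbatim and yields that polynomial finitely generated functors $\widetilde{\mathbf{S}(\mathbb{Z})}\to\A$ are pointwise noetherian; with that substitution your argument coincides with the paper's proof.
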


\begin{proof}
 Il est clair que $\mathbf{S}(\mathbb{Z})$ vérifie les propriétés $(EI)$, $(P_0)$, $(P_1)$, $(P_2)$ et $(P_3)$. Comme $\mathbb{Z}$ est un anneau principal, cette catégorie vérifie également $(P_4)$ et $(FM)$ (les $GL_n(\mathbb{Z})$-ensembles $\mathbf{S}(\mathbb{Z})(\mathbb{Z}^i,\mathbb{Z}^n)$ sont transitifs). La propriété $(FE)$ provient de ce que les groupes $GL_n(\mathbb{Z})$ sont de type fini.

Par ailleurs, l'un des principaux résultats de $\cite{DV3}$ montre que le foncteur évident $\mathbf{S}(\mathbb{Z})\to\mathbf{P}(\mathbb{Z})^{op}\times\mathbf{P}(\mathbb{Z})$ induit une équivalence de catégories des foncteurs polynomiaux $\mathbf{P}(\mathbb{Z})^{op}\times\mathbf{P}(\mathbb{Z})\to\A$ vers les foncteurs polynomiaux $\widetilde{\mathbf{S}(\mathbb{Z})}\to\A$. La proposition~\ref{pr-adln} montrant que les foncteurs polynomiaux de type fini $\widetilde{\mathbf{S}(\mathbb{Z})}\to\A$ sont ponctuellement noethériens, on peut appliquer le corollaire~\ref{cor-ln3} pour obtenir la conclusion.
\end{proof}

\begin{rem}
 On ne peut pas s'affranchir de l'adverbe {\em presque} dans le thé\-orème~\ref{thsab} (et bien sûr pas non plus dans la proposition~\ref{pr-nf3} ni dans le corollaire~\ref{cor-ln3}). En effet, la catégorie des foncteurs presque nuls $\mathbf{S}(\mathbb{Z})\to\mathbf{Ab}$ n'est déjà pas localement noethérienne, car, pour $n>1$, la catégorie des $\mathbb{Z}[GL_n(\mathbb{Z})]$-modules n'est pas localement noethérienne. Cela provient de ce que $GL_n(\mathbb{Z})$ n'est pas un groupe noethérien pour $n>1$, car c'est déjà le cas de son sous-quotient $PSL_2(\mathbb{Z})$. Il est en effet classique que ce groupe est isomorphe à $\mathbb{Z}/2*\mathbb{Z}/3$, qui contient un groupe libre non commutatif (le noyau de la projection $\mathbb{Z}/2*\mathbb{Z}/3\twoheadrightarrow\mathbb{Z}/2\times\mathbb{Z}/3$ par exemple), donc un groupe libre de rang infini.
\end{rem}

\section{Applications autour des groupes d'automorphismes des groupes libres}\label{sappl}

Soient $V$ un groupe abélien et $n\in\mathbb{N}$. On note $\G r^s_{\leq n}(V)$ (pour {\em grassmannienne scindée}) l'ensemble des couples $(A,B)$ de sous-groupes de $V$ tels que $V=A\oplus B$, où $A$ est un groupe abélien libre de rang au plus $n$ ; on munit cet ensemble de la relation d'ordre $\leq$ définie par $(A,B)\leq (C,D)$ si $A\subset C$ et $B\supset D$. On obtient ainsi un foncteur $\G r^s_{\leq n}$ de la catégorie $\mathbf{S}(\mathbb{Z})$ vers la catégorie des ensembles ordonnés.

Avec les notations du début du §\,\ref{sect-sk}, $\G r^s_{\leq n}(V)$, vu comme une petite catégorie, s'identifie à $S_n[\varphi_n,V]$, où $\varphi_n$ désigne l'inclusion de la sous-catégorie pleine $S_n$ de $\mathbf{S}(\mathbb{Z})$ constituée des $\mathbb{Z}^i$ pour $i\leq n$. Par conséquent, la proposition suivante n'est qu'une reformulation d'une partie de la proposition~\ref{pr-suppFini} et de la proposition-définition~\ref{pres-suppFini}.

\begin{pr}\label{prp-id}
 Soient $F : \mathbf{S}(\mathbb{Z})\to\mathbf{Ab}$ un foncteur et $n\in\mathbb{N}$.
 \begin{enumerate}
  \item L'ensemble des groupes abéliens $\mathbb{Z}^i$, pour $i\leq n$, est un support de $F$ si et seulement si le morphisme canonique
  $$\underset{W=(A,B)\in\G r^s_{\leq n}(V)}{\col}F(A)\to F(V)$$
  est un épimorphisme pour tout objet $V$ de $\mathbf{S}(\mathbb{Z})$.
  \item L'ensemble des groupes abéliens $\mathbb{Z}^i$, pour $i\leq n$, est un support de présentation de $F$ si et seulement si le morphisme précédent est un isomorphisme pour tout objet $V$ de $\mathbf{S}(\mathbb{Z})$.
 \end{enumerate}
\end{pr}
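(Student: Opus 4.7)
The plan is to unfold the definitions and reduce everything to the general results on left Kan extensions already proven in Proposition~\ref{pr-suppFini} and Proposition-définition~\ref{pres-suppFini}. I would first make explicit the identification $\G r^s_{\leq n}(V)\simeq S_n[\varphi_n,V]$ already indicated just before the statement: an object of the comma category is a pair $(\mathbb{Z}^i,\xi)$ with $i\leq n$ and $\xi:\mathbb{Z}^i\to V$ a morphism of $\mathbf{S}(\mathbb{Z})$, i.e. the data of a split monomorphism $\mathbb{Z}^i\hookrightarrow V$ together with a chosen retraction; equivalently, a pair $(A,B)$ with $V=A\oplus B$ and $A$ of rank $\leq n$ (taking $A$ to be the image and $B$ the kernel of the chosen retraction). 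Morphisms in the comma category correspond precisely to the order relation $(A,B)\leq(C,D)$ given by $A\subset C$, $B\supset D$, because a morphism $(A,\xi)\to(A',\xi')$ in $S_n[\varphi_n,V]$ is a morphism $\mathbb{Z}^i\to\mathbb{Z}^{i'}$ of $\mathbf{S}(\mathbb{Z})$ compatible with the structural maps to $V$.

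Once this identification is granted, the explicit formula for the left Kan extension recalled at the beginning of §\,\ref{sect-sk} gives
$$(\varphi_n)_!\varphi_n^*F(V)=\underset{S_n[\varphi_n,V]}{\col}\iota[\varphi_n,V]^*\varphi_n^*F=\underset{(A,B)\in\G r^s_{\leq n}(V)}{\col}F(A),$$
and under this identification the counit $(\varphi_n)_!\varphi_n^*F\to F$ evaluated at $V$ is exactly the canonical morphism of the statement (it is induced by the structural maps $F(A)\to F(V)$ coming from the inclusions $A\hookrightarrow V$).

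The two assertions then follow directly: the first from the equivalence \ref{psf1}$\Leftrightarrow$\ref{psf2} of Proposition~\ref{pr-suppFini} applied to $S=\{\mathbb{Z}^i\,|\,i\leq n\}$, characterizing support as surjectivity of the counit; the second from the equivalence \ref{pspf1}$\Leftrightarrow$\ref{pspf2} of Proposition-définition~\ref{pres-suppFini}, characterizing a support of presentation as bijectivity of the counit. There is no genuine obstacle here; the only point requiring care is the verification that morphisms in the comma category really correspond to the ordering on decompositions $V=A\oplus B$, but this is straightforward because a split monomorphism $\mathbb{Z}^i\hookrightarrow\mathbb{Z}^{i'}$ making the relevant triangle commute is equivalent to the inclusion of the corresponding direct summand of $V$ together with a compatible retraction, which is precisely the data of the order relation on $\G r^s_{\leq n}(V)$.
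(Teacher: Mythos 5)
Votre démonstration est correcte et suit exactement la démarche du texte : le papier se contente d'indiquer que $\G r^s_{\leq n}(V)$ s'identifie (à équivalence de catégories près) à la catégorie comma $S_n[\varphi_n,V]$ et que l'énoncé n'est alors qu'une reformulation des propositions~\ref{pr-suppFini} et~\ref{pres-suppFini} via la formule explicite de l'extension de Kan à gauche. Vous ne faites qu'expliciter cette identification et la correspondance entre la coünité et le morphisme canonique, ce qui est fidèle à l'argument attendu.
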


Nous donnons maintenant quelques applications des résultats de la section~\ref{spffp} à des foncteurs $\mathbf{S}(\mathbb{Z})\to\mathbf{Ab}$ liés aux groupes d'automorphismes des groupes libres, en reprenant les notations de la section~4.2 de \cite{DV3}, à laquelle on se réfère pour les détails. Il existe un foncteur {\em groupe d'automorphismes} Aut de la catégorie des groupes libres de rang fini, les morphismes étant les monomorphismes de groupes dont l'image possède un complément pour le produit libre, ce complément faisant partie de la structure (catégorie notée $\G$ dans \cite{DV3}), vers la catégorie $\mathbf{Grp}$ des groupes. Un sous-foncteur particulièrement intéressant et difficile à étudier est le sous-foncteur noté $IA$, où $IA(G)$ est constitué des automorphismes de $G$ qui induisant l'identité sur l'abélianisation de $G$. La composée du foncteur $IA : \G\to\mathbf{Grp}$ avec le foncteur $\gamma_n/\gamma_{n+1} : \mathbf{Grp}\to\mathbf{Ab}$, où $n$ est un entier naturel et $\gamma_n(T)$ désigne le $n$-ème terme de la suite centrale descendante d'un groupe $T$, se factorise par le foncteur d'abélianisation $\G\to\mathbf{S}(\mathbb{Z})$ ; par abus de notation, on désignera encore par $\gamma_n(IA)/\gamma_{n+1}(IA) : \mathbf{S}(\mathbb{Z})\to\mathbf{Ab}$ la factorisation en question. Il n'est pas difficile de voir que ce foncteur est fortement polynomial (de degré au plus $3n$ --- cf. \cite{DV3}, proposition~4.12) et ponctuellement noethérien. Par conséquent, il est noethérien, par la proposition~\ref{polfor-noeth2}. La proposition~\ref{pr-nf2} permet de voir qu'il est également $PSF$ (ce que, contrairement à la propriété noethérienne, on ne semble pas pouvoir déduire aisément de la simple considération de foncteurs depuis $\Theta$, puisque le comportement de la propriété $PSF$ par précomposition par le foncteur canonique $\Theta\to\mathbf{S}(\mathbb{Z})$ n'a rien d'évident), ce qui se traduit, compte-tenu de la proposition~\ref{prp-id}, de la manière suivante.

\begin{pr}\label{th-ia}
 Soit $n\in\mathbb{N}$. Il existe $N\in\mathbb{N}$ tel que, pour tout groupe abélien libre de rang fini $V$, le morphisme canonique
$$\underset{W=(R,S)\in\G r^s_{\leq N}(V)}{\col}(\gamma_n(IA)/\gamma_{n+1}(IA))(R)\to (\gamma_n(IA)/\gamma_{n+1}(IA))(V)$$ 
 est un isomorphisme.
\end{pr}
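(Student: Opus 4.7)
The plan is to derive this proposition as a direct application of the presentation-with-finite-support machinery developed in Section~\ref{spffp}, combined with the translation given by Proposition~\ref{prp-id}. Concretely, I would treat $F := \gamma_n(IA)/\gamma_{n+1}(IA)$ as a functor $\mathbf{S}(\mathbb{Z})\to\mathbf{Ab}$ and check that it satisfies the hypotheses of Proposition~\ref{pr-nf2} with $\M=\mathbf{S}(\mathbb{Z})$ and $\A=\mathbf{Ab}$.

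First I would recall, as noted in the paragraph preceding the statement, that $F$ is strongly polynomial (of degree at most $3n$) and pointwise noetherian (the values are finitely generated abelian groups, known from the Andreadakis filtration). The strong polynomiality, combined with Proposition~\ref{pr-P11} and the fact that $\mathbf{S}(\mathbb{Z})$ satisfies $(P_0)$, immediately gives that $F$ is à support fini. So $F$ is a weakly polynomial, pointwise noetherian, finitely supported functor. Since $\mathbf{S}(\mathbb{Z})$ satisfies all of $(P_0)$, $(P_1)$, $(P_2)$, $(P_3)$, $(P_4)$ (verified in the proof of Theorem~\ref{thsab} via Propositions~\ref{pfpa} and~\ref{pr-ev}), the hypotheses of Proposition~\ref{pr-nf2} are met. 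Its conclusion then gives in particular that $F$ is $PSF$.

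Next, I would translate the property $PSF$ into the explicit colimit statement. By definition, there exists a finite set $S$ of objects of $\mathbf{S}(\mathbb{Z})$ that is a support de présentation of $F$. Since every object of $\mathbf{S}(\mathbb{Z})$ is isomorphic to some $\mathbb{Z}^i$, we may take $N$ large enough so that $S$ is contained (up to isomorphism) in $\{\mathbb{Z}^i\mid i\leq N\}$. Enlarging a support of presentation by extra objects is harmless: using criterion~\ref{pspf3} of the proposition-definition~\ref{pres-suppFini}, one just completes the resolution with zero summands on the new generators, which preserves the fact that the enlarged set remains a support of the kernel. Hence $\{\mathbb{Z}^i\mid i\leq N\}$ is still a support de présentation of $F$.

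Finally, I would invoke Proposition~\ref{prp-id} (second assertion) to rephrase this as the desired statement: the canonical morphism
$$\underset{W=(R,S)\in\G r^s_{\leq N}(V)}{\col}F(R)\to F(V)$$
is an isomorphism for every finitely generated free abelian group $V$. There is no real obstacle in this proof: the heart of the work has already been done in Sections~\ref{sec-sn}--\ref{spffp}, and the only content of the argument is verifying that $F$ fits those hypotheses and then unwinding the definition of $PSF$ in the particular case of $\mathbf{S}(\mathbb{Z})$. The subtlest point, which is barely a point, is the passage from an abstract finite $S$ to the standard form $\{\mathbb{Z}^i\mid i\leq N\}$; this is handled by the trivial observation on enlargement of presentation supports just indicated.
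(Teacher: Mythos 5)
Votre démonstration est correcte et suit essentiellement la même voie que l'article : on établit que $\gamma_n(IA)/\gamma_{n+1}(IA)$ est fortement polynomial, ponctuellement noethérien et à support fini, on applique la proposition~\ref{pr-nf2} pour obtenir la propriété $PSF$, puis on traduit via la proposition~\ref{prp-id}. Votre vérification explicite des hypothèses de la proposition~\ref{pr-nf2} et la remarque sur l'agrandissement d'un support de présentation en $\{\mathbb{Z}^i\mid i\leq N\}$ (via le critère~\ref{pspf4} de la proposition-définition~\ref{pres-suppFini}) ne font qu'expliciter des points que l'article laisse implicites.
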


Une autre filtration importante du foncteur $IA : \G\to\mathbf{Grp}$ est la {\em filtration d'Andreadakis} $(\A_n)$ : $\A_n(G)$ est le groupe des automorphismes de $G$ qui induisent l'identité sur $G/\gamma_{n+1}(G)$. Comme précédemment, $\A_n/\A_{n+1}$ induit un foncteur $\mathbf{S}(\mathbb{Z})\to\mathbf{Ab}$ (encore noté de la même façon) ; on vérifie qu'il est ponctuellement noethérien, faiblement polynomial (de degré au plus $n+2$ --- cf. \cite{DV3}, proposition~4.13). C'est en fait un sous-foncteur d'un foncteur fortement polynomial et ponctuellement noethérien, donc un foncteur noethérien. Par conséquent :

\begin{pr}\label{th-and}
 Soit $n\in\mathbb{N}$. Il existe $N\in\mathbb{N}$ tel que, pour tout groupe abélien libre de rang fini $V$, le morphisme canonique
$$\underset{W=(R,S)\in\G r^s_{\leq N}(V)}{\col}(\A_n/\A_{n+1})(R)\to (\A_n/\A_{n+1})(V)$$ 
 est un isomorphisme.
\end{pr}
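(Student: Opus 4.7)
The plan is to recognize that the claimed colimit isomorphism is, by Proposition~\ref{prp-id}, nothing other than the assertion that $F:=\A_n/\A_{n+1} : \mathbf{S}(\mathbb{Z})\to\mathbf{Ab}$ admits a support of presentation of the form $\{\mathbb{Z}^i\,|\,i\leq N\}$ for some $N\in\mathbb{N}$. So I would reduce the statement to proving that $F$ is $PSF$.

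Next, I would collect the finiteness properties of $F$ already mentioned in the paragraph preceding the statement: $F$ is weakly polynomial (of degree at most $n+2$, by \cite{DV3}, prop.~4.13), pointwise noetherian, and noetherian (since it embeds into a strongly polynomial pointwise noetherian functor, to which Proposition~\ref{polfor-noeth2} applies). In particular $F$ is of finite type, hence à support fini by Corollary~\ref{pf-gal2}. The category $\mathbf{S}(\mathbb{Z})$ satisfies hypotheses $(P_0), (P_1), (P_2), (P_3), (P_4)$, as was verified in the proof of Theorem~\ref{thsab} (with $(P_4)$ coming from $\mathbb{Z}$ being principal via Proposition~\ref{pfpa}).

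With these ingredients in place, I would apply Proposition~\ref{pr-nf2} directly to $F$: since $F$ is weakly polynomial, pointwise noetherian and à support fini over a source satisfying $(P_0)$--$(P_4)$, the proposition yields that $F$ is $PSF$. Let $S$ be a finite support of presentation of $F$. Because $\mathbf{S}(\mathbb{Z})$ is a pseudo-theory whose objects are (up to isomorphism) the $\mathbb{Z}^i$, $i\in\mathbb{N}$, there exists $N$ such that every element of $S$ is isomorphic to some $\mathbb{Z}^i$ with $i\leq N$. Enlarging a support of presentation to a larger set preserves the property (one sees this directly from characterization~\ref{pspf3} of Proposition-Définition~\ref{pres-suppFini}, by extending the sum $\bigoplus_{s\in S}A_s[\C(s,-)]$ by zero summands), so $\{\mathbb{Z}^i\,|\,i\leq N\}$ is itself a support of presentation of $F$. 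Translating this back through Proposition~\ref{prp-id} yields exactly the announced colimit isomorphism.

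There is essentially no hard step left to do; the whole point of the theory developed in Section~\ref{spffp} is to make this kind of application routine. The only subtlety worth flagging is that, unlike the noetherian property (which could conceivably be detected after pullback along $\Theta\to\mathbf{S}(\mathbb{Z})$), the $PSF$ property does not behave transparently under such precomposition, so one really does need the whole ``intertwined recursion'' of Proposition~\ref{prn-princ} and its consequences in Section~\ref{sect-psf} to reach the conclusion — this is the substantive technical ingredient that makes the statement accessible.
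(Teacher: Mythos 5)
Your argument is correct and is essentially the paper's own proof: the paper likewise establishes that $\A_n/\A_{n+1}$ is weakly polynomial, pointwise noetherian and noetherian (as a subfunctor of a strongly polynomial pointwise noetherian functor), then deduces the $PSF$ property from Proposition~\ref{pr-nf2} and translates it through Proposition~\ref{prp-id}, exactly as you do. You merely make explicit two routine steps the paper leaves implicit (noetherian $\Rightarrow$ finite type $\Rightarrow$ à support fini, and the enlargement of a support of presentation to $\{\mathbb{Z}^i\mid i\leq N\}$), both of which are handled correctly.
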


Ces applications sont inspirées du théorème~C de l'article \cite{CEFN} de Church, Ellenberg, Farb et Nagpal, et des applications qu'il en tire, comme pour l'homologie de certains groupes de congruences (théorème~1.6 du même article). D'ailleurs, comme l'homologie des groupes de congruences définit non seulement des foncteurs sur $\Theta$, mais aussi sur $\mathbf{S}(\mathbb{Z})$ (voir la fin du §\,4.2 de \cite{DV3} pour une discussion plus détaillée à ce sujet), les mêmes arguments que ceux qu'on a présentés dans le contexte de foncteurs liés aux automorphismes des groupes libres peut s'appliquer aux foncteurs d'homologie de groupes de congruences considérés dans l'article \cite{CEFN}, de sorte qu'on peut obtenir une variation de son théorème~1.6 en remplaçant l'ensemble ordonné des sous-objets de cardinal au plus $N$ d'un ensemble fini par la grassmannienne scindée $\G r^s_{\leq N}$.

\section{Propriétés de finitude de $\Pol_d(\M,\A)$}\label{seckrul}

\begin{conv}
 Dans toute cette section, $\M$ désigne un objet de $\mi$ vérifiant la propriété $(P_0)$ et $\A$ une catégorie de Grothendieck. 
 
 On note $\{x_1,\dots,x_m\}$ un ensemble générateur monoïdal faible fini de $\M$.
\end{conv}

\begin{pr}\label{pr-plf}
 Pour tout $d\in\mathbb{N}$, il existe un objet $r(d)$ de $\M$ tel que, pour tout objet $t$ de $\M$, le foncteur
 $$\Pol_d(\M,\A)\to\Pol_{d-1}(\M,\A)^m\times\A\quad X\mapsto (\delta_{x_1}(X),\dots,\delta_{x_m}(X),sX(t+r(d)))$$
 est exact et fidèle. Il commute de plus aux colimites. Il commute également aux limites si les produits sont exacts dans $\A$.
\end{pr}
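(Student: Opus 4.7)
The plan is to take $r(d)$ to be the object furnished by Proposition~\ref{prn-princ}, so that $\tau_{r(d)}(\mathbf{R}^i s)(X)=0$ for every $X\in\Pol_d(\M,\A)$ and every $1\leq i\leq d$; equivalently, $(\mathbf{R}^i s)(X)(t+r(d))=0$ for all $t\in\mathrm{Ob}\,\M$ and $i\geq 1$. With this choice, each of the four assertions reduces to a formal consequence of the basic properties of $\delta_x$, $\tau_x$ and $s$ recalled in the first section, together with the weak monoidal generation by $\{x_1,\dots,x_m\}$.

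For exactness, the functors $\delta_{x_i}$ are exact on $\st(\M,\A)$, hence on $\Pol_d(\M,\A)$. For the last component, applying $s$ to a short exact sequence $0\to X'\to X\to X''\to 0$ of $\Pol_d(\M,\A)$ yields a long exact sequence $0\to sX'\to sX\to sX''\to (\mathbf{R}^1 s)(X')\to\cdots$, which becomes short exact upon evaluation at $t+r(d)$ by the vanishing above. For faithfulness, since the functor is additive and exact it suffices to check that it detects zero: if $\delta_{x_i}(X)=0$ for every $i$, Remark~\ref{rq-diff} together with the fact that $\{x_1,\dots,x_m\}$ is a weak monoidal generator of $\M$ puts $X$ in $\Pol_0(\M,\A)$, so that $sX$ is a constant functor under the equivalence $\Pol_0(\M,\A)\simeq\A$; the further hypothesis $sX(t+r(d))=0$ then forces that constant to vanish, whence $X=0$.

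For preservation of colimits, each $\delta_{x_i}$ preserves all colimits (since $\tau_{x_i}$ is defined pointwise in $\fct(\M,\A)$ and cokernels commute with colimits), evaluation at $t+r(d)$ preserves all colimits pointwise, and the restriction of $s$ to $\Pol_d(\M,\A)$ preserves finite coproducts as a right adjoint and filtered colimits by Corollary~\ref{corsec}, hence preserves arbitrary coproducts; combined with the exactness already established, this yields preservation of all colimits.

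For preservation of limits under the hypothesis that products are exact in $\A$, the right adjoint $s$ preserves arbitrary limits and pointwise evaluation preserves limits, giving the last component directly. On the first $m$ components, $\tau_{x_i}$ preserves limits, and the AB4* hypothesis on $\A$ propagates to $\fct(\M,\A)$ so that cokernels commute with products, which gives that $\delta_{x_i}$ preserves arbitrary products; combined with exactness this yields preservation of all limits. The most delicate step is precisely this last one, where care is needed to compare products taken in the bilocalizing subcategory $\Pol_d(\M,\A)\subset\st(\M,\A)$ with those in $\fct(\M,\A)$ through the section functor; the assumption that products are exact in $\A$ is exactly what makes this transport behave well.
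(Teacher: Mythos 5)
Your choice of $r(d)$ and your treatment of exactness, faithfulness and colimits coincide with the paper's proof and are correct. The problem is the last step, which you explicitly flag as ``the most delicate'' but do not actually carry out. Showing that the endofunctor $\delta_{x}$ of $\fct(\M,\A)$ commutes with products under AB4* is the easy half; it does not by itself give that $\delta_{x}:\Pol_d(\M,\A)\to\st(\M,\A)$ commutes with products, because the product of a family $(X_i)$ in $\st(\M,\A)$ is $\pi\bigl(\prod_i s(X_i)\bigr)$ and is \emph{not} computed pointwise. One must prove that the canonical map $\delta_x\bigl(\prod_i X_i\bigr)\to\prod_i\delta_x(X_i)$ is an isomorphism, which after unwinding reduces to comparing $\prod_i\delta_x s(X_i)$ with $\prod_i s\delta_x(X_i)$ and showing that the cokernel of this monomorphism is stably null. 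Your assertion that ``the assumption that products are exact in $\A$ is exactly what makes this transport behave well'' is not enough: each individual cokernel is stably null, but a product of stably null functors need not be stably null --- the remark immediately following the proposition in the paper exhibits $\prod_{n}\mathbb{Z}_{\leq n}$ in $\fct(\Theta,\mathbf{Ab})$ as a counterexample, and this is precisely why products fail to be exact in $\st(\Theta,\mathbf{Ab})$ for unbounded degrees.

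The missing ingredient is a \emph{uniform} bound: each cokernel of $\delta_x s(X_i)\hookrightarrow s\delta_x(X_i)$ embeds in $\mathbf{R}^1 s(X_i)$, and Proposition~\ref{prn-princ} provides a single object $r(d)$ with $\tau_{r(d)}\mathbf{R}^1 s(X_i)=0$ for \emph{all} $X_i$ in $\Pol_d(\M,\A)$ simultaneously. Since $\tau_{r(d)}$ commutes with products and products are exact in $\fct(\M,\A)$, the cokernel of $\prod_i\delta_x s(X_i)\to\prod_i s\delta_x(X_i)$ is then annihilated by $\tau_{r(d)}$, hence stably null, which is what makes the comparison map an isomorphism in $\st(\M,\A)$. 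Without invoking this uniform annihilation (i.e.\ using Proposition~\ref{prn-princ} a second time, beyond its role in the exactness of evaluation), the limit-preservation claim for the $\delta_{x_i}$-components is unjustified; the boundedness of the degree $d$ is essential here, not merely the AB4* hypothesis on $\A$.
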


\begin{proof}
 On choisit l'objet $r(d)$ de $\M$ donné par la proposition~\ref{prn-princ}. Celle-ci montre l'exactitude de la restriction à $\Pol_d(\M,\A)$ de la composée du foncteur $s : \st(\M,\A)\to\fct(\M,\A)$ et de l'évaluation en $t+r(d)$, d'où l'exactitude du foncteur en question. Celui-ci est fidèle car le fait que $\{x_1,\dots,x_m\}$ est un ensemble générateur monoïdal faible montre que le noyau du foncteur
 $$\st(\M,\A)\to\st(\M,\A)^m\qquad X\mapsto (\delta_{x_1}(X),\dots,\delta_{x_m}(X))$$
 est constitué des objets polynomiaux de degré $0$, c'est-à-dire ceux dont l'image par le foncteur section est constante.
 
 Les foncteurs $\delta_{x_i} : \st(\M,\A)\to\st(\M,\A)$ commutent aux colimites ; le foncteur section commute aux colimites filtrantes par le corollaire~\ref{corsec}, donc notre foncteur commute aux colimites filtrantes. Comme il est exact, il commute donc à toutes les colimites.
 
 Le foncteur section commute aux limites car c'est un adjoint à gauche. Pour conclure, il suffit donc de voir que les foncteurs $\delta_x : \Pol_d(\M,\A)\to\st(\M,\A)$ commutent aux produits (car cela implique la commutation aux limites pour un foncteur exact) lorsque les produits sont exacts dans $\A$. On note d'abord que c'est le cas pour l'endofoncteur $\delta_x$ de $\fct(\M,\A)$, puisque c'est le conoyau d'une transformation naturelle ${\rm Id}\to\tau_x$ entre foncteurs commutant aux produits. Vérifions maintenant que c'est le cas pour $\Pol_d(\M,\A)\to\st(\M,\A)$, ce qui terminera la démonstration (on rappelle que les inclusions $\Pol_i(\M,\A)\to\st(\M,\A)$ commutent aux produits --- voir \cite{DV3}, proposition~1.22). Soit $(X_i)_{i\in I}$ un ensemble d'objets de $\Pol_d(\M,\A)$. Son produit dans $\st(\M,\A)$ (et donc dans $\Pol_d(\M,\A)$) est
 $$\pi\Big(\prod_{i\in I}s(X_i)\Big)$$
(puisque $s$ commute aux produits), et le morphisme canonique
\begin{equation}\label{mcds}
 \delta_x\Big(\prod_{i\in I}X_i\Big)\to\prod_{i\in I}\delta_x(X_i)
\end{equation}
s'identifie, en utilisant la commutation de $\delta_x$ à $\pi$ et aux produits dans $\fct(\M,\A)$, à la flèche
$$\delta_x\pi\Big(\prod_{i\in I}s(X_i)\Big)\simeq\pi\delta_x\Big(\prod_{i\in I}s(X_i)\Big)\simeq\pi\Big(\prod_{i\in I}\delta_xs(X_i)\Big)\to\pi\Big(\prod_{i\in I}s\delta_x(X_i)\Big)$$
%$$\pi\Big(\prod_{i\in I}s\delta_x(X_i)\Big)\simeq\prod_{i\in I}\delta_x(X_i)$$
induite par les morphismes canoniques $\delta_x s(X_i)\to s\delta_x(X_i)$. Or ceux-ci sont des monomorphismes de conoyaux inclus dans $\mathbf{R}^1s(X_i)$, donc annihilés par $\tau_{r(d)}$ (voir la démonstration de la proposition~\ref{pr-pff}). La commutation de $\tau_{r(d)}$ aux produits et leur exactitude dans $\fct(\M,\A)$ permettent  d'en déduire que le morphisme canonique
$$\prod_{i\in I}\delta_x s(X_i)\to\prod_{i\in I}s\delta_x(X_i)$$
est un monomorphisme de conoyau annihilé par $\tau_{r(d)}$, donc en particulier stablement nul, ce qui montre que le morphisme~(\ref{mcds}) est un isomorphisme, comme souhaité.
 \end{proof}
 
\begin{rem}
Le même raisonnement permet de voir que les produits sont exacts dans $\Pol_d(\M,\A)$ s'ils le sont dans $\A$. Ce n'est généralement pas le cas dans $\st(\M,\A)$, même si $\M$ et $\A$ possèdent de très fortes propriétés de finitude et de régularité. En voici un exemple dans $\fct(\Theta,\mathbf{Ab})$. 
 
Pour tout entier $n\in\mathbb{N}$, posons $P_n:=\mathbb{Z}[\Theta(\mathbf{n},-)]$ et notons $F_n$ le noyau du morphisme $P_{n+1}\to P_0=\mathbb{Z}$ induit par l'unique morphisme $\mathbf{0}\to\mathbf{n+1}$. On note que le conoyau de ce morphisme est le foncteur $\mathbb{Z}_{\leq n}$ quotient du foncteur constant $\mathbb{Z}$ dont les valeurs sont $\mathbb{Z}$ sur $\mathbf{i}$ pour $i\leq n$ et $0$ pour $i>n$. Comme $\mathbb{Z}_{\leq n}$ est stablement nul, on dispose dans $\st(\Theta,\mathbf{Ab})$ d'une suite exacte
$$0\to\pi(F_n)\to\pi(P_{n+1})\to\pi(\mathbb{Z})\to 0$$
dont on va voir que le produit sur tous les $n\in\mathbb{N}$ n'est pas exact (alors que tous les objets en jeu sont noethériens et polynomiaux --- mais pas de degré borné).

Les $P_i$ sont $\s n(\Theta,\mathbf{Ab})$-fermés, il en est donc de même pour les foncteurs $F_n$. Par conséquent, le produit de nos suites exactes est l'image par le foncteur $\pi$ du produit des suites exactes
$$0\to F_n\to P_{n+1}\to\mathbb{Z}$$
de $\fct(\Theta,\mathbf{Ab})$, donc le conoyau du produit des morphismes $\pi(P_{n+1})\to\pi(\mathbb{Z})$ est isomorphe à l'image par $\pi$ de
$$\prod_{n\in\mathbb{N}}\mathbb{Z}_{\leq n}.$$
Or ce foncteur n'est {\em pas} stablement nul (ses valeurs ont la puissance du continu, mais son image par le foncteur $\kappa$ prend des valeurs dénombrables), comme annoncé.
\end{rem}

En raisonnant par récurrence sur le degré $d$ et en utilisant derechef que le morphisme canonique $\delta_xs(X)\to s\delta_x(X)$ devient un isomorphisme lorsqu'on applique $\tau_{r(d)}$, pour $X$ dans $\Pol_d(\M,\A)$, on déduit de la proposition~\ref{pr-plf} le corollaire suivant.

\begin{cor}\label{cor-psta}
 Soit $d\in\mathbb{N}$. On note $E(d,m)$ l'ensemble des suites finies {\em non ordonnées} (notées, par abus, de la même façon qu'une suite ordonnée les représentant) de longueur au plus $d$ d'éléments de $\mathbf{m}$ ; pour $a=(a_1,\dots,a_i)\in E(d,m)$ ($i\leq d$), on note $\Delta_a$ l'endofoncteur $\delta_{x_{a_1}}\dots\delta_{x_{a_i}}$ de $\fct(\M,\A)$ (qui, à isomorphisme canonique près, ne dépend pas de l'ordre de la suite).
  
 Alors il existe un objet $r(d)$ de $\M$ tel que, pour toute famille $(t_a)_{a\in E(d,m)}$ d'objets de $\M$, le foncteur
 $$\Pol_d(\M,\A)\to\A^{E(d,m)}\qquad X\mapsto\big(\Delta_a sX(t_a+r(d))\big)_{a\in E(d,m)}$$
 soit exact et fidèle. Il commute de plus aux colimites. Il commute également aux limites si les produits sont exacts dans $\A$.
\end{cor}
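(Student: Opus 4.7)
The plan is to proceed by induction on the degree $d$. For the base case $d = 0$, the set $E(0, m)$ is reduced to the empty sequence and $\Pol_0(\M, \A)$ identifies, via the section functor $s$, with the full subcategory of constant functors (equivalent to $\A$); thus, with $r(0) := 0$, the functor $X \mapsto sX(t_\emptyset)$ is at once exact, faithful, and preserves all (co)limits.

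For the inductive step, I would take $r(d) := r'(d) + r(d-1)$, where $r'(d)$ is the object of $\M$ furnished by Proposition~\ref{prn-princ}. As noted in the proof of Proposition~\ref{pr-pff}, this same $r'(d)$ has the property that, for every $X \in \Pol_d(\M, \A)$, the canonical monomorphism $\delta_x s(X) \hookrightarrow s\delta_x(X)$ becomes an isomorphism after applying $\tau_{r'(d)}$, its cokernel being embedded in $\mathbf{R}^1 s(X)$. Iterating this isomorphism along the chain of derivatives composing $\Delta_a$ (each intermediate stage lying in some $\Pol_{d - k}(\M, \A)$ with $k \geq 1$, where the inductively chosen $r(d-k)$ already suffices) yields, for every $a \in E(d, m)$ and every $X \in \Pol_d(\M, \A)$, a natural isomorphism
$$\tau_{r(d)} \Delta_a s(X) \simeq \tau_{r(d)} s \Delta_a(X).$$

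Granted this isomorphism, the target functor $\Phi$ is naturally isomorphic to $X \mapsto \bigl(s\Delta_a X(t_a + r(d))\bigr)_{a \in E(d, m)}$. Each of its components is exact: applying $\Delta_a$ (exact in $\st(\M, \A)$) to a short exact sequence of $\Pol_d(\M, \A)$ yields a short exact sequence in $\Pol_{d-|a|}(\M, \A) \subseteq \Pol_d(\M, \A)$, and the obstruction term $\mathbf{R}^1 s$ of the kernel is annihilated by $\tau_{r(d)}$ thanks to Proposition~\ref{prn-princ}. Colimit preservation follows from the commutation of $\Delta_a$ with all colimits in $\st(\M, \A)$ and from the commutation of $s$ with filtered colimits (Corollary~\ref{corsec}), combined with exactness; limit preservation under the product-exactness hypothesis follows along the same lines as the second half of the proof of Proposition~\ref{pr-plf}.

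The decisive step will be faithfulness. Suppose $\Phi(X) = 0$ for some $X \in \Pol_d(\M, \A)$. For each index $j \in \{1, \ldots, m\}$, set $Y_j := \delta_{x_j}(X) \in \Pol_{d-1}(\M, \A)$ and observe that $b \mapsto (j) \cdot b$ sends $E(d-1, m)$ into $E(d, m)$. Using the identity $\Delta_{(j) \cdot b} = \Delta_b \circ \delta_{x_j}$ together with the key isomorphism, the vanishing of the components of $\Phi(X)$ indexed by sequences of the form $(j) \cdot b$ translates into the vanishing of $\Delta_b s Y_j(t_{(j) \cdot b} + r(d))$ for every $b \in E(d-1, m)$. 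Defining the family $(t'_b)_{b \in E(d-1, m)}$ by $t'_b := t_{(j) \cdot b} + r'(d)$, so that $t'_b + r(d-1) = t_{(j) \cdot b} + r(d)$, this is precisely the content of the inductive hypothesis applied to $Y_j$ with the family $(t'_b)_b$; faithfulness at degree $d-1$ then forces $Y_j = 0$. Since this holds for all $j$, and since the $\emptyset$-component of $\Phi(X)$ also vanishes (giving $sX(t_\emptyset + r(d)) = 0$), the faithfulness statement in Proposition~\ref{pr-plf} forces $X = 0$. The main obstacle is really organizational bookkeeping: making sure the accumulated shift $r(d) = r'(d) + r(d-1)$ absorbs each iteration of the key isomorphism, and verifying that applying the inductive hypothesis with different families for different $j$ creates no conflict.
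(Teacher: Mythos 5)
Votre démonstration est correcte et suit essentiellement la même voie que celle, seulement esquissée, du texte : récurrence sur $d$, utilisation de l'isomorphisme $\tau_{r(d)}\delta_x s(X)\simeq\tau_{r(d)}s\delta_x(X)$ pour ramener $\Delta_a sX$ à $s\Delta_a X$, puis réduction à la proposition~\ref{pr-plf}. La comptabilité des décalages $r(d)=r'(d)+r(d-1)$ et l'argument de fidélité via les $\delta_{x_j}(X)$ sont corrects.
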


Avant de tirer une série de conséquences de ce résultat, on rappelle quelques définitions complémentaires de propriétés de finitude dans une catégorie abélienne $\C$. Un objet de $\C$ est dit {\em artinien} si toute suite décroissante de sous-objets d'icelui stationne (i.e. si l'objet correspondant de $\C^{op}$ est noethérien), {\em de longueur finie} s'il possède une filtration finie dont les sous-quotients sont simples (i.e. non nuls mais sans sous-objet non trivial). Il est classique qu'un objet est de longueur finie si et seulement s'il est à la fois noethérien et artinien. Tout objet noethérien $X$ d'une catégorie abélienne possède une {\em dimension de Krull}, qui est un ordinal noté $K\dim(X)$, qu'on peut définir comme la {\em déviation} de l'ensemble ordonné de ses sous-objets --- on renvoie le lecteur au chapitre~6 de \cite{MCR} pour la définition de la déviation d'un ensemble ordonné (1.2) et le fait qu'un ensemble ordonné dont toutes les suites croissantes stationnent possède une déviation (1.8). S'il existe une injection croissante d'une ensemble ordonné $A$ dans un autre $B$, la déviation de $A$ est au plus celle de $B$ ; un objet noethérien non nul d'une catégorie abélienne est de dimension de Krull nulle si et seulement s'il est de longueur finie. La dimension de Krull d'une catégorie localement noethérienne est le suprémum des dimensions de Krull de ses objets noethériens. Il existe aussi une notion de dimension de Krull plus générale dans les catégories abéliennes --- voir \cite{Gab}, chapitre~IV, §\,1 ---, qui n'est d'ailleurs pas définie pour toutes les catégories abéliennes, mais la précédente n'est appropriée que pour les catégories localement noethériennes.

Les deux premières assertions du lemme classique suivant sont immédiates. La dernière résulte de ce que tout foncteur d'une catégorie de Grothendieck vers $\mathbf{Ab}$
qui commute aux limites est représentable et de ce qu'une catégorie de Grothendieck est équivalente à une catégorie de modules si et seulement si elle possède un générateur projectif de type fini (théorie de Morita) : ces deux faits impliquent qu'une catégorie de Grothendieck est équivalente à une catégorie de modules si et seulement s'il existe un foncteur fidèle commutant aux limites et colimites de cette catégorie vers $\mathbf{Ab}$.

\begin{lm}\label{lm-classique}
 Soient $\Phi : \B\to\C$ un foncteur exact et fidèle entre catégories abéliennes et $X$ un objet de $\B$.
 \begin{enumerate}
  \item Si $\Phi(X)$ est un objet noethérien (resp. artinien, de longueur finie) de $\C$, alors $X$ est un objet noethérien (resp. artinien, de longueur finie) de $\B$. Si $\Phi(X)$ est noethérien, on a de plus $K\dim(X)\leq K\dim\Phi(X)$.
  \item Si $\Phi$ commute aux colimites et que $\Phi(X)$ est un objet de type fini de $\C$, alors $X$ est un objet de type fini de $\B$.
  \item Supposons que $\Phi$ commute aux limites et colimites, que $\B$ est une catégorie de Grothendieck et que $\C$ est équivalente à une catégorie de modules, alors $\B$ est équivalente à une catégorie de modules.
 \end{enumerate}
\end{lm}

En combinant les deux premières assertions de ce lemme au corollaire~\ref{cor-psta}, on obtient :

\begin{cor}\label{cor-dks}
 Soient $X$ un objet polynomial de $\st(\M,\A)$ et $\omega$ un ordinal. Supposons qu'il existe un objet $t$ de $\M$ tel que, pour tout objet $x$ de $\M$, $sX(t+x)$ soit un objet noethérien (resp. artinien, de longueur finie, de type fini, noethérien et de dimension de Krull au plus $\omega$) de $\A$. Alors $X$ est un objet noethérien (resp. artinien, de longueur finie, de type fini, noethérien et de dimension de Krull au plus $\omega$) de $\st(\M,\A)$.
\end{cor}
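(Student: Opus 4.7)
L'idée est de déduire le résultat directement du corollaire~\ref{cor-psta}, combiné au lemme~\ref{lm-classique}, en choisissant tous les objets auxiliaires $t_a$ égaux à l'objet $t$ de l'énoncé. Comme $X$ est polynomial, il existe $d\in\mathbb{N}$ tel que $X\in {\rm Ob}\,\Pol_d(\M,\A)$ ; le corollaire~\ref{cor-psta} fournit alors un objet $r(d)$ de $\M$ tel que le foncteur
$$\Phi : \Pol_d(\M,\A)\to\A^{E(d,m)},\qquad Y\mapsto\big(\Delta_a sY(t+r(d))\big)_{a\in E(d,m)},$$
soit exact et fidèle.

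Le point clef consiste à vérifier que chaque composante $\Delta_a sX(t+r(d))$ de $\Phi(X)$ hérite de la propriété de finitude voulue à partir des valeurs de $sX$. Écrivant $a=(a_1,\dots,a_i)\in E(d,m)$ et itérant la suite exacte de définition $0\to F\to\tau_y F\to\delta_y F\to 0$, on constate par récurrence sur $i$ que $\Delta_a sX(t+r(d))$ est un \emph{quotient} de $sX(x_{a_1}+\dots+x_{a_i}+t+r(d))$. Par hypothèse, ce dernier objet est noethérien (respectivement artinien, de longueur finie, de type fini, noethérien de dimension de Krull au plus $\omega$) ; chacune de ces classes étant stable par quotients --- pour le cas de la dimension de Krull, parce que le treillis des sous-objets d'un quotient s'injecte comme un intervalle dans celui de l'objet ambiant --- chaque coordonnée de $\Phi(X)$ possède la propriété prescrite.

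L'ensemble $E(d,m)$ étant fini et chacune des classes de finitude considérées étant stable par sommes directes finies (pour la dimension de Krull, la déviation d'un produit fini d'ensembles ordonnés noethériens de déviation au plus $\omega$ est encore au plus $\omega$), il s'ensuit que $\Phi(X)$ lui-même possède cette propriété dans $\A^{E(d,m)}$. Le lemme~\ref{lm-classique}, appliqué au foncteur exact et fidèle $\Phi$, permet alors de transférer la propriété à $X$ vu dans $\Pol_d(\M,\A)$. Enfin, l'exactitude de chaque $\delta_y$ sur $\st(\M,\A)$ assure que $\Pol_d(\M,\A)$ est une sous-catégorie pleine de $\st(\M,\A)$ stable par sous-objets, de sorte que les propriétés de noethérianité, d'artinianité, de longueur finie, de type fini ou de dimension de Krull bornée de $X$ calculées dans $\Pol_d(\M,\A)$ coïncident avec celles calculées dans $\st(\M,\A)$, ce qui est bien ce que l'on voulait.

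L'argument est essentiellement mécanique une fois le corollaire~\ref{cor-psta} établi ; les deux seuls points méritant un peu d'attention sont, d'une part, le fait de remplacer une majoration par sous-quotient par une majoration par quotient (indispensable pour que le cas \emph{de type fini} fonctionne exactement comme les autres) et, d'autre part, les vérifications classiques mais non totalement triviales du comportement de la dimension de Krull vis-à-vis des quotients et des produits finis.
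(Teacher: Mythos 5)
Votre démonstration est correcte et suit exactement la voie du texte, qui se contente d'indiquer « combiner les deux premières assertions du lemme~\ref{lm-classique} au corollaire~\ref{cor-psta} » : vous prenez $t_a=t$, observez que $\Delta_a sX(t+r(d))$ est un quotient de $sX(t+x_{a_1}+\dots+x_{a_i}+r(d))$ et transférez les propriétés de finitude via le foncteur exact, fidèle et commutant aux colimites. Les points que vous explicitez (stabilité par quotients et par produits finis de chaque classe, y compris pour la dimension de Krull, et le fait que $\Pol_d(\M,\A)$ est stable par sous-objets dans $\st(\M,\A)$) sont précisément les détails laissés implicites par l'article.
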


En particulier :
\begin{cor}\label{cor-keta}
  Soient $F$ un foncteur polynomial de $\widetilde{\M}$ dans $\A$ et $\omega$ un ordinal. Supposons que, pour tout objet $x$ de $\widetilde{\M}$, $F(x)$ soit un objet noethérien (resp. artinien, de longueur finie, de type fini, noethérien et de dimension de Krull au plus $\omega$) de $\A$. Alors $\pi\eta(F)$ est un objet noethérien (resp. artinien, de longueur finie, de type fini, noethérien et de dimension de Krull au plus $\omega$) de $\st(\M,\A)$.
\end{cor}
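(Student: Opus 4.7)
The plan is to reduce the statement to a direct application of Corollary~\ref{cor-dks}. The hypothesis tells us that $F \in \Pol_d(\widetilde{\M},\A)$ for some $d$; I would first recall (from the recollections in Section~1) that $\eta$ sends $\Pol_d(\widetilde{\M},\A)$ into $\Pol_d^{{\rm fort}}(\M,\A)$, so that $\pi\eta(F)$ is an object of $\Pol_d(\M,\A)$. Thus Corollary~\ref{cor-dks} is applicable to $X := \pi\eta(F)$.

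The second key ingredient is the identification $s\pi\eta(F) \simeq \eta(F)$, which is one of the basic properties of $\eta$ recalled just after formula~(\ref{eqalph}). Since $\eta$ is precomposition by the identity-on-objects functor $\M \to \widetilde{\M}$, we have $\eta(F)(y) = F(y)$ for every object $y$ of $\M$ (viewing objects of $\M$ and $\widetilde{\M}$ as the same set via this functor). Therefore, taking $t = 0$ in the statement of Corollary~\ref{cor-dks}, we get
\[
s\pi\eta(F)(0 + x) \simeq \eta(F)(x) = F(x)
\]
for every object $x$ of $\M$, and the hypothesis ensures that this object of $\A$ is noetherian (resp.\ artinian, of finite length, of finite type, noetherian of Krull dimension at most $\omega$).

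I would then simply invoke Corollary~\ref{cor-dks}: since the condition on $s\pi\eta(F)(t+x)$ at $t=0$ is satisfied for every $x$, the object $\pi\eta(F)$ of $\st(\M,\A)$ inherits the corresponding finiteness property. There is no genuine obstacle here---the content of the corollary is really the reduction of a question about $\pi\eta(F) \in \st(\M,\A)$ to a pointwise question about $F$ itself, made possible by the fact that the section functor $s$ is an exact inverse to $\pi$ on the essential image of $\eta$. The only subtlety worth stating explicitly in the write-up is that the hypothesis is on the values $F(x)$ for $x \in {\rm Ob}\,\widetilde{\M} = {\rm Ob}\,\M$, which is exactly what Corollary~\ref{cor-dks} requires after the identification $s\pi\eta(F) \simeq \eta(F)$.
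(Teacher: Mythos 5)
Your proposal is correct and coincides with the paper's argument: the corollary is stated there as an immediate specialization (``en particulier'') of the corollary~\ref{cor-dks} to $X=\pi\eta(F)$, using precisely the two facts you isolate, namely that $\eta$ sends $\Pol_d(\widetilde{\M},\A)$ into $\Pol_d^{{\rm fort}}(\M,\A)$ (so $\pi\eta(F)\in\Pol_d(\M,\A)$) and that $s\pi\eta(F)\simeq\eta(F)$, whose value at $0+x$ is $F(x)$. Taking $t=0$ in corollary~\ref{cor-dks} is exactly the intended reading, so there is nothing to add.
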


\begin{rem}
 L'assertion relative à la propriété noethérienne est aussi un corollaire (au moins si $\M$ vérifie $(P_1)$) de la proposition~\ref{noeth-eta}, parce que le foncteur $\pi$ préserve les objets noethériens. En revanche, dans $\fct(\Theta,\A)$, par exemple, l'image d'un foncteur non nul de $\fct(\widetilde{\Theta},\A)$ par le foncteur $\eta$ n'est {\em jamais} un foncteur artinien : les assertions des corollaires précédents relatives à la propriété artinienne ou à la dimension de Krull sont spécifiques à la catégorie $\st(\M,\A)$.
\end{rem}

Dans le résultat suivant, on considère la dimension de Krull de catégories de Grothendieck pas nécessairement localement noethériennes (cf. remarque~\ref{rq-ln}). On rappelle que cette dimension de Krull (lorsqu'elle est définie) est le suprémum des dimensions de Krull des objets de la catégorie en question. La sous-catégorie pleine des objets de dimension de Krull au plus $\omega$ (ordinal fixé) d'une catégorie de Grothendieck en est, par construction, une sous-catégorie localisante (voir \cite{Gab}, chap.~IV).

\begin{thm}\label{th_k}
 Soit $d\in\mathbb{N}$. Supposons que $\Pol_d(\widetilde{\M},\A)$ est engendrée par des foncteurs ponctuellement noethériens (ce qui implique que $\A$ est localement noethérienne, comme on le voit en considérant les foncteurs constants). Alors la catégorie $\Pol_d(\M,\A)$ a la même dimension de Krull que $\A$.
\end{thm}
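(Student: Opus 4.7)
I will prove the two inequalities separately. The lower bound $K\dim(\A)\leq K\dim(\Pol_d(\M,\A))$ comes from the equivalence $\Pol_0(\M,\A)\simeq\A$ given by the section functor, together with the fully faithful exact inclusion $\Pol_0(\M,\A)\hookrightarrow\Pol_d(\M,\A)$; since $\Pol_0$ is bilocalizing in $\st(\M,\A)$, this inclusion preserves and reflects sub-objects, so the poset of sub-objects of any noetherian $A\in\A$ embeds in that of its image.

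For the upper bound $K\dim(\Pol_d(\M,\A))\leq K\dim(\A)$, I argue by induction on $d$, the base $d=0$ being the equivalence $\Pol_0\simeq\A$. For the inductive step, Proposition~\ref{pr-plf} applied with $t=0$ yields $r(d)\in\M$ and an exact, faithful functor
$$\Phi:\Pol_d(\M,\A)\to\Pol_{d-1}(\M,\A)^m\times\A,\qquad X\mapsto\bigl(\delta_{x_1}(X),\ldots,\delta_{x_m}(X),s(X)(r(d))\bigr).$$
By Lemma~\ref{lm-classique}, whenever $\Phi(X)$ is noetherian in the target one has $K\dim(X)\leq K\dim(\Phi(X))$. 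Since Krull dimension of a finite product of locally noetherian Grothendieck categories is the maximum of the factors' dimensions, and by induction $K\dim(\Pol_{d-1}(\M,\A))\leq K\dim(\A)$, the target has Krull dimension at most $K\dim(\A)$, giving the conclusion.

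The hypothesis plays a dual role: it ensures $\A$ is locally noetherian (so $K\dim(\A)$ is defined), and it provides enough noetherian objects in $\Pol_d(\M,\A)$ to run the induction. For a pointwise-noetherian $F\in\Pol_d(\widetilde{\M},\A)$, Proposition~\ref{polfor-noeth2} gives that $\eta(F)$ is noetherian in $\fct(\M,\A)$, hence $\pi\eta(F)$ is noetherian in $\Pol_d(\M,\A)$, and Corollary~\ref{cor-keta} bounds $K\dim(\pi\eta(F))\leq K\dim(\A)$. Using the four-term exact sequence $0\to K\to X\to\pi\eta\alpha(X)\to C\to 0$ with $K,C\in\Pol_{d-1}(\M,\A)$ recalled in Section~1, and combining these $\pi\eta(F_i)$ with the inductively-furnished noetherian generators of $\Pol_{d-1}(\M,\A)$, one sees that $\Pol_d(\M,\A)$ is locally noetherian with noetherian generators of Krull dimension $\leq K\dim(\A)$.

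The main obstacle is verifying that $\Phi(X)$ is noetherian for $X$ noetherian in $\Pol_d(\M,\A)$. Local noetherianity of $\Pol_d(\M,\A)$ just established makes $X$ finitely generated; since $\delta_{x_i}$ on $\st(\M,\A)$ is exact and commutes with colimits (as the cokernel of a natural transformation between colimit-preserving functors), $\delta_{x_i}(X)$ is finitely generated hence noetherian in $\Pol_{d-1}(\M,\A)$. Moreover, $\Phi$ is itself exact and commutes with colimits by Proposition~\ref{pr-plf}, and on the generators $\pi\eta(F_i)$ the last component reduces to $F_i(r(d))$, which is noetherian by the pointwise-noetherian hypothesis; realizing $X$ as a quotient of a finite sum of noetherian generators then shows that $s(X)(r(d))$ is noetherian in $\A$. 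This completes the induction, hence the proof.
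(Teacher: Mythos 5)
Your lower bound is the paper's own argument and is fine. The upper bound, however, rests on the claim that under the theorem's hypotheses $\Pol_d(\M,\A)$ is locally noetherian with noetherian generators, and this is false: the paper explicitly warns against it in Remark~\ref{rq-ln}, and Example~\ref{ex-lnb} exhibits (for $\C_M$ built from $\C=\widetilde{\Theta}$ or $\C=\mathbf{P}(A)$, which do satisfy the hypotheses of Theorem~\ref{th_k}) a finitely generated, non-noetherian object of $\Pol_1(\C_M,\A)$. Since the Krull dimension considered here is Gabriel's, defined as the supremum over \emph{all} objects of a possibly non-locally-noetherian Grothendieck category, bounding the dimension of the noetherian objects alone does not bound the dimension of the category. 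This defect propagates through your inductive step: the deduction that $\delta_{x_i}(X)$ is noetherian because it is finitely generated in a locally noetherian $\Pol_{d-1}(\M,\A)$, and the deduction that $s(X)(r(d))$ is noetherian by writing $X$ as a quotient of a finite sum of generators, both collapse. On the latter point there is a further problem independent of noetherianity: the objects $\pi\eta(F)$ are not generators of $\Pol_d(\M,\A)$ in the ``every object is a quotient of a direct sum'' sense --- the canonical comparison map goes $X\to\pi\eta\alpha(X)$ (with kernel and cokernel of lower degree), not onto $X$, so a general $X$ is an extension of a \emph{subobject} of some $\pi\eta(A)$ by a lower-degree object, not a quotient of such objects.

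The paper's route avoids all of this. It first bounds $K\dim(\pi\eta(F))\leq K\dim(\A)$ for $F\in\Pol_d(\widetilde{\M},\A)$ \emph{pointwise noetherian}, via Corollary~\ref{cor-keta} (itself a consequence of the exact faithful functor of Corollary~\ref{cor-psta}, the iterated refinement of the Proposition~\ref{pr-plf} you invoke). It then extends the bound to $\pi\eta(F)$ for arbitrary $F$ by writing $F$ as a colimit of pointwise noetherian functors, using that $\pi\eta$ commutes with colimits and that the objects of Krull dimension at most $\omega$ form a \emph{localizing} subcategory. Finally it concludes for every object of $\Pol_d(\M,\A)$ because that category is the smallest thick subcategory of $\st(\M,\A)$ containing the image of $\pi\eta$, and a localizing subcategory is in particular thick. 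If you want to keep your inductive scheme, you would at minimum have to replace ``noetherian generators'' by an argument working at the level of localizing subcategories of bounded Krull dimension, which is essentially what Corollary~\ref{cor-keta} packages.
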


\begin{proof}
 La considération de la sous-catégorie $\Pol_0(\M,\A)\simeq\A$ de $\Pol_d(\M,\A)$ montre que la dimension de Krull de cette catégorie (si elle est définie) est au moins égale à celle de $\A$, que nous noterons $\omega$. Le corollaire~\ref{cor-keta} montre que $\pi\eta(F)$ est de dimension de Krull au plus $\omega$ si $F\in {\rm Ob}\,\Pol_d(\widetilde{\M},\A)$ est ponctuellement noethérien ; l'hypothèse faite sur cette catégorie, ainsi que la commutation de $\pi\eta$ aux colimites, montre que cette propriété est vraie pour {\em tout} $F\in {\rm Ob}\,\Pol_d(\widetilde{\M},\A)$. La conclusion provient donc de ce que $\Pol_d(\M,\A)$ est la plus petite sous-catégorie épaisse de $\st(\M,\A)$ contenant l'image par $\pi\eta$ de $\Pol_d(\widetilde{\M},\A)$ (grâce à la proposition~3.9 de \cite{DV3}).
\end{proof}

\begin{rem}\label{rq-ln}
 Sous les hypothèses du théorème, la catégorie $\Pol_d(\widetilde{\M},\A)$ est localement noethérienne, mais pas nécessairement $\Pol_d(\M,\A)$ (les corollaires~\ref{cor-ln2} et~\ref{cor-ln3} donnent toutefois des conditions suffisantes pour qu'il en soit ainsi). Cela est illustré par l'exemple suivant.
 \end{rem}
 
 \begin{ex}\label{ex-lnb}
Soient $\C$ un objet de $\mn$ et $M : \C^{op}\to\mathbf{Ab}$ un foncteur additif. Notons $\C_M$ la catégorie ayant les mêmes objets que $\A$ et dont les morphismes sont donnés par
 $$\C_M(x,y):=\C(x,y)\times M(x),$$
 la composition étant donnée par
 $$\C_M(y,z)\times\C_M(x,y)\to\C_M(x,z)\qquad \big((f,u),(g,v)\big)\mapsto (f\circ_\C g,v+g^*u).$$
 
 Comme $M$ est monoïdal (au sens fort), la structure monoïdale symétrique sur $\C$ induit une structure monoïdale symétrique sur $\C_M$ et en fait un objet de $\mi$. Le foncteur d'oubli $\C_M\to\C$ est une flèche de $\mi$. En particulier, sa composée $\tilde{A}$ avec un foncteur additif $A : \C\to\A$ est un foncteur polynomial de degré fort $1$, qui appartient à l'image du foncteur $\eta$ (en effet, $\C$ appartenant à $\mn$, le foncteur d'oubli $\C_M\to\C$ se factorise par le foncteur canonique  $\C_M\to\widetilde{\C_M}$). Notons $K$ l'objet $M\underset{\C}{\otimes}A$ de $\A$ (par définition, c'est la cofin du bifoncteur produit tensoriel extérieur $M\boxtimes A : \C^{op}\times\C\to\A$). On définit un foncteur $B : \C_M\to\A$ par $B(x)=A(x)\oplus K$ sur les objets, $B$ envoyant un morphisme $(f,u)\in\C_M(x,y)$ sur le morphisme $A(x)\oplus K\to A(y)\oplus K$ dont :
 \begin{itemize}
  \item la composante $A(x)\to A(y)$ est $A(f)$ ;
  \item la composante $A(x)\to K$ est la composée de $A(x)\xrightarrow{u\otimes -}M(x)\otimes A(x)$ et du morphisme canonique $M(x)\otimes A(x)\to M\underset{\C}{\otimes}A$ ;
  \item la composante $K\to A(y)$ est nulle ;
  \item la composante $K\to K$ est l'identité.
\end{itemize}
On dispose d'une suite exacte
$$0\to K\to B\to\tilde{A}\to 0$$
dans $\fct(\C_M,\A)$ (où $K$ désigne le foncteur constant en $K$).

On vérifie aisément que cette extension est essentielle à droite au sens où tout sous-foncteur $F$ de $B$ tel que le morphisme $F\hookrightarrow B\twoheadrightarrow\tilde{A}$ soit surjectif est égal à $B$. Cela implique que $B$, et donc $\pi(B)$, est de type fini si $\tilde{A}$ l'est, condition qui est vérifiée si $A$ est ponctuellement noethérien. En revanche, si $K$ n'est pas un objet noethérien de $\A$, alors ni $B$ ni $\pi(B)$ ne sont noethériens.

Par ailleurs, le foncteur canonique $\widetilde{\C_M}\to\C$ (induit par le foncteur d'oubli $\C_M\to\C$) est \guillemotleft~souvent~\guillemotright\ une équivalence de catégories --- on vérifie notamment sans peine que c'est le cas si $\C=\widetilde{\Theta}$ ou si $\C$ est une catégorie additive.

Si l'on choisit $\C$ de ce type et de sorte que les catégories $\Pol_d(\C,\A)$ soient engendrées par des foncteurs ponctuellement noethériens lorsque $\A$ est une catégorie localement noethérienne (c'est vrai pour $\C=\widetilde{\Theta}$ ou $\C=\mathbf{P}(A)$ où $A$ est un anneau dont le groupe additif sous-jacent de type fini, par exemple), la considération d'un foncteur additif non nul et de type fini $A : \C\to\A$ et d'un foncteur additif $M : \C^{op}\to\mathbf{Ab}$ tels que $M\underset{\C}{\otimes}A$ soit non noethérien (par, exemple, une somme directe infinie de copies de foncteurs tels que ce produit tensoriel soit non nul, ce qui est toujours possible lorsque $A$ est non nul) procure un objet de type fini et non noethérien $\tilde{A}$ dans $\Pol_1(\C_M,\A)$.

On notera que, dans cet exemple, la catégorie $\s n(\C_M,\A)$ est réduite à $0$ (bien que $\C_M$ n'appartienne à $\mn$ que si $M$ est nul), car $\C$ appartient à $\mn$. Cet exemple est susceptible de variations en partant d'une catégorie $\C$ appartenant seulement à $\mi$ (les foncteurs monoïdaux covariant et contravariant $A$ et $M$ doivent alors être définis sur $\widetilde{\C}$).
\end{ex}

Voici quelques corollaires utiles du théorème~\ref{th_k}.

\begin{cor}\label{cor-k1}
 Supposons que $\A$ est localement noethérienne et que $\M$ est la catégorie $\Theta$, ou la catégorie $\mathbf{S}(A)$, où $A$ est un anneau dont le groupe abélien sous-jacent est de type fini. Alors, pour tout entier $d\geq 0$, $\Pol_d(\M,\A)$ a la même dimension de Krull que $\A$.
\end{cor}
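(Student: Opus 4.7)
The plan is to deduce this corollary directly from Theorem~\ref{th_k} in each of the two cases, by verifying its two hypotheses: that $\M$ satisfies $(P_0)$ and that $\Pol_d(\widetilde{\M},\A)$ is generated by pointwise noetherian functors.

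Property $(P_0)$ is immediate in both situations: $\{\mathbf{1}\}$ is a finite monoidal generator of $\Theta$, and the rank-one free module is a monoidal generator of $\mathbf{S}(A)$.

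For the generation hypothesis in the case $\M=\Theta$, I would use the identity $\delta_{\mathbf{1}}(A[\widetilde{\Theta}(\mathbf{n},-)])\simeq A[\widetilde{\Theta}(\mathbf{n-1},-)]^{\oplus n}$ recalled in the proof of Proposition~\ref{FInoeth}, which shows that $A[\widetilde{\Theta}(\mathbf{n},-)]$ lies in $\Pol_n(\widetilde{\Theta},\A)$. When $A$ is a noetherian object of $\A$, these representables are pointwise noetherian because each Hom-set $\widetilde{\Theta}(\mathbf{n},\mathbf{m})$ is finite. Any object $F$ of $\Pol_d(\widetilde{\Theta},\A)$ is, in $\fct(\widetilde{\Theta},\A)$, a quotient of a direct sum of representables $A[\widetilde{\Theta}(\mathbf{n},-)]$; projecting each summand onto its maximal degree-$d$ polynomial quotient (which exists and is pointwise noetherian since $\Pol_d(\widetilde{\Theta},\A)$ is a bilocalizing subcategory of $\fct(\widetilde{\Theta},\A)$ and quotients of pointwise noetherian objects remain pointwise noetherian) provides a system of pointwise noetherian generators of $\Pol_d(\widetilde{\Theta},\A)$.

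For the generation hypothesis in the case $\M=\mathbf{S}(A)$, I would invoke the analogue, for a general ring $A$, of the equivalence of \cite{DV3} already used in the proof of Theorem~\ref{thsab}, identifying $\Pol_d(\widetilde{\mathbf{S}(A)},\A)$ with $\Pol_d(\mathbf{P}(A)^{op}\times\mathbf{P}(A),\A)$. The additive category $\mathbf{P}(A)^{op}\times\mathbf{P}(A)$ satisfies $(P_0)$, and its morphism abelian groups $\mathbf{P}(A)(A^m,A^n)\simeq A^{mn}$ are finitely generated over $\mathbb{Z}$ by the standing hypothesis on $A$. Proposition~\ref{pr-adln} then applies and shows that $\Pol_d(\mathbf{P}(A)^{op}\times\mathbf{P}(A),\A)$ is locally noetherian with noetherian objects pointwise noetherian, in particular it is generated by pointwise noetherian functors; transporting via the equivalence yields the same property for $\Pol_d(\widetilde{\mathbf{S}(A)},\A)$.

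The main subtle point is the generalization of the $\mathbf{S}(\mathbb{Z})$-equivalence of \cite{DV3} to arbitrary $A$ with finitely generated underlying abelian group; this should follow formally from the hermitian-category formalism recalled in Section~1, but is the point requiring the most care. Once both verifications are in hand, Theorem~\ref{th_k} applies directly in each case to give the announced equality of Krull dimensions.
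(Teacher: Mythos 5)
Your proof is correct and follows the route the paper intends (it states the corollary without proof, as a direct consequence of Theorem~\ref{th_k}): one checks $(P_0)$ and the generation of $\Pol_d(\widetilde{\M},\A)$ by pointwise noetherian functors, in the case of $\Theta$ via the polynomial representables $A[\widetilde{\Theta}(\mathbf{n},-)]$ and in the case of $\mathbf{S}(A)$ via the equivalence with $\Pol_d(\mathbf{P}(A)^{op}\times\mathbf{P}(A),\A)$ combined with Proposition~\ref{pr-adln}, exactly as in the proof of Theorem~\ref{thsab}. The point you flag as delicate --- extending that equivalence from $\mathbb{Z}$ to a general ring $A$ --- is indeed the only external input, but it is not a gap: the result of \cite{DV3} is established for categories of hermitian objects over an arbitrary small additive category with duality, of which $\mathbf{S}(A)$ (with underlying additive category $\mathbf{P}(A)^{op}\times\mathbf{P}(A)$) is an instance, as recalled in Section~1 of the paper.
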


Afin d'en déduire des résultats de dimension de Krull sur des {\em foncteurs} (et non des objets de $\st(\M,\A)$) polynomiaux, on donne un lemme aisé pour contrôler, dans certains cas, la dimension de Krull de $\s n(\M,\A)$.

\begin{lm}\label{lmks}
 \begin{enumerate}
  \item Si $\M$ vérifie la propriété $(EI)$ et que pour tout objet $x$ de $\M$ existe un objet $y$ de $\M$ tel que $x+y$ ne soit pas isomorphe à $x$, alors tous les objets de $\fct(\M,\A)$ de dimension de Krull nulle sont stablement nuls.
  \item Si $\M$ vérifie la propriété $(P_3)$, que tous les ensembles de morphismes de $\M$ sont finis et que $\A$ est localement finie, alors $\s n(\M,\A)$ est localement finie.
 \end{enumerate}
\end{lm}

\begin{proof}\begin{enumerate}
              \item Sur une catégorie $(EI)$, les foncteurs simples sont {\em atomiques}, c'est-à-dire nuls sur toutes les classes d'isomorphisme d'objets sauf une. L'autre hypothèse faite sur $\M$ assure qu'un foncteur atomique est stablement nul, d'où la première assertion, la sous-catégorie $\s n(\M,\A)$ de $\fct(\M,\A)$ étant localisante.
              \item La proposition~\ref{psn} montre qu'il suffit de vérifier qu'un foncteur {\em presque nul} $\M\to\A$ est localement fini. Cela découle de ce que $\A$ est localement finie et que les ensembles de morphismes de $\M$ sont finis (car cette condition implique qu'un foncteur de type fini sur $\M$ prend des valeurs de type fini).
             \end{enumerate}
\end{proof}

\begin{cor}\label{cor-k2}
 Supposons que la catégorie $\A$ est localement finie. Alors $\fct(\Theta,\A)$ est une catégorie localement noethérienne de dimension de Krull $1$, dont les objets de dimension de Krull nulle sont exactement ceux de $\s n(\Theta,\A)$.
\end{cor}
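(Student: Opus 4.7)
Local noetherianity of $\fct(\Theta,\A)$ is immediate from Proposition~\ref{FInoeth}, since a locally finite Grothendieck category is in particular locally noetherian. The substance of the corollary is the Krull dimension computation and the identification of the Krull-dimension-zero objects; both will combine Corollary~\ref{cor-k1} (which gives Krull dimension $0$ for $\Pol_d(\Theta,\A)$) with the two parts of Lemma~\ref{lmks}.

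For the bound $K\dim\leq 1$, I would start by checking that any noetherian $F\in\fct(\Theta,\A)$ is weakly polynomial. Indeed, $F$ is a quotient of a finite direct sum of generators $A[\Theta(\mathbf{n},-)]$ with $A$ noetherian, and the faithful functor $\Theta\to\widetilde{\Theta}$ yields monomorphisms $A[\Theta(\mathbf{n},-)]\hookrightarrow\eta(A[\widetilde{\Theta}(\mathbf{n},-)])$; the right-hand side is strongly polynomial (as exploited in the proof of Proposition~\ref{FInoeth}), hence weakly polynomial, and since $\Pol^{\text{faible}}_d$ is localizing, so is $F$. Pick $d$ with $\pi(F)\in\Pol_d(\Theta,\A)$. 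By Corollary~\ref{cor-k1} this category has Krull dimension $0$, so the noetherian object $\pi(F)$ is of finite length. Given a descending chain $F=F_0\supseteq F_1\supseteq F_2\supseteq\cdots$, the image chain $\pi(F_n)$ therefore stabilizes beyond some index $N$, whence $F_n/F_{n+1}$ is noetherian and stably null for $n\geq N$. Lemma~\ref{lmks}(2) now applies (since $\Theta$ satisfies $(P_3)$, has finite morphism sets, and $\A$ is locally finite), giving that $F_n/F_{n+1}$ is of finite length, i.e. of Krull dimension $0$; this yields $K\dim(F)\leq 1$.

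For the reverse bound, I would exhibit a noetherian object not of finite length. Take any nonzero noetherian $A\in\A$ and consider the constant functor $\underline{A}\simeq A[\Theta(\mathbf{0},-)]$, noetherian by Proposition~\ref{FInoeth}. For each $k\in\mathbb{N}$ the rule $G_k(\mathbf{n})=A$ if $n\geq k$ and $G_k(\mathbf{n})=0$ otherwise defines a subfunctor of $\underline{A}$ (compatibility is automatic because $\Theta(\mathbf{n},\mathbf{m})=\emptyset$ when $n>m$), and the chain $\underline{A}=G_0\supsetneq G_1\supsetneq G_2\supsetneq\cdots$ is strictly decreasing, so $\underline{A}$ is not of finite length and $K\dim(\underline{A})\geq 1$.

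Finally, for the characterization of the Krull-dimension-zero objects: a noetherian $F\in\s n(\Theta,\A)$ is of finite length by Lemma~\ref{lmks}(2), hence of Krull dimension $0$. Conversely, any finite-length $F$ has simple composition factors, which by Lemma~\ref{lmks}(1) (applicable because $\Theta$ is $(EI)$ and satisfies $\mathbf{n}+\mathbf{1}\not\simeq\mathbf{n}$) are stably null, so $F$ itself lies in the localizing subcategory $\s n(\Theta,\A)$. The principal obstacle is the upper bound step, which crucially uses Corollary~\ref{cor-k1}, itself resting on the delicate Proposition~\ref{prn-princ} about the derived functors of $s$; the remaining ingredients are either elementary or assembled from results already stated.
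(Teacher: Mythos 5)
Your proof is correct and follows exactly the route the paper intends (the corollary is stated without an explicit proof, immediately after Lemme~\ref{lmks}): local noetherianity from Proposition~\ref{FInoeth}, the observation that noetherian functors on $\Theta$ are polynomial via the generators $A[\Theta(\mathbf{n},-)]$, the upper bound from Corollaire~\ref{cor-k1} combined with Lemme~\ref{lmks}(2), the lower bound from a constant functor, and the identification of $\K_0$ with $\s n(\Theta,\A)$ from the two parts of Lemme~\ref{lmks}. All hypotheses you invoke ($(P_3)$, $(EI)$, finite hom-sets for $\Theta$, local finiteness of $\A$) are indeed available, so there is nothing to add.
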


\begin{cor}\label{cor-k3}
 Supposons que la catégorie $\A$ est localement finie. Soit $A$ un anneau fini. Alors la catégorie des foncteurs faiblement analytiques (i.e. qui sont colimites de foncteurs faiblement polynomiaux) de $\fct(\mathbf{S}(A),\A)$ est une catégorie localement noethérienne de dimension de Krull $1$, dont les objets de dimension de Krull nulle sont exactement ceux de $\s n(\mathbf{S}(A),\A)$.
\end{cor}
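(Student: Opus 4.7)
Le plan transpose à $\mathbf{S}(A)$ celui du corollaire~\ref{cor-k2} pour $\Theta$, à ceci près qu'il faut restreindre aux foncteurs faiblement analytiques puisque, contrairement à $\Theta$, tous les foncteurs de $\fct(\mathbf{S}(A),\A)$ ne sont pas faiblement polynomiaux. Je commencerai par établir la noethérianité locale : l'hypothèse <<~$\A$ localement finie~>> implique <<~$\A$ localement noethérienne~>>, si bien que le théorème~\ref{th-cf} fournit la noethérianité locale de la sous-catégorie des foncteurs faiblement polynomiaux. Comme tout foncteur faiblement analytique est colimite filtrante de foncteurs faiblement polynomiaux, donc colimite filtrante de sous-foncteurs noethériens de ceux-ci, la catégorie ambiante des foncteurs faiblement analytiques hérite de cette propriété. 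Un argument standard de stabilisation montre que ses objets noethériens sont exactement les foncteurs faiblement polynomiaux noethériens, chaque $\Pol^{{\rm faible}}_d(\mathbf{S}(A),\A)$ étant localisante dans $\fct(\mathbf{S}(A),\A)$.

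Pour la majoration $K\dim \leq 1$, je considérerai un objet noethérien $F$ et son image $\pi(F)$, qui appartient à $\Pol_d(\mathbf{S}(A),\A)$ pour un certain $d$. Le corollaire~\ref{cor-k1}, applicable car $A$ étant fini a un groupe abélien sous-jacent de type fini, affirme que $K\dim\,\Pol_d(\mathbf{S}(A),\A) = K\dim\,\A = 0$ : la catégorie $\A$ étant localement finie, $\pi(F)$ est de longueur finie. Étant donné une chaîne décroissante $F_0 \supseteq F_1 \supseteq \cdots$, l'exactitude de $\pi$ entraîne alors la stationnarité de la chaîne $\pi(F_i)$, d'où $F_i/F_{i+1}$ stablement nul à partir d'un certain rang. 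Le lemme~\ref{lmks}(2) prendra le relais --- $\mathbf{S}(A)$ vérifiant $(P_3)$ (proposition~\ref{pfpa}, $A$ fini étant à rang stable fini), ayant des ensembles de morphismes finis, et $\A$ étant localement finie --- pour assurer que $\s n(\mathbf{S}(A),\A)$ est localement fini. Les quotients $F_i/F_{i+1}$, noethériens et stablement nuls, seront donc de longueur finie, ce qui donnera $K\dim(F) \leq 1$.

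La caractérisation des objets de dimension de Krull nulle s'obtiendra par double inclusion. Le lemme~\ref{lmks}(1), applicable car $\mathbf{S}(A)$ est $(EI)$ (toute pseudo-théorie régulière l'est, cf. proposition~\ref{pfpa}) et tout objet y admet un sommant direct non trivial (par exemple $A$ lui-même), montre qu'un objet de dimension de Krull nulle est stablement nul. Réciproquement, un objet noethérien de $\s n(\mathbf{S}(A),\A)$ est de longueur finie par le lemme~\ref{lmks}(2), donc de dimension de Krull nulle. Pour conclure que la dimension de Krull vaut exactement $1$, j'exhiberai un objet noethérien non stablement nul : je partirai d'un foncteur constant $C_V$ de valeur $V$ non nulle dans $\A$, qui est polynomial de degré $0$ et donc faiblement analytique. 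Ses sous-foncteurs ont tous $\kappa = 0$, parce que leurs applications de structure sont des restrictions d'identités, donc injectives ; or la noethérianité locale déjà acquise fournit un sous-foncteur noethérien non nul de $C_V$, qui est ainsi noethérien et non stablement nul, donc de dimension de Krull~$1$.

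La difficulté principale me semble résider dans l'articulation entre dimension de Krull dans la catégorie ambiante $\fct(\mathbf{S}(A),\A)$ et dans la sous-catégorie des foncteurs faiblement analytiques. L'application du lemme~\ref{lmks}(1), énoncé pour $\fct(\mathbf{S}(A),\A)$, doit se transférer à la sous-catégorie, ce qui repose sur la stabilité par sous-objet des foncteurs faiblement analytiques --- conséquence de la stabilité par sous-objet de chaque $\Pol^{{\rm faible}}_d$ et de la commutation des sous-objets aux colimites filtrantes dans une catégorie de Grothendieck --- ainsi que sur l'égalité des treillis de sous-objets calculés dans $\fct$ et dans la sous-catégorie, qui en résulte.
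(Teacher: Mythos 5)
Votre démonstration est correcte et suit essentiellement la voie (implicite) du texte : le corollaire~\ref{cor-k3} y est obtenu exactement comme le corollaire~\ref{cor-k2}, en combinant le théorème~\ref{th-cf} (noethérianité locale), le corollaire~\ref{cor-k1} avec $K\dim\,\A=0$ (d'où $\pi(F)$ de longueur finie et la stationnarité des chaînes décroissantes modulo $\s n$) et le lemme~\ref{lmks}, le passage à la sous-catégorie des foncteurs faiblement analytiques se faisant comme vous l'indiquez. Seule réserve, mineure : la propriété $(EI)$ de $\mathbf{S}(A)$ ne découle ni de la proposition~\ref{pfpa} ni du fait d'être une pseudo-théorie régulière (c'est faux pour $\mathbf{P}(\mathbb{Z})$, par exemple) ; elle est en revanche immédiate ici parce que, $A$ étant fini, les modules $A^n$ sont des ensembles finis, de sorte que tout monomorphisme scindé $A^n\to A^n$ est bijectif.
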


Ce corollaire mérite quelques commentaires : le caractère localement noethérien (qui est le théorème~\ref{th-cf}) est surpassé par les travaux de Putman et Sam \cite{PSam} (au moins lorsque $A$ est commutatif), qui établissent que $\fct(\mathbf{S}(A),\A)$ est localement noethérienne si $\A$ l'est ($A$ étant un anneau fini). En revanche, l'assertion sur la dimension de Krull semble difficile à relier aux méthodes issues des bases de Gröbner utilisées dans \cite{PSam}. De plus, la question de la détermination de la dimension de Krull et des sous-quotients de la filtration de Krull de $\fct(\mathbf{S}(A),\A)$ constitue un problème difficile, toujours ouvert. Seule l'égalité $\K_0(\mathbf{S}(A),\A)=\s n(\mathbf{S}(A),\A)$ (on suppose ici que la catégorie $\A$ est localement finie) est aisée. Il est naturel de conjecturer que $\K_1(\mathbf{S}(A),\A)$ est exactement constituée des foncteurs faiblement analytiques, mais même cette question semble ardue. Pour les étages supérieurs de la filtration de Krull, au moins lorsque $A$ est un corps fini et $\A$ la catégorie des espaces vectoriels sur $A$, les conjectures de \cite{Dja} (§\,13.3 et 12.2), qui traite du cas analogue de la catégorie source des $A$-espaces vectoriels de dimension finie avec injections (mais aucun scindement donné dans la structure), donnent une idée de ce à quoi on peut s'attendre.

\medskip

Utilisant la dernière assertion du lemme~\ref{lm-classique} et le corollaire~\ref{cor-psta}, on obtient la propriété suivante :

\begin{pr}
 Supposons que $\A$ est équivalente à une catégorie de modules. Alors il en est de même pour $\Pol_d(\M,\A)$ pour tout $d\in\mathbb{N}$.
\end{pr}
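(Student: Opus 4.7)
L'idée est d'appliquer la troisième assertion du lemme~\ref{lm-classique} au foncteur fourni par le corollaire~\ref{cor-psta}. Puisque $\A$ est par hypothèse équivalente à une catégorie de modules, les produits y sont exacts (le foncteur d'oubli vers $\mathbf{Ab}$ commute aux produits et est exact) ; par conséquent, pour un choix convenable de $r(d)$ et d'une famille $(t_a)_{a\in E(d,m)}$, le corollaire~\ref{cor-psta} procure un foncteur
$$\Phi : \Pol_d(\M,\A)\to\A^{E(d,m)}$$
exact, fidèle, et commutant à la fois aux colimites et aux limites.

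On observe ensuite que la catégorie source $\Pol_d(\M,\A)$ est une catégorie de Grothendieck : elle est en effet bilocalisante (donc en particulier localisante) dans $\st(\M,\A)$, laquelle est elle-même une catégorie de Grothendieck comme quotient de $\fct(\M,\A)$ par la sous-catégorie localisante $\s n(\M,\A)$, et toute sous-catégorie localisante d'une catégorie de Grothendieck est une catégorie de Grothendieck. Quant à la catégorie but $\A^{E(d,m)}$, elle est également équivalente à une catégorie de modules : l'ensemble $E(d,m)$ étant fini (il s'agit des multi-ensembles de taille au plus $d$ dans $\mathbf{m}$), ce produit fini de copies d'une catégorie équivalente à $\mathrm{Mod}(R)$ s'identifie à la catégorie des modules sur l'anneau produit $R^{E(d,m)}$.

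Toutes les hypothèses de la troisième assertion du lemme~\ref{lm-classique} sont alors réunies pour $\B=\Pol_d(\M,\A)$, $\C=\A^{E(d,m)}$ et $\Phi$ comme ci-dessus, d'où la conclusion. L'essentiel du travail ayant été accompli dans le corollaire~\ref{cor-psta} (et en amont, dans la proposition fondamentale~\ref{prn-princ} sur le foncteur section), les vérifications restantes sont tout à fait standard ; il n'y a pas d'obstacle sérieux dans cette démonstration.
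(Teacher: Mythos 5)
Votre démonstration est correcte et suit exactement la voie du texte : le papier se contente d'invoquer la dernière assertion du lemme~\ref{lm-classique} et le corollaire~\ref{cor-psta}, et vous explicitez précisément les vérifications sous-entendues (exactitude des produits dans une catégorie de modules, finitude de $E(d,m)$, caractère de Grothendieck de $\Pol_d(\M,\A)$). Rien à redire.
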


\begin{ex}\label{ex-ann}
 Soient $R$ et $S$ deux anneaux et $M$ un $R$-module à droite, qu'on identifie au foncteur additif ${\rm Hom}_R(-,M) : \mathbf{P}(R)^{op}\to\mathbf{Ab}$. On vérifie que la catégorie $\Pol_1(\mathbf{P}(R)_{M},\mathbf{Mod}-S)$ (les notations sont celles de l'exemple~\ref{ex-lnb}) est équivalente à la catégorie des modules à droite sur l'anneau donné matriciellement par $\left(\begin{array}{cc}R^{op}\otimes S & M\otimes S\\                                                                                                                                                                                     0 & S                                                                                                                                                                                  \end{array}
\right)$.
\end{ex}

\begin{rem}\label{rq-rec}
 L'exemple précédent montre que, rien qu'en considérant le cas $d=1$, le diagramme de recollement liant $\Pol_0(\M,\A)\simeq\A$, $\Pol_d(\M,\A)$ et $\Pol_d(\M,\A)/\Pol_0(\M,\A)$ peut être assez général. Voici néanmoins une restriction à ce sujet : cette situation de recollement est toujours isomorphe à un recollement venant de la construction de MacPherson-Vilonen, au moins si $\A$ est une catégorie de modules, comme on le voit en appliquant le théorème du §\,1 de \cite{FP} (dont le §\,3 rappelle la construction en question). En effet, on peut appliquer le critère de ce théorème parce que :
 \begin{enumerate}
  \item notre proposition~\ref{pr-plf} montre que, si $a$ est un objet convenable de $\M$, alors le foncteur $\Pol_1(\M,\A)\to\A\qquad X\mapsto sX(a)$, qui est toujours une rétraction du foncteur d'inclusion $\A\simeq\Pol_0(\M,\A)\to\Pol_1(\M,\A)$, est {\em exact} ;
  \item l'adjoint à gauche de l'inclusion $\A\to\st(\M,\A)$ est donné par la composée $\st(\M,\A)\to\fct(\M,\A)\to\A$ du foncteur section et du foncteur colimite, et les objets de $\A\simeq\Pol_0(\M,\A)$ sont {\em acycliques} pour ce foncteur ($\M$ ayant un objet initial, son homologie à coefficients constants est nulle en degré strictement positif).
 \end{enumerate}
 
 Illustrons ce phénomène en revenant sur l'exemple~\ref{ex-lnb} (dont on conserve les notations), en supposant pour simplifier que la catégorie $\C$ est additive. La catégorie $\Pol_1(\C_M,\A)$ est équivalente à la catégorie de MacPherson-Vilonen $\D$ dont les objets sont les triplets $(A,V,\xi)$ constitués d'un foncteur additif $A : \C\to\A$, d'un objet $V$ de $A$ et d'un morphisme $\xi : M\underset{\C}{\otimes}A\to V$ de $\A$, les morphismes $(A,V,\xi)\to (A',V',\xi')$ étant les morphismes $(A\to A',V\to V')$ de $\fct(\C,\A)\times\A$ vérifiant la condition de compatibilité évidente à $\xi$ et $\xi'$. On vérifie que les foncteurs suivants sont quasi-inverses l'un de l'autre :
 \begin{enumerate}
  \item $\D\to\Pol_1(\C_M,\A)$ associant à un objet $(A,V,\xi)$ l'extension de $\tilde{A}$ par $V$ image par $\xi$ de l'extension $0\to M\underset{\C}{\otimes}A\to B\to\tilde{A}\to 0$ construite dans l'exemple~\ref{ex-lnb} ;
  \item $\Pol_1(\C_M,\A)\to\D$ associant à un objet $X$ le triplet constitué de son image $\alpha(X)$ dans $\Pol_1(\widetilde{\C_M},\A)\simeq\Pol_1(\C,\A)$, sa valeur en $0$ (on rappelle que $\st(\C_M,\M)\simeq\fct(\C_M,\A)$) et un morphisme canonique $M\underset{\C}{\otimes}\alpha(X)\to X(0)$ qu'on laisse au lecteur le soin de définir.
 \end{enumerate}
\end{rem}

\appendix

\section{Foncteurs polynomiaux $\s n(\M,\A)$-parfaits}

\begin{conv} Dans ce qui suit, $\M$ désigne un objet de $\mi$ vérifiant $(P_0)$ et $\A$ une catégorie de Grothendieck.
\end{conv}

La propriété suivante est classique et formelle (voir par exemple \cite{Gab}, chap.~III).

\begin{prdef}
 Soit $F$ un foncteur de $\fct(\M,\A)$. Les assertions suivantes sont équivalentes.
 \begin{enumerate}
  \item Pour tout foncteur $N$ de $\s n(\M,\A)$, on a ${\rm Ext}^*_{\fct(\M,\A)}(N,F)=0$ ;
  \item pour tout foncteur $G$ de $\fct(\M,\A)$, le morphisme naturel
  $${\rm Ext}^*_{\fct(\M,\A)}(G,F)\to {\rm Ext}^*_{\st(\M,\A)}(\pi G,\pi F)$$
  induit par le foncteur exact $\pi$ est un isomorphisme ;
  \item l'unité $F\to s\pi F$ de l'adjonction entre $s$ et $\pi$ est un isomorphisme, et $(\mathbf{R}^i s)(\pi F)=0$ pour tout entier $i>0$.
 \end{enumerate}

 Si ces conditions sont vérifiées, on dira que $F$ est {\em $\s n(\M,\A)$-parfait}, ou, simplement, parfait. On notera $\mathbf{Pft}(\M,\A)$ la sous-catégorie pleine de $\fct(\M,\A)$ formée de ces foncteurs.
\end{prdef}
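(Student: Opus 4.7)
The plan is to verify this equivalence by standard techniques for localising subcategories, in the spirit of Gabriel's thesis (chapitre~III), the main tool being a Grothendieck-type spectral sequence associated to the adjunction $\pi\dashv s$.

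First, I would set up the key spectral sequence. Since $\pi$ is exact and left adjoint to $s$, the functor $s$ preserves injectives, so applying $\mathrm{Hom}_{\fct(\M,\A)}(G,-)$ to $s$ of an injective resolution of $\pi F$ in $\st(\M,\A)$ and using the adjunction $\mathrm{Hom}_{\st(\M,\A)}(\pi G,\pi F)\simeq\mathrm{Hom}_{\fct(\M,\A)}(G,s\pi F)$ yields a convergent spectral sequence
\[
E_2^{p,q}=\mathrm{Ext}^p_{\fct(\M,\A)}(G,(\mathbf{R}^q s)(\pi F))\;\Longrightarrow\;\mathrm{Ext}^{p+q}_{\st(\M,\A)}(\pi G,\pi F),
\]
natural in $G\in\fct(\M,\A)$. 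A routine unwinding of definitions identifies its edge morphism along the row $q=0$ with the natural comparison map of (2). From this, the implication (3)$\Rightarrow$(2) is immediate: under (3) only the row $q=0$ survives and equals $\mathrm{Ext}^p_{\fct(\M,\A)}(G,F)$. The implication (2)$\Rightarrow$(1) is trivial, taking $G=N\in\s n(\M,\A)$, so that $\pi G=0$.

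The heart of the argument is (1)$\Rightarrow$(3). The vanishing postulated in (1) for $i=0,1$, combined with Proposition~\ref{caract-sect}, immediately gives that the unit $F\to s\pi F$ is an isomorphism. For the derived functors, one observes first that, since $\pi$ is exact and $\pi s\simeq\mathrm{Id}$, one has $\pi\circ\mathbf{R}^q s\simeq\mathbf{R}^q(\pi s)=0$ for $q>0$, so each $(\mathbf{R}^q s)(\pi F)$ automatically lies in $\s n(\M,\A)$. I would then proceed by induction on $i\geq 1$: supposing $(\mathbf{R}^q s)(\pi F)=0$ for $1\leq q<i$, the spectral sequence degenerates in low total degree and gives a five-term exact sequence
\[
\mathrm{Ext}^i_{\fct(\M,\A)}(G,F)\to\mathrm{Ext}^i_{\st(\M,\A)}(\pi G,\pi F)\to\mathrm{Hom}_{\fct(\M,\A)}(G,(\mathbf{R}^i s)(\pi F))\to\mathrm{Ext}^{i+1}_{\fct(\M,\A)}(G,F).
\]
Specialising to $G:=(\mathbf{R}^i s)(\pi F)$, which belongs to $\s n(\M,\A)$, hypothesis (1) annihilates the two outer Ext terms while $\pi G=0$ annihilates the middle-left term, so $\mathrm{Hom}_{\fct(\M,\A)}(G,G)=0$, forcing $(\mathbf{R}^i s)(\pi F)=0$ and closing the induction.

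The principal technical obstacle I anticipate is the careful construction of the spectral sequence, and especially the identification of its $E_2^{p,0}$ edge map with the natural comparison morphism of (2); this is entirely standard but requires some bookkeeping. Beyond this point, the argument only uses the exactness of $\pi$, the fact that $s$ preserves injectives, Proposition~\ref{caract-sect} in degrees $0$ and $1$, and the observation that $\pi\circ\mathbf{R}^q s=0$ for $q>0$, which makes the induction step formal.
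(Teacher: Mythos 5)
Your argument is correct. The paper gives no proof of this statement, dismissing it as \guillemotleft~classique et formelle~\guillemotright\ with a reference to Gabriel (chap.~III); the Grothendieck spectral sequence $E_2^{p,q}=\mathrm{Ext}^p_{\fct(\M,\A)}(G,(\mathbf{R}^q s)(\pi F))\Rightarrow\mathrm{Ext}^{p+q}_{\st(\M,\A)}(\pi G,\pi F)$ you construct (valid because $\pi$ is exact so $s$ preserves injectives), together with the induction taking $G=(\mathbf{R}^i s)(\pi F)$ in the generalized five-term sequence and the fact that $\mathbf{R}^i s$ takes values in $\s n(\M,\A)$ for $i>0$, is precisely the standard formal argument being alluded to. Note only that the equivalence \guillemotleft~unité iso $\Leftrightarrow$ $\mathrm{Ext}^{\leq 1}(N,F)=0$ pour $N$ stablement nul~\guillemotright, which you invoke via Proposition~\ref{caract-sect}, is itself the classical characterization recalled at the start of that proposition's proof, so your use of it is legitimate and circular in no way.
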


Commençons par donner une propriété générale qui sera d'un usage courant dans la suite.

\begin{pr}\label{prdsk}
Pour tout objet $x$ de $\M$, l'isomorphisme canonique $\tau_x s\xrightarrow{\simeq}s\tau_x$ de foncteurs $\st(\M,\A)\to\fct(\M,\A)$ induit un isomorphisme $\tau_x (\mathbf{R}^n s)\xrightarrow{\simeq}(\mathbf{R}^n s)\tau_x$ pour tout $n\in\mathbb{N}$.
\end{pr}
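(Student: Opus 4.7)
For $n=0$, the statement is exactly the given isomorphism $\tau_x s\simeq s\tau_x$. For $n\geq 1$, the plan is to reduce everything to the following claim.

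\smallskip

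\noindent\emph{Key claim.} The exact endofunctor $\tau_x$ of $\st(\M,\A)$ preserves injective objects.

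\smallskip

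Granted this, the proposition follows at once: take an injective resolution $X\to I^\bullet$ in $\st(\M,\A)$; since $\tau_x$ is exact on $\st(\M,\A)$, $\tau_x X\to\tau_x I^\bullet$ is an exact resolution, and by the key claim each $\tau_x I^k$ is injective, so it is an injective resolution of $\tau_x X$. Applying $s$ and the natural isomorphism $s\tau_x\simeq\tau_x s$, one obtains $s\tau_x I^\bullet\simeq\tau_x s I^\bullet$, and since $\tau_x$ is exact on $\fct(\M,\A)$ it commutes with cohomology, giving
$$(\mathbf{R}^n s)(\tau_x X)=H^n(s\tau_x I^\bullet)\simeq H^n(\tau_x s I^\bullet)\simeq\tau_x H^n(sI^\bullet)=\tau_x(\mathbf{R}^n s)(X),$$
and one checks that the composite isomorphism is the one induced by the $n=0$ isomorphism.

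\smallskip

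To prove the key claim, the plan is to exhibit an exact left adjoint $\sigma_x\dashv\tau_x$ on $\st(\M,\A)$. On $\fct(\M,\A)$, the precomposition functor $\tau_x$ admits a left adjoint $L_x$, namely the left Kan extension along the endofunctor $y\mapsto x+y$ of $\M$. A formal computation using $\pi\dashv s$, the isomorphism $s\tau_x\simeq\tau_x s$, and the identity $\pi s\simeq\mathrm{Id}$, shows that $\sigma_x:=\pi L_x s$ is left adjoint to $\tau_x$ on $\st(\M,\A)$: for any $Y,Z\in\st(\M,\A)$,
$$\mathrm{Hom}_\st(\pi L_x sY,Z)\simeq\mathrm{Hom}_\fct(L_x sY,sZ)\simeq\mathrm{Hom}_\fct(sY,\tau_x sZ)\simeq\mathrm{Hom}_\fct(sY,s\tau_x Z)\simeq\mathrm{Hom}_\st(Y,\tau_x Z).$$
Since $\pi$ is exact and $L_x$ is right exact, $\sigma_x$ is automatically right exact. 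The hard part will be the left exactness of $\sigma_x$: it amounts to showing that the first left derived functor of $L_x$, applied to an object of the form $sY$ with $Y\in\st(\M,\A)$, lies in $\s n(\M,\A)$, so that $\pi$ kills it.

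\smallskip

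Should the direct exactness verification prove delicate, an alternative is a universal $\delta$-functor argument: both $(\tau_x\mathbf{R}^n s)_{n\geq 0}$ and $(\mathbf{R}^n s\,\tau_x)_{n\geq 0}$ are cohomological $\delta$-functors from $\st(\M,\A)$ to $\fct(\M,\A)$ that agree in degree $0$, and any morphism in degree $0$ extends uniquely to a morphism of $\delta$-functors if both are universal. Effaceability of $\mathbf{R}^n s\,\tau_x$ is the key issue: one would like, for every $X\in\st(\M,\A)$, to embed $X$ into an object $J$ with $\tau_x J$ being $s$-acyclic. The natural candidates are the objects of the form $\pi\eta(A)$, which are $s$-acyclic and satisfy $\tau_x\pi\eta(A)\simeq\pi\eta\tau_x(A)$, hence again $s$-acyclic; the main obstacle on this route is finding enough embeddings $X\hookrightarrow J$ of this form, or reducing to them by dévissage. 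In either approach, the technical heart of the argument is controlling how the section functor and its derivatives interact with the non-exactness of $L_x$.
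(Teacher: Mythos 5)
Your argument is a plan rather than a proof: everything hinges on the \emph{Key claim} that $\tau_x$ preserves injectives of $\st(\M,\A)$, and you do not establish it. The formal adjunction computation showing $\sigma_x:=\pi L_x s\dashv\tau_x$ is correct, but the left exactness of $\sigma_x$ is exactly the hard point, and your reduction of it is itself incomplete: since $s$ is only left exact, applying $L_x s$ to a short exact sequence of $\st(\M,\A)$ involves not only $\mathbf{L}_1(L_x)$ on objects of the form $sY$ but also the defect of right exactness of $s$, i.e.\ $\mathbf{R}^1 s$ --- the very object the proposition is about. There is no reason for the left Kan extension $L_x$ along $x+-$ to be exact (the relevant comma categories are not filtered), and nothing in the paper suggests that $\tau_x$ preserves injectives. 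The alternative route via universal $\delta$-functors has the same unfilled hole: you correctly observe that $\tau_x\mathbf{R}^\bullet s$ is effaceable (hence universal), which yields a canonical morphism of $\delta$-functors $\tau_x(\mathbf{R}^n s)\to(\mathbf{R}^n s)\tau_x$, but the effaceability of $(\mathbf{R}^n s)\tau_x$ --- equivalently, enough embeddings into objects $J$ with $\tau_x J$ acyclic for $s$ --- is precisely what you cannot supply, since general objects of $\st(\M,\A)$ do not embed into objects of the form $\pi\eta(A)$.

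The paper avoids this question entirely by an induction on $n$ that never mentions injectives. Under the inductive hypothesis for degrees $<n$, a shift argument produces a monomorphism $\tau_x(\mathbf{R}^n s)\hookrightarrow(\mathbf{R}^n s)\tau_x$ fitting in a triangle with $i_x(\mathbf{R}^n s)$; the long exact sequence attached to $0\to X\to\tau_x(X)\to\delta_x(X)\to 0$ then identifies $\kappa_x(\mathbf{R}^n s)(X)$ with the cokernel of $(\mathbf{R}^{n-1}s)(\tau_x X)\to(\mathbf{R}^{n-1}s)(\delta_x X)$, which commutes with shifts by induction; finally, since $\mathbf{R}^n s$ takes stably null values for $n>0$, it is the filtered colimit of its subfunctors $\kappa_y(\mathbf{R}^n s)$, and shifts commute with colimits. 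If you want to salvage your approach you would have to prove the Key claim, which appears to be at least as hard as the proposition itself; otherwise the argument should be replaced by an induction of this kind.
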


\begin{proof}
On montre l'assertion par récurrence sur $n$, qu'on peut supposer non nul. Comme le foncteur $\tau_x$ est exact, un argument formel de décalage montre que l'isomorphisme canonique induit un {\em monomorphisme} $\tau_x (\mathbf{R}^n s)\hookrightarrow(\mathbf{R}^n s)\tau_x$, sous l'hypothèse de récurrence que le morphisme $\tau_x (\mathbf{R}^j s)\to(\mathbf{R}^j s)\tau_x$ est un isomorphisme pour $j<n$. Ce monomorphisme s'insère un triangle commutatif
$$\xymatrix{\mathbf{R}^n s\ar[rr]^-{i_x(\mathbf{R}^n s)}\ar[rrd]_-{(\mathbf{R}^n s)(i_x)} & & \tau_x(\mathbf{R}^n s)\ar@{^{(}->}[d]\\
& & (\mathbf{R}^n s)\tau_x
}$$
(car c'est vrai en degré nul, pour l'isomorphisme canonique $\tau_x s\xrightarrow{\simeq}s\tau_x$).
 
 Par conséquent, pour tout objet $X$ de $\st(\M,\A)$, la suite exacte longue de cohomologie associée à la suite exacte courte $0\to X\to\tau_x(X)\to\delta_x(X)\to 0$ fournit une suite exacte
 $$(\mathbf{R}^{n-1} s)(\tau_x X)\to(\mathbf{R}^{n-1} s)(\delta_x X)\to(\mathbf{R}^n s)(X)\xrightarrow{i_x}\tau_x(\mathbf{R}^n s)(X)$$
 qui montre que $\kappa_x(\mathbf{R}^n s)(X)$ est isomorphe au conoyau du morphisme naturel $(\mathbf{R}^{n-1} s)(\tau_x X)\to(\mathbf{R}^{n-1} s)(\delta_x X)$. Comme $\tau_x$, $\delta_x$, ainsi que $\mathbf{R}^{n-1} s$ par l'hypothèse de récurrence, commutent aux foncteurs de décalage, on en déduit que $\kappa_x(\mathbf{R}^n s)$ commute aux foncteurs de décalage. Mais $\mathbf{R}^n s$ est à valeurs dans $\s n(\M,\A)$, puisque $n>0$, de sorte que $\mathbf{R}^n s=\kappa(\mathbf{R}^n s)$ est la colimite de ses sous-foncteurs $\kappa_x(\mathbf{R}^n s)$. Comme les foncteurs de décalage commutent aux colimites, on en tire que $\mathbf{R}^n s$ commute aux décalages, ce qu'il fallait démontrer.
 \end{proof}

On en déduit aussitôt, par récurrence sur $n$ (en utilisant derechef l'argument de suite exacte longue et de stable nullité des valeurs de $\mathbf{R}^j s$ pour $j>0$ de la démonstration qui précède), l'énoncé suivant, qui étend la proposition~\ref{caract-sect}.

 \begin{cor}\label{cor-pft}
  Pour tout entier $n>0$ et tout foncteur $F : \M\to\A$, il y a équivalence entre :
 \begin{enumerate}
  \item l'unité $F\to s\pi(F)$ est un isomorphisme et $(\mathbf{R}^i s)(\pi F)=0$ pour $0<i<n$ ;
  \item pour toute suite finie $(x_1,\dots,x_r)$ d'objets de $\M$ avec $0\leq r\leq n$, on a $\kappa\delta_{x_1}\dots\delta_{x_r}(F)=0$.
 \end{enumerate}
 
 En particulier, $F$ est parfait si et seulement si $\kappa\delta_{x_1}\dots\delta_{x_r}(F)=0$ pour toute suite finie $(x_1,\dots,x_r)$ d'objets de $\M$.
 \end{cor}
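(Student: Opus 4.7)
The plan is to prove the equivalence by strong induction on $n$, using as the key engine the formula obtained in the proof of Proposition~\ref{prdsk},
$$\kappa_x \mathbf{R}^n s(X) \simeq \operatorname{coker}\bigl(\tau_x\mathbf{R}^{n-1}s(X) \to \mathbf{R}^{n-1}s(\delta_x X)\bigr),$$
together with the observation that each $\mathbf{R}^i s(X)$ for $i>0$ belongs to $\s n(\M,\A)$, hence vanishes if and only if all of its $\kappa_x$-subobjects do.

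For the base case $n=1$, one direction is immediate from Proposition~\ref{caract-sect}. The converse requires a small upgrade, since Proposition~\ref{caract-sect} only supplies $\kappa_x F = 0$ and $\kappa_x\delta_x(F)=0$, whereas assertion~(2) demands $\kappa_y\delta_x(F)=0$ for all pairs $(x,y)$. To obtain this, I will apply Proposition~\ref{caract-sect} at the object $x+y$, yielding $\kappa_{x+y}\delta_{x+y}(F)=0$, and exploit the short exact sequence
$$0 \to \delta_y(F) \to \delta_{x+y}(F) \to \tau_y\delta_x(F) \to 0;$$
the snake lemma applied via $\kappa_{x+y}$ then extracts $\kappa_{x+y}\delta_y(F)=0$ (for arbitrary $x,y$). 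The elementary inclusion $\kappa_z(G)\subseteq\kappa_{z+u}(G)$, which follows from the factorization $G\to\tau_z G\to\tau_{z+u}G$ of the unit, finally promotes $\kappa_z\delta_y(F)\subseteq\kappa_{z+y}\delta_y(F)=0$ for every $z$.

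For the inductive step at level $n$, assuming the equivalence at every $k<n$ and every functor, it suffices to show that, under the equivalent conditions at level $n-1$, the extra vanishing $\mathbf{R}^{n-1}s(\pi F)=0$ of assertion~(1) corresponds to the extra vanishing $\kappa\delta_{x_1}\cdots\delta_{x_n}(F)=0$ of assertion~(2). Under the level $n-1$ conditions we have $F = s\pi F$ and $\mathbf{R}^i s(\pi F)=0$ for $0<i<n-1$, so the displayed formula specializes, for $n\geq 3$, to $\kappa_x\mathbf{R}^{n-1}s(\pi F)\simeq\mathbf{R}^{n-2}s(\pi\delta_x F)$, and for $n=2$ to $s\pi\delta_x(F)/\delta_x(F)$ (computing directly that the image of $\tau_x F\to s\pi\delta_x(F)$ is $\delta_x(F)$). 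Stable nullity of $\mathbf{R}^{n-1}s(\pi F)$ therefore turns the extra vanishing into the statement that $\mathbf{R}^{n-2}s(\pi\delta_x F)=0$ for every $x$ (respectively, that the unit of each $\delta_x F$ is an isomorphism). Since assertion~(2) at level $n-1$ for $F$ manifestly delivers assertion~(2) at level $n-2$ for each $\delta_x F$, the induction hypothesis converts this into assertion~(1) at level $n-2$ for $\delta_x F$; combined with the extracted vanishing, this promotes to assertion~(1) at level $n-1$ for $\delta_x F$, and one further application of the induction hypothesis rewrites it as assertion~(2) at level $n-1$ for $\delta_x F$. The part not already contained at level $n-2$ is exactly $\kappa\delta_{y_1}\cdots\delta_{y_{n-1}}\delta_x(F)=0$, which matches the missing content of assertion~(2) at level $n$ for $F$.

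The main obstacle will be the bookkeeping in this inductive step: one must track carefully that the induction hypothesis is applied to $\delta_x F$ at a level shifted by one (because of the extra $\delta_x$), and that the two translations — from derived functor vanishing on $F$ to the same on $\delta_x F$, and from $\kappa$-vanishing at length $r$ on $\delta_x F$ to length $r+1$ on $F$ — really line up on both sides without gap or overlap. The boundary case $n=2$ deserves separate attention, since $\mathbf{R}^0 s(\pi F)=F$ does not vanish; there the cokernel must be computed explicitly and Proposition~\ref{caract-sect} (or equivalently the already-proved base case applied to $\delta_x F$) provides the missing link. The final assertion characterizing perfect functors then follows by letting $n$ range over all positive integers.
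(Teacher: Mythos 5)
Votre argument est correct et suit essentiellement la même route que la démonstration (très concise) du papier : une récurrence sur $n$ pilotée par l'identité $\kappa_x(\mathbf{R}^m s)(X)\simeq{\rm Coker}\big((\mathbf{R}^{m-1}s)(\tau_x X)\to(\mathbf{R}^{m-1}s)(\delta_x X)\big)$ issue de la démonstration de la proposition~\ref{prdsk}, combinée à la stable nullité des valeurs de $\mathbf{R}^j s$ pour $j>0$. Le passage explicite de la condition diagonale $\kappa_x\delta_x(F)=0$ de la proposition~\ref{caract-sect} à la condition complète $\kappa_y\delta_x(F)=0$ (via la suite exacte $0\to\delta_y(F)\to\delta_{x+y}(F)\to\tau_y\delta_x(F)\to 0$ et l'inclusion $\kappa_z\subseteq\kappa_{z+u}$) est un détail que le papier laisse implicite, et vous le traitez correctement.
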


\begin{rem}
 On déduit aussi aisément de la proposition~\ref{prdsk} que le corollaire~\ref{corsec} s'étend aux dérivés du foncteur section : ceux-ci commutent aux colimites filtrantes.
\end{rem}

La propriété suivante énumère les propriétés de base des foncteurs parfaits.

\begin{pr}\label{prfpe}
\begin{enumerate}
 \item Soit $0\to F\to G\to H\to 0$ une suite exacte de $\fct(\M,\A)$. Si deux des foncteurs $F$, $G$ et $H$ appartiennent à $\mathbf{Pft}(\M,\A)$, alors il en est de même du troisième.
 \item La sous-catégorie $\mathbf{Pft}(\M,\A)$ de $\fct(\M,\A)$ est stable par les endofoncteurs $\tau_x$ et $\delta_x$.
 \item Pour tout entier $d\in\mathbb{N}$, il existe un objet $r(d)$ de $\M$ tel que $\tau_{r(d)}F$ appartienne à $\mathbf{Pft}(\M,\A)$ pour tout foncteur $F$ faiblement polynomial de degré au plus $d$ et $\s n(\M,\A)$-fermé.
 \item Le degré fort et le degré faible d'un foncteur polynomial parfait $\M\to\A$ coïncident.
\end{enumerate}
\end{pr}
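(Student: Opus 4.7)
The plan is to derive the four assertions in order, using mainly the three equivalent characterizations of perfectness (especially the criterion via the vanishing of $\kappa\delta_{x_1}\dots\delta_{x_r}$ from corollary~\ref{cor-pft}) together with the commutation $\tau_x(\mathbf{R}^i s)\simeq(\mathbf{R}^i s)\tau_x$ from proposition~\ref{prdsk} and the fundamental result of proposition~\ref{prn-princ}.

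For the first assertion I would read the two-out-of-three property directly off the long exact sequence of ${\rm Ext}^*_{\fct(\M,\A)}(N,-)$ associated to $0\to F\to G\to H\to 0$: if two of the three terms have vanishing higher Ext against every $N\in\s n(\M,\A)$, so does the third. For the stability of $\mathbf{Pft}(\M,\A)$ under $\tau_x$, I would exploit that $\tau_x$, being exact, commutes with each $\kappa_y$ (hence with $\kappa$) and with the $\delta_y$'s; applying the criterion of corollary~\ref{cor-pft} yields $\kappa\delta_{x_1}\dots\delta_{x_r}(\tau_x F)\simeq\tau_x\kappa\delta_{x_1}\dots\delta_{x_r}(F)=0$. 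Stability under $\delta_x$ then follows from the first assertion applied to the short exact sequence $0\to F\to\tau_x F\to\delta_x F\to 0$, both of whose first two terms are perfect.

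For the third assertion I would take $r(d)$ to be an object furnished by proposition~\ref{prn-princ}, so that $\tau_{r(d)}(\mathbf{R}^i s)(X)=0$ for $1\leq i\leq d$ and $(\mathbf{R}^i s)(X)=0$ for $i>d$ on every $X\in{\rm Ob}\,\Pol_d(\M,\A)$. Set $X:=\pi(F)$ for $F$ a $\s n(\M,\A)$-closed weakly polynomial functor of degree at most $d$. Since $\tau_{r(d)}$ commutes with $\pi$ and with $s$, and since the unit $F\to s\pi(F)$ is an isomorphism by closedness, one gets $\tau_{r(d)}F\simeq s\pi\tau_{r(d)}F$; moreover proposition~\ref{prdsk} gives $(\mathbf{R}^i s)(\pi\tau_{r(d)}F)\simeq\tau_{r(d)}(\mathbf{R}^i s)(X)$, which vanishes for every $i>0$ by the choice of $r(d)$. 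Both conditions of the third characterization of perfectness are therefore met by $\tau_{r(d)}F$.

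For the fourth assertion, the inequality $\deg_{{\rm faible}}(F)\leq\deg_{{\rm fort}}(F)$ is immediate, so the point is the reverse direction under the assumption that $F$ is perfect. The key observation is that a perfect stably nul functor is zero: if $G\in\mathbf{Pft}(\M,\A)\cap\s n(\M,\A)$, then ${\rm Hom}_{\fct(\M,\A)}(G,G)=0$ by the first characterization of perfectness, forcing $G=0$. If now $F$ is perfect and polynomial of weak degree at most $d$, then $\delta_{x_0}\dots\delta_{x_d}(F)$ is perfect by iterating the second assertion and stably nul by hypothesis, hence identically zero, which gives the reverse inequality. The only nontrivial input in this whole proposition is proposition~\ref{prn-princ}, used in the third assertion; the main thing to take care of is keeping the natural identifications $\tau_{r(d)}\pi\simeq\pi\tau_{r(d)}$ and $\tau_{r(d)}s\simeq s\tau_{r(d)}$ properly aligned, but both have been recorded earlier in the text.
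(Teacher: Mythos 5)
Your proof is correct and follows essentially the same route as the paper, whose own proof merely points to the long exact sequence of ${\rm Ext}(N,-)$ for the first point, to proposition~\ref{prdsk} and corollaire~\ref{cor-pft} for the second and fourth, and to propositions~\ref{prn-princ} and~\ref{prdsk} for the third; you have simply filled in those references with the expected details (including the small but necessary observation that $\kappa_x(F)=0$ for $F$ perfect, which makes $0\to F\to\tau_x F\to\delta_x F\to 0$ short exact).
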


\begin{proof}
 Le premier point est immédiat. Le deuxième découle de la proposition~\ref{prdsk} (et du corollaire~\ref{cor-pft}). Le troisième se déduit des propositions~\ref{prn-princ} et~\ref{prdsk}. Le dernier provient du corollaire~\ref{cor-pft}.
\end{proof}

\begin{pr}\label{pr1-cp}
 Soit $\C$ une sous-catégorie pleine de $\fct(\M,\A)$ vérifiant les deux propriétés suivantes.
 \begin{enumerate}
  \item Pour tout foncteur polynomial $\s n(\M,\A)$-fermé $F$, il existe un objet $x$ de $\M$ tel que $\tau_x(F)$ appartienne à $\C$. 
  \item Si $0\to F\to G\to H\to 0$ est une suite exacte de $\fct(\M,\A)$ avec $G$ et $H$ dans $\C$, alors $F$ est dans $\C$.
 \end{enumerate}
Alors $\C$ contient tous les foncteurs polynomiaux de $\mathbf{Pft}(\M,\A)$.
\end{pr}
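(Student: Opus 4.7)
The plan is to argue by induction on the polynomial degree of $F$, using hypothesis~(1) to place a shift of $F$ in $\C$ and hypothesis~(2) to descend back to $F$ via the fundamental short exact sequence
$$0\to F\to\tau_x(F)\to\delta_x(F)\to 0,$$
which is left exact precisely because $F$, being parfait, satisfies $\kappa_x(F)\subset\kappa(F)=0$ for every object $x$ of $\M$.

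For the base case, apply hypothesis~(1) to the zero functor (which is trivially polynomial and $\s n(\M,\A)$-fermé): it yields some $x$ with $\tau_x(0)=0\in\C$, so in particular $0\in\C$. Any polynomial parfait functor of degree $<0$ (i.e. the zero functor) therefore lies in $\C$.

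For the inductive step, suppose $F$ is polynomial and parfait of degree $d\geq 0$, and that the conclusion is known for polynomial parfait functors of degree strictly less than $d$. By hypothesis~(1), there exists $x\in{\rm Ob}\,\M$ with $\tau_x(F)\in\C$. In the short exact sequence displayed above, the quotient $\delta_x(F)$ is polynomial of degree $\leq d-1$ (using Proposition~\ref{prfpe}, which ensures that strong and weak polynomial degrees coincide on $\mathbf{Pft}(\M,\A)$, so $\delta_x$ strictly lowers the degree) and remains parfait, again by Proposition~\ref{prfpe} which asserts stability of $\mathbf{Pft}(\M,\A)$ under $\delta_x$. The induction hypothesis thus gives $\delta_x(F)\in\C$, and since $\tau_x(F)\in\C$ as well, hypothesis~(2) applied to our short exact sequence yields $F\in\C$.

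The only point requiring any care is the exactness at the left of the short exact sequence, i.e.\ the vanishing of $\kappa_x(F)$; this is immediate from the characterization of parfait functors (Proposition~\ref{caract-sect} and the corresponding paragraph), since parfait forces the unit $F\to s\pi(F)$ to be an isomorphism and hence $\kappa(F)=0$. No further obstacle arises: the argument is a clean two-line induction whose content is entirely encoded in Proposition~\ref{prfpe} (stability of $\mathbf{Pft}$ under $\delta_x$ and coincidence of degrees) combined with the two hypotheses on $\C$.
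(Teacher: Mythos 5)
Votre démonstration est correcte et suit essentiellement la même voie que celle du texte : récurrence sur le degré, choix de $x$ avec $\tau_x(F)\in\C$ par l'hypothèse~(1), exactitude à gauche de $0\to F\to\tau_x(F)\to\delta_x(F)\to 0$ garantie par le caractère parfait de $F$, stabilité de $\mathbf{Pft}(\M,\A)$ par $\delta_x$ (proposition~\ref{prfpe}) pour appliquer l'hypothèse de récurrence, puis conclusion par l'hypothèse~(2). Le seul écart est cosmétique : vous justifiez explicitement $0\in\C$ pour l'initialisation, point que le texte passe sous silence.
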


\begin{proof}
 On montre par récurrence sur l'entier $d$ que tout foncteur $F$ de $\mathbf{Pft}(\M,\A)$ polynomial de degré au plus $d$ appartient à $\C$. Pour $d<0$, il n'y a rien à démontrer, on suppose donc $d\geq 0$ et l'assertion établie pour le degré $d-1$. Choisissons un objet $x$ de $\M$ tel que $\tau_x(F)$ appartienne à $\C$. Comme $F$ est parfait, le morphisme canonique $F\to\tau_x(F)$ est un monomorphisme et son conoyau $\delta_x(F)$ est parfait. L'hypothèse de récurrence montre que $\delta_x(F)$ appartient à $\C$, de sorte que la deuxième hypothèse sur cette catégorie et la suite exacte $0\to F\to\tau_x(F)\to\delta_x(F)\to 0$ donnent la conclusion.
\end{proof}

\begin{pr}\label{pr2-cp}
 Soit $\C$ une sous-catégorie pleine de $\fct(\M,\A)$ vérifiant les trois propriétés suivantes.
 \begin{enumerate}
  \item L'image par le foncteur $\eta$ de tout foncteur polynomial $\widetilde{\M}\to\A$ appartient à $\C$. 
  \item Si $0\to F\to G\to H\to 0$ est une suite exacte de $\fct(\M,\A)$ avec $G$ et $H$ dans $\C$, alors $F$ est dans $\C$.
   \item Si $0\to F\to G\to H\to 0$ est une suite exacte de $\fct(\M,\A)$ avec $F$ et $H$ dans $\C$, alors $G$ est dans $\C$.
 \end{enumerate}
Alors $\C$ contient tous les foncteurs polynomiaux de $\mathbf{Pft}(\M,\A)$.
\end{pr}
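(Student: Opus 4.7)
The plan is to proceed by induction on the polynomial weak degree $d$ of $F$, simultaneously establishing the auxiliary claim that for every $W \in \Pol_e(\M,\A)$ (any $e$) and every $i \geq 0$, one has $\mathbf{R}^i s(W) \in \C$. The base case $d \leq 0$ is immediate: $\Pol_0 \simeq \A$ via constant functors, every constant functor on $\M$ is the image under $\eta$ of the corresponding constant functor on $\widetilde{\M}$ (hence in $\C$ by property 1), and its higher derived sections vanish.

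For the inductive step, set $X = \pi F \in \Pol_d(\M,\A)$. The structural result of \cite{DV3} recalled in the proof of Lemma \ref{lma1} furnishes $A \in \Pol_d(\widetilde{\M},\A)$ and an exact sequence $0 \to Y \to X \xrightarrow{u} \pi\eta(A) \to Z \to 0$ in $\st(\M,\A)$ with $Y, Z \in \Pol_{d-1}(\M,\A)$. Setting $L := \operatorname{Im}(u)$ and applying $s$ to each of the two induced short exact sequences, one exploits perfection of $F$ (giving $\mathbf{R}^i s(X) = 0$ for $i \geq 1$) and perfection of $\eta(A) = s\pi\eta(A)$ (giving $\mathbf{R}^i s\pi\eta(A) = 0$ for $i \geq 1$) to extract
\begin{equation*}
0 \to s(Y) \to F \to s(L) \to \mathbf{R}^1 s(Y) \to 0, \quad 0 \to s(L) \to \eta(A) \to s(Z) \to \mathbf{R}^1 s(L) \to 0,
\end{equation*}
together with the identification $\mathbf{R}^1 s(L) \simeq \mathbf{R}^2 s(Y)$ and analogous ones in higher degrees.

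By the auxiliary claim applied to $Y, Z \in \Pol_{d-1}$, each of $s(Y)$, $s(Z)$, $\mathbf{R}^1 s(Y)$ and $\mathbf{R}^2 s(Y) \simeq \mathbf{R}^1 s(L)$ lies in $\C$. One then concludes by three successive applications of property 2 followed by one of property 3: first $K := \operatorname{Im}(\eta(A) \to s(Z)) \in \C$ from $0 \to K \to s(Z) \to \mathbf{R}^1 s(L) \to 0$; then $s(L) \in \C$ from $0 \to s(L) \to \eta(A) \to K \to 0$ using $\eta(A) \in \C$ by property 1; then $F/s(Y) = \ker(s(L) \to \mathbf{R}^1 s(Y)) \in \C$ from $0 \to F/s(Y) \to s(L) \to \mathbf{R}^1 s(Y) \to 0$; finally $F \in \C$ by property 3 applied to $0 \to s(Y) \to F \to F/s(Y) \to 0$.

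The main obstacle is the auxiliary claim itself: within its induction step at level $e$, controlling the cokernel $\mathbf{R}^1 s(L')$ for the intermediate object $L' \in \Pol_e$ is delicate, because $L'$ has the same degree $e$ as the object under consideration, so the outer induction does not directly apply, and because cokernels are not generally accessible through properties 2 and 3. I plan to handle this by a nested reverse induction on the cohomological degree $i$, anchored by the vanishing $\mathbf{R}^i s(L') = 0$ for $i > e$ from Proposition \ref{prn-princ}; this allows the problematic cokernel-type terms to be rewritten, via the two long exact sequences and the identifications between $\mathbf{R}^j s(L')$, $\mathbf{R}^j s(Y')$ and $\mathbf{R}^j s(Z')$, as expressions involving only $\mathbf{R}^j s$ of objects in $\Pol_{e-1}$, which are covered by the outer induction.
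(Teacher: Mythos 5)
Il y a une lacune rédhibitoire : votre affirmation auxiliaire (\og $\mathbf{R}^i s(W)\in\C$ pour tout objet polynomial $W$ et tout $i\geq 0$ \fg) est fausse, et aucune récurrence descendante sur le degré cohomologique ne peut la sauver. Pour le voir, prenez pour $\C$ la classe des foncteurs polynomiaux parfaits : elle satisfait les trois hypothèses de l'énoncé (hypothèse 1 car l'image de $\eta$ est formée de foncteurs parfaits, hypothèses 2 et 3 par la proposition~\ref{prfpe}, la polynomialité des sous-objets et quotients concernés étant assurée par la coïncidence des degrés fort et faible pour les foncteurs parfaits). Un foncteur stablement nul et parfait est nul ; votre affirmation forcerait donc $\mathbf{R}^i s(W)=0$ pour tout $W$ polynomial et tout $i\geq 1$, ce qui est faux : dans $\st(\Theta,\mathbf{Ab})$, l'objet $L=\ker\big(\pi\eta(\mathbb{Z}[-])\twoheadrightarrow\pi(\mathbb{Z})\big)$ (où $\mathbb{Z}[-]$ est la linéarisation et la flèche l'augmentation, surjective sur les ensembles non vides) appartient à $\Pol_1$ et vérifie $\mathbf{R}^1 s(L)=\operatorname{coker}\big(\mathbb{Z}[-]\to\mathbb{Z}\big)\neq 0$ (il vaut $\mathbb{Z}$ en $\mathbf{0}$). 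C'est précisément pour cela que la proposition~\ref{prn-princ} n'affirme que l'annulation de $\tau_{r(d)}(\mathbf{R}^i s)(X)$ et non celle de $(\mathbf{R}^i s)(X)$. Les hypothèses 2 et 3 ne donnent aucune prise sur ces conoyaux stablement nuls (elles ne produisent que des noyaux et des extensions), si bien que les termes $\mathbf{R}^1 s(Y)$ et $\mathbf{R}^1 s(L)$ de vos suites exactes à quatre termes restent inaccessibles.

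La démonstration du texte contourne l'obstacle en n'établissant, par récurrence sur $d$, qu'un énoncé plus faible : pour tout $X\in\Pol_d(\M,\A)$, il existe $x$ tel que $\tau_x(sX)\in\C$. On applique $\tau_x$ (fourni par la proposition~\ref{prn-princ}) aux deux suites exactes courtes découpées dans $0\to A\to X\to\pi\eta\alpha(X)\to B\to 0$, ce qui tue les $\mathbf{R}^1 s$ parasites et transforme vos suites à quatre termes en suites exactes courtes ne faisant intervenir que $\tau_x(sA)$, $\tau_x(sB)$ (couverts par l'hypothèse de récurrence) et $\eta\alpha(\tau_x X)$ (couvert par l'hypothèse 1) ; les hypothèses 2 puis 3 donnent alors $\tau_x(sX)\in\C$. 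On conclut ensuite par la proposition~\ref{pr1-cp}, qui exploite la perfection de $F$ via la suite exacte $0\to F\to\tau_x(F)\to\delta_x(F)\to 0$ --- c'est là, et seulement là, que l'hypothèse $F\in\mathbf{Pft}(\M,\A)$ intervient, alors que votre rédaction ne l'utilise que pour annuler $\mathbf{R}^i s(\pi F)$. Votre schéma de dévissage (la suite $0\to Y\to X\to\pi\eta(A)\to Z\to 0$ et l'usage alterné des hypothèses 2 et 3) est le bon, mais il doit être précédé de l'application d'un foncteur de décalage et complété par la réduction à la proposition~\ref{pr1-cp}.
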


\begin{proof}
 On montre par récurrence sur $d$ que, pour tout objet $X$ de $\Pol_d(\M,\A)$, il existe $x\in {\rm Ob}\,\M$ tel que $\tau_x(sX)$ appartienne à $\C$, ce qui permettra d'appliquer la proposition précédente pour conclure. On utilise la suite exacte canonique $0\to A\to X\to\pi\eta\alpha(X)\to B\to 0$, avec $A$ et $B$ dans $\Pol_{d-1}(\M,\A)$, qu'on scinde en deux suites exactes courtes :
 $$0\to A\to X\to Y\to 0\quad\text{et}\quad 0\to Y\to\pi\eta\alpha(X)\to B\to 0.$$
 En utilisant l'hypothèse de récurrence et la proposition~\ref{prn-princ}, on trouve un objet $x$ de $\M$ tel que $\tau_x(sA)$ et $\tau_x(sB)$ appartiennent à $\C$ et que les suites
  $$0\to\tau_x(sA)\to\tau_x(sX)\to\tau_x(sY)\to 0\;\text{et}\;0\to\tau_x(sY)\to\eta\alpha(\tau_x(X))\to\tau_x(sB)\to 0$$
 soient exactes. L'application successive des trois hypothèses permet de voir que $\tau_x(sY)$ et $\tau_x(sX)$ appartiennent à $\C$, comme souhaité.
\end{proof}

Dans le cas où $\M=\Theta$, on peut en fait s'abstenir de la deuxième condition dans la proposition précédente, comme le montre le résultat suivant, qui recoupe fortement les travaux de Nagpal \cite{Nag}, ainsi que \cite{Nag-prive}, \cite{NSS} et \cite{Ram}.

\begin{thm}\label{th-nag}
 Soit $F : \Theta\to\A$ un foncteur à support fini. Les assertions suivantes sont équivalentes :
 \begin{enumerate}
  \item $F$ est parfait ;
  \item $F$ possède une filtration finie dont les sous-quotients appartiennent à l'image essentielle du foncteur $\eta : \fct(\widetilde{\Theta},\A)\to\fct(\Theta,\A)$ ;
  \item il existe un entier $d$ tel que $F$ possède une résolution de la forme
  $$\cdots\to\eta(T_n)\to\eta(T_{n-1})\to\dots\to\eta(T_0)\to F\to 0$$
  où chaque foncteur $T_n : \widetilde{\Theta}\to\A$ est polynomial de degré au plus $d$.
 \end{enumerate}
 
 Lorsque $\A$ possède assez de projectifs, ces conditions équivalent encore à l'existence d'un entier $d$ tel que $F$ possède une résolution projective dont chaque terme a un support inclus dans $\{\mathbf{0},\dots,\mathbf{d}\}$.
\end{thm}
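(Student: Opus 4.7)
The plan is to establish the cyclic implications $(1) \Rightarrow (2) \Rightarrow (3) \Rightarrow (1)$ and then deduce the fourth characterization under the projectivity hypothesis on $\A$. Throughout, Proposition~\ref{theta-supp} identifies the hypothesis "$F$ à support fini" with "$F$ fortement polynomial", so $\pi F$ lies in $\Pol_d(\Theta, \A)$ for some $d \geq 0$.

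For $(1) \Rightarrow (2)$, I would apply Proposition~\ref{pr2-cp} to the full subcategory $\mathcal{C}$ of $\fct(\Theta, \A)$ of functors admitting a finite filtration whose sub-quotients lie in the essential image of $\eta$. The conditions regarding $\eta$-images and extensions are immediate. The delicate stability under kernels in a short exact sequence $0 \to F \to G \to H \to 0$ with $G, H \in \mathcal{C}$ is checked by intersecting $G$'s filtration with $F$ and performing a snake-lemma analysis: the resulting sub-quotients on $F$ are kernels of maps between $\eta$-images, and by using the equivalence $\Pol_d(\widetilde{\Theta}, \A)/\Pol_{d-1} \simeq \Pol_d(\Theta, \A)/\Pol_{d-1}$ recalled in the introduction, together with refinement of the filtration to absorb lower-degree corrections, one arranges for these sub-quotients to be in the image of $\eta$.

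For $(2) \Rightarrow (3)$, I induct on the length of the filtration. The base case $F = \eta(T)$ with $T \in \Pol_d(\widetilde{\Theta}, \A)$ reduces to constructing a resolution of $T$ in $\fct(\widetilde{\Theta}, \A)$ by sums of the standard functors $U[\widetilde{\Theta}(\mathbf{n}, -)]$ with $n \leq d$, each polynomial of degree exactly $n$ (as noted in the proof of Proposition~\ref{FInoeth}), and then applying the exact functor $\eta$. The inductive step is a horseshoe lemma construction on the short exact sequence $0 \to F' \to F \to \eta(T) \to 0$.

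The central implication is $(3) \Rightarrow (1)$, handled by a dimension-shift argument in the derived category. Denote by $R\Gamma$ the right derived functor of the left-exact functor $\kappa : \fct(\Theta, \A) \to \s n(\Theta, \A)$ (right adjoint to the inclusion), so that $F$ is parfait if and only if $R\Gamma(F) = 0$. Letting $K_n$ be the $n$-th syzygy of the resolution ($K_0 = F$, $G_n = \eta(T_n)$), each $G_n$ is parfait hence $R\Gamma(G_n) = 0$, and the sequences $0 \to K_{n+1} \to G_n \to K_n \to 0$ yield isomorphisms $R\Gamma(K_n) \simeq R\Gamma(K_{n+1})[1]$; iterating, $R\Gamma(F) \simeq R\Gamma(K_n)[n]$ for every $n \geq 0$. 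Since $\pi K_n$ is a subobject of $\pi\eta(T_{n-1}) \in \Pol_d$, it lies in $\Pol_d(\Theta, \A)$, and Proposition~\ref{prn-princ} gives $\mathbf{R}^j s(\pi K_n) = 0$ for $j > d$, so the triangle $R\Gamma \to \operatorname{Id} \to Rs\pi$ shows that $R\Gamma(K_n)$ has cohomology concentrated in degrees $[0, d+1]$. Since $R\Gamma(F)$ has cohomology only in non-negative degrees, the isomorphism $R\Gamma(F) \simeq R\Gamma(K_n)[n]$ for $n \geq d+2$ forces $R\Gamma(F) = 0$, i.e., $F$ parfait. The equivalence with the fourth characterization follows because, under the enough-projectives assumption on $\A$, projectives in $\fct(\Theta, \A)$ with support in $\{\mathbf{0}, \dots, \mathbf{d}\}$ are exactly $\eta$-images of projectives of polynomial degree $\leq d$ in $\fct(\widetilde{\Theta}, \A)$. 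The main obstacle is the dimension-shift coupled with the cohomological bound in $(3) \Rightarrow (1)$, which essentially relies on Proposition~\ref{prn-princ}; the verification of the kernel-stability axiom in $(1) \Rightarrow (2)$ is also technically subtle, requiring fine control over how $\eta$-images decompose modulo lower polynomial degree.
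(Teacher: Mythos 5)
Your overall architecture matches the paper's: Proposition~\ref{pr2-cp} applied to the class of $\eta$-filtered functors for $(1)\Rightarrow(2)$, resolutions built from the generators $A[\Theta(\mathbf{i},-)]$ with $i\leq d$, and a dimension shift based on Proposition~\ref{prn-princ} for $(3)\Rightarrow(1)$. That last step, which you phrase via the derived torsion functor $R\Gamma$ and the triangle $R\Gamma\to\mathrm{Id}\to\mathbf{R}s\circ\pi$, is exactly the computation the paper carries out with the hypercohomology spectral sequences of $(\mathbf{R}^*s)\pi$, and it is correct.

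The genuine gap is in the verification of the kernel-stability axiom of Proposition~\ref{pr2-cp} for the class $\C$ of functors filtered by $\eta$-images, which is the technical heart of the proof. Intersecting an \emph{arbitrary} filtration of $G$ with the kernel only exhibits the subquotients of the kernel as \emph{subobjects} of $\eta$-images, and subobjects of (or kernels of maps between) $\eta$-images are not in the essential image of $\eta$ in general --- this failure is precisely why condition~(2) asks for a filtration rather than membership in the image of $\eta$ --- while ``refining the filtration to absorb lower-degree corrections'' via the equivalence $\Pol_d(\widetilde{\Theta},\A)/\Pol_{d-1}(\widetilde{\Theta},\A)\simeq\Pol_d(\Theta,\A)/\Pol_{d-1}(\Theta,\A)$ only gives information in a quotient category and does not produce an actual filtration in $\fct(\Theta,\A)$. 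The paper closes this step with three ingredients absent from your sketch: (a) it replaces the arbitrary filtration by the \emph{canonical} one $p_n(F)$ (largest subfunctor in $\Pol_n^{{\rm fort}}$, obtained from the Kan extension along $\Theta_{\leq n}\hookrightarrow\Theta$), which is functorial; (b) via the equivalence $\fct(\widetilde{\Theta},\A)\simeq\fct(\Sigma,\A)$, the graded pieces are of the form $\Phi_n(M)=\mathbb{Z}[\Theta(\mathbf{n},-)]\underset{\Sigma_n}{\otimes}M$ with $\Phi_n$ exact and fully faithful, so kernels of epimorphisms between graded pieces remain of this form; (c) the strict compatibility $p_n(F)\cap X=p_n(X)$ is forced by the fact that $p_n(F)\cap X$ is \emph{perfect} (kernel of an epimorphism of perfect functors), hence of strong degree equal to its weak degree $\leq n$. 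Without these the implication $(1)\Rightarrow(2)$ does not close. A secondary imprecision: $P[\Theta(\mathbf{n},-)]$ lies in the essential image of $\eta$ because it equals $\Phi_n(P[\Sigma_n])$, not because it equals $\eta(P[\widetilde{\Theta}(\mathbf{n},-)])$, of which it is only a proper subfunctor.
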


\begin{proof}
 Pour chaque entier $n$, notons $\rho_n$ l'inclusion dans $\Theta$ de la sous-catégorie pleine $\Theta_{\leq n}$ des $\mathbf{i}$ pour $i\leq n$ et $p_n$ l'endofoncteur de $\fct(\Theta,\A)$ image de la coünité $(\rho_n)_!\rho_n^*\to {\rm Id}$ (où $(\rho_n)_!$ désigne, comme dans le §\,\ref{sect-sk}, l'extension de Kan à gauche de $\rho_n$). On note que $p_n(F)$ est le plus grand sous-foncteur de $F$ appartenant à $\Pol_n^{{\rm fort}}(\Theta,\A)$, grâce aux propositions~\ref{pr-suppFini} et~\ref{theta-supp}. Le foncteur $p_n$ est additif et préserve les monomorphismes (comme l'identité dont c'est un sous-foncteur) et les épimorphismes (comme $(\rho_n)_!\rho_n^*$ dont c'est un quotient). En utilisant l'équivalence de catégories classique $\fct(\widetilde{\Theta},\A)\simeq\fct(\Sigma,\A)$, où $\Sigma$ désigne la catégorie des ensembles finis avec {\em bijections} (cette conséquence du théorème de Pirashvili à la Dold-Kan \cite{PDK} est rappelée dans le §\,4 de \cite{DV3} ; elle figure aussi, avec des notations différentes, dans \cite{CEF}, dont c'est le théorème~4.1.5), on voit qu'il existe un endofoncteur {\em exact} $\tilde{p}_n$ de $\fct(\widetilde{\Theta},\A)$ tel que $p_n\circ\eta\simeq\eta\circ\tilde{p}_n$. En utilisant cette même équivalence de catégories, on voit également que ${\rm Ext}^*_{\fct(\Theta,\A)}(\eta(A),\eta(B))$ est nul lorsque $A$ est de degré au plus $n$ et $B$ nul sur $\mathbf{n}$ (condition équivalente à $\tilde{p}_n(B)=0$), et naturellement isomorphe à ${\rm Ext}^*_{\fct(\widetilde{\Theta},\A)}(A,B)$ lorsque $A$ est de degré au plus $n$ et $B$ nul sur $\mathbf{n+1}$. On en déduit aisément que $F$ possède une filtration finie dont les sous-quotients sont dans l'image essentielle de $\eta$ si et seulement si, pour tout $n\in\mathbb{N}$, $p_n(F)/p_{n-1}(F)$ est isomorphe à l'image par $\eta$ d'un foncteur de support $\{\mathbf{n}\}$, c'est-à-dire à un foncteur du type $\Phi_n(M):=\mathbb{Z}[\Theta(\mathbf{n},-)]\underset{\Sigma_n}{\otimes}M$ où $M$ est un objet de $\A$ muni d'une action du groupe symétrique $\Sigma_n$.
 
 Cette observation permet de voir que la classe $\C$ des foncteurs à support fini $\Theta\to\A$ qui possèdent une telle filtration est stable par noyaux d'épimorphismes. Supposons en effet que $0\to X\to F\xrightarrow{f} G\to 0$ est une suite exacte de $\fct(\Theta,\A)$ avec $F$ et $G$ dans $\C$. Pour chaque entier $n$, $p_n(f) : p_n(F)\to p_n(G)$ est un épimorphisme dont le noyau $p_n(F)\cap X(\subset F)$ est {\em parfait}. En effet, $p_n(F)$ et $p_n(G)$, qui appartiennent à $\C$ d'après la caractérisation précédente, sont parfaits (puisque la classe des foncteurs parfaits contient l'image essentielle de $\eta$ et est stable par extensions, par la proposition~\ref{prfpe}), et la classe des foncteurs parfaits est stable par noyaux d'épimorphismes (par la même proposition). Comme $p_n(F)\cap X$ est faiblement polynomial de degré au plus $n$ (car c'est un sous-foncteur de $p_n(F)$), on en déduit qu'il est également polynomial de degré {\em fort} au plus $n$ (d'après la proposition~\ref{prfpe}), donc inclus dans $p_n(X)$. Par conséquent, on a $p_n(F)\cap X=p_n(X)$, ce qui montre que la suite $0\to p_n(X)\to p_n(F)\xrightarrow{p_n(f)} p_n(G)\to 0$ est {\em exacte} pour tout $n$, donc également $0\to p_n(X)/p_{n-1}(X)\to p_n(F)/p_{n-1}(F)\to p_n(G)/p_{n-1}(G)\to 0$. Mais par hypothèse il existe des objets $M$ et $N$ de $\A_{\Sigma_n}$ tels que $p_n(F)/p_{n-1}(F)\simeq\Phi_n(M)$ et $p_n(G)/p_{n-1}(G)\simeq\Phi_n(N)$ ; comme $\Phi_n$ est exact et pleinement fidèle, on en déduit que $p_n(X)/p_{n-1}(X)$ est isomorphe à l'image par $\Phi_n$ du noyau d'un certain épimorphisme $M\to N$ de $\A_{\Sigma_n}$, ce qui montre que $X$ appartient à $\C$. Comme cette classe est également stable par extensions et contient l'image par $\eta$ des foncteurs polynomiaux (qui, dans $\fct(\Theta,\A)$, sont exactement les foncteurs à support fini), la proposition~\ref{pr2-cp} montre que $\C$ contient tous les foncteurs parfaits. Ainsi, les deux premières conditions de l'énoncé sont équivalentes.
 
 Si $F$ appartient à $\Pol_d^{{\rm fort}}(\Theta,\A)$, on peut trouver un épimorphisme du type $G:=\underset{i\leq d}{\bigoplus}\mathbb{Z}[\Theta(\mathbf{i},-)]\otimes A_i\twoheadrightarrow F$ (en utilisant de nouveau les propositions~\ref{pr-suppFini} et~\ref{theta-supp}) ; si $\A$ a assez de projectifs, on peut même supposer que les $A_i$ sont projectifs. On note que $G$ appartient à l'image par $\eta$ d'un foncteur de $\Pol_d(\widetilde{\Theta},\A)$, et est projectif si les $A_i$ le sont. Si l'on suppose de plus que $F$ est parfait, comme $G$ l'est également, le noyau de cet épimorphisme $G\twoheadrightarrow F$ est encore un foncteur parfait, de degré fort au plus $d$, puisque le degré fort d'un foncteur parfait coïncide avec son degré faible et que $G$ est de degré au plus $d$. On en déduit par récurrence que tout foncteur parfait vérifie la troisième propriété de l'énoncé, et même le renforcement en termes de projectifs si $\A$ a assez de projectifs.
 
 Il reste à montrer qu'un foncteur $F$ vérifiant la troisième condition est parfait. En fait, il suffit de disposer d'une suite exacte du type
 $$\eta(T_{d+1})\to\cdots\to\eta(T_n)\to\eta(T_{n-1})\to\dots\to\eta(T_0)\to F\to 0$$
 avec tous les $T_i$ de degré au plus $i$. Cela se voit en considérant les suites spectrales d'hypercohomologie associées, via l'application de $(\mathbf{R}^*s)\pi$, et les deux faits suivants :
 \begin{enumerate}
  \item les foncteurs dans l'image de $\eta$ sont parfaits ;
  \item on a $(\mathbf{R}^i s)(X)=0$ pour $i>d$ si $X$ appartient à $\Pol_d(\Theta,\A)$ (cf. proposition~\ref{prn-princ}).
 \end{enumerate}
\end{proof}

Le théorème~\ref{th-nag} doit beaucoup à l'étude du travail \cite{GP-cow} de Powell (la deuxième propriété, qui traduit que $F$ est un foncteur {\em \#-filtré} au sens de \cite{Nag}, est similaire à celle de {\em foncteur $DJ$-bon} dans le contexte de \cite{GP-cow}, tandis que la dernière propriété, et surtout sa variante dans le cas où $\A$ possède assez de projectifs, constitue un analogue direct du critère homologique de \cite{GP-cow} pour la description des foncteurs $DJ$-bons), dont les considérations s'inspirent des {\em catégories de plus haut poids} introduites par Cline, Parshall et Scott dans \cite{CPS}.

\begin{rem}
 En général, saturer par extensions l'image essentielle du foncteur $\eta$ ne suffit nullement à obtenir tous les foncteurs polynomiaux parfaits $\M\to\A$ (même en leur imposant de fortes hypothèses de finitude). En voici un exemple sur des foncteurs $\mathbf{S}(\mathbb{Q})\to\mathbf{Mod}-\mathbb{Q}$ fortement polynomiaux de degré $3$. Les foncteurs $F : V\mapsto V^*\otimes V\otimes V$ (où l'étoile indique la dualité dans $\mathbb{Q}$-espaces vectoriels) et $A : V\mapsto V$ appartiennent à l'image du foncteur $\eta$, et l'on dispose d'un {\em épimorphisme} de foncteurs $F\twoheadrightarrow A$ donné sur $V$ par $V\mapsto V^*\otimes V\otimes V\xrightarrow{{\rm tr}\otimes V}V$, où tr désigne la trace. Son noyau est un foncteur polynomial parfait de degré $3$, de longueur finie, mais on vérifie sans peine, en utilisant la proposition~6.8 de \cite{DV3}, qu'il ne s'obtient pas par extensions successives de foncteurs appartenant à l'image essentielle de $\eta$.
\end{rem}

\bibliographystyle{plain}
\bibliography{b-npol.bib}
\end{document}